\newcommand\myshade{70}
\colorlet{mylinkcolor}{violet}
\colorlet{mycitecolor}{YellowOrange}
\colorlet{myurlcolor}{Aquamarine}
\theoremstyle{plain}
\newtheorem{theorem}{Theorem}[section]
\newtheorem{lemma}{Lemma}[section]
\newtheorem{proposition}{Proposition}[section]
\theoremstyle{definition}
\newtheorem{definition}{Definition}[section]
\newtheorem{remark}{Remark}[section]
\newcommand{\set}[1]{\left\{{#1}\right\}}
\newcommand{\argmin}{\operatornamewithlimits{argmin}}
\newcommand{\argmax}{\operatornamewithlimits{argmax}}
\newcommand{\C}{\mathbb{C}}
\newcommand{\R}{\mathbb{R}}
\newcommand{\Z}{\mathbb{Z}}
\newcommand{\B}{\mathrm{B}}
\newcommand{\Sphere}{\mathcal{S}}
\newcommand{\diam}{\mathrm{diam}}
\newcommand{\Gr}[2]{\mathbb{G}^{#1,#2}}
\newcommand{\dd}{\mathrm{d}}
\newcommand{\dHaus}{\mathrm{d_H}}
\newcommand{\manifolds}[3]{\mathcal{M}^{{#1}, {#2}}_{#3}}
\newcommand{\manifoldspoint}[4]{\set{#4} \sqcup \mathcal{M}^{{#1}, {#2}}_{#3}}
\newcommand{\manifoldsball}[4]{\B(0,#4) \sqcap \mathcal{M}^{{#1}, {#2}}_{#3}}
\newcommand{\distributions}[6]{\mathcal{D}^{{#1}, {#2}}_{#3}({#4}, {#5}, {#6})}
\newcommand{\distributionspoint}[7]{\set{#7} \sqcup \mathcal{D}^{{#1}, {#2}}_{#3}({#4}, {#5}, {#6})}
\newcommand{\distributionsball}[7]{\B(0,#7) \sqcap \mathcal{D}^{{#1}, {#2}}_{#3}({#4}, {#5}, {#6})}
\newcommand{\proofof}{Proof of\xspace}
\newcommand{\ceil}[1]{\left\lceil#1\right\rceil}
\newcommand{\floor}[1]{\left\lfloor#1\right\rfloor}
\newcommand{\supp}{\mathrm{Supp}}
\newcommand{\E}{\mathop{\mathbb{E}}}
\newcommand{\indicator}[1]{\mathbbm{1}_{#1}}
\newcommand{\TV}{\mathop{\mathrm{TV}}}
\newcommand{\Haus}{\mathcal{H}}
\newcommand{\CV}{\mathrm{cv}}
\newcommand{\PK}{\mathrm{pk}}
\newcommand{\inner}[2]{\left\langle {#1}, {#2} \right\rangle}
\newcommand{\norm}[1]{\left\Vert #1 \right\Vert }
\newcommand{\normop}[1]{\left\Vert #1 \right\Vert_{\mathrm{op}} }
\newcommand{\normF}[1]{\left\Vert #1 \right\Vert_{\mathrm{F}} }
\newcommand{\normNuc}[1]{\left\Vert #1 \right\Vert_{\ast} }
\newcommand{\model}{\mathcal{D}}
\newcommand{\parameterspace}{\mathcal{M}}
\newcommand{\functional}{\theta}
\newcommand{\metric}{\Theta}
\newcommand{\dist}{\rho}
\newcommand{\Bmetric}{\B_{(\metric,\dist)}}
\newcommand{\rank}{\mathrm{rank}}
\newcommand{\tr}{\mathrm{tr}}
\newcommand{\transpose}[1]{{#1}^\top}
\newcommand{\sampling}{\mathcal{L}}
\newcommand{\samplingPauli}{\sampling_{\mathrm{Pauli}}}
\newcommand{\opt}{\text{opt}}
\newcommand{\Length}{\mathrm{Length}}
\newcommand{\Med}{\mathrm{Med}}
\newcommand{\rch}{\mathrm{rch}}
\newcommand{\oracle}{\mathsf{O}}
\newcommand{\STAT}{\mathrm{STAT}}
\newcommand{\answer}{\mathrm{a}}
\newcommand{\Algorithm}[1]{\mathtt{#1}}
\newcommand{\RandmizedAlgorithm}[1]{\mathcal{#1}}
\newcommand{\ttau}{\Tilde{\tau}}
\newcommand{\MPA}{\hyperref[alg:MPA]{\texttt{Manifold Propagation}}\xspace}
\newcommand{\ABS}{\hyperref[alg:routine_detection_raw_point]{\texttt{SQ Ambient Binary Search}}\xspace}
\renewcommand\nomgroup[1]{%
  \vspace{0em}\item[\bfseries
  \ifstrequal{#1}{A}{General}{%
  \ifstrequal{#1}{B}{Geometry}{%
  \ifstrequal{#1}{C}{Geometric and Statistical Model}{%
  \ifstrequal{#1}{D}{Statistical Queries}{%
  \ifstrequal{#1}{E}{Matrix Calculus}{%
  \ifstrequal{#1}{F}{Lower Bounds}{%
  \ifstrequal{#1}{G}{Other Symbols}{}}}}}}}%
]}
\title{Adversarial Manifold Estimation}
\date{}
\author{
    Eddie Aamari \\
    {\small \texttt{aamari@lpsm.paris}} \\
    {\small LPSM, CNRS} \\
    {\small Universit\'e Paris Cité, Sorbonne Université} \\
    {\small Paris, France}
    \and 
    Alexander Knop \\
    {\small \texttt{aknop@ucsd.edu}} \\
    {\small Department of Mathematics} \\
    {\small University of California, San Diego} \\
    {\small La Jolla, CA, USA}
}
\begin{document}

    \maketitle
    
    \begin{abstract}
        This paper studies the statistical query (SQ) complexity of estimating
$d$-dimensional submanifolds in $\mathbb{R}^n$.
We propose a purely geometric algorithm called Manifold Propagation,
that reduces the problem to three natural geometric
routines: projection, tangent space estimation, and point 
detection. We then provide constructions of these geometric routines in the SQ framework.
Given an adversarial $\mathrm{STAT}(\tau)$ oracle and a target Hausdorff distance precision $\varepsilon = \Omega(\tau^{2 / (d + 1)})$, the resulting SQ manifold reconstruction algorithm has query complexity $\tilde{O}(n \varepsilon^{-d / 2})$, which is proved to be nearly optimal.
In the process, we establish low-rank matrix completion results for SQ's and lower bounds for randomized SQ estimators in general metric spaces.
    \end{abstract}

\setcounter{tocdepth}{3}
\tableofcontents

    \section{Introduction}
    In the realm of massive data acquisition, the curse of dimensionality phenomena led to major developments of computationally efficient statistical and machine learning techniques. 
Central to them are topological data analysis and geometric methods, which have recently garnered a lot of attention and proved fruitful in both theoretical and 
applied areas~\cite{Wasserman18}. These realms refer to a  collection of statistical methods that find intrinsic structure in data. In short, this field is based upon the idea that data described with a huge number of features $n$ may be subject to redundancies and correlations, so that they include only $d \ll n$ intrinsic and informative degrees of freedom. 
This low-dimensional paradigm naturally leads to the problem of recovering this intrinsic structure, for data visualization or to mitigate the curse of dimensionality. This problem is usually referred to as \emph{support estimation}~\cite{Cuevas97} or \emph{dimensionality reduction}~\cite{Lee07}.

Linear dimensionality reduction techniques, such as Principal Component Analysis and LASSO-types methods~\cite{Hastie09}, assume that the data of interest lie on a low-dimensional linear subspace. 
This assumption appears to be often too strong in practice, so that one may use problem-specific featurization techniques, or other
non-linear methods. 
On the other hand, non-linear dimensionality reduction techniques such as Isomap~\cite{Ten97}, Local Linear Embedding~\cite{Row00} and Maximum Variance Unfolding~\cite{Pelletier13}, work under the relaxed assumption that the data of interest lie on an embedded $d$-dimensional manifold of $\R^n$ with $d \ll n$, hence allowing natively for non-linearities.

\subsection{Context}

\paragraph{Geometric Inference from Samples.}

The classical statistical framework, based on data points, is usually referred to as \emph{PAC-learning}~\cite{Val84} or \emph{sample framework}. In this setting, the learner is given a set $\set{x_1, \dots, x_s}$ of $s$ samples drawn, most commonly independently, from an unknown distribution $D$. From these samples, the learner then aims at 
estimating a parameter of interest $\functional(D)$, which in our context will naturally be taken as the support $\functional(D) = \supp(D) \subseteq \R^n$. As described above, for this problem to make sense, $D$ shall be assumed to be supported on (or near) a low-dimensional structure, i.e. a $d$-dimensional submanifold with $d \ll n$. Here, the precision is usually measured with the Hausdorff distance, a $L^\infty$-type distance between compact subsets of~$\R^n$.

The Hausdorff estimation of manifolds in the noiseless sample framework is now well understood. The first minimax manifold estimation results in the sample framework are due to~\cite{Genovese12,Genovese12b}. At the core of their work is the reach, a scale parameter that quantitatively encodes $\mathcal{C}^2$ regularity of the unknown manifold $M = \supp(D) \subseteq \R^n$, and that allows measuring the typical scale at which $M$ looks like $\R^d$~\cite{Aamari19} (see \Cref{def:reach}). Under reach assumptions, the estimator of~\cite{Genovese12b} achieves a worst-case average precision at most $O((\log s / s)^{2 / d})$, but with a computationally intractable method. 
This rate was later shown to be $\log$-tight optimal by~\cite{Kim2015}. Given a target precision $\varepsilon>0$, these results hence reformulate to yield sample complexity of order $s = O(\varepsilon^{-d / 2} \log(1 / \varepsilon))$. 
Later,~\cite{Aamari18} gave a constructive estimator combining local PCA and the computational geometry algorithm from~\cite{Boissonnat14}, which outputs a triangulation of the sample points in polynomial time, and linear time in the ambient dimension $n$~\cite{Boissonnat14}. 
More recently,~\cite{Divol20} proposed a computationally tractable minimax adaptive method that automatically selects the intrinsic dimension and reach.
Let us also mention that by using local polynomials, faster sample rates can also be achieved when the manifolds are smoother than $\mathcal{C}^2$~\cite{Aamari19b}.

Manifold estimation in the presence of noise is by far less understood.
The only known optimal statistical method able to handle samples corrupted with additive noise~\cite{Genovese12} is 
intractable.
Currently, the best algorithmically tractable estimators in this context require either the noise level to vanish fast enough as the sample size grows~\cite{Puchkin21}, or very specific distributional assumptions on the noise: either Gaussian~\cite{Fefferman19,Dunson21} or ambient uniform~\cite{Aizenbud21}.
To date, the only computationally tractable sample method that truly is robust to some type of noise is due to~\cite{Aamari18}, in which the authors consider the so-called \emph{clutter noise} model introduced by~\cite{Genovese12b}. 
In this model, the samples are generated by a mixture of a distribution $D$ on $M$ and a \emph{uniform}
distribution in the ambient space, with respective coefficients $\beta \in (0,1]$ and
$(1 - \beta)$. That is, the $s$-sample consists of unlabelled points in $\R^n$,
with approximately $\beta s$ informative points on $M$ and $(1 - \beta)s$ non-informative
ambient clutter points. In~\cite{Genovese12b,Kim2015}, the optimal sample complexity was
shown to be $s = O(\beta^{-1}\varepsilon^{-d/2}\log(1/\varepsilon))$, but this rate was
obtained with an intractable method. 
On the other hand,~\cite{Aamari18} proposed to label the non-informative data points, which allows to apply a clutter-free estimation procedure to the remaining decluttered points. 
This results in a computationally tractable minimax optimal method, with an additional computational cost due to the decluttering. However, the success of this extra step relies heavily on the assumption that the ambient clutter is uniform.

Overall, the existing reasonable manifold estimation methods are heavily attached to the \emph{individual} data points and do not tolerate much noise, as the change of a single point may have the method fail completely. 
Let us mention from now that in sharp contrast, the statistical query framework considered in this work is inherently robust to clutter noise without an artificial decluttering, no matter the clutter noise distribution (see \Cref{rk:clutter_noise}).

\paragraph{Private Learning.}
\label{par:private-learning}
Beyond the classical sample complexity, the modern practice of statistics raised concerns leading to quantitative and qualitative estimation constraints~\cite{Wainwright14}.
For instance, in many applications of learning methods, the studied data is contributed by
individuals, and features represent their (possibly) private characteristics such as
gender, race, or health history. Hence, it is essential not to reveal too much information about any particular individual.
The seminal paper~\cite{KLNRS11:what-can-we-learn-privately} on this topic introduces 
the notion of private learning, 
a learning framework inspired by differentially private algorithms~\cite{DMNS16}.
Given samples $\set{x_1,\dots,x_s}$, this framework imposes privacy to a learner by requiring it not to be significantly affected if a particular sample $x_i$ is replaced with an arbitrary $x'_i$. 

In contrast to precision, which is analyzed with respect 
to a model, the level of differential privacy is a worst-case notion.
Hence, when analyzing the privacy guarantees of a learner, no assumption should be made on the underlying generative model. Indeed, such a generative assumption could fall apart in the presence of background knowledge that the adversary might have: 
conditioned on this background knowledge, the model may change drastically.

There are two main types of differentially private algorithms.
\emph{Global differential privacy} assumes that there is a trusted entity (i.e. a
central data aggregator) that can give private answers to database queries~\cite{KLNRS11:what-can-we-learn-privately}. This
approach was used by LinkedIn to share advertisements data~\cite{RSPDKSA20}, 
by Uber’s system for internal analytics~\cite{JNS18}, and is about to be implemented 
by the U.S. Census Bureau for publishing data~\cite{CB20:census-is-private}.

In contrast, \emph{local differential privacy}, as
defined by~\cite{EGS03,KLNRS11:what-can-we-learn-privately}, 
even further restricts the learners.
It requires that even if an adversary
has access to the personal revealed data of individuals, this adversary will still
be unable to learn too much about the user's actual personal data.
The simplest example of a locally private learning protocol was originally introduced to encourage truthfulness in surveys~\cite{Warner:randomized-response}.
In local differential privacy, a trusted entity is not necessarily present, and
each individual protects their own privacy, for instance, by adding noise to their 
data separately.

\paragraph{Statistical Queries.}
Instead of sample complexity, this paper considers the notion of statistical query  complexity, which was proved to be equivalent to the locally private learning complexity up
to a polynomial factor~\cite{KLNRS11:what-can-we-learn-privately}, 
and that naturally enforces robustness to clutter noise (see \Cref{rk:clutter_noise}).

First introduced by Kearns~\cite{Kearns:learning-from-SQs}, the statistical query (SQ)
framework is a restriction of PAC-learning, where the learner is only allowed to 
obtain approximate averages of the unknown distribution $D$ via an adversarial 
oracle, but cannot see any sample. That is, given a tolerance parameter $\tau > 0$,
a $\STAT(\tau)$ oracle for the distribution $D$ accepts functions 
$r : \R^n \to [-1, 1]$ as queries from the learner, and can answer \emph{any} value $\answer \in \R$
such that $|\!\E_{x \sim D}[r(x)] - \answer| \le \tau$. 
Informally, the fact that the oracle is adversarial is the counterpart to the fact that differential privacy is a worst-case notion.
We emphasize that in
the statistical query framework, estimators (or learners) are only given access to such an oracle, and not to the data themselves.
Limiting the learner's accessible information to adversarially perturbed averages both restricts the range of the usable algorithms,
and effectively forces them to be robust and efficient.

Kearns showed that any statistical query learner can be transformed into a classical PAC-learning algorithm with robustness to random classification noise~\cite{Kearns:learning-from-SQs}. 
Conversely, many commonly used PAC-learning algorithms have statistical query implementations~\cite{%
    Kearns:learning-from-SQs,%
    Bylander:learning-linear-threshold,%
    DV:rescaling-for-linear-programs}.
Though, Kearns also showed that there are information-theoretic 
obstacles that are specific to the statistical query framework; e.g. parity functions 
require an exponential number of queries~\cite{Kearns:learning-from-SQs}.
In other words, PAC-learning is strictly stronger than SQ-learning.

We have already mentioned the connection between statistical queries and private learning. 
On top of this, the simplicity of the SQ framework allowed its application in several other fields, such as (theoretical and practical) learning algorithms for distributed data systems. Indeed, a problem 
has an efficient SQ algorithm if and only if it has an efficient distributed learner~\cite{BD98,SVW16}.
Another incentive to study statistical queries arises from quantum computations: general quantum PAC learners can perform complex entangling measurements which do not seem realizable for
near-term quantum computers. 
To overcome this issue, Arunachalam, Grilo, and Yuen~\cite{AGY20:quantum-SQ} introduced the notion of quantum statistical query learning, for which practical implementations would only require to measure a single quantum state at a time.

Overall, certainly the most interesting property of statistical query algorithms
is the possibility of proving unconditional lower bounds on the complexity
of statistical problems. Considering the number of learning algorithms that
are implementable in the statistical query framework, these lower bounds provide strong
evidence of hardness of these problems.
Moreover, for many learning problems, the known unconditional lower bounds for the statistical query framework closely match the known computational complexity upper bounds.
For instance,~\cite{BFJKMR94} proved that SQ algorithms require a quasi-polynomial 
number of queries to learn disjunctive normal forms (DNF), which matches the running time upper bound by 
Verbeurgt~\cite{Ver90}. Similar results were proved by~\cite{FGRVX17} for the planted
clique problem, and by~\cite{DKS17} for high-dimensional Gaussian mixtures learning. Finally, some problem-specific 
statistical query lower bounds directly imply 
lower bounds against general convex relaxations of Boolean constraint satisfaction 
problems~\cite{FPV18,Feldman17}, lower bounds on approximation of Boolean functions by 
polynomials~\cite{DFTWW15}, and lower bounds on dimension complexity of Boolean function
classes~\cite{She08,Feldman17}.

\subsection{Contribution}
This paper is the long and complete version of~\cite{Aamari21}. 
We establish nearly matching upper and lower bounds on the statistical query complexity
of manifold learning. As a corollary, it provides an efficient and natural 
noise-tolerant sample manifold estimation technique; as another side-product,
it also provides, to the best of our knowledge, the first private manifold estimation method.
In some regimes of the parameters, it also exhibits another example of a natural statistical problem with different sample and statistical query complexities.

\subsubsection{Main Results}

\paragraph{Upper Bounds.}
The main contribution of this paper is the construction of a low-complexity 
\emph{deterministic} SQ algorithm that uniformly estimates the compact connected $d$-dimensional 
$\mathcal{C}^2$-manifolds $M \subseteq \R^n$ with reach $\rch_M \geq \rch_{\min} > 0$
(i.e. curvature roughly bounded by  $1 / \rch_{\min}$), from distributions $D$ with
support $\supp(D) = M$ that have a Lipschitz density function bounded below by $f_{\min} > 0$ on $M$ (i.e. volume of $M$ roughly bounded by  $1 / f_{\min}$). See \Cref{def:manifold_model_distribution} for a formal definition.
The estimation error is measured in Hausdorff distance, 
which plays the role of a sup-norm between compact subsets of $\R^n$.

In \Cref{prop:SQ_lower_bound_infinity_unbounded}, we prove that without any prior information about the location of the manifolds, SQ algorithms cannot estimate them, even with an unlimited number of queries.
It is worth noting that this phenomenon is specific to the SQ framework and does not occur in the sample framework.
We consider two ways to ``localize'' the model. Namely, we either assume: 
that the manifold contains the origin (fixed point model), 
or that the manifold lies within the ball of radius $R > 0$ centered at the origin (bounding ball model).
\begin{description}
    \item[{[Fixed point model]}] 
        \Cref{thm:SQ_upper_bound_point} presents a \emph{deterministic} algorithm which, given the information that $0 \in M$, achieves
        precision $\varepsilon$ using 
        \[
            q = O\left(
                \frac{n \operatorname{polylog}(n)}{f_{\min}}
                \left(\frac{1}{\rch_{\min} \varepsilon}\right)^{d / 2}
            \right)
        \]
        queries to a $\STAT(\tau)$ oracle, provided that 
        $$
            \varepsilon 
            =
            \Omega\left(
            \rch_{\min}
                \biggl(
                    \frac{\tau}{f_{\min}\rch_{\min}^d}
                \biggr)^{2 / (d + 1)}
            \right)
        ,
        \text{ and }
        \tau = O(f_{\min} \rch_{\min}^d)
        .
        $$

    \item[{[Bounding ball model]}]
        \Cref{thm:SQ_upper_bound_ball} shows that the same estimation problem can still be solved using $O(n \log(R / \varepsilon))$ extra queries to $\STAT(\tau)$ if $M$ is only assumed to be contained in the ball $\B(0,R)$.
        That is, it shows that manifold estimation with precision $\varepsilon$ in the bounding ball model can be done deterministically with
        \[
            q
            =
            O\left(                
                n\log\left(\frac{R}{\varepsilon}\right) + 
                \frac{n\operatorname{polylog}(n)}{f_{\min}}
                \left(\frac{1}{\rch_{\min} \varepsilon}\right)^{d / 2}
            \right)
        \]
        queries to a $\STAT(\tau)$ oracle, under similar conditions as above.
\end{description}
Notice the limited quasi-linear dependency on the ambient dimension $n$ in these bounds.
Actually, in the fixed point model, the given query complexity corresponds to the
sample complexity up to the $n \operatorname{polylog}(n)$ factor~\cite{Kim2015,Divol20}.
This apparent discrepancy can be explained by the fact that a single sample of
$M \subseteq \R^n$ corresponds to $n$ coordinates, while statistical queries are
forced to be real-valued.
More interestingly, the extra cost $O(n \log(R / \varepsilon))$ in the bounding ball 
model is specific to the statistical query framework and does not appear in the sample 
framework~\cite{Kim2015}, although this term would dominate only in the regime where $R$ is exponentially bigger than $\rch_{\min}$.

The insights underlying these upper bounds are described in 
\Cref{section:intro-manifold-propagation,section:intro-SQ-implementations}, and the 
formal statements in \Cref{sec:mpa,sec:SQ-routines,sec:SQ-manifold-estimation}.

\paragraph{Differentially Private Manifold Estimation.}
As a direct corollary (see~\cite[Theorem~5.7]{KLNRS11:what-can-we-learn-privately}),  these SQ upper bounds transform into private learning upper bounds.
They yield, to the best of our knowledge, the first private learning algorithms for manifold estimation.
More precisely, we proved that for all $\varepsilon = O(\rch_{\min})$, there is 
a local $\delta$-differentially\footnote{As the present paper uses $\varepsilon$ for precision, we use $\delta$ as the privacy parameter, contrary to the standard notation.}
private algorithm estimating the $d$-dimensional $\mathcal{C}^2$-manifolds $M$ with precision $\varepsilon$ that requires no more than
\begin{align*}
    s_{\delta\text{-private}}(\varepsilon)
    &=
    \Tilde{O}\left(
        \frac{n}{(f_{\min}\rch_{\min}^d)^3  \delta^2} 
        \left(
            \frac{\rch_{\min}}{\varepsilon}
        \right)^{(3d+2)/2} 
    \right)
\end{align*}
samples in the fixed point model, where $\Tilde{O}$ hides the logarithmic terms of the complexity.
See~\cite{KLNRS11:what-can-we-learn-privately} for more formal and thorough developments on differential privacy.

\paragraph{Lower Bounds.}

Complementing these upper bounds on the statistical query complexity of manifold estimation, we prove a \emph{computational} and an \emph{informational} lower bound, that both nearly match.
To examine whether or not randomness may facilitate manifold learning, the below lower bounds apply to \emph{randomized} SQ algorithms, which are allowed to use randomness and to fail with probability at most $\alpha \in[0, 1)$.
Recall that the above upper bounds stand for \emph{deterministic} ($\alpha = 0$) SQ algorithms.

First, we prove the following computational lower bounds.

\begin{description}
    \item[{[Fixed point model]}]
        \Cref{thm:SQ_lower_bound_point_computational}  asserts that any \emph{randomized} SQ 
        algorithm estimating $M$ with 
        precision $\varepsilon$ and probability of error at most $\alpha$ in the fixed point model requires at least
        \[
            q
            =
            \Omega\left(
                \frac{
                    \frac{n}{f_{\min}}
                    \left(\frac{1}{\rch_{\min} \varepsilon}\right)^{d / 2} + \log(1 - \alpha)
                }{
                    \log\left(1 + \frac{1}{\tau}\right)
                }
            \right)
        \] 
        queries to a $\STAT(\tau)$ oracle.

    \item[{[Bounding ball model]}]
        \Cref{thm:SQ_lower_bound_ball_computational} states that any \emph{randomized} SQ 
        algorithm estimating $M$ with 
        precision $\varepsilon$ and probability of error at most $\alpha$ in the bounding ball model requires at least
        \[
            q
            =
            \Omega\left(
                \frac{
                    n \log\left( \frac{R}{\varepsilon} \right)
                    +
                    \frac{n}{f_{\min}}
                    \left(\frac{1}{\rch_{\min} \varepsilon}\right)^{d / 2} 
                    +
                    \log(1 - \alpha)
                }{
                    \log\left(1 + \frac{1}{\tau}\right)
                }
            \right)
        \] 
        queries to a $\STAT(\tau)$ oracle.
\end{description}
In words, this proves that for any fixed probability of error $\alpha < 1$, the above manifold estimators are optimal up to a $\operatorname{polylog}(n,1/\tau)$ factor.
Hence, randomized algorithms are not significantly more powerful than deterministic ones in these models.

Second, we establish informational lower bounds (\Cref{thm:SQ_lower_bound_point_informational,thm:SQ_lower_bound_ball_informational})
that advocate for the necessity of the requirements on $\varepsilon$ and $\tau$ made in the upper bounds.
More precisely, they assert that if we either have
$
    \varepsilon 
    =
    o
    \left(
    \rch_{\min}
    \left(
        \frac{\tau}{f_{\min}\rch_{\min}^d}
    \right)^{2 / d}
    \right)
$,
or
$\tau = \Omega(f_{\min} \rch_{\min}^d)$
and
$\varepsilon = o(\rch_{\min})$,
then no SQ algorithm (even randomized) can estimate manifolds in these models with precision $\varepsilon$, regardless of its number of queries.
Said otherwise, the adversarial tolerance parameter $\tau$ prevents the learner to have arbitrary precision $\varepsilon$ , with quantitative lower bound
$$
    \varepsilon 
    =
    \Omega
    \left(
    \rch_{\min}
    \min\set{
    1
    ,
    \left(
        \frac{\tau}{f_{\min}\rch_{\min}^d}
    \right)^{2 / d}
    }
    \right)
$$
no matter the computational power of the statistician.

The high level exposition of these lower bounds stands in \Cref{section:intro-lower-bound},
and all the necessary details and formal statements in \Cref{sec:SQ-manifold-estimation}.

\subsubsection{Manifold Propagation Algorithm}
\label{section:intro-manifold-propagation}
The core component of the upper bounds (\Cref{thm:SQ_upper_bound_point,thm:SQ_upper_bound_ball})
is a purely geometric algorithm, which we call \MPA, parametrized by an initialization method $\hat{x}_0$ and two routines $\hat{T}(\cdot)$ and $\hat{\pi}(\cdot)$ related to the manifold $M$:
\begin{description}[font=\it]
    \item[(Seed point)] This initialization method finds a point $\hat{x}_0$ that is $\eta$-close to $M$, for some $\eta \geq 0$.
    \item[(Tangent space)] Given a point $x_0$ that is $\eta$-close to $M$, this routine
        finds a linear subspace $\hat{T}(x_0)$ that is $(\sin \theta)$-close to the tangent
        space $T_{\pi_M(x_0)} M$ at its projection $\pi_M(x_0)$ (i.e. the closest point on $M$), for some $\theta \geq 0$.
    \item[(Projection)] Given a point $x_0$ that is $\Lambda$-close to $M$, this routine
        finds a point $\hat{\pi}(x_0)$ that is $\eta$-close to its projection $\pi_M(x_0)$, where $\Lambda \geq \eta$.
\end{description}

Then, given a tuning scale parameter $\Delta > 0$, \MPA iteratively explores the connected manifold $M$ starting from the seed point, and constructs a $\Omega(\Delta)$-sparse and $O(\Delta)$-dense point cloud $\mathcal{O}$ of points close to $M$ (see \Cref{thm:MPA_properties}).
This algorithm is reminiscent of breadth-first search and can be roughly described as follows:
\begin{enumerate}
    \item 
    Start with the seed point $\hat{x}_0$ in the vicinity of the manifold and initialize a queue of points to $\mathcal{Q} = \set{\hat{x}_0}$, and the final output point cloud to $\mathcal{O} = \emptyset$.
    \item 
    Pick a point $x_0 \in \mathcal{Q}$, remove $x_0$ from $\mathcal{Q}$ and add it to $\mathcal{O}$.
    Compute the approximate tangent space $\hat{T}(x_0)$ of $M$ at $x_0$.
    \item 
    Consider a covering $y_1$, \dots, $y_k$ of a sphere of radius $\Delta$ in $\hat{T}(x_0)$. To avoid backtracking,  remove all of these points that are too close to (the already-visited) points from $\mathcal{Q} \cup \mathcal{O}$.
    To account for the linear approximation made and the past estimation errors, ``project'' the remaining points $y_i$'s on $M$ with $\hat{\pi}(\cdot)$ and add them to $\mathcal{Q}$.
    \item If $\mathcal{Q}$ is empty, terminate and output $\mathcal{O}$. Otherwise go to Step $2$.
\end{enumerate}
Note the importance of the proximity check of Step 3, without which the algorithm would not terminate, even with infinitely precise routines.

Then, given such a point cloud $\mathcal{O}$ that forms a $O(\Delta)$-dense sample of $M$, existing algorithms from computational geometry (\Cref{thm:manifold_reconstruction_deterministic}) allow to reconstruct a manifold with precision $O(\Delta^2/\rch_{\min})$. 
This quadratic gain is made possible by the $\mathcal{C}^2$-smoothness of $M$~\cite{Boissonnat14,Aamari18}.
Hence, running \MPA with step size $\Delta = O(\sqrt{\rch_{M}\varepsilon})$ and applying \Cref{thm:manifold_reconstruction_deterministic} yields a dynamic method to estimate a manifold $M$ with reach $\rch_{\min} >0$.
Namely, to estimate $M$ with precision $\varepsilon$ in Hausdorff distance, it shows that it is sufficient to design routines for $M$ that have precision 
$\eta = O(\varepsilon)$ for the seed point,
$\sin \theta = O(\sqrt{\varepsilon/\rch_{\min}})$ for the tangent spaces,
and
$\eta = O(\varepsilon)$ for the projection.

To our knowledge, this provides the first computational geometric result involving the three routines above only.
It also does a single call to $\hat{x}_0$ for initialization, and provably no more than
$O(\Haus^d(M)/\Delta^{d}) = O(\Haus^d(M)/(\rch_{\min}\varepsilon)^{d/2})$ 
calls to the routines $\hat{\pi}(\cdot)$ and $\hat{T}(\cdot)$, where $\Haus^d(M)$ stands for the surface area of $M$. In particular, this number of calls is blind to the ambient dimension.
Overall, \MPA manages to have this possible ambient dependency supported by $\hat{x}_0$, $\hat{T}(\cdot)$ and $\hat{\pi}(\cdot)$ only.
See \Cref{sec:mpa} for the formal statements and further discussion.

\subsubsection{Statistical Query Algorithms for the Routines}
\label{section:intro-SQ-implementations}
In order to plug \MPA in the SQ framework, we then provide SQ implementations of its geometric routines.

\paragraph{Projection Routine.}
As mentioned above, the projection routine should allow to find a point $\hat{\pi}(x_0)$ that is $\eta$-close to $\pi_M(x_0)$, provided that $x_0$ is $\Lambda$-close to $M$.
To implement this using a $\STAT(\tau)$ oracle, we first note that the conditional expectation of $D$ in the neighborhood of $x_0$ has small bias for estimating $\pi_M(x_0)$.
That is, 
$\|\pi_M(x_0) - \E_{x \sim D}\left[ x \left| \B(x_0, h) \right. \right]\|$ 
is small for a properly tuned bandwidth $h$ (see \Cref{lem:local_conditional_mean_bias}). 
Hence, it is enough to estimate 
\[
    m_D(x_0, h) = \E_{x \sim D}\left[ x \left| \B(x_0, h) \right. \right] = 
    x_0
    +
    h \frac{
        \E_{x \sim D}\left[ \frac{(x-x_0)}{h} \cdot \indicator{\|x - x_0\| \le h} \right]
    }{
        D(\B(x_0, h))
    }
    ,
\]
where $D(\B(x_0, h))$ stands for the mass of the ball $\B(x_0, h)$ for distribution $D$.
Written as a ratio of two means, one easily sees how to estimate $m_D(x_0, h)$ in $\STAT(\tau)$, as we now explain.
The denominator only requires one query $r = \indicator{\B(x_0, h)}$ to the oracle. 
As about the numerator, which is a $n$-dimensional mean vector, the naive approach that would query each coordinate of its integrand separately
would end up with the dimension-dependent precision $\sqrt{n}\tau$ in Euclidean norm.
Instead, by using tight frames, an algorithm of Feldman, Guzm\'{a}n, and Vempala~\cite{Feldman17} allows to get precision $O(\tau)$ in only $2n$ queries.

\noindent
At the end of the day, we achieve precision 
$
\eta = 
O\left(
    \Lambda^2/\rch_{\min}
\right)
$
with $O(n)$ queries to $\STAT(\tau)$,
provided that: (see \Cref{thm:routine_projection})
$$
\Lambda 
=
\Omega\left(
\rch_{\min}
\left( \frac{\tau}{f_{\min}\rch_{\min}^d} \right)^{1/(d+1)} 
\right)
.
$$

\paragraph{Tangent Space Routine.}
Here, the tangent space routine should allow to estimate the tangent space $T_{\pi_M(x_0)} M$ of $M$ at $\pi_M(x_0)$, provided that $x_0$ is $\eta$-close to $M$.
Local Principal Component Analysis  proved fruitful in the sample framework~\cite{Aamari18}. Inspired by it, we notice that the local covariance matrix of $D$ at $x_0$
\[
\Sigma_D(x_0, h) = 
\E_{x \sim D}\left[
    \frac{(x - x_0)\transpose{(x - x_0)}}{h^2} \indicator{\|x - x_0\| \le h}
    \right]
\]
allows to approximate $T_{\pi_M(x_0)} M$.
That is, $\Sigma_D(x_0, h)$ is almost rank-$d$ and its first $d$ principal components span a $d$-plane close to $T_{\pi_M(x_0)} M$, for a properly tuned bandwidth $h$ (see \Cref{lem:covariance_decomposed}). 
Next, aiming at estimating $\Sigma_D(x_0, h) \in \R^{n\times n}$ in $\STAT(\tau)$, we note that seeing it as a mean vector of $\R^{n^2}$ and using tight frames~\cite{Feldman17} directly would cost $O(n^2)$ queries for precision $O(\tau)$, but would not exploit the low-rank (hence redundant) structure of $\Sigma_D(x_0,h)$.
Instead, we use matrix compression arguments~\cite{Fazel08} to present a new general SQ algorithm estimating low-rank mean matrices (\Cref{lem:mean_matrix_estimation}). 
This allows to mitigate the query complexity from $O(n^2)$ to $O(dn \log^6(n))$ while keeping precision $O(\tau)$ in Frobenius norm.

\noindent
At the end of the day, coming back to our initial problem of tangent space estimation in $\STAT(\tau)$, we achieve precision 
$
\sin \theta = 
O\left(
    \sqrt{\eta/\rch_{\min}}
\right)
$ 
with $O(dn \operatorname{polylog}(n))$ queries to $\STAT(\tau)$,
provided that: (see \Cref{thm:routine_tangent})
$$\eta 
=
\Omega\left(
\rch_{\min}
\left( \frac{\tau}{f_{\min}\rch_{\min}^d} \right)^{2/(d+1)} 
\right)
.
$$

\paragraph{Seed Point Detection.}

Finally, the seed point $\hat{x}_0$ should be $\eta$-close to $M$.
In the fixed point model, this method is trivial since $0 \in M$ by assumption.
In the bounding ball model, where it is only assumed that $M \subseteq \B(0, R)$, we proceed in two stages:
\begin{itemize}[leftmargin=*]
    \item 
    First, we use a divide and conquer strategy  over $\B(0,R)$ (\Cref{thm:routine_detection_raw_point}).
    The algorithm (\ABS) recurses down over a discretization of $\B(0,R)$ with unions of small balls, maintained to intersect $M = \supp(D)$ by querying their indicator functions, i.e. by checking that they have non-zero mass for $D$. It stops when there is only one ball left and outputs its center $\hat{x}_0^{raw}$.
    Unfortunately, the output point $\hat{x}_0^{raw}$ of this simple strategy might only be 
    $O( \rch_{\min} \left( \tau/(f_{\min}\rch_{\min}^d) \right)^{1/d}  )$-close to $M$, since this procedure does not use the $\mathcal{C}^2$-smoothness of $M$.

    \item 
    Starting from $\hat{x}_0^{raw}$, the algorithm applies iteratively the projection routine $\hat{\pi}(\cdot)$ described above.
    Since $\hat{x}_0^{raw}$ is already close to $M$, the procedure is guaranteed to improve precision quadratically at each step, and to output a point $\hat{x}_0$ that is $\eta$-close to $M$ after a logarithmic number of iterations.
\end{itemize}

\noindent
At the end of the day, we achieve precision $\eta$ with $O(n \log(R/\eta))$ queries to $\STAT(\tau)$,
provided that: (see \Cref{thm:routine_detection_seed_point})
$$
\eta 
=
\Omega\left(
\rch_{\min}
\left( \frac{\tau}{f_{\min}\rch_{\min}^d} \right)^{2/(d+1)} 
\right)
.
$$

\subsubsection{Lower Bound Techniques}
\label{section:intro-lower-bound}
The standard SQ lower bound techniques, such as those involving the so-called statistical dimension~\cite{Feldman17b}, appear to be ill-suited to our context.
In fact, the informational bounds on the statistical dimension naturally involve Kullback-Leibler or chi-squared divergences~\cite{Feldman17b,DKS17}, which are non-informative in non-dominated statistical models such as manifolds ones.
Indeed, two low-dimensional submanifolds $M_0,M_1 \subseteq \R^n$ that are not equal would differ in a non-trivial area, yielding distributions are not absolutely continuous with respect to one another. This results in infinite Kullback-Leibler and 
chi-squared divergences, and hence non-informative lower bounds.

To overcome this issue, we present a two-step technique --- involving a computational and an informational lower bound --- that does not involve these quantities.
The method applies in general metric spaces (see \Cref{sec:lower-bounds}), although we shall limit its exposition and application to the case of manifolds with Hausdorff distance in this introduction.

\paragraph{Computational Lower Bounds.}

We aim at deriving a lower bound the number $q(\varepsilon)$ of queries necessary to achieve precision $\varepsilon$.
For this, we observe that since a SQ algorithm should cope with \emph{any} adversarial oracle, it has to cope with the oracle that answers roundings $\answer = \tau \floor{\E_{x \sim D}[r(x)] / \tau}$ of the true queried mean to the nearest integer multiple of $\tau$ in $[-1,1]$.
As this oracle only generates $(1 + 1 / \tau)$ different answers, any SQ manifold
estimation algorithm that makes only $q$ queries to this discrete oracle produces at
most $\mathcal{N} \leq (1 + 1 / \tau)^q$ possible outputs $\hat{M}$.
Hence, if this estimator has precision $\varepsilon$, these outputs should form an 
$\varepsilon$-covering of the manifold class of interest $\mathcal{M}$.
Hence, deriving a lower bound on 
$q = q(\varepsilon) \ge \log \mathcal{N}(\varepsilon) / \log(1 + 1/\tau)$ 
boils down to deriving a lower bound on the $\varepsilon$-covering number of
$\mathcal{M}$ for the Hausdorff distance,
or equivalently, by duality, on its $\varepsilon$-packing number.
This argument also extends to randomized SQ algorithms (see \Cref{subsec:lower-bound-general-computational}).

We then explicitly build $\varepsilon$-packings of the manifold classes associated to the models, with a general combinatorial scheme (see \Cref{prop:packing_local_variations}) based on a single initial manifold $M_0$. The construction bumps $M_0$ at many different locations, with bumps of height $\Omega(\varepsilon)$ scattered in all the available $(n-d)$ codimensions of space, as shown in \Cref{figure:bump-combinatorial}.
\begin{figure}[!h]
    \centering
    \includegraphics[height=4em]{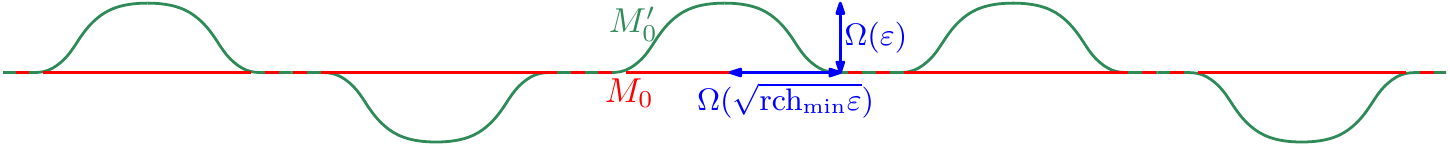}
    \caption{
        Construction of an $\varepsilon$-packing of the manifold class by local bumping. Here, in dimension $d = 1$ and codimension $n-d =1$, each bump has the two options ``upwards'' and ``downwards'' within each of the $N \gg 1$ locations, yielding $2^N$ $\varepsilon$-separated manifolds.
    }
    \label{figure:bump-combinatorial}
\end{figure}
Note that the $\mathcal{C}^2$-like assumption $\rch_M \geq \rch_{\min}$ forces to spread each of these bumps on a ball of radius $\Omega(\sqrt{\rch_{\min}\varepsilon})$.
Intuitively, in this construction, the larger the surface area $\Haus^d(M_0)$ of the base manifold $M_0$, the more room to include many bumps on it, and hence the stronger the lower bound. 
Hence, in the bounding ball model, we exhibit manifolds that can have large volume, while remaining
in $\B(0,R)$ and with reach $\rch_M \geq \rch_{\min}$. This is done by gluing next 
to each other linkable \emph{widgets} along a long path in the cubic grid in 
$\B(0,R)$ (see \Cref{subsec:building-large-volume-manifold}).
Overall, this construction allows to get the correct dependency in 
$1 / f_{\min}$ --- which plays the role of a maximal volume, see \Cref{subsubsec:implicit-bounds-on-parameters} --- in the bounds.

\begin{description}
    \item[{[Fixed point model]}] 
    If $0 \in M_0$, the above construction is possible while preserving the point $0 \in \R^n$ within all the bumped manifolds, yielding the lower bound (\Cref{thm:SQ_lower_bound_point_computational}).
    \item[{[Bounding ball model]}] 
    As in this model, no point is fixed, we build another type of $\varepsilon$-packing by translating a base manifold $M_0 \subseteq \B(0,R/2)$ in the ambient space by all the possible vectors of an $\varepsilon$-packing of the ambient ball $\B(0,R/2)$, which has size $\Omega((R/\varepsilon)^n)$. This yields the first term of the lower bound, while the second term follows as described above, by locally bumping a manifold $M_0 \subseteq \B(0,R)$ (\Cref{thm:SQ_lower_bound_ball_computational}).
\end{description}

\paragraph{Informational Lower Bounds.}
In addition, forgetting about the number of queries  SQ algorithms may do, they have a limited precision $\varepsilon$ given tolerance $\tau$.
Hence, aiming at lower bounding this best precision $\varepsilon(\tau)$ achievable in $\STAT(\tau)$, we notice that two distributions that are closer than $\tau/2$ in total variation distance allow an adversarial oracle to swap their respective answers, and hence make them --- and their supports --- indistinguishable using SQ’s.
This idea is at the core of standard lower bounds in the sample framework~\cite{Yu97}, and is formalized in the so-called \emph{Le Cam's lemma} for SQ's (\Cref{thm:lecam_SQ}).

\begin{figure}[!ht]
    \centering
    \includegraphics[height=4em]{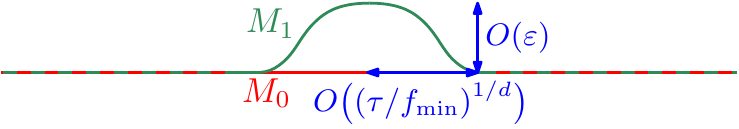}
    \caption{Indistinguishable manifolds for the informational lower bound.
    The measure on which they differ being of order $O(\tau)$, an adversarial $\STAT(\tau)$-oracle may fool the learner and force them to make an error proportional to the Hausdorff distance between them.
    }
    \label{figure:le-cams-lemma}
\end{figure}

To build such indistinguishable manifolds, we locally bump a base manifold $M_0$ at a single location.
As $M_0$ supports a $d$-dimensional measure with density lower bounded by $f_{\min}$, the largest possible width of such a bump is of order $\delta = \Omega((\tau/f_{\min})^{1/d})$, since the $d$-volume of this area multiplied by $f_{\min}$ (i.e. total variation) gets of order $\Omega(\tau)$.
Similarly as above, given the width $\delta$ of this bump, its largest possible height is $
\varepsilon 
= 
\Omega\left(
\delta^2/\rch_{\min} 
\right)
=
\Omega\left(
    \rch_{\min}
    \left(
        \frac{\tau}{f_{\min}\rch_{\min}^d}
    \right)^{2/d}
\right)
$, which provides the $\varepsilon$-separated manifolds indistinguishable in $\STAT(\tau)$.
This yields the announced informational lower bounds (\Cref{thm:SQ_lower_bound_point_informational,thm:SQ_lower_bound_ball_informational}), after picking manifolds $M_0$ in the fixed point and bounding model respectively that have volume of order $1/f_{\min}$, and the uniform distributions on them.

    \section{Preliminaries}
    \subsection{Statistical Query Framework}
\label{subsec:SQ-general-setting}

To begin with the formal presentation of this work, let us define the statistical query (SQ) framework used throughout the paper.
In the SQ framework, the algorithm (or learner) is allowed to access to the unknown underlying distribution $D$ over $\R^n$ via an adversarial oracle $\oracle$ who knows it perfectly.
The learner also has access to some prior information on $D$ via the knowledge of a \emph{model} $\model$, i.e. a set of probability distributions over $\R^n$ assumed to contain $D$.
For a measurable function $r :\R^n \to [-1, 1]$, called \emph{query}, the oracle answers the mean value $\E_{x \sim D} [r(x)]$ of $r$ with respect to $D$, up to some adversarial error $\tau$ known to both parties.

Let $\mathfrak{F}$ denote the set of Borel-measurable functions from $\R^n$ to $[-1, 1]$.
An \emph{oracle} provides answers $\answer : \mathfrak{F} \to \R$. 
Given a query $r \in \mathfrak{F}$ and a tolerance parameter $\tau \geq 0$, we say that $\oracle$ is a \emph{valid} $\STAT(\tau)$ oracle for the distribution $D$ over $\R^n$ if its answers are such that 
$\left| \answer(r) - \E_{x \sim D} [r(x)]\right| \leq \tau$.
Let us insist on the fact that the oracle is adversarial, meaning that it can answer \emph{any} such values. Its adversarial strategy can also adapt to the previous queries made by the learner. See formal \Cref{def:deterministic-SQ}).

We now describe the estimation framework using SQ's.
Given a metric space $(\metric, \dist)$, a target precision $\varepsilon>0$ and a parameter of interest $\theta : \model \to \metric$, the learner aims at estimating $\theta(D)$ with precision $\varepsilon$ for the metric $\dist$ with a minimum number of queries $r:\R^n \to [-1,1]$, uniformly over the model $\model$.
The present framework is a particular case of the \emph{search problems} considered in~\cite{Feldman17b}, where a metric on $\Theta$ is not necessarily available. 

\begin{remark}
Manifold estimation will naturally bring us to consider the support $\theta(D) = \supp(D) \subseteq \R^n$ as the parameter of interest, and the Hausdorff distance $\dist = \dHaus$.
However, we present the broader setting of a general metric space $(\metric, \dist)$ of estimation, to also cover the intermediate results required by the SQ versions of the routines of \MPA (see \Cref{sec:SQ-routines}). Namely, it will involve  the Euclidean space $(\R^n,\norm{\cdot})$ for point estimation, and the matrix spaces $(\R^{n\times n},\normF{\cdot})$ and $(\R^{n\times n},\normop{\cdot})$.
\end{remark}

This paper considers interactive SQ algorithms, meaning that the learner is allowed to interact with the oracle dynamically and does not have to send all their queries at once before any answer.
That is, query functions are allowed to depend arbitrarily on the previous answers given by the oracle. More formally, we give the following \Cref{def:deterministic-SQ}.
\begin{definition}[Deterministic Statistical Query Estimation Framework]
\label{def:deterministic-SQ}
~

\begin{itemize}[leftmargin=*]
    \item
    A \emph{statistical query algorithm making $q$ queries} is a tuple
    $\Algorithm{A} = (r_1, \dots, r_q, \hat{\theta})$,
    where
    \[
    r_1 \in \mathfrak{F},
    ~
    r_2 : \R \to \mathfrak{F},
    ~
    \cdots,
    ~
    r_q : \R^{q - 1} \to \mathfrak{F},
    \text{ and } 
    \hat{\theta} : \R^q \to \metric
    . 
    \]    
    
    \item
    Let  $\answer_1 : \mathfrak{F} \to \R$, $\answer_2 : \mathfrak{F}^2 \to \R$, \dots,  $\answer_q : \mathfrak{F}^q \to \R$. 
    We say that $\oracle = (\answer_1, \dots, \answer_q)$ is a \emph{$\STAT(\tau)$ oracle for SQ algorithms making $q$ queries on the distribution $D$},
    if for all $r_1 \in \mathfrak{F}$, $r_2 : \R \to \mathfrak{F}$, \dots, $r_q : \R^{q - 1} \to \mathfrak{F}$,
    \begin{align*}
        \bigl|\answer_1(r_1) - \E_{x \sim D} \bigl[r_1(x) \bigr]\bigr| 
        &\le \tau,
        \\
        \bigl|
            \answer_2(r_1, r_2) 
            - 
            \E_{x \sim D} \bigl[r_2(\answer_1(r_1))(x)\bigr]
        \bigr| 
        &\le \tau,
        \\
        &\vdots 
        \\
        \bigl|
            \answer_q(r_1, \dots, r_q) 
            - 
            \E_{x \sim D} 
            \bigl[
                r_q(\answer_1(r_1), \dots, \answer_{q - 1}(r_1))(x)
            \bigr]
        \bigr| 
        &\le \tau
        .
    \end{align*}
    
    \item
    The \emph{output} of $\Algorithm{A} = (r_1, \dots, r_q, \hat{\theta})$ when it interacts with the oracle $\oracle = (\answer_1,\ldots,\answer_q)$ is defined by
    \[
        \hat{\theta}(r_1, \dots, r_q; \oracle) 
        = 
        \hat{\theta}(\answer_1(r_1), \dots, \answer_q(r_1, \dots, r_q))
        .
    \]
    
    \item
    Given a model $\model$ over $\R^n$ (i.e. a set of probability distributions),
    we say that a SQ algorithm $\Algorithm{A}$ is a \emph{$\STAT(\tau)$ estimator 
    with precision $\varepsilon$ for the statistical estimation problem 
    $\functional : \model \to \metric$} if for all $D \in \model$ and all valid 
    $\STAT(\tau)$ oracle $\oracle$ for $D$,
    \[
        \dist\big(\functional(D), \hat{\theta}(r_1, \dots, r_q; \oracle)\big) 
        \le \varepsilon.
    \]
    \end{itemize}

\end{definition}

Beyond deterministic algorithms, one may allow the learner to access randomness, and to fail at estimating the parameter of interest with some controlled probability $\alpha < 1$~\cite{Kearns:learning-from-SQs,Feldman17b}. This gives rise to the following \Cref{def:randomized-SQ}.

\begin{definition}[Randomized SQ Estimation Framework]
\label{def:randomized-SQ}
~

\begin{itemize}[leftmargin=*]
    \item
    A \emph{randomized SQ algorithm} $\RandmizedAlgorithm{A}$ is a distribution over SQ algorithms.
    
    \item
    Given a model $\model$ over $\R^n$, we say that a randomized SQ algorithm $\RandmizedAlgorithm{A}$ is a 
    \emph{
    $\STAT(\tau)$ algorithm with precision $\varepsilon$ and probability of failure (or error) $\alpha$ over $\model$},
    if for all distribution $D \in \model$ and all valid $\STAT(\tau)$ oracle $\oracle$ for $D$,
    \[ 
        \Pr_{\Algorithm{A} = (r_1, \dots, r_q, \hat{\theta}) \sim \RandmizedAlgorithm{A}} 
        \Big[
            \dist\big(\functional(D), 
                \hat{\theta}(r_1, \dots, r_q; \oracle)\big) \le \varepsilon
        \Big]
        \ge
        1 - \alpha
        .
    \]
    \end{itemize}
\end{definition}

Naturally, any SQ algorithm making at most $q$ queries can be emulated into a PAC algorithm by considering empirical averages. 
Indeed, given independent and identically distributed data $\{X_1,\ldots,X_s\}$ with common distribution $D$, Hoeffding's inequality yields that the oracle which answers
$\answer = \frac{1}{s} \sum_{i=1}^s r(X_i)
$ to any query $r : \R^n \to [-1,1]$
satisfies 
$
|\answer - \E_P[r]| \leq \tau
$
with probability 
$
\geq 
1 - 2 e^{-s\tau^2/2}
$
.
Therefore, if we can estimate $\functional : \model \to \metric$ ``efficiently'' in $\STAT(\tau)$, so do we in the PAC (sample) framework, with sample complexity $
s \leq 
\frac{q \sqrt{\log(q/\alpha)}}{\tau^2}
.
$
.

A priori, randomized algorithms may require significantly less queries than deterministic ones to achieve an estimation task~\cite{Feldman17b}.
However, we will show that this phenomenon does not occur for manifold estimation, as soon as the probability of error $\alpha$ is not considerably close to $1$. For this, we will exhibit upper bounds using \emph{deterministic} algorithms, and matching lower bounds on \emph{randomized} algorithms. See \Cref{sec:SQ-manifold-estimation} for the precise statements.

    \begin{remark}[About Noise]
        \label{rk:clutter_noise}
        The statistical models considered in this work (\Cref{def:manifold_model_distribution}) are noise-free, in the sense that the $\STAT(\tau)$ oracle --- although adversarial --- has access to the exact underlying distribution $D$.
        Beyond such an exact model, a noise model that is particularly popular in the manifold estimation literature is the so-called \emph{clutter noise} model~\cite{Aamari18,Genovese12b}.
        Given a nuisance parameter $\beta \in (0,1]$ and a fixed noise distribution $Q_0$ over $\R^n$ --- usually 
        the ambient uniform distribution over a compact set of $\R^n$ ---,  the associated clutter noise model is the set of mixtures
        \[
            \model^\mathrm{(clutter)}_{\beta, Q_0}
            =
            \set{
                \beta D + (1 - \beta) Q_0
                ,
                D \in \model
            }
        .
        \]
        In particular,
        $\model^\mathrm{(clutter)}_{\beta = 1, Q_0}$
        coincides with $\model$.
        For $\beta < 1$, in the independent and identically distributed (i.i.d.) sampling framework, it yields samples with a proportion of approximately $\beta$ informative points and $(1-\beta)$ of non-informative clutter points.
        
        As mentioned in the introduction, this type of noise model gave rise to subtle iterative decluttering procedures that rely heavily on the properties of $Q_0$ (i.e. being ambient uniform)~\cite{Aamari18}. This noise-specificity is also a limitation of the (intractable) estimator of~\cite{Genovese12b}, which would also fail with clutter distributions $Q_0$ other than uniform.
        In contrast, in the statistical query framework, if $\beta$ and $Q_0$ are known, then estimation techniques need not be much more elaborate for the case $\beta < 1$ than for $\beta =1$.
        Indeed, the statistical query complexity of an estimation problem in $\STAT(\tau)$ over $\model^\mathrm{(clutter)}_{\beta, Q_0}$ coincides with its counterpart in $\STAT(\tau/\beta)$ over $\model$.
        
        The correspondence is explicit: algorithms designed for $\beta = 1$ naturally generalize for $\beta < 1$ and vice-versa.
        To see this, let $r:\R^n \to [-1,1]$ be a query to a $\STAT(\tau)$ oracle with true distribution $D^\mathrm{(clutter)} = \beta D + (1 - \beta) Q_0$. Say that the learner gets answer $\answer \in \R$, then the function returning $\answer' = (\answer - (1 - \beta) \E_{Q_0}[r]) / \beta$, which can be computed by the learner who knows $Q_0$ and $\beta$, clearly simulates a valid $\STAT(\tau/\beta)$ oracle to the query for the distribution $D$.
        For the same reason, conversely, any $\STAT(\tau/\beta)$-algorithm over $\model$ yields a $\STAT(\tau)$-algorithm in $\model^\mathrm{(clutter)}_{\beta, Q_0}$.
        
        This shows that the statistical query complexity in $\STAT(\tau)$ over $\model$ coincides with its counterpart in $\STAT(\beta \tau)$ over $ \mathcal{D}^\mathrm{(clutter)}_{\beta, Q_0}$ for any fixed $0 < \beta \leq 1$ and clutter distribution $Q_0$.
        Conversely, any SQ algorithm in the clutter-free model can easily be made robust to clutter noise, as soon as the clutter distribution $Q_0$ and noise level $0 < \beta \leq 1$ are known to the learner.
    \end{remark}

As a first illustration of a non-trivial SQ estimation problem, let us describe that of the mean $\E_{x \sim D}[F(x)]$ of a bounded vector-valued function ${F:\R^n\to\R^k}$, where $\norm{F(x)}\leq 1$, see~\cite{Feldman17}. Here and below, $\norm{\cdot}$ stands for the Euclidean norm.
This example will be central in the construction of our SQ geometric routines (\Cref{thm:routine_projection,thm:routine_detection_seed_point,thm:routine_tangent}), and hence for the final SQ manifold estimation algorithms (\Cref{thm:SQ_upper_bound_ball,thm:SQ_upper_bound_point}).

One query to a $\STAT(\tau)$ oracle allows to compute the mean value of a function ranging in $[-1,1]$ with precision $\tau$.
Hence, the $k$ coordinate functions $r_i(x) = \inner{e_i}{F(x)} \in [-1,1]$ are valid queries, and allow to estimate each coordinate of $\E_{x \sim D}[F(x)]$ with precision $\tau$.
This naive strategy results in a deterministic SQ algorithm making $k$ queries to $\STAT(\tau)$ and precision $\tau$ for the sup-norm, but only $\sqrt{k}\tau$ for the Euclidean norm.
The following \Cref{lem:mean_vector_estimation} shows that the learner may ask $2k$ queries to a $\STAT(\tau)$ oracle, while still preserving a precision of order $\tau$ for the Euclidean norm.
The strategy consists in querying $F$ in a suitable frame of $\R^k$~\cite[Theorem~3.9]{Feldman17}, i.e. a redundant family of vectors of $\R^k$ which avoids the extra $\sqrt{k}$ factor of the non-redundant coordinate-wise queries.

\begin{lemma}
\label{lem:mean_vector_estimation}
    Let $D$ be a Borel probability distribution on $\R^n$, and ${F:\R^n\to\R^k}$ be such that $\norm{F(x)} \leq 1$ for all $x \in \R^n$.
    
    There exists a deterministic statistical query algorithm making $2k$ queries to a $\STAT(\tau)$ 
    oracle, and that estimates $\E_{x \sim D} \left[F(x)\right]$ with precision $C \tau$ for the
    Euclidean norm, where $C > 0$ is a universal constant.
\end{lemma}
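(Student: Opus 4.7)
The plan is to use the tight-frame strategy suggested in the preamble. Any vector $v \in \R^k$ with $\|v\|_2 \leq 1$ yields a valid query $r_v(x) = \langle v, F(x)\rangle \in [-1, 1]$, since $|r_v(x)| \leq \|v\|\,\|F(x)\| \leq 1$ by Cauchy--Schwarz using $\|F(x)\|_2 \leq 1$. Each such query returns a scalar approximation of $\langle v, \E_{x\sim D}[F(x)]\rangle$ up to error $\tau$, so the problem reduces to choosing $2k$ well-distributed directions from which the full vector $\E_{x\sim D}[F(x)] \in \R^k$ can be reconstructed with Euclidean error of order $\tau$.

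Concretely, I would fix a tight frame $v_1, \ldots, v_{2k} \in \R^k$ of unit-norm vectors whose frame operator $S = \sum_{j=1}^{2k} v_j v_j^\top$ is a scalar multiple of the identity, $S = c\,I_k$ --- a simple example being the union of two orthonormal bases, for which $c = 2$. Writing $V \in \R^{2k \times k}$ for the matrix with rows $v_j$, I would submit to the $\STAT(\tau)$ oracle the $2k$ queries $r_j(x) = \langle v_j, F(x)\rangle$, collecting answers $a_j$ with $|a_j - \E_{x\sim D}[r_j(x)]| \leq \tau$. Denoting $\epsilon_j = a_j - \E_{x\sim D}[r_j(x)]$, the natural reconstruction
\[
    \hat\mu \;=\; \frac{1}{c} \sum_{j=1}^{2k} a_j\, v_j \;=\; S^{-1} V^\top a
\]
satisfies $\hat\mu - \E_{x\sim D}[F(x)] = S^{-1} V^\top \epsilon$ via the frame inversion formula $\E_{x\sim D}[F(x)] = S^{-1}\sum_j \E_{x\sim D}[r_j(x)]\,v_j$. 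The analysis then reduces to bounding $\|V^\top \epsilon\|_2$ given $\|\epsilon\|_\infty \leq \tau$.

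I expect this last step to be the main obstacle. A crude Cauchy--Schwarz estimate $\|\epsilon\|_2 \leq \sqrt{2k}\,\|\epsilon\|_\infty$ combined with $\|V^\top\|_{\mathrm{op}} = \sqrt{c}$ only yields $\|\hat\mu - \E_{x\sim D}[F(x)]\|_2 = O(\sqrt{k/c}\,\tau)$, losing the universal-constant factor by a $\sqrt{k}$ term. Obtaining the claimed $O(\tau)$ precision requires leveraging the $\ell_\infty$ (not $\ell_2$) control on $\epsilon$ with a more carefully engineered frame --- of the kind produced by Feldman, Guzm\'{a}n, and Vempala~\cite{Feldman17} via Kashin-type balanced decompositions --- for which the operator norm $\|V^\top\|_{\ell_\infty \to \ell_2}$ of the reconstruction map is bounded by a universal constant. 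Plugging this refined estimate into the identity above yields $\|\hat\mu - \E_{x\sim D}[F(x)]\|_2 \leq C\tau$ with $C > 0$ absolute, as required.
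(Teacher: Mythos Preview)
Your proposal is correct and follows the same underlying strategy as the paper --- both rest on the tight-frame result of Feldman--Guzm\'an--Vempala~\cite{Feldman17}. The paper's proof, however, is a one-line reduction: it observes that a $\STAT(\tau)$ oracle for $D$ simulates one for the pushforward $D' = F_\sharp D$ on $\R^k$ (via $r \mapsto r \circ F$), and then cites \cite[Theorem~3.9]{Feldman17} directly for $D'$. You instead compose $F$ into the queries by hand and sketch the content of that theorem (the Kashin-type frame controlling $\|V^\top\|_{\ell_\infty \to \ell_2}$), which is more informative but ultimately the same argument.
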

        
\begin{proof}[\proofof \Cref{lem:mean_vector_estimation}]
    Let us denote by $D'$ the pushforward distribution of $D$ by $F$. As for all measurable function 
    $r : \R^k \to [-1, 1]$,
    \[
        \E_{x' \sim D'}[r(x')]
        =
        \E_{x \sim D}[r(F(x))],
    \]
    any valid $\STAT(\tau)$ oracle for $D$ simulates a valid $\STAT(\tau)$ oracle for $D'$. Hence, 
    applying~\cite[Theorem 3.9]{Feldman17} to $D'$, we get the desired 
    result.
\end{proof}

\subsection{Manifold Regularity and Distributional Assumptions}

\subsubsection{General Notation and Differential Geometry}
From now on, $n \ge 2$ is referred to as the ambient dimension and $\mathbb{R}^n$
is endowed with the Euclidean inner product $\left\langle \cdot, \cdot \right\rangle$
and the associated norm $\norm{\cdot}$.
The closed Euclidean ball of center $x$ and radius $r$ is denoted by $\B(x, r)$.
The volume of the $d$-dimensional unit ball $\B_d(0,1)$ is denoted by $\omega_d$, and that of the $d$-dimensional unit sphere $\Sphere^{d}(0,1) \subseteq \R^{d+1}$ by $\sigma_d$.

We will consider compact connected submanifolds $M$ of $\R^n$, without boundary,
and with dimension $d < n$~\cite{DoCarmo92}. 
Given a point $p \in M$, the tangent space of $M$ at $p$, denoted by $T_p M$,
is the $d$-dimensional linear subspace of $\R^n$ spanned by the velocity vectors
at $p$ of $\mathcal{C}^1$ curves of $M$.
The Grassmannian $\Gr{n}{d}$ is the set of all the $d$-dimensional linear subspaces of $\R^n$, so that $T_p M \in \Gr{n}{d}$ for all $p \in M$.
In addition to the Euclidean structure induced by $\R^n$ on $M \subseteq \R^n$,
we also endow $M$ with its intrinsic geodesic distance $\dd_M$, with $\B_M(p, s)$
denoting the closed geodesic ball of center $p\in M$ and of radius $s$. 
More precisely, given a $\mathcal{C}^1$ curve $c: [a,b] \rightarrow M$,
the length of $c$ is defined as $\Length(c)= \int_a^b \norm{c'(t)}dt$. Given 
$p, q \in M$, there always exists a path $\gamma_{p \rightarrow q}$ of minimal
length joining $p$ and $q$~\cite{DoCarmo92}. Such a curve 
$\gamma_{p \rightarrow q}$ is called geodesic, and the geodesic distance between 
$p$ and $q$ is given by 
$\dd_M(p,q) = \Length(\gamma_{p \rightarrow q})$~\cite[Chapter 2]{Burago01}. 
In particular, $(M, \dd_M)$ is a length space~\cite[Remark 5.1.7]{Burago01}. 
A geodesic $\gamma$ such that $\norm{\gamma'(t)} = 1$ for all $t$ is 
called arc-length parametrized. Unless stated otherwise, we always assume that
geodesics are parametrized by arc-length. For all $p \in M$ and all unit vectors 
$v \in T_p M$, we denote by $\gamma_{p, v}$ the unique arc-length parametrized geodesic
of $M$ such that $\gamma_{p, v}(0)=p$ and $\gamma'_{p, v}(0) = v$~\cite{DoCarmo92};
the exponential map is defined as $\exp_p^M(vt) = \gamma_{p, v}(t)$. Note that from
the compactness of $M$, $\exp_p^M: T_p M \rightarrow M$ is defined globally on 
$T_p M$~\cite[Theorem 2.5.28]{Burago01}.

\subsubsection{Geometric and Statistical Models}
\label{subsec:geom-stat-model}
Let us detail the geometric assumptions we will make throughout. Besides the differential structure given by low-dimensional submanifolds, the core regularity assumption of this work will be encoded by the \emph{reach}, a central quantity in the statistical analysis of geometric structures (see~\cite{Aamari19} and references therein), and that we now describe.

To this aim, let us define the \emph{medial axis} $\Med(K)$ of a closed subset $K \subseteq \R^n$ as the set of ambient points that have at least two nearest neighbors on $K$. 
Namely, if we let $\dd(z,K) = \inf_{p \in K} \norm{p-z}$ denote the distance
function to $K$,
\begin{align*}
    \Med(K) =
    \set{
        z \in \R^n
        ~|~
        \exists p\neq q \in K, \norm{p-z} = \norm{q-z} = \dd(z,K)
    }.
\end{align*}
By definition of the medial axis, the \emph{metric projection onto $K$}, given by
\begin{align*}
    \pi_K(z) = \argmin_{p \in K} \norm{p - z}
\end{align*}
is well defined exactly on $\R^n \setminus \Med(K)$. 
The reach of $K$ is then defined as the minimal distance from $K$ to $\Med(K)$.
\begin{definition}[{\cite[Theorem 4.18]{Federer59}}]
\label{def:reach}
    The \emph{reach} of a closed subset $K \subseteq \R^n$ is defined by
    \begin{align*}
        \rch_K
        &=
        \inf_{z \in \Med(K)} \dd(z,K )
        .
    \end{align*}
    Furthermore, if $K = M \subseteq \R^n$ is a $\mathcal{C}^2$-submanifold, then its reach can be written as
    \begin{align*}
        \rch_M
        &=
        \inf_{p \neq q \in M}
        \frac{\norm{q-p}^2}{\dd(q-p,T_p M)}
        .
    \end{align*}
\end{definition}
The second formulation of \Cref{def:reach} assesses how a large positive reach testifies of a quantitative uniform regularity of $M \subseteq \R^n$. 
Indeed, the submanifold $M$ being $\mathcal{C}^2$-smooth essentially means that locally, $M$ deviates at most quadratically from its tangent spaces. Adding the condition $\rch_M \geq \rch_{\min} > 0$ yields the quantitative bound $\dd(q-p,T_p M) \leq \norm{q-p}^2/(2\rch_{\min})$ for all $p,q \in M$. In particular, this condition bounds curvature and intrinsic metric properties (see \Cref{lem:geodesic_comparison}).
We shall refer the interested reader to~\cite{Aamari19} for further discussions on the reach.

\begin{definition}
\label{def:manifold_model}
    We let $\manifolds{n}{d}{\rch_{\min}}$ denote the class of compact connected $d$-dimensional
    $\mathcal{C}^2$-submanifolds $M$ of $\R^n$, without boundary, that have 
    reach bounded below by $\rch_M \ge \rch_{\min}$.
\end{definition}

Among the key properties shared by submanifolds $M$ with reach bounded below $\rch_M \geq \rch_{\min}$ are a quantitative equivalence between the Euclidean and geodesic distances, and the fact that their curvature is uniformly bounded by $1/\rch_{\min}$.

\begin{lemma}
\label{lem:geodesic_comparison}
        Let $M \in \manifolds{n}{d}{\rch_{\min}}$ and $p, q \in M$. 
        If $\norm{q - p} < 2 \rch_{\min}$, then
        \begin{align*}
            \norm{q-p}
            \le
            \dd_M(p,q)
            \le
            2 \rch_{\min}
            \arcsin{\left(\frac{\norm{q-p}}{2\rch_{\min}}\right)}
            .
        \end{align*}
        In particular, for all $r < 2 \rch_{\min}$,
        \begin{align*}
            \B\left( 
                p, 
                r\left(
                    1 - \left( r / \rch_{\min} \right)^2/24 
               \right)
            \right) \cap M
            &\subseteq
            \B_M(p, r) 
            \\
            &\subseteq 
            \B(p, r) \cap M 
            \\
            &\subseteq 
            \B_M
            \left(
                p,
                r\left(
                    1 + \left( r / \rch_{\min} \right)^2/4 
                \right)
            \right).
        \end{align*}
        Furthermore, if $\gamma : [a, b] \to M$ is an arc-length 
        parametrized geodesic, then for all $t \in [a, b]$, 
        $\norm{\gamma''(t)} \le 1 / \rch_{\min}$.
    \end{lemma}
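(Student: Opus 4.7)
The plan is to first establish the pointwise curvature bound $\norm{\gamma''(t)} \le 1/\rch_{\min}$ for arc-length geodesics on $M$, then leverage it to prove the two main chord--arc inequalities, and finally extract the three ball inclusions via elementary trigonometric expansions.

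For the curvature bound, I will use that for an arc-length geodesic $\gamma : [a,b] \to M$, differentiating $\norm{\gamma'}^2 \equiv 1$ gives $\gamma'' \perp \gamma'$, while the geodesic equation on a submanifold further forces $\gamma''(t) \perp T_{\gamma(t)} M$. Taylor expanding $\gamma(t+h) - \gamma(t) = h \gamma'(t) + (h^2/2) \gamma''(t) + o(h^2)$, the tangential leading term $h\gamma'(t)$ lies in $T_{\gamma(t)} M$ and is erased by the orthogonal projection onto $T_{\gamma(t)} M$, so that $\dd(\gamma(t+h) - \gamma(t),\, T_{\gamma(t)} M) = (h^2/2) \norm{\gamma''(t)} + o(h^2)$, while $\norm{\gamma(t+h) - \gamma(t)}^2 = h^2 + o(h^2)$. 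Passing to the limit $h \to 0$ in the reach characterization of \Cref{def:reach} then yields $\norm{\gamma''(t)} \le 1/\rch_M \le 1/\rch_{\min}$.

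The lower inequality $\norm{q-p} \le \dd_M(p,q)$ is immediate from the triangle inequality: for any $\mathcal{C}^1$ curve $\gamma : [0,L] \to M$ from $p$ to $q$, $\norm{q-p} = \norm{\int_0^L \gamma'(t)\,dt} \le \Length(\gamma)$. For the upper inequality, I will take $\gamma$ a unit-speed minimizing geodesic of length $L = \dd_M(p,q)$ and view $u \mapsto \gamma'(u)$ as a curve on the unit sphere. Its ambient speed $\norm{\gamma''(u)}$ is bounded by $1/\rch_{\min}$, so its spherical arc-length, and hence spherical distance, between times $t$ and $L/2$ is at most $|t-L/2|/\rch_{\min}$, giving $\inner{\gamma'(t)}{\gamma'(L/2)} \ge \cos(|t-L/2|/\rch_{\min})$. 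Projecting the chord $q-p = \int_0^L \gamma'(t)\,dt$ onto $\gamma'(L/2)$ then yields
\[
    \norm{q-p} \ge \inner{q-p}{\gamma'(L/2)} = \int_0^L \inner{\gamma'(t)}{\gamma'(L/2)}\,dt \ge \int_0^L \cos\!\Bigl(\tfrac{|t-L/2|}{\rch_{\min}}\Bigr)dt = 2\rch_{\min}\sin\!\Bigl(\tfrac{L}{2\rch_{\min}}\Bigr),
\]
from which the target $\dd_M(p,q) \le 2\rch_{\min}\arcsin(\norm{q-p}/(2\rch_{\min}))$ follows by inverting $\sin$. The delicate point will be checking that the assumption $\norm{q-p} < 2\rch_{\min}$, combined with the minimality of $\gamma$, indeed forces $L \le \pi\rch_{\min}$, the range where the inversion is legitimate.

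For the ball inclusions, the middle one is just a reformulation of $\norm{q-p} \le \dd_M(p,q)$. For the right one, the elementary bound $\arcsin(u) \le u(1+u^2)$ on $[0,1)$ applied with $u = \norm{q-p}/(2\rch_{\min})$ gives, whenever $\norm{q-p} \le r < 2\rch_{\min}$,
\[
    \dd_M(p,q) \le \norm{q-p}\bigl(1 + \norm{q-p}^2/(4\rch_{\min}^2)\bigr) \le r\bigl(1 + (r/\rch_{\min})^2/4\bigr).
\]
For the left inclusion, the bound $\sin(u) \ge u - u^3/6$ for $u \ge 0$ gives $2\rch_{\min}\sin(r/(2\rch_{\min})) \ge r(1 - (r/\rch_{\min})^2/24)$; so any $q \in M$ with $\norm{q-p} \le r(1 - (r/\rch_{\min})^2/24)$ satisfies $\arcsin(\norm{q-p}/(2\rch_{\min})) \le r/(2\rch_{\min})$ by monotonicity of $\arcsin$, and hence $\dd_M(p,q) \le r$ by the upper chord--arc bound.
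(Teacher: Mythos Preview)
Your derivation of $\norm{\gamma''(t)}\le 1/\rch_{\min}$ from the reach formula via a second-order Taylor expansion of $\gamma$ is correct, and your treatment of the three ball inclusions using $\sin s \ge s(1-s^2/6)$ and $\arcsin u \le u(1+u^2)$ is exactly what the paper does. The paper does not actually prove the arcsin upper bound or the curvature bound in-text --- it simply cites \cite[Lemma~3]{Boissonnat19} and \cite[Proposition~6.1]{Niyogi08} respectively --- so on those two items your write-up is strictly more self-contained than the paper's.

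That said, there is a real gap in your argument for the arcsin bound. The projection--integration step
\[
\norm{q-p} \ \ge\ \int_0^L \inner{\gamma'(t)}{\gamma'(L/2)}\,dt \ \ge\ \int_0^L \cos\!\Bigl(\tfrac{|t-L/2|}{\rch_{\min}}\Bigr)\,dt \ =\ 2\rch_{\min}\sin\!\Bigl(\tfrac{L}{2\rch_{\min}}\Bigr)
\]
and the subsequent inversion of $\sin$ both require $L\le \pi\rch_{\min}$. You flag this as ``delicate'' and propose to deduce it from $\norm{q-p}<2\rch_{\min}$ together with minimality of $\gamma$, but neither ingredient delivers it: applying your own inequality on the sub-arc $[0,\pi\rch_{\min}]$ only yields $\norm{\gamma(\pi\rch_{\min})-p}\ge 2\rch_{\min}$, which says nothing about $\norm{\gamma(L)-p}=\norm{q-p}$; and minimality of $\gamma$ provides no evident obstruction to $L>\pi\rch_{\min}$ while the chord remains short (indeed, for $L$ slightly above $\pi\rch_{\min}$ your displayed lower bound $2\rch_{\min}\sin(L/(2\rch_{\min}))$ drops back below $2\rch_{\min}$, so no contradiction arises). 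Ruling out $L>\pi\rch_{\min}$ under the sole hypothesis $\norm{q-p}<2\rch_{\min}$ is precisely the substantive content of the cited result. One standard route uses the reach a second time, at $\gamma(s)$ rather than at $p$: since $\gamma''(s)\perp T_{\gamma(s)}M$ and $p\in M$, the tubular-neighborhood property gives $|\inner{\gamma(s)-p}{\gamma''(s)}|\le \norm{\gamma(s)-p}^2/(2\rch_{\min}^2)$, whence $g(s)=\norm{\gamma(s)-p}^2$ satisfies $g''+g/\rch_{\min}^2\ge 2$ with $g(0)=g'(0)=0$; a Sturm-type comparison and an additional continuity/monotonicity argument then exclude the long-geodesic scenario. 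This is a genuinely different mechanism from the single projection onto $\gamma'(L/2)$ that you sketch.
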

    
    \begin{proof}[\proofof \Cref{lem:geodesic_comparison}]
        We clearly have $\norm{q - p} \le \dd_M(p,q)$, and the upper bound comes 
        from~\cite[Lemma 3]{Boissonnat19}. The ball inclusions then follow from 
        the elementary bounds $\sin s \ge s(1-s^2/6)$ for $s\ge 0$, and 
        $\arcsin u \le u(1+u^2)$ for $0 \le u \le 1$.
        The last claim is a rephrasing of~\cite[Proposition~6.1]{Niyogi08}.
    \end{proof}
    
    These estimates will be used to compare, in a quantitative way, the (curved) geometry of $M$ with that of the (flat) Euclidean $d$-dimensional space.
    Finally, we present the following uniform estimate on the massivity of submanifolds $M \in \manifolds{n}{d}{\rch_{\min}}$, which we will use below to show that \MPA terminates.    
    For $\delta > 0$, the $\delta$-packing number $\PK_M(\delta)$ of $M \subseteq \R^n$ is the maximal cardinal $k$ of a set of points $\set{p_i}_{1\leq i \leq k} \subseteq M$ such that $\B(p_i,\delta) \cap \B(p_j,\delta) = \emptyset$ for all $i \neq j$ (i.e. $\norm{p_i - p_i} > 2\delta$) (see \Cref{subsec:euclidean-packing-and-covering} for more details).
    \begin{lemma}
    \label{lem:packing_number}
        Let $M \in \manifolds{n}{d}{\rch_{\min}}$. 
        Then for all $\delta \leq \rch_{\min}/8$, 
        \[
            \PK_M(\delta) \le \dfrac{\Haus^d(M)}{\omega_d(\delta/4)^d}
        ,
        \]
        where $\Haus^d(M)$ denotes the surface area of $M$.
    \end{lemma}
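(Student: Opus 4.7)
The plan is to associate to each point of a maximal $\delta$-packing a pairwise disjoint piece of $M$ of $d$-dimensional Hausdorff measure at least $\omega_d(\delta/4)^d$, and then sum. First I would take a maximal $\delta$-packing $\{p_1,\dots,p_k\}\subseteq M$ with $k=\PK_M(\delta)$, so that $\|p_i - p_j\|>2\delta$ for $i\neq j$.

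As the disjoint pieces, I would use the geodesic balls $\B_M(p_i,\delta/2)$. If $q\in \B_M(p_i,\delta/2)\cap \B_M(p_j,\delta/2)$, the geodesic triangle inequality gives $\dd_M(p_i,p_j)\le \delta$, and the Euclidean-vs.-geodesic bound $\|p_i-p_j\|\le \dd_M(p_i,p_j)$ from \Cref{lem:geodesic_comparison} contradicts the packing condition. Once disjointness is secured, the crux of the argument is the lower bound $\Haus^d(\B_M(p_i,\delta/2))\ge \omega_d(\delta/4)^d$.

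For this volume bound, the plan is to exploit that the orthogonal projection $\pi_i$ onto $T_{p_i}M$ is $1$-Lipschitz and therefore does not increase $d$-dimensional Hausdorff measure, so that it suffices to show $\pi_i(\B_M(p_i,\delta/2)-p_i)\supseteq \B_{T_{p_i}M}(0,\delta/4)$. I would study the composed map $\Psi_i(v):=\pi_i(\exp_{p_i}^M(v)-p_i)$ defined on $\B_{T_{p_i}M}(0,\delta/2)$. Since $v\in T_{p_i}M$ is fixed by $\pi_i$, the deviation $\Psi_i(v)-v$ is controlled by $\|\exp_{p_i}^M(v)-p_i-v\|$, which a Taylor expansion of the arclength geodesic $t\mapsto\exp_{p_i}^M(tv/\|v\|)$ combined with the curvature bound $\|\gamma''\|\le 1/\rch_{\min}$ from \Cref{lem:geodesic_comparison} controls by $\|v\|^2/(2\rch_{\min})$.

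The hard part is converting this close-to-identity estimate into a surjection onto $\B_{T_{p_i}M}(0,\delta/4)$. The plan is to apply Brouwer's fixed-point theorem to the continuous self-map $T(v)=v-\Psi_i(v)+u$ of $\B_{T_{p_i}M}(0,\delta/2)$, for each target $u\in \B_{T_{p_i}M}(0,\delta/4)$. The assumption $\delta\le \rch_{\min}/8$ makes the error term $\|v\|^2/(2\rch_{\min})$ much smaller than $\delta/4$ throughout $\B_{T_{p_i}M}(0,\delta/2)$, so that $\|T(v)\|\le \|u\|+\|v\|^2/(2\rch_{\min})\le \delta/2$ and $T$ indeed lands back in the ball; any fixed point $v^{\star}$ then satisfies $\Psi_i(v^{\star})=u$, and the corresponding $\exp_{p_i}^M(v^{\star})\in \B_M(p_i,\delta/2)$ projects onto $u$, giving the desired inclusion. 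Summing the disjoint-pieces inequality yields $\Haus^d(M)\ge k\cdot\omega_d(\delta/4)^d$, which rearranges to the claim.
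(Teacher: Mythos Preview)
Your argument is correct. It differs from the paper's route in where the work is done. The paper reduces the statement to two modular facts: the duality $\PK_M(\delta)\le\CV_M(\delta)\le\PK_M(\delta/2)$ (\Cref{prop:packing_covering_link}) together with the covering/packing estimates of \Cref{prop:packing_covering_manifold}, whose proof in turn rests on the ball-mass bound $\Haus^d(M\cap\B(p,h))\ge 2^{-d}\omega_d h^d$ of \Cref{lem:intrinsic_ball_mass}, itself imported from~\cite{Aamari18}. You instead bypass both the duality and the external citation: you work directly with geodesic balls around the packing points and obtain the needed volume lower bound by projecting onto the tangent space and using Brouwer's fixed point theorem to force $\pi_i(\B_M(p_i,\delta/2)-p_i)\supseteq\B_{T_{p_i}M}(0,\delta/4)$. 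Your approach is more self-contained (it only needs the curvature bound $\|\gamma''\|\le 1/\rch_{\min}$ from \Cref{lem:geodesic_comparison} plus Brouwer), whereas the paper's is quicker once \Cref{lem:intrinsic_ball_mass} is in hand---and that lemma is reused throughout the paper, so the modular route amortizes well.
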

    \begin{proof}[\proofof \Cref{lem:packing_number}]
        Follows from \Cref{prop:packing_covering_link} and \Cref{prop:packing_covering_manifold}.
    \end{proof}

Based on the geometric model above (\Cref{def:manifold_model}), we now describe the statistical model (i.e. set of probability distributions) of this work.
Every $M \in \manifolds{n}{d}{\rch_{\min}}$ inherits a non-trivial finite measure induced by the $d$-dimensional Hausdorff measure $\Haus^d$ on $\R^n\supseteq M$, defined by $\mathrm{vol}_M = \indicator{M} \Haus^d$, and called the
volume measure of $M$. 
Note that with this normalization, $\mathrm{vol}_M(M) = \Haus^d(M)$ corresponds to the $d$-dimensional surface area of $M$, and $\mathrm{vol}_M/\Haus^d(M)$ corresponds to the uniform probability distribution on $M$.
    
\begin{definition}
\label{def:manifold_model_distribution}
    We let $\distributions{n}{d}{\rch_{\min}}{f_{\min}}{f_{\max}}{L}$ denote the set of
    Borel probability distributions $D$ on $\R^n$ with 
    $M = \supp(D) \in \manifolds{n}{d}{\rch_{\min}}$ and a density $f$ with respect to $\mathrm{vol}_M$ such that:
    \begin{itemize}
        \item $f$ is bounded away from zero and infinity: 
            $0 < f_{\min} \le f(x) \le f_{\max} < \infty$ for all $x \in M$.
        \item $f$ is $L$-Lipschitz over $M$:
            $|f(x) - f(y)| \le L \norm{x - y}$ for all $x, y \in M$.
    \end{itemize}
\end{definition}

In this model, as will be clear below, the extra degree of freedom allowed by the density $f$ being non-constant will contribute in the final estimation rate and query complexity, especially through the lower bound $f_{\min}$.
On the geometric side, $f_{\min}^{-1}$ and $f_{\max}^{-1}$ impose quantitative restrictions on the volume $\Haus^d(M)$ of $M$ (see \Cref{subsubsec:implicit-bounds-on-parameters}).

Since $\distributions{n}{d}{\rch_{\min}}{f_{\min}}{f_{\max}}{L}$ is invariant by translations in $\R^n$, this model actually provides insufficient prior information to derive any uniform SQ complexity bound over it.
This contrasts sharply with the sample framework~\cite{Aamari18,Genovese12b}, where the sample points provide automatic location information and yields finite sample complexity over $\distributions{n}{d}{\rch_{\min}}{f_{\min}}{f_{\max}}{L}$.

\begin{proposition}
    \label{prop:SQ_lower_bound_infinity_unbounded}
    Assume that $\sigma_d f_{\min} \rch_{\min}^d \leq 1$. Then for all $\varepsilon > 0$, manifold estimation over $\distributions{n}{d}{\rch_{\min}}{f_{\min}}{f_{\max}}{L}$ with precision $\varepsilon$ has infinite randomized statistical query complexity.
\end{proposition}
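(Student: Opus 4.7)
The plan is to exhibit, for every $q$ and every failure probability $\alpha < 1$, adversarial inputs on which no $q$-query randomized SQ estimator can succeed. The driving mechanism is the translation invariance of the model, combined with the observation that any oracle answering in a finite set of values encodes only a bounded amount of location information.

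First, under the assumption $\sigma_d f_{\min} \rch_{\min}^d \leq 1$, I would build a reference distribution by taking $r_0 = (\sigma_d f_{\min})^{-1/d} \geq \rch_{\min}$ and letting $M_0 := \Sphere^{d}(0, r_0) \subseteq \R^{d+1} \subseteq \R^n$; then $M_0 \in \manifolds{n}{d}{\rch_{\min}}$, and the uniform distribution $D_0$ on $M_0$ has constant density $1/(\sigma_d r_0^d) = f_{\min} \in [f_{\min}, f_{\max}]$, so $D_0 \in \distributions{n}{d}{\rch_{\min}}{f_{\min}}{f_{\max}}{L}$ (it is trivially $L$-Lipschitz). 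Because reach, density range, and Lipschitz constant are translation-invariant, the pushforward $D_v$ of $D_0$ by $x \mapsto x + v$ lies in the model for every $v \in \R^n$, with $\supp(D_v) = M_0 + v$.

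Next, I would let the adversary always play the rounded oracle $\oracle_v^{\mathrm{rd}}(r) := \tau \floor{\E_{x \sim D_v}[r(x)] / \tau}$, which is a valid $\STAT(\tau)$ oracle for $D_v$ whose answers lie in a finite set of cardinality $N_1 \leq \floor{2/\tau} + 2$. Against such an oracle, any deterministic $q$-query SQ algorithm produces at most $N_0 := N_1^q$ distinct outputs as $v$ varies over $\R^n$, since the whole execution tree --- queries and final output alike --- is parametrised by the $\leq N_0$ possible answer sequences. Suppose now for contradiction that a randomized SQ algorithm $\RandmizedAlgorithm{A}$ with $q$ queries achieves precision $\varepsilon$ and failure probability $\alpha < 1$. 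Fix a $(2\varepsilon + \diam(M_0))$-separated finite set $\mathcal{V} \subseteq \R^n$ of cardinality $N$, which exists for arbitrary $N$ since $\R^n$ is unbounded. Applying the success hypothesis to each valid oracle $\oracle_v^{\mathrm{rd}}$ and using Fubini yields a deterministic realisation $\Algorithm{A}^{\omega^*}$ of $\RandmizedAlgorithm{A}$ that succeeds on $(D_v, \oracle_v^{\mathrm{rd}})$ for at least $(1 - \alpha) N$ values of $v \in \mathcal{V}$. If two successes $v_1 \neq v_2$ yielded the same algorithmic output $K$, the triangle inequality and the elementary lower bound $\dHaus(M_0 + v_1, M_0 + v_2) \geq \|v_1 - v_2\| - \diam(M_0)$ would give
\[
    2 \varepsilon
    <
    \|v_1 - v_2\| - \diam(M_0)
    \leq
    \dHaus(M_0 + v_1, M_0 + v_2)
    \leq
    2 \varepsilon,
\]
a contradiction. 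Hence the successful values of $v$ inject into the $\leq N_0$ possible outputs of $\Algorithm{A}^{\omega^*}$, so $(1-\alpha)N \leq N_0$. Choosing $N > N_0 / (1-\alpha)$ yields the required contradiction.

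The main subtlety to flag is that a direct Le Cam / two-point argument is \emph{not} available here: two translates $D_{v_1}, D_{v_2}$ with disjoint supports are at total variation distance $1$, so no single $\STAT(\tau)$ oracle is simultaneously valid for both. The discretisation-plus-counting scheme circumvents this by letting the adversary commit to one valid oracle \emph{per} distribution $D_v$; the genuine bottleneck is then informational, because the bounded entropy of the rounded answer sequence simply cannot encode the unbounded location information of $v$ in $\R^n$.
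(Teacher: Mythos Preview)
Your proof is correct and follows essentially the same approach as the paper: build a family of translates of a fixed sphere in the model and exploit the bounded entropy of a discretised oracle to show that no finite-query algorithm can cover the resulting infinite Hausdorff packing. The paper streamlines this by invoking its general packing lower bound (\Cref{theorem:lower_bound_computational_manifold}) together with the exact identity $\dHaus(K, K+v) = \|v\|$ (\Cref{lem:hausdorff_distance_between_translations}), whereas you unpack the counting argument directly and use the weaker (but sufficient) bound $\dHaus(M_0 + v_1, M_0 + v_2) \geq \|v_1 - v_2\| - \diam(M_0)$.
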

The assumption that $\sigma_d f_{\min} \rch_{\min}^d \leq 1$ is made to preclude degeneracy of the model. It can be shown to be necessary (see \Cref{subsubsec:implicit-bounds-on-parameters} below for a more detailed discussion).
The proof of \Cref{prop:SQ_lower_bound_infinity_unbounded} relies on the fact that the supports $\supp(D)$ of distributions 
$D \in \distributions{n}{d}{\rch_{\min}}{f_{\min}}{f_{\max}}{L}$ form an unbounded class for the Hausdorff distance.
It is therefore natural to add an extra location constraint to the model. We study two different such constraints. 
The first one fixes membership of a distinguished point to $M$, which we take to be the origin $0\in \R^n$ without loss of generality.
The second one bounds the problem in an ambient ball of radius $R>0$, which we take to be centered at the origin $\B(0,R)$ without loss of generality.

\begin{definition}
    \label{def:models_bounded}
    Completing the framework of \Cref{def:manifold_model_distribution}, we consider the two following models.
     \begin{itemize}[leftmargin=*]
         \item Fixed point model:
         \begin{itemize}[leftmargin=*]
            \item 
            $\manifoldspoint{n}{d}{\rch_{\min}}{0}$ denotes the set of manifolds $M \in \manifolds{n}{d}{\rch_{\min}}$ such that $0 \in M$;
            \item
            The model $\distributionspoint{n}{d}{\rch_{\min}}{f_{\min}}{f_{\max}}{L}{0}$ stands for the set of distributions $D \in \distributions{n}{d}{\rch_{\min}}{f_{\min}}{f_{\max}}{L}$ with support such that $0 \in M = \supp(D).$
         \end{itemize}

         \item Bounding ball model: given $R > 0$,
         \begin{itemize}[leftmargin=*]
            \item 
            $\manifoldsball{n}{d}{\rch_{\min}}{R}$ denotes the set of manifolds $M \in \manifolds{n}{d}{\rch_{\min}}$ such that $M \subseteq \B(0,R)$;
            \item
            The model $\distributionsball{n}{d}{\rch_{\min}}{f_{\min}}{f_{\max}}{L}{R}$ stands for the set of distributions $D \in \distributions{n}{d}{\rch_{\min}}{f_{\min}}{f_{\max}}{L}$ with support such that $\supp(D) = M \subseteq \B(0,R).$
         \end{itemize}     
         \end{itemize}
\end{definition}

Let us now discuss some features imposed by the above models.

    \subsubsection{On Some Implicit Bounds on the Model Parameters}
    \label{subsubsec:implicit-bounds-on-parameters}
    
    Although not explicit in \Cref{def:models_bounded}, parameters of the models are not arbitrary.
    That is, $\distributions{n}{d}{\rch_{\min}}{f_{\min}}{f_{\max}}{L}$ might be degenerate or even empty in some regimes of parameters, making the manifold estimation problem vacuous. The reason for this resides in implicit volume bounds imposed by the reach.
    Indeed, if $D \in \distributions{n}{d}{\rch_{\min}}{f_{\min}}{f_{\max}}{L}$ has support $M$, then since $D$ is a probability distribution,
    \[
    f_{\min} \Haus^d(M)
    \leq
    1
    =
    \int_M f \dd \Haus^d
    \leq
    f_{\max} \Haus^d(M)
    .
    \]
    As a result, the volume estimates of \Cref{prop:volume_bounds_under_reach_constraint} yield
    \begin{align*}
        f_{\min} 
        \leq
        \frac{1}{\Haus^d(M)}
        \leq
        \frac{1}{\sigma_d \rch_{\min}^d}
        \leq
        \frac{1}{\omega_d \rch_{\min}^d}
        .
    \end{align*}
    If furthermore, $D \in \distributionsball{n}{d}{\rch_{\min}}{f_{\min}}{f_{\max}}{L}{R}$ (i.e. $M \subseteq \B(0,R)$), then
    \begin{align*}
        f_{\max}
        \geq
        \frac{1}{\Haus^d(M)}
        \geq
        \frac{
            1
        }{
            \left(\frac{18R}{\rch_{\min}} \right)^n
            \omega_d \left( \frac{\rch_{\min}}{2} \right)^d
        }
        .
    \end{align*}
    Note that \Cref{prop:volume_bounds_under_reach_constraint} also yields that $R \geq \rch_{\min}/\sqrt{2}$.
    Consequently, to ensure non-vacuity of the models, and without loss of generality, it is natural to take the following setup. Here, $C_\square$ stands for a constant depending only on~$\square$.
    \begin{itemize}[leftmargin=*]
        \item 
        When working over $\distributionspoint{n}{d}{\rch_{\min}}{f_{\min}}{f_{\max}}{L}{0}$, we will always assume that $f_{\min} \leq f_{\max}$, $R \geq C \rch_{\min}$, and
        \[
        \omega_d f_{\min} \rch_{\min}^d
        \leq
        C_d^{-1}
        ,
        \]
        for some large enough constant $C_d > 0$.    
        \item 
        When working over $\distributionsball{n}{d}{\rch_{\min}}{f_{\min}}{f_{\max}}{L}{R}$, we will always assume that $f_{\min} \leq f_{\max}$, $R \geq C \rch_{\min}$,
        \[
        \omega_d f_{\min} \rch_{\min}^d
        \leq
        C_d^{-1}
        \text{~~and~~}
        \omega_d f_{\max} \rch_{\min}^d
        \geq
        C_{n,d}
        \left( \frac{\rch_{\min}}{R} \right)^n
        ,
        \]
        for some large enough constants $C,C_d,C_{n,d} > 0$.    
    \end{itemize}
See \Cref{sec:misc} for a more thorough exposition of the technical properties of the models $\distributionspoint{n}{d}{\rch_{\min}}{f_{\min}}{f_{\max}}{L}{0}$ and $\distributionsball{n}{d}{\rch_{\min}}{f_{\min}}{f_{\max}}{L}{R}$.
    
    \subsection{Manifold Reconstruction from Point Clouds}
    
    Following the recent line of research on manifold estimation~\cite{Genovese12,Genovese12b,Aamari18,Aamari19b,Divol20}, we will measure the accuracy of estimators $\hat{M}$ of manifolds $M$ via the so-called \emph{Hausdorff distance}, which plays the role of an $L^\infty$-distance between compact subsets of $\R^n$.
    To this aim, we will need the following piece of notation. 
    For $K \subseteq \R^n$ and $r\geq 0$, we let $K^r$ denote the \emph{$r$-offset of $K$}:
\begin{align}
    \label{eq:offset}
    K^r
    :=
    \set{
        z \in \R^n,
        \dd(z,K) \leq r
    }
    ,
\end{align}
where we recall that $\dd(z,K) = \inf_{p \in K} \norm{p-z}$ is the function distance to $K$.
\begin{definition}[{Hausdorff Distance~\cite[Section 7.3.1]{Burago01}}]
    \label{def:hausdorff_distance}
        Given two compact subsets $K,K' \subseteq \R^n$, the \emph{Hausdorff distance} between them is
        \begin{align*}
            \dHaus(K,K')
            &=
            \sup_{x \in \R^n} |\dd(x,K) - \dd(x,K')|
            \\
            &=
            \inf\set{
                r > 0
                ,
                K \subseteq (K')^r
                \text{ and }
                K' \subseteq K^r
            }
            .
        \end{align*}
    \end{definition}
    
    Manifold reconstruction from point clouds has been extensively studied in the area of computational geometry~\cite{DeyBook,Boissonnat14}.
    In this field, the learner is given a sample of $M$, usually seen as deterministic, and the goal is to build efficiently a reliable triangulation $\hat{M}$ of $M$, either topologically, geometrically, or both.
    Such a construction actually is always possible, provided that the point cloud is sufficiently close and dense in $M$, and that the learner is provided with tangent space estimates at these points.
    This is formalized in the following \Cref{thm:manifold_reconstruction_deterministic}, where $\normop{\cdot}$ stands for the operator norm over the set of matrices.
    \begin{theorem}[{Adapted from~\cite[Theorem 4.4]{Aamari18}}]
    \label{thm:manifold_reconstruction_deterministic}
        There exists $\lambda_d > 0$ such that for all $\varepsilon \le \lambda_{d}\rch_{\min}$
        and all $M \in \manifolds{n}{d}{\rch_{\min}}$, the following holds.
        
        Let $\mathcal{X} \subseteq \R^n$ be a finite point cloud and $\mathbb{T}_\mathcal{X} = 
        \bigl\{T_x\bigr\}_{x\in \mathcal{X}} \subseteq \Gr{n}{d}$ be a family of $d$-dimensional linear subspaces of 
        $\R^n$ such that
        \begin{multicols}{2}
            \begin{itemize}[leftmargin=*]
                \item $\underset{x \in \mathcal{X}}{\max} \: \dd(x,M) \le \eta$,
                \item $\underset{p\in M}{\max} \: \dd(p, \mathcal{X}) \le  \Delta$,
                \item $\underset{x \in \mathcal{X}}{\max} \: 
                    \normop{\pi_{T_{\pi_M(x)} {M}} - \pi_{T_x} } \le \sin \theta$.                
            \end{itemize}
        \end{multicols}
        \noindent If $\theta \le \Delta/(1140 \rch_{\min})$ and 
        $\eta \le \Delta^2/(1140 \rch_{\min})$, then one can build a triangulation 
        $\hat{M} = \hat{M}(\mathcal{X}, \mathbb{T}_\mathcal{X})$ with vertices in $\mathcal{X}$ such that
        \begin{multicols}{2}
        \begin{itemize}[leftmargin=*]
            \item $\dHaus \bigl( M, \hat{M} \bigr) 
            \le
            C_{d}\Delta^2/ \rch_{\min}$,
            \item $M$ and $\hat{M}$ are ambient isotopic.
        \end{itemize}
        \end{multicols}
    \end{theorem}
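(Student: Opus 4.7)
The plan is to reduce directly to the cited result of~\cite[Theorem 4.4]{Aamari18}, whose hypotheses quantitatively match those stated here up to fixing the constants $\lambda_d$, $C_d$, and $1140$. The triangulation $\hat{M}$ is the tangential Delaunay complex of Boissonnat--Ghosh: at each $x \in \mathcal{X}$, one computes the local Delaunay star in the affine plane $x + T_x$, pulls the $d$-dimensional faces back to $\mathcal{X}$ using the nearest-neighbor structure, and glues the resulting stars into a pure $d$-dimensional simplicial complex embedded in $\R^n$. The numerical thresholds $\theta \leq \Delta/(1140\rch_{\min})$ and $\eta \leq \Delta^2/(1140\rch_{\min})$ are precisely what is needed for the Boissonnat--Ghosh construction to succeed: they ensure that stars match along shared faces (no so-called ``inconsistencies'') and that every $d$-simplex is well-shaped.

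The two conclusions would be obtained as follows. For the ambient isotopy statement, I would invoke that the complex $\hat{M}$ lies inside the tubular neighborhood $M^{r}$ of $M$ for some $r \ll \rch_{\min}$, so that the nearest-point projection $\pi_M$ restricted to $\hat{M}$ is well-defined, $\mathcal{C}^1$, and a homeomorphism onto $M$ (this is standard once $r$ is a small fraction of $\rch_M$ by \Cref{def:reach}); the ambient isotopy is then obtained by straight-line interpolation along the fibers of $\pi_M$. For the Hausdorff bound, I would argue in the two standard directions. For $\hat{M} \subseteq M^{C_d \Delta^2/\rch_{\min}}$: each simplex of $\hat{M}$ has vertices within $\eta$ of $M$ and spans directions $\sin\theta$-close to the true tangent $T_{\pi_M(x)}M$, so the $\mathcal{C}^2$-control $\dd(q-p, T_p M) \leq \norm{q-p}^2/(2\rch_{\min})$ from \Cref{def:reach} yields that every point of the simplex lies within $O(\eta + \sin\theta \cdot \Delta + \Delta^2/\rch_{\min}) = O(\Delta^2/\rch_{\min})$ of $M$. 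For $M \subseteq \hat{M}^{C_d \Delta^2/\rch_{\min}}$: every $p \in M$ is within $\Delta$ of some $x \in \mathcal{X}$ by density, and the local star at $x$ contains a simplex whose supporting affine subspace is within $\sin\theta$ of $T_{\pi_M(x)}M$, so again by $\mathcal{C}^2$-smoothness the projection of $p$ onto that simplex lies at the required quadratic distance.

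The only real obstacle, given that the substance is imported from~\cite{Aamari18}, is verifying that the adapted statement inherits the hypotheses of the cited theorem verbatim. In~\cite[Theorem~4.4]{Aamari18} the point cloud $\mathcal{X}$ is typically assumed to sit on $M$ and the tangent error is measured against the \emph{true} tangent $T_x M$; here we allow $\mathcal{X}$ to be off-manifold and measure tangent accuracy against $T_{\pi_M(x)} M$. I would resolve this by replacing each $x \in \mathcal{X}$ by its metric projection $\pi_M(x) \in M$, which is well-defined because $\eta < \rch_{\min}$; the displacements $\norm{x - \pi_M(x)} \leq \eta \leq \Delta^2/(1140\rch_{\min})$ perturb the vertices by an amount that is quadratically smaller than $\Delta$ and can be absorbed into the constants $C_d$ and $\lambda_d$ without affecting either the well-definedness of the Boissonnat--Ghosh construction or the ambient isotopy conclusion.
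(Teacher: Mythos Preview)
Your proposal misidentifies the adaptation that is actually needed. The cited result~\cite[Theorem~4.4]{Aamari18} already allows the sample points to lie off the manifold (the $\eta$-closeness hypothesis and the tangent error measured at $\pi_M(x)$ are present there verbatim, and the constants $1140$ are inherited from that statement), so your proposed projection step is unnecessary. What~\cite[Theorem~4.4]{Aamari18} does require, and what the present statement omits, is a \emph{sparsity} (separation) condition on $\mathcal{X}$: a lower bound $\min_{x\neq y\in\mathcal{X}}\norm{x-y}\geq c\Delta$. Without it the Boissonnat--Ghosh tangential Delaunay complex can contain arbitrarily thin slivers and the star-consistency argument breaks down, so you cannot simply feed $\mathcal{X}$ into the cited theorem.

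The paper's proof fixes exactly this gap: it first prunes $\mathcal{X}$ to a sparse subset $\mathcal{X}'\subseteq\mathcal{X}$ via farthest-point sampling (initialize with any $x_0\in\mathcal{X}$, then repeatedly add $\argmax_{x\in\mathcal{X}}\dd(x,\mathcal{X}')$ while this maximum exceeds $\Delta$). By construction $\mathcal{X}'$ is $\Delta$-separated, and one checks $\max_{p\in M}\dd(p,\mathcal{X}')\leq\max_{p\in M}\dd(p,\mathcal{X})+\max_{x\in\mathcal{X}}\dd(x,\mathcal{X}')\leq 2\Delta$, so the density only doubles. Then~\cite[Theorem~4.4]{Aamari18} applies directly to $(\mathcal{X}',\mathbb{T}_{\mathcal{X}'})$ and yields the triangulation. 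Your sketch of the Hausdorff and isotopy arguments is fine as intuition but is not the content of the adaptation; the substantive step is the sparsification, which you did not mention.
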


    \begin{proof}[\proofof \Cref{thm:manifold_reconstruction_deterministic}]
        We apply~\cite[Theorem 4.4]{Aamari18} on a sparsified subset
        $\mathcal{X}'$ of $\mathcal{X}$, which is a pruned version of $\mathcal{X}$
        that is $\varepsilon$-sparse but still dense enough in $M$. This subsample
        $\mathcal{X}'$ can be built explicitly by using the so-called farthest point 
        sampling algorithm to $\mathcal{X}$~\cite[Section 3.3]{Aamari18}. For this, 
        initialize $\mathcal{X}'$ with  $\mathcal{X}'= \set{x_0}$, where 
        $x_0 \in \mathcal{X}$ is chosen arbitrarily. 
        Then, while $\max_{x \in \mathcal{X}} \dd(x, \mathcal{X}') > \Delta$, 
        find the farthest point to $\mathcal{X}'$ in $\mathcal{X}$, and add it to 
        $\mathcal{X}'$. That is,
        $\mathcal{X}' \leftarrow \mathcal{X}' \cup 
            \set{\argmax_{x \in \mathcal{X}} \dd(x, \mathcal{X}')}$ 
        (and if the $\argmax$ is not a singleton, pick an arbitrary element of it).
        The output $\mathcal{X}' \subseteq \mathcal{X}$ of this algorithm clearly satisfies 
        $\min_{x'\neq y' \in \mathcal{X}'} \norm{y'-x'} \ge \Delta$, and furthermore,
        \[
            \max_{p \in M} \dd(p, \mathcal{X}') 
            \le
            \max_{p \in M} \dd(p, \mathcal{X}) 
                + 
                \max_{x \in \mathcal{X}} \dd(x, \mathcal{X}') \le 2\Delta
            .
        \]
        Therefore,~\cite[Theorem 4.4]{Aamari18} applies to $\mathcal{X}'$ and 
        $\mathbb{T}_{\mathcal{X}'}$, and $\hat{M}(\mathcal{X}',\mathbb{T}_{\mathcal{X}'})$ provides the announced triangulation.
    \end{proof}
    
    Although we will not emphasize on exact topology recovery in the present work, let us mention that the triangulation $\hat{M}$ actually exhibits the extra feature of sharing the same topology as $M$, i.e. $M$ and $\hat{M}$ are isotopy equivalent.
    Let us also mention that the triangulation can be built in linear time in $n$, with an explicit polynomial time and space complexity~\cite[Section 4.6]{Boissonnat14}.
    
    Said otherwise, \Cref{thm:manifold_reconstruction_deterministic} asserts that manifold reconstruction with precision $\varepsilon$ can be achieved if a sample that is $(\sqrt{\rch_{\min}\varepsilon})$-dense and $\varepsilon$-close to $M$, together with associated estimated tangent spaces with precision $\sqrt{\varepsilon/\rch_{\min}}$, are available to the learner.
    As opposed to the sample framework, the statistical framework does not provide the learner with such data directly.
    In $\STAT(\tau)$, our strategy will therefore be to build such a point cloud and tangent spaces iteratively from queries, using the following purely geometric \MPA algorithm.
        
    \section{Manifold Propagation Algorithm}
    \label{sec:mpa}
    We now present the \MPA algorithm and its properties, which works in a setting where only geometric routines are available to the learner.
Although we will eventually apply this algorithm in the context of statistical queries (see \Cref{sec:SQ-manifold-estimation}), let us insist on the fact that the framework detailed in this \Cref{sec:mpa} is purely geometric, and does not rely specifically on statistical queries.

As mentioned in the introduction, the idea is to explore the unknown manifold $M$ via the access to only three complementary geometric routines. 
Roughly speaking, \MPA explores $M$ in a greedy way, while building a point cloud with associated tangent spaces, by using:
\begin{itemize}[leftmargin=*]
    \item 
    A seed point $\hat{x}_0 \in \R^n$, known to be close to $M$, and that allows to initialize the process.
    \item
    A tangent space routine $\hat{T}: \R^n \to \mathbb{G}^{n,d}$, that allows to make linear approximations of $M$ nearby points, and hence to provide local candidate directions to explore next.
    \item
    A projection routine $\hat{\pi} : \R^n \to \R^n$, that compensates for the errors made by the previous steps, by approximately projecting points back to $M$.
\end{itemize}
To avoid redundancy, all this is done while checking that the new candidate points
are not too close to some already-visited region of the space.
More formally, the algorithm runs as described on \cpageref{alg:MPA}.
\begin{algorithm}
    \SetAlgoLined
    \begin{algorithmic}[1]
    \SetAlgoLined
        \REQUIRE 
        ~
        \\
        Seed point $\hat{x}_0 \in \R^n$
        \\
        Tangent space routine $\hat{T}: \R^n \to \mathbb{G}^{n,d}$
        \\ 
        Projection routine $\hat{\pi} : \R^n \to \R^n$ 
        \\
        Tuning parameters $\Delta, \delta > 0$ (scales) and $
        0 < \alpha < \pi/2$ (angle)
        \STATE Initialize $\mathcal{Q} \leftarrow \{\hat{x}_0\}$, 
        $\mathcal{O} \leftarrow \emptyset$ and 
        $\mathbb{T}_\mathcal{O} \leftarrow \emptyset$
        \WHILE{$\mathcal{Q} \neq \emptyset$}
            \STATE Pick $x \in \mathcal{Q}$ 
            \STATE Set $T \leftarrow \hat{T}(x)$ and 
                $\mathbb{T}_\mathcal{O} \leftarrow \mathbb{T}_\mathcal{O} \cup \set{T}$
            \STATE Consider a maximal $(\sin \alpha)$-packing 
                $v_1, \dots, v_k$ of the sphere $\Sphere^{d-1}_T(0,1) \subseteq T$
            \FOR{$i \in \{1, \dots,k\}$}
                \IF{$\dd\bigl( x + \Delta v_i, \mathcal{Q} \cup \mathcal{O} \bigr) \ge \delta$}
                    \STATE 
                        $\mathcal{Q} \leftarrow \mathcal{Q} \cup \bigl\{ \hat{\pi}(x + \Delta v_i)\bigr\}$
                \ENDIF
            \ENDFOR
            \STATE $\mathcal{Q} \leftarrow \mathcal{Q} \setminus\{x\}$ and 
                $\mathcal{O} \leftarrow \mathcal{O} \cup \{x\}$
        \ENDWHILE
        \RETURN $\mathcal{O}$ and $\mathbb{T}_\mathcal{O}$
    \end{algorithmic}  
    \caption{\MPA}
    \label{alg:MPA}
\end{algorithm}

In spirit, \MPA is similar to the \emph{marching cube algorithm} of~\cite{Lorensen87} and the \emph{tracing algorithm} of~\cite{Kachanovich19}, which use calls to an \emph{intersection oracle} answering whether a candidate element of a partition of the ambient space intersects the manifold.
However, the approaches of~\cite{Lorensen87} and~\cite{Kachanovich19} use static partitions of $\R^n$ (cubes and a Coxeter triangulation respectively), which translates into an exploration complexity of $M$ --- measured in the number of calls made to the oracles/routines --- that strongly depends on the ambient dimension~\cite[Theorem~24]{Kachanovich19}.
In contrast, \MPA builds a point cloud nearby $M$ dynamically, which allows to adapt to its intrinsic low-dimensional geometry.
This results in an exploration complexity that is completely oblivious to the ambient space.
That is, the overall dependency in the ambient dimension is fully supported by the geometric routines themselves.
This can be explained by the intermediate tangent space estimation routine, that allows the algorithm to only explore the $d$ local (approximate) tangent directions of $M$ only, while being oblivious to the $(n-d) \gg d$ non-informative codimensions.
As a counterpart, \MPA needs to compensate for these local linear approximations which, although possibly negligible at each iteration, may cumulate into substantial deviations from the manifold after several steps.
This possible global drift is taken care of via the projection routine, which somehow reboots the precision of the process when a point is added.
To the best of our knowledge, \MPA is the first instance of an algorithm working only with the three geometric routines described above. 
We now state the main result presenting its properties.
    
\begin{theorem}[Properties of \MPA]
\label{thm:MPA_properties}
    Let $M \in \manifolds{n}{d}{\rch_{\min}}$, and assume that there exist 
    $0 \le \eta \le \Lambda < \rch_{\min}$ and $0 \le \theta < \pi/2$ such that:
    \begin{enumerate}[label=(\roman*)]
        \item \label{assumption_seed}
            $\dd(\hat{x}_0,M) \le \eta$;
        \item \label{assumption_tangent}
            For all $x \in \R^n$ such that $\dd(x,M) \le \eta$,
            $
            \normop{ 
                \pi_{T_{\pi_M(x)}M}
                -
                \pi_{\hat{T}(x)} 
            }
            \le 
            \sin \theta
            ;
            $
        \item \label{assumption_proj}
            For all $x \in \R^n$ such that $\dd(x,M) \le \Lambda$, 
            $\norm{\pi_M(x) - \hat{\pi}(x)} \le \eta$.
    \end{enumerate}
    Assume furthermore that 
    \begin{align*}
    64 \eta \leq \Delta \le \rch_{\min}/24,
    &~\max\set{\sin \alpha, \sin \theta} \le 1/ 64,
    \\
    5\Delta^2/(8\rch_{\min}) + \eta + \Delta \sin \theta \le \Lambda
    ,&~\text{and}
    ~3 \Delta /10 \le \delta \le 7 \Delta / 10
    .
    \end{align*}
    Then, \MPA terminates, and the number
    $N_{\mathrm{loop}}$ of iterations performed in the \emph{while} loop (Lines 2--12) satisfies 
    \begin{enumerate}
        \item  
        $
        N_{\mathrm{loop}} 
        \le
        \dfrac{\Haus^d(M)}{\omega_d (\delta/32)^d}
        ,
        $
        where $\Haus^d(M)$ denotes the surface area of $M$.
    \end{enumerate}
    Furthermore, it outputs a finite point cloud $\mathcal{O} \subseteq \R^n$ that:
    \begin{enumerate}\addtocounter{enumi}{1}
        \item Is $\eta$-close to $M$: $\max_{x \in \mathcal{O}} \dd(x,M) \le \eta$;
        \item Is a $(\Delta + \eta)$-covering of $M$: 
            $\max_{p \in M} \dd(p, \mathcal{O}) \le \Delta + \eta$;
    \end{enumerate}
    together with a family $\mathbb{T}_{\mathcal{O}} = 
    \bigl\{\hat{T}(x)\bigr\}_{x \in \mathcal{O}} \subseteq \Gr{n}{d}$ of linear spaces that:
    \begin{enumerate}\addtocounter{enumi}{3}
        \item $\theta$-approximate tangent spaces:
            $\max_{x \in \mathcal{O}} \normop{ \pi_{T_{\pi_M(x)}M}-\pi_{\hat{T}(x)}} \le 
                \sin \theta.$
    \end{enumerate}
\end{theorem}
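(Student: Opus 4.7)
\textbf{Invariants.} The plan is first to establish, by induction on the \emph{while} loop, that every point ever added to $\mathcal{Q}$ is $\eta$-close to $M$. For the seed this is assumption \ref{assumption_seed}. Suppose $x \in \mathcal{Q}$ is $\eta$-close to $M$ and we consider a candidate $y = x + \Delta v_i$ with $v_i \in \hat{T}(x)$ unit. Writing $p = \pi_M(x)$ and decomposing $v_i$ along $T_p M$ and $T_p M^{\perp}$, \ref{assumption_tangent} gives $\dd(v_i, T_p M) \leq \sin\theta$, while the reach bound of \Cref{lem:geodesic_comparison} controls how far $M$ curves away from $p + T_p M$ within a $\Delta$-ball. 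A short calculation yields
\[
    \dd(y, M) \leq \eta + \Delta \sin\theta + \tfrac{5\Delta^{2}}{8\rch_{\min}} \leq \Lambda,
\]
so assumption \ref{assumption_proj} applies and $\hat{\pi}(y)$ is again $\eta$-close to $M$. This invariant immediately yields points 2 and 4 of the conclusion.

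\textbf{Packing, termination, and bound (1).} Next I would show that the set $\mathcal{O}$ is ``well spread out''. By construction, whenever $\hat{\pi}(x+\Delta v_i)$ is appended to $\mathcal{Q}$, the raw point $x+\Delta v_i$ is at distance $\geq \delta$ from everything currently in $\mathcal{Q}\cup\mathcal{O}$; since $\hat{\pi}$ displaces it by at most $\eta$ and the same bound holds symmetrically for the pre-existing points, any two distinct elements of $\mathcal{O}$ (and in fact of all points ever added) end up at Euclidean distance at least $\delta - 2\eta \geq \delta/2$ (using $\eta \le \Delta/64$ and $\delta \geq 3\Delta/10$). Projecting this Euclidean packing of points $\eta$-close to $M$ onto $M$ via $\pi_M$, combined with the comparison inequalities of \Cref{lem:geodesic_comparison}, gives a $(\delta/4)$-separated set on $M$ in Euclidean distance, hence (by \Cref{prop:packing_covering_link}) a packing in the geodesic sense of radius $\ge \delta/16$. \Cref{lem:packing_number} then yields $|\mathcal{O}| \le \Haus^d(M)/(\omega_d(\delta/32)^d)$, which both bounds $N_{\mathrm{loop}}$ and forces termination.

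\textbf{Covering (3).} This is the delicate step. I would argue by contradiction: suppose some $p \in M$ satisfies $\dd(p,\mathcal{O}) > \Delta + \eta$. Since $M$ is connected (and $\pi_M(\hat{x}_0)$ lies within $\Delta+\eta$ of $\hat{x}_0 \in \mathcal{O}$), take an arc-length geodesic $\gamma:[0,\ell] \to M$ from $\pi_M(\hat{x}_0)$ to $p$, and let $t^{*} = \sup\set{t : \dd(\gamma(t),\mathcal{O}) \leq \Delta + \eta}$, so there exists $x \in \mathcal{O}$ with $\norm{x - \gamma(t^{*})} \leq \Delta + \eta$. Consider the translated geodesic from $\pi_M(x)$ in the direction of $\gamma$; its initial tangent vector, pushed to $\hat{T}(x)$, is within angle $O(\sin\theta + \sin\alpha)$ of some $v_i$ from the maximal $(\sin\alpha)$-packing of $\Sphere^{d-1}_{\hat{T}(x)}(0,1)$ (which is also an $(\sin\alpha)$-covering). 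Using the reach bound again, $y_i = x + \Delta v_i$ satisfies $\norm{y_i - \gamma(t^{*}+ \Delta')} \leq (\Delta + \eta) - \delta/2$ for some $\Delta'$ slightly less than $\Delta$, contradicting maximality of $t^*$, \emph{unless} the check in Line~7 failed, i.e.\ $\dd(y_i, \mathcal{Q}\cup\mathcal{O}) < \delta$. But by the termination of the algorithm $\mathcal{Q}$ is eventually empty, so some $x' \in \mathcal{O}$ lies within $\delta$ of $y_i$, and then $x'$ is within $\Delta + \eta$ of a point on $\gamma$ past $t^*$, again contradicting the choice of $t^*$. The quantitative constants ($1140$, $24$, $64$, etc.\ in the hypotheses) are precisely what is needed to make these two estimates compatible.

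\textbf{Tangent spaces (4).} This is immediate: every $x \in \mathcal{O}$ was shown in the first step to be $\eta$-close to $M$, so \ref{assumption_tangent} applies directly to $\hat{T}(x) \in \mathbb{T}_{\mathcal{O}}$. The main obstacle I anticipate is managing the constants in the covering step~3, since a naive chaining along $\gamma$ loses factors at each of ``tangent approximation'', ``reach-curvature drift'', and ``projection routine error''; the stated hypotheses $\Delta \leq \rch_{\min}/24$, $\max\{\sin\alpha,\sin\theta\}\leq 1/64$, and $3\Delta/10 \leq \delta \leq 7\Delta/10$ are what allow all three losses to fit simultaneously inside the $\delta$-gap used for the proximity check.
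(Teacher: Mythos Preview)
Your sketch matches the paper's proof: the three stages are exactly \Cref{lem:MPA_distance_maintained}, \Cref{lem:MPA_sparsity_maintained}, and \Cref{lem:MPA_geodesic_covering}, and your argument for each is structurally the same (for the covering, the paper runs the contradiction from the $x_0 \in \mathcal{O}$ whose projection is \emph{geodesically closest} to $p_0$ rather than your $\sup$ along $\gamma$, but the two set-ups are interchangeable).

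One quantitative slip worth fixing in the packing step: assumption~\ref{assumption_proj} says $\norm{\hat{\pi}(y) - \pi_M(y)} \le \eta$, not $\norm{\hat{\pi}(y) - y} \le \eta$. Hence the displacement of the raw candidate $y = x + \Delta v_i$ under $\hat{\pi}$ is at most $\eta + \dd(y,M) \le 2\eta + \tfrac{5\Delta^2}{8\rch_{\min}} + \Delta\sin\theta$, and the resulting pairwise separation in $\mathcal{Q}\cup\mathcal{O}$ is $\ge \delta - \tfrac{5\Delta^2}{8\rch_{\min}} - 2\eta - \Delta\sin\theta$ rather than your $\delta - 2\eta$. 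Under the stated hypotheses this still exceeds $\delta/4$, so the conclusion survives; there is also no ``symmetric'' displacement for the pre-existing points, since the proximity check in Line~7 already compares the raw $y$ against the processed points of $\mathcal{Q}\cup\mathcal{O}$. Finally, no detour through geodesic packings is needed: \Cref{lem:packing_number} is stated for Euclidean balls, so once $\pi_M(\mathcal{O})$ is $(\delta/8)$-packed in the Euclidean sense you may apply it directly.
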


To get to \Cref{thm:MPA_properties}, we will need the following series of lemmas, which are proved in
\Cref{sec:mpa-tech}.
The first statement asserts that the point clouds $\mathcal{Q}$ and $\mathcal{O}$ that the algorithm builds remain $\eta$-close to $M$ at all times. The reason for this resides in the fact that this property holds for the seed point $\hat{x}_0$ by assumption, and that the projection routine $\hat{\pi}$ maintains this $\eta$-closeness when points are added to $\mathcal{Q}$, and hence to 
$\mathcal{Q} \cup \mathcal{O}$.
\begin{lemma}
\label{lem:MPA_distance_maintained}
    Let $M \in \manifolds{n}{d}{\rch_{\min}}$, and assume that $\eta < \rch_{\min}$, 
    $\Delta \le \rch_{\min} / 4$ and 
    $\frac{5}{8} \frac{\Delta^2}{\rch_{\min}} + \eta + \Delta \sin \theta \le \Lambda$.
    Then when running \MPA, the following inequality is maintained:
    \[
        \max_{x \in \mathcal{Q} \cup \mathcal{O}} \dd(x, M) 
        \le 
        \eta
        .
    \]
\end{lemma}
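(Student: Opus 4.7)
The plan is to prove this by induction on the number of iterations of the while loop. Initially, $\mathcal{Q} \cup \mathcal{O} = \{\hat{x}_0\}$, and assumption \ref{assumption_seed} gives $\dd(\hat{x}_0,M) \le \eta$, which is the base case. In the inductive step, the only operations modifying $\mathcal{Q} \cup \mathcal{O}$ during one pass of the loop are either moving the picked $x$ from $\mathcal{Q}$ to $\mathcal{O}$ (which preserves the bound) or inserting new points of the form $\hat{\pi}(x + \Delta v_i)$ into $\mathcal{Q}$. Hence the whole argument reduces to showing that every such new point is $\eta$-close to $M$.

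To do this, let $p = \pi_M(x)$ so that $\|x - p\| \le \eta$ by the induction hypothesis (the projection is well-defined since $\eta < \rch_{\min} \le \rch_M$). Because $\dd(x,M) \le \eta$, assumption \ref{assumption_tangent} applies and yields $\normop{\pi_{T_p M} - \pi_{\hat{T}(x)}} \le \sin\theta$. Setting $u_i = \pi_{T_p M}(v_i) \in T_p M$ and using that $v_i \in \hat{T}(x)$ has unit norm, this gives $\|v_i - u_i\| \le \sin\theta$, and in particular $\|u_i\| \le 1$.

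Next I would control $\dd(x + \Delta v_i, M)$. Consider the arc-length parametrized geodesic $\gamma(t) = \exp_p^M(t\, u_i/\|u_i\|)$ (taking $\gamma \equiv p$ if $u_i = 0$). By \Cref{lem:geodesic_comparison}, $\|\gamma''(t)\| \le 1/\rch_{\min}$ along its trajectory, so Taylor's formula together with $\gamma(0) = p$ and $\gamma'(0) = u_i/\|u_i\|$ yields
\[
\bigl\|\gamma(\|u_i\|\Delta) - (p + \Delta u_i)\bigr\| \le \frac{(\Delta\|u_i\|)^2}{2\rch_{\min}} \le \frac{\Delta^2}{2\rch_{\min}}.
\]
Since $\gamma(\|u_i\|\Delta) \in M$, combining with the triangle inequality gives
\[
\dd(x + \Delta v_i, M) \le \|x - p\| + \Delta\|v_i - u_i\| + \frac{\Delta^2}{2\rch_{\min}} \le \eta + \Delta \sin\theta + \frac{\Delta^2}{2\rch_{\min}} \le \Lambda,
\]
using the quantitative assumption $\frac{5}{8}\frac{\Delta^2}{\rch_{\min}} + \eta + \Delta\sin\theta \le \Lambda$.

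Finally, since $\dd(x + \Delta v_i, M) \le \Lambda$, assumption \ref{assumption_proj} applies and gives $\|\pi_M(x + \Delta v_i) - \hat{\pi}(x + \Delta v_i)\| \le \eta$, which in turn yields $\dd(\hat{\pi}(x + \Delta v_i), M) \le \eta$, completing the induction. The main conceptual step is the tangent-plus-quadratic-correction estimate above; everything else is bookkeeping to ensure that each hypothesis is invoked exactly when its precondition on $\dd(\cdot,M)$ is met.
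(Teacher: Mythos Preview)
Your proof is correct and follows essentially the same inductive argument as the paper: the base case is assumption~\ref{assumption_seed}, and for the inductive step you bound $\dd(x+\Delta v_i,M)$ by a tangent-plus-curvature estimate and then invoke assumption~\ref{assumption_proj}. The only cosmetic difference is that the paper packages the curvature estimate into a separate lemma (\Cref{lem:MPA_tangent_error_propagation}, using a \emph{unit} vector $v'\in T_pM$ with $\|v_i-v'\|\le\sin\theta$ and the bound $\|\exp_p^M(\Delta v')-(p+\Delta v')\|\le \tfrac{5}{8}\Delta^2/\rch_{\min}$ from~\cite{Aamari19b}), whereas you use the orthogonal projection $u_i=\pi_{T_pM}(v_i)$ and a direct Taylor expansion via \Cref{lem:geodesic_comparison}, obtaining the slightly sharper constant $\tfrac12$ in place of $\tfrac58$.
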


The second statement ensures that points in $\mathcal{Q} \cup \mathcal{O}$ remain far away from each other, so that they always form a packing with fixed radius.
This property, maintained by the proximity test at Line 7 of \MPA, is the key ingredient for the termination of the algorithm and its complexity.
\begin{lemma}
        \label{lem:MPA_sparsity_maintained}
        Let $M \in \manifolds{n}{d}{\rch_{\min}}$, and assume that $\eta < \rch_{\min}$, 
        $\Delta \le \rch_{\min}/4$ and
        $\frac{5}{8} \frac{\Delta^2}{\rch_{\min}} + \eta + \Delta \sin \theta \le \Lambda$.
        Then when running \MPA, the following inequality is maintained:
        \[
            \min_{\substack{x, y \in \mathcal{Q} \cup \mathcal{O} \\ x \neq y}}
            \norm{x-y} 
            \ge
            \delta -  \frac{5}{8} \frac{\Delta^2}{\rch_{\min}} - 2\eta - \Delta \sin \theta
            .
        \]
\end{lemma}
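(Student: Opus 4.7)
The plan is to proceed by induction on the number of iterations of the while-loop of \MPA, showing that the claimed lower bound on pairwise distances inside $\mathcal{Q} \cup \mathcal{O}$ is maintained after each iteration. At initialization, $\mathcal{Q} \cup \mathcal{O} = \{\hat{x}_0\}$ contains a single point, so the minimum is taken over an empty set and the bound holds vacuously. Along the execution, $\mathcal{Q} \cup \mathcal{O}$ changes only via Line 8 (insertion of $z_i := \hat{\pi}(x + \Delta v_i)$) or Line 11 (moving $x$ from $\mathcal{Q}$ to $\mathcal{O}$); the latter does not affect the underlying set, so the whole inductive burden lies in the former.

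For each new point $z_i$ inserted, we must verify the bound against every $y \in \mathcal{Q} \cup \mathcal{O}$ at the moment of its insertion, which includes $x$ itself (still in $\mathcal{Q}$) and previously inserted $z_j$ for $j < i$ in the same iteration. The proximity test at Line 7 guarantees $\norm{x + \Delta v_i - y} \ge \delta$ in all these cases (for $y = x$ this simply reads $\Delta \ge \delta$, automatic under the hypothesis $\delta \le 7\Delta/10$). Hence by the triangle inequality
\[
    \norm{z_i - y} \ge \norm{x + \Delta v_i - y} - \norm{z_i - (x + \Delta v_i)} \ge \delta - \norm{z_i - (x + \Delta v_i)},
\]
so it remains to upper bound $\norm{z_i - (x + \Delta v_i)}$.

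The key estimate is to control $\dd(x + \Delta v_i, M)$. Let $p := \pi_M(x)$; Lemma~\ref{lem:MPA_distance_maintained} (applied inductively) provides $\norm{x - p} \le \eta$. Assumption~\ref{assumption_tangent} then gives $\normop{\pi_{T_p M} - \pi_{\hat{T}(x)}} \le \sin\theta$, so setting $u := \pi_{T_p M}(v_i) \in T_p M$ we have $\norm{v_i - u} \le \sin\theta$ and $\norm{u} \le 1$. Using Lemma~\ref{lem:geodesic_comparison} (curvature of arc-length geodesics bounded by $1 / \rch_{\min}$) and integrating twice yields
\[
    \norm{\exp_p^M(\Delta u) - (p + \Delta u)} \le \frac{(\Delta \norm{u})^2}{2\rch_{\min}} \le \frac{\Delta^2}{2\rch_{\min}},
\]
so that $\dd(p + \Delta u, M) \le \Delta^2/(2\rch_{\min})$. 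By the triangle inequality,
\[
    \dd(x + \Delta v_i, M) \le \norm{x - p} + \Delta \norm{v_i - u} + \dd(p + \Delta u, M) \le \eta + \Delta \sin\theta + \frac{\Delta^2}{2\rch_{\min}} \le \Lambda,
\]
where the last inequality uses the lemma's hypothesis (with slack: $\Delta^2/(2\rch_{\min}) \le 5\Delta^2/(8\rch_{\min})$). We are therefore in a position to invoke Assumption~\ref{assumption_proj}, yielding $\norm{z_i - \pi_M(x + \Delta v_i)} \le \eta$, and consequently
\[
    \norm{z_i - (x + \Delta v_i)} \le \eta + \dd(x + \Delta v_i, M) \le 2\eta + \Delta \sin\theta + \frac{\Delta^2}{2\rch_{\min}}.
\]
Plugging back into the displayed triangle inequality gives the announced bound (in fact with the slightly stronger $\Delta^2/(2\rch_{\min})$ in place of $5\Delta^2/(8\rch_{\min})$), closing the induction.

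The only subtle point is to ensure that $\dd(x + \Delta v_i, M)$ stays within the validity range $\Lambda$ of the projection routine, which is exactly the reason the hypothesis $5\Delta^2/(8\rch_{\min}) + \eta + \Delta \sin\theta \le \Lambda$ is imposed. Apart from that, the argument is a clean combination of the triangle inequality with the standard second-order tangent approximation afforded by the reach condition.
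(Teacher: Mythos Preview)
Your argument is correct and follows essentially the same route as the paper: induction on the iterations, triangle inequality against the proximity test, and a bound on $\norm{\hat\pi(x+\Delta v_i)-(x+\Delta v_i)}$ via the tangent approximation plus the projection guarantee; you simply inline what the paper packages as \Cref{lem:MPA_tangent_error_propagation} (and get the slightly sharper constant $\Delta^2/(2\rch_{\min})$ instead of $5\Delta^2/(8\rch_{\min})$). One minor remark: your aside invoking the hypothesis $\delta\le 7\Delta/10$ for the case $y=x$ is unnecessary and in fact not available here---that inequality is an assumption of \Cref{thm:MPA_properties}, not of the present lemma---but it is also not needed, since $x\in\mathcal{Q}$ at the time of the test, so the proximity check already forces $\Delta=\norm{(x+\Delta v_i)-x}\ge\delta$ whenever a point is actually inserted.
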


The third and last statement roughly asserts that if $\MPA$ terminates, then all the $\Delta$-neighborhoods of $M$ have been visited by the output $\mathcal{O}$, i.e. that the greedy tangential exploration strategy is somehow exhaustive at scale $\Delta$.
\begin{lemma}
\label{lem:MPA_geodesic_covering}
    Let $M \in \manifolds{n}{d}{\rch_{\min}}$, and assume that $\Delta \le \rch_{\min} / 24$, $\eta < \Delta/64$, and 
    $\max\set{\sin \alpha, \sin \theta} \le 1 / 64$. Assume furthermore that,
    $\frac{5}{8} \frac{\Delta^2}{\rch_{\min}} + \eta + \Delta \sin \theta \le \Lambda$ and 
    $\delta \le 7 \Delta / 10$. If \MPA terminates, then its output $\mathcal{O}$ satisfies
    \[
        \max_{p \in M} \min_{x \in \mathcal{O}} \dd_M(p, \pi_M(x)\bigr) \le \Delta,
    \]
    where $\dd_M(\cdot,\cdot)$ is the geodesic distance of $M$.
\end{lemma}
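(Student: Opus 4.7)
The plan is to argue by contradiction along a minimal geodesic of $M$. Fix an arbitrary $p \in M$. Since \MPA has terminated, every point inserted in $\mathcal{Q}$ has been moved to $\mathcal{O}$; in particular $\hat{x}_0 \in \mathcal{O}$, and \Cref{lem:MPA_distance_maintained} gives $\dd(\hat{x}_0, M) \le \eta$. Let $p_0 = \pi_M(\hat{x}_0) \in M$ and pick an arc-length parametrized minimizing geodesic $\gamma : [0, L] \to M$ from $p_0$ to $p$ (existence by compactness). Define
\[
    S = \set{ t \in [0, L] : \min_{x \in \mathcal{O}} \dd_M(\gamma(t), \pi_M(x)) \le \Delta }.
\]
Since $\mathcal{O}$ is finite and each $t \mapsto \dd_M(\gamma(t), \pi_M(x))$ is continuous, $S$ is closed. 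Let $t^* = \sup\set{ t : [0, t] \subseteq S }$; since $0 \in S$ (take $x = \hat{x}_0$), we have $t^* \ge 0$ and $[0, t^*] \subseteq S$. It suffices to show $t^* = L$, whence $p = \gamma(L)$ is covered.

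Suppose for contradiction $t^* < L$, and let $x^* \in \mathcal{O}$ be a minimizer of $\dd_M(\gamma(t^*), \pi_M(\cdot))$; write $p^* = \pi_M(x^*)$, $q^* = \gamma(t^*)$, and $a = \dd_M(q^*, p^*) \le \Delta$. If $a < \Delta$, continuity of $t \mapsto \dd_M(\gamma(t), p^*)$ yields $t^* + \epsilon \in S$ for some $\epsilon > 0$, contradicting the definition of $t^*$. Hence $a = \Delta$ exactly, and the task reduces to exhibiting some $y \in \mathcal{O}$ whose projection lies \emph{strictly} inside the geodesic $\Delta$-ball around $q^*$, so that $y$ also covers $\gamma(t^* + \epsilon)$ for small $\epsilon > 0$.

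To produce such a $y$, let $\rho : [0, \Delta] \to M$ be the geodesic from $p^*$ to $q^*$ and set $u = \rho'(0) \in T_{p^*} M$. Consider the iteration at which $x^*$ was processed, with tangent estimate $T = \hat{T}(x^*)$ and maximal $(\sin\alpha)$-packing $v_1, \dots, v_k$ of $\Sphere^{d-1}_T(0,1)$. Maximality makes the $v_i$'s a $(\sin\alpha)$-net of the sphere; combining this with assumption \ref{assumption_tangent}, which gives $\normop{\pi_{\hat{T}(x^*)} - \pi_{T_{p^*} M}} \le \sin\theta$, furnishes an index $i$ with $\norm{v_i - u} \le \sin\alpha + 2\sin\theta$. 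By the logic of Lines 7--8, either $\hat{\pi}(x^* + \Delta v_i)$ was inserted into $\mathcal{Q}$ (hence into $\mathcal{O}$ upon termination), or some point of $\mathcal{Q} \cup \mathcal{O}$ was already within $\delta$ of $x^* + \Delta v_i$ at that time; in both cases there exists $y_i \in \mathcal{O}$ with $\norm{y_i - (x^* + \Delta v_i)} \le \delta$ (case (a) being tighter under assumption \ref{assumption_proj}).

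The conclusion follows from a quantitative triangle inequality. Using $\norm{x^* - p^*} \le \eta$, the reach-based estimate $\norm{\exp_{p^*}(\Delta u) - p^* - \Delta u} \le \Delta^2/(2\rch_{\min})$ (from $\norm{\rho''} \le 1/\rch_{\min}$ in \Cref{lem:geodesic_comparison}), orthogonal decomposition of $v_i$ with respect to $T_{p^*} M$ controlled by $\sin\theta$, and $\norm{y_i - \pi_M(y_i)} \le \eta$ from \Cref{lem:MPA_distance_maintained}, one obtains
\[
    \norm{\pi_M(y_i) - q^*}
    \le
    2\eta + \delta + \Delta \sin\alpha + 2 \Delta \sin\theta + \Delta^2/(2 \rch_{\min}).
\]
Under the standing hypotheses $\eta \le \Delta/64$, $\delta \le 7\Delta/10$, $\max\set{\sin\alpha, \sin\theta} \le 1/64$, and $\Delta \le \rch_{\min}/24$, this is strictly below $0.8\,\Delta$; the passage to geodesic distance via \Cref{lem:geodesic_comparison} then yields $\dd_M(\pi_M(y_i), q^*) < \Delta$. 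For any sufficiently small $\epsilon > 0$, $\dd_M(\pi_M(y_i), \gamma(t^* + \epsilon)) \le \dd_M(\pi_M(y_i), q^*) + \epsilon < \Delta$, which places $t^* + \epsilon \in S$ and contradicts the maximality of $t^*$. The main obstacle is precisely this constant balancing: the blocking threshold $\delta \le 7\Delta/10$ is essentially tight, and the remaining slack must absorb the seed inaccuracy $\eta$, the tangent misalignment $\sin\theta$, the angular packing $\sin\alpha$, and the curvature term $\Delta^2/\rch_{\min}$ all at once.
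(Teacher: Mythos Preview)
Your argument is correct and follows the same mechanism as the paper's proof: locate a ``frontier'' point $q^*$ at geodesic distance exactly $\Delta$ from some $\pi_M(x^*)$, find a packing direction $v_i$ nearly aligned with the geodesic from $p^*$ to $q^*$, and show that either the projected step $\hat{\pi}(x^*+\Delta v_i)$ or the blocking point from the $\delta$-test produces a member of $\mathcal{O}$ whose projection is strictly closer to $q^*$, yielding the contradiction.  The arithmetic you present (summing $2\eta$, $\delta$, the angular error, and the curvature term to get something just below $0.8\Delta$, then passing to geodesic distance via \Cref{lem:geodesic_comparison}) is exactly the content of the paper's chain of inequalities.

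The organization differs slightly.  The paper assumes an uncovered point $p_0$, takes $x_0\in\mathcal{O}$ \emph{globally minimizing} $\dd_M(p_0,\pi_M(\cdot))$, and contradicts that minimality directly.  You instead fix the geodesic from the seed projection to $p$ and run a sup-of-covered-prefix argument along it; the minimizer $x^*$ you use at $t^*$ need not be related to the seed.  Both framings reach the same local estimate and either is fine.  Two small corrections: in the paper's convention a maximal $(\sin\alpha)$-packing is a $(2\sin\alpha)$-net (not a $(\sin\alpha)$-net), so the bound on $\norm{v_i-u}$ should read $2\sin\alpha+\sin\theta$ rather than $\sin\alpha+2\sin\theta$; and in case~(a) you should not claim $\norm{y_i-(x^*+\Delta v_i)}\le\delta$ (no lower bound on $\delta$ is assumed here) but rather the explicit bound $\frac{5}{8}\Delta^2/\rch_{\min}+2\eta+\Delta\sin\theta$, which is what makes that case ``tighter''.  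Neither affects the final numerical conclusion since $\sin\alpha,\sin\theta\le 1/64$.
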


We are now in position to prove \Cref{thm:MPA_properties}.

\begin{proof}[\proofof \Cref{thm:MPA_properties}]
    \begin{enumerate}[leftmargin=*]
        \item By construction of \MPA, any visit of the \emph{while} loop 
            (Lines 2--12) finishes with the addition of a point to $\mathcal{O}$. Since $\mathcal{O} = \emptyset$ at initialization, the number of already performed loops is maintained to satisfy $N_{\mathrm{loop}} = |\mathcal{O}|$ when the algorithm runs.
            Furthermore, by \Cref{lem:MPA_distance_maintained} and \Cref{lem:MPA_sparsity_maintained}, we have at all times
            \begin{align*}
                \min_{\substack{x, y \in \mathcal{O} \\ x \neq y}}
                &\norm{\pi_M(x)-\pi_M(y)}
                \\
                &
                \geq
                \min_{\substack{x, y \in \mathcal{Q} \cup \mathcal{O} \\ x \neq y}}
                \norm{\pi_M(x)-\pi_M(y)}
                \\
                &\ge
                \min_{\substack{x, y \in \mathcal{Q} \cup \mathcal{O} \\ x \neq y}}
                \bigl(
                \norm{x-y}
                - \norm{x - \pi_M(x)} - \norm{y - \pi_M(y)}
                \bigr)
                \\
                &\ge
                \left(\delta -  \frac{5}{8} \frac{\Delta^2}{\rch_{\min}} - 2\eta - \Delta \sin \theta\right)
                - 2 \eta
                \\
                &\ge
                \frac{173}{960} \Delta
                \ge
                \frac{173}{960} \frac{10}{7} \delta    
                >
                \frac{\delta}{4}
                >
                0.
            \end{align*}
            This shows that $\pi_M : \mathcal{O} \to \pi_M(\mathcal{O})$ is one-to-one, and that the set $\pi_M(\mathcal{O}) \subseteq M$ is a $(\delta / 8)$-packing of $M$. As a consequence, we have at all times
            \begin{align*}
                N_{\mathrm{loop}}
                =
                |\mathcal{O}|
                =
                |\pi_M(\mathcal{O})|
                \le
                \PK_M(\delta / 8)
                \le
                \dfrac{\Haus^d(M)}{\omega_d(\delta/32)^d}
                ,
            \end{align*}
            where the last inequality follows from \Cref{lem:packing_number}.
            \end{enumerate}

            As $N_{\mathrm{loop}} < \infty$, this first item also shows that \MPA terminates.
            
            \begin{enumerate}[leftmargin=*]
            \addtocounter{enumi}{1}
            
            \item This statement follows directly from \Cref{lem:MPA_distance_maintained}.
            \item We have already shown that \MPA terminates. Therefore,  \Cref{lem:MPA_geodesic_covering} applies, and combining it with Item 2, we get
            \begin{align*}
                \max_{p \in M} \dd(p, \mathcal{O})
                &=
                \max_{p \in M} \min_{x \in \mathcal{O}}
                \norm{p - x}
                \\
                &\le
                \max_{p \in M} \min_{x \in \mathcal{O}} \norm{p-\pi_M(x)}
                +
                \max_{x \in \mathcal{O}} \norm{x - \pi_M(x)}
                \\
                &\le
                \max_{p \in M} \min_{x \in \mathcal{O}} \dd_M(p,\pi_M(x))
                +
                \eta
                \\
                &\le
                \Delta + \eta
                .
            \end{align*}
            \item Follows straightforwardly from Item 2 above, and the assumption that 
            $\normop{ \pi_{T_{\pi_M(x)}M}-\pi_{\hat{T}(x)}}
                \le 
                \sin \theta
            $
            for all $x \in \R^n$ such that $\dd(x,M) \le \eta$.
    \end{enumerate}
\end{proof}

    \section{Geometric Routines with Statistical Queries}
    \label{sec:SQ-routines}
    Coming back to manifold estimation with SQ's, we notice that by combining together:
\vspace{-\topsep}
\begin{description}[font=\it]
\item[(Exploration)]
    the greedy point cloud construction of \Cref{thm:MPA_properties} using geometric routines only,

\vspace{-0.9em}
\item[(Reconstruction)]
    the point cloud-based reconstruction method of 
\Cref{thm:manifold_reconstruction_deterministic},
\end{description}
\vspace{-\topsep}

\noindent
we have reduced the problem to constructing SQ algorithms emulating these routines with a $\STAT(\tau)$ oracle.
We now present constructions of SQ algorithms for 
the projection routine $\hat{\pi}(\cdot)$ (\Cref{subsec:SQ-projection-presentation}),
the tangent space estimation routine $\hat{T}(\cdot)$ 
(\Cref{subsec:SQ-tangent-presentation}), and the seed point detection $\hat{x}_0$
(\Cref{subsec:SQ-seed-presentation}).

\subsection{Projection}
\label{subsec:SQ-projection-presentation}
Given a point $x_0 \in \R^n$ nearby $M = \supp(D)$, we aim at estimating its metric
projection $\pi_M(x_0)$ onto $M$ with statistical queries to $\STAT(\tau)$.
As mentioned earlier, the reasoning we adopt is as follows:
\begin{itemize}[leftmargin=*]
    \item
        For a properly chosen bandwidth $h>0$, the local conditional mean 
        \begin{align*}
            m_D(x_0,h)
            &=
            \E_{x \sim D}\left[ x \left| \B(x_0,h) \right. \right]
            =
            x_0
            +
            h\frac{
                \E_{x \sim D}\left[ \frac{(x-x_0)}{h} 
                    \indicator{\norm{x-x_0}\le h} \right]
            }
            {
                D(\B(x_0,h))
            }
        \end{align*} 
        of $D$ around $x_0$ has small bias for estimating $\pi_M(x_0)$ (\Cref{lem:local_conditional_mean_bias}).
    \item 
        As $m_D(x_0,h) \in \R^{n}$ writes as a functional of the two means
        $D(\B(x_0,h)) = \E_{x \sim D} [\indicator{\norm{x - x_0}\le h}] \in \R$,
        and
        $\E_{x \sim D}\left[ 
            \frac{(x-x_0)}{h} \indicator{\norm{x-x_0}\le h} 
        \right] \in \R^n$,
        it can be estimated using $2n+1$ queries (\Cref{lem:mean_vector_estimation}).
\end{itemize}
The proof of these results are to be found in \Cref{sec:SQ-projection-appendix}.
Combined together, we then prove the correctness of the SQ projection estimation 
procedure (\Cref{thm:routine_projection}) in \Cref{subsec:projection-estimation-with-SQs}.

\begin{theorem}[SQ Projection Estimation]
\label{thm:routine_projection}
    Let $D \in \distributions{n}{d}{\rch_{\min}}{f_{\min}}{f_{\max}}{L}$ have support 
    $M = \supp(D)$. Assume that 
    \[
        \frac{\tau}{\omega_d f_{\min}  \rch_{\min}^d} 
        \le 
        c^{d(d+1)} 
        \Gamma^d
        \text{~and~} 
        \Lambda \le 
        \frac{\rch_{\min}}{16}
    ,
    \]
    for some small enough absolute constant $c>0$, where 
    $
        \Gamma 
        = 
        \Gamma_{f_{\min},f_{\max},L} 
        = 
        \frac{f_{\min}}{f_{\max} + L \rch_{\min}} 
        .
    $
    
    Then for all $x_0 \in \R^n$ such that $\dd(x_0, M) \le \Lambda $, there exists
    a SQ algorithm making $2n + 1$ queries to $\STAT(\tau)$, that outputs a point
    $\hat{\pi}(x_0) \in \R^n$ estimating $\pi_M(x_0)$ with precision 
    \[
            \norm{\hat{\pi}(x_0) - \pi_M(x_0)}
            \leq
            \eta 
            =
            \frac{C^d}{\Gamma}
            \max\set{
                \frac{\Lambda^2}{\rch_{\min}}
            ,
                \Gamma^\frac{2}{d+1}
                \rch_{\min}
                \left(\frac{\tau}{\omega_d f_{\min} \rch_{\min}^d} \right)^{\frac{2}{d+1}}
            }
            ,
    \]
    where $C>0$ is an absolute constant.
\end{theorem}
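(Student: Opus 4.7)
The plan is to implement the strategy sketched just above the statement: for a bandwidth $h > 0$ to be tuned at the end, the algorithm estimates the local conditional mean $m_D(x_0, h) = \E_{x \sim D}[x \mid \B(x_0, h)]$ by estimating separately its denominator $V = D(\B(x_0, h)) = \E_{x \sim D}[\indicator{\norm{x - x_0} \le h}]$ and its vector numerator $U = \E_{x \sim D}[\frac{x - x_0}{h}\indicator{\norm{x - x_0} \le h}]$. The scalar $V$ costs one query and yields $\hat V$ with $|\hat V - V|\le \tau$. The vector $U$ is of the form $\E_{x \sim D}[F(x)]$ with $\norm{F(x)}\le 1$, so Lemma~\ref{lem:mean_vector_estimation} produces $\hat U$ with $\norm{\hat U - U}\le C\tau$ using $2n$ queries. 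The output $\hat\pi(x_0) = x_0 + h\,\hat U / \hat V$ then costs $2n + 1$ queries in total.

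For the error analysis, I would split via the triangle inequality as $\norm{\hat\pi(x_0) - \pi_M(x_0)} \le \norm{\hat\pi(x_0) - m_D(x_0, h)} + \norm{m_D(x_0, h) - \pi_M(x_0)}$. The \emph{bias} term is controlled by Lemma~\ref{lem:local_conditional_mean_bias}, giving a bound of order $(\Lambda^2 + h^2)/\rch_{\min}$ up to prefactors depending on $\Gamma$. For the \emph{SQ} term, using the identity $\hat U / \hat V - U / V = ((\hat U - U)V + U(V - \hat V))/(\hat V V)$ together with $\norm{U} \le V$ yields $\norm{\hat\pi(x_0) - m_D(x_0, h)} \le h(C+1)\tau / \hat V$. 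Provided $V \ge 2\tau$, we have $\hat V \ge V/2$, and a volume--density lower bound of the form $V \gtrsim \Gamma\, \omega_d f_{\min} h^d$ (obtained from the reach assumption via the $d$-volume estimate on $\B(x_0, h) \cap M$ together with the Lipschitz control of $f$) then turns the SQ term into $O(\tau / (\Gamma\, \omega_d f_{\min}h^{d-1}))$.

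Finally, I would balance $h^2/\rch_{\min}$ against $\tau/(\Gamma\,\omega_d f_{\min}h^{d-1})$ by choosing $h \asymp \rch_{\min}\,(\tau/(\omega_d f_{\min}\rch_{\min}^d))^{1/(d+1)}$, absorbing $\Gamma$-factors appropriately, so that both terms become of order $\Gamma^{-\mathrm{poly}(d)}\,\rch_{\min}\,(\tau/(\omega_d f_{\min}\rch_{\min}^d))^{2/(d+1)}$. Adding the irreducible $\Lambda^2/\rch_{\min}$ contribution from the initial offset of $x_0$ recovers the announced $\max$. The main obstacle I expect is threefold: (i) extracting the volume lower bound $V \gtrsim \Gamma\,\omega_d f_{\min}h^d$ cleanly from the Lipschitz density assumption, since $\Gamma = f_{\min}/(f_{\max} + L\rch_{\min})$ is precisely the worst-case density oscillation on a ball of diameter $\lesssim \rch_{\min}$; (ii) checking that the hypothesis $\tau/(\omega_d f_{\min}\rch_{\min}^d) \le c^{d(d+1)}\Gamma^d$ makes the chosen $h$ simultaneously satisfy $h \le \rch_{\min}/16$ (so Lemma~\ref{lem:local_conditional_mean_bias} applies) and $V \ge 2\tau$ (so $\hat V$ is bounded away from zero); and (iii) tracking how the density inhomogeneity enters the bias term, since the conditional mean of a non-uniform distribution is generically pulled towards mass-richer regions of $M$, producing additional $\Gamma$-dependent factors that must not disturb the balance above.
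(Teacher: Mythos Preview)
Your plan is essentially the paper's proof: same $2n+1$ queries, same triangle-inequality split into an SQ term handled by the ratio identity and a bias term handled by Lemma~\ref{lem:local_conditional_mean_bias}, same balancing of $h$. Two clarifications will save you the trouble you anticipate in obstacles (i) and (iii). First, the mass lower bound needs no Lipschitz input and no $\Gamma$: Lemma~\ref{lem:intrinsic_ball_mass} gives $V = D(\B(x_0,h)) \ge c^d \omega_d f_{\min}(h^2-\dd(x_0,M)^2)^{d/2}$ directly from $f\ge f_{\min}$, so once $h \ge \sqrt{2}\Lambda$ you get $V \ge c^d \omega_d f_{\min}(h/\sqrt 2)^d$. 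The factor $\Gamma^{-1}$ enters \emph{only} through the bias Lemma~\ref{lem:local_conditional_mean_bias}, whose bound is $C^d\Gamma^{-1} h r_h/\rch_{\min} \le C^d\Gamma^{-1} h^2/\rch_{\min}$; keeping $\Gamma$ out of the SQ term is what produces the precise $\Gamma^{2/(d+1)}$ inside the $\max$ after optimisation. Second, the $\Lambda^2/\rch_{\min}$ term does not arise as a separate additive bias contribution: it comes from the constraint $h \ge 2\Lambda$ (needed so that $\B(x_0,h)\cap M\neq\emptyset$ and $r_h \ge h/\sqrt 2$), which forces the bias $\asymp h^2/(\Gamma\rch_{\min})$ to be at least $\Lambda^2/(\Gamma\rch_{\min})$. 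The paper therefore takes $h = \max\{2\Lambda,\ \rch_{\min}\Gamma^{1/(d+1)}(\tau/(\omega_d f_{\min}\rch_{\min}^d))^{1/(d+1)}\}$, and the hypothesis on $\tau$ is exactly what makes this $h$ satisfy $h\le \rch_{\min}/8$ and $c^d\omega_d f_{\min}(h/\sqrt 2)^d \ge 2\tau$.
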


\subsection{Tangent Space}
\label{subsec:SQ-tangent-presentation}

Given a point $x_0 \in \R^n$ nearby $M = \supp(D)$, we aim at estimating the tangent space $T_{\pi_M(x_0)} M$ with statistical queries to $\STAT(\tau)$. 
The strategy we propose is based on local Principal Components Analysis, combined with low-rank matrix recovery with SQ's.
This local PCA approach is similar to that of \cite{Arias13,Aamari18}.
As described above, the reasoning is as follows:
\begin{itemize}[leftmargin=*]
    \item
        For a properly chosen bandwidth $h>0$, the local (rescaled) covariance matrix 
        \[
            \Sigma_D(x_0,h)
            =
            \E_{x \sim D} \left[
                \frac{(x-x_0)\transpose{(x-x_0)}}{h^2} \mathbbm{1}_{\norm{x-x_0} \le h} 
            \right] \in \R^{n \times n}
        \] 
        of $D$ around $x_0$ is nearly rank-$d$, and its first $d$ components
        span a $d$-plane close to $T_{\pi_M(x_0)} M \in \Gr{n}{d}$
        (\Cref{lem:covariance_decomposed}).
    \item 
        Principal components being stable to perturbations 
        (\Cref{lem:angledeviation}), estimating 
        $\Sigma_D(x_0,h) \in \R^{n\times n}$ is sufficient to estimate 
        $T_{\pi_M(x_0)} M \in \Gr{n}{d}$.
    \item 
        Estimating $\Sigma_D(x_0,h) \in \R^{n\times n} = \R^{n^2}$ using 
        $O(n^2)$ queries (\Cref{lem:mean_vector_estimation}) is too costly 
        and would be redundant since $\Sigma_D(x_0,h)$ is nearly rank $d \ll n$.
        Instead, we use matrix compression arguments 
        (\Cref{thm:RIP_implies_Stable_Reconstruction}) and an explicit
        construction of a matrix sensing operator (\Cref{lem:Pauli_Satisfy_RIP}) 
        to derive a general mean low-rank matrix SQ algorithm 
        (\Cref{lem:mean_matrix_estimation}).
        This result roughly asserts that a mean matrix $\Sigma = \E_{x \sim D}[F(x)] \in \R^{n \times n}$ of a bounded function $F : \R^n \to \R^{n \times n}$ that has nearly rank $d$ can be estimated with precision $C\tau$ using 
        $n d \operatorname{polylog}(n)$ queries to $\STAT(\tau)$.
\end{itemize}
The proof of these results are to be found in \Cref{sec:SQ-tangent-appendix}.
All combined together, we then prove the correctness of the SQ tangent space estimation procedure (\Cref{thm:routine_tangent}) in \Cref{subsec:tangent-space-estimation-with-SQs}.

\begin{theorem}[SQ Tangent Space Estimation]
\label{thm:routine_tangent}
    Let $D \in \distributions{n}{d}{\rch_{\min}}{f_{\min}}{f_{\max}}{L}$ have support $M = \supp(D)$. 
    Assume that
    \[
        \frac{\tau}{\omega_d f_{\max} \rch_{\min}^d} 
        \leq 
        \left(\frac{1}{8\sqrt{d}}\right)^{d+1}
        \text{  and  }
        \eta \leq \frac{\rch_{\min}}{64d}
        .
    \]
    Then for all $x_0 \in \R^n$ such that $\dd(x_0,M) \leq \eta$, there exists a deterministic SQ algorithm making at most 
    $C d n \log^6(n)$ queries to $\STAT(\tau)$, and that outputs a $d$-plane $\hat{T}(x_0) \in \Gr{n}{d}$ estimating $T_{\pi_M(x_0)} M$ with precision
    \begin{align*}
    \normop{\pi_{\hat{T}(x_0)} - \pi_{T_{\pi_M(x_0)} M}}
    &\leq
    \sin \theta 
    \\
    &= 
    \tilde{C}^d
    \frac{f_{\max}}{f_{\min}}
        \max\set{
            \sqrt{\frac{\eta}{\rch_{\min}}} 
            ,
            \left(
                \frac{\tau}{\omega_d f_{\max} \rch_{\min}^d}
            \right)^{\frac{1}{d+1}}
        }
        ,
    \end{align*}
    where $\tilde{C}>0$ is an absolute constant.
\end{theorem}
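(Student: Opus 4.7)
The plan is to implement the three-step reduction sketched right before the theorem statement: (i) reduce tangent space estimation to estimating the local covariance $\Sigma_D(x_0,h)$, (ii) control the spectral geometry of $\Sigma_D(x_0,h)$ so that its top $d$ eigenspace is close to $T_{\pi_M(x_0)}M$, and (iii) estimate $\Sigma_D(x_0,h)$ efficiently in $\STAT(\tau)$ using the low-rank mean-matrix SQ routine, then optimize the bandwidth $h$.

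More concretely, I would write $\Sigma_D(x_0,h) = \E_{x \sim D}[F(x)]$ with $F(x) = (x-x_0)\transpose{(x-x_0)} h^{-2} \indicator{\norm{x-x_0}\le h}$, noting $\normF{F(x)} \le 1$ pointwise. I would then invoke the covariance decomposition (\Cref{lem:covariance_decomposed}) to obtain, for $h$ small compared to $\rch_{\min}$ and $\eta \le h$, that $\Sigma_D(x_0,h)$ has its top $d$ eigenvalues of order $\omega_d f_{\min} h^d$ and bottom $n-d$ eigenvalues of order at most $\omega_d f_{\max}\,h^{d+2}/\rch_{\min}^2$, and that the principal angle between its top $d$-eigenspace and $T_{\pi_M(x_0)}M$ is at most $C(\sqrt{\eta/\rch_{\min}} + h/\rch_{\min})$. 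Next, I would feed $F$ to the low-rank mean-matrix SQ algorithm (\Cref{lem:mean_matrix_estimation}), which returns $\hat\Sigma$ with $\normF{\hat\Sigma - \Sigma_D(x_0,h)} \le C'\tau$ using $O(dn\operatorname{polylog}(n))$ queries to $\STAT(\tau)$, and output $\hat T(x_0)$ as the span of the top $d$ eigenvectors of $\hat\Sigma$. Applying the Davis-Kahan-type principal angle bound (\Cref{lem:angledeviation}) to the gap $\lambda_d - \lambda_{d+1} \gtrsim \omega_d f_{\min} h^d$ then yields
\[
\normop{\pi_{\hat T(x_0)} - \pi_{T_{\pi_M(x_0)}M}}
\;\lesssim\;
\frac{f_{\max}}{f_{\min}} \left( \sqrt{\frac{\eta}{\rch_{\min}}} + \frac{h}{\rch_{\min}} + \frac{\tau}{\omega_d f_{\min} h^d} \right).
\]

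The last step is to choose $h$ to balance the curvature bias $h/\rch_{\min}$ against the SQ-noise term $\tau/(\omega_d f_{\min} h^d)$, which gives $h \sim \rch_{\min}\bigl(\tau/(\omega_d f_{\max} \rch_{\min}^d)\bigr)^{1/(d+1)}$. This choice is legal under the hypothesis $\tau/(\omega_d f_{\max}\rch_{\min}^d) \le (8\sqrt d)^{-(d+1)}$, since it forces $h \le \rch_{\min}/(8\sqrt d)$, which together with $\eta \le \rch_{\min}/(64d)$ keeps us in the regime where \Cref{lem:covariance_decomposed} applies. Substituting back produces the stated bound
$\sin\theta = \tilde C^d (f_{\max}/f_{\min}) \max\{\sqrt{\eta/\rch_{\min}},\,(\tau/(\omega_d f_{\max}\rch_{\min}^d))^{1/(d+1)}\}$, with the dimension-dependent $\tilde C^d$ absorbing the constants from the covariance decomposition and the spectral-gap comparison.

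The main obstacle, and the place where the bulk of the technical work lies, is the spectral analysis of $\Sigma_D(x_0,h)$: one must simultaneously obtain a lower bound of order $\omega_d f_{\min} h^d$ on the $d$-th eigenvalue, an upper bound of order $\omega_d f_{\max} h^{d+2}/\rch_{\min}^2$ on the $(d+1)$-th eigenvalue, and a quantitative principal-angle bound on the top eigenspace, uniformly over off-manifold query points at distance $\eta$ and under only a bounded-Lipschitz-density assumption on $f$. A secondary but essential subtlety is that the $n^2$-dimensional Frobenius guarantee from \Cref{lem:mean_matrix_estimation} must hold with precision $O(\tau)$ rather than the naive $O(n\tau)$ one would obtain by treating $\Sigma_D(x_0,h)$ as a generic mean vector in $\R^{n\times n}$; this is exactly where the near-optimal $O(dn\operatorname{polylog}(n))$ query complexity comes from, and it is crucial that the matrix-sensing construction used there respects the Frobenius normalization $\normF{F(x)}\le 1$ rather than an entrywise bound.
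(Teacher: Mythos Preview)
Your architecture matches the paper's proof: decompose $\Sigma_D(x_0,h)$ via \Cref{lem:covariance_decomposed}, estimate it with the low-rank matrix routine \Cref{lem:mean_matrix_estimation}, apply Davis--Kahan (\Cref{lem:angledeviation}), then tune $h$. The gap is in how you state the bias and, as a consequence, how you choose $h$. \Cref{lem:covariance_decomposed} gives $\Sigma_D(x_0,h)=\Sigma_0+R$ with $\Sigma_0$ of rank $d$ and image $T_{\pi_M(x_0)}M$, $\mu_d(\Sigma_0)\gtrsim \omega_d f_{\min} h^d$, and $\normNuc{R}\lesssim \omega_d f_{\max} h^d\bigl(\eta/h+h/\rch_{\min}\bigr)$. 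After Davis--Kahan the angle is therefore controlled by $(f_{\max}/f_{\min})\bigl(\eta/h+h/\rch_{\min}\bigr)+\tau/(\omega_d f_{\min} h^d)$, with an $\eta/h$ term, not the $h$-independent $\sqrt{\eta/\rch_{\min}}$ you write. The square-root expression only arises \emph{after} $h$ has been optimized to balance $\eta/h$ against $h/\rch_{\min}$.

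This matters because your bandwidth $h\sim\rch_{\min}\bigl(\tau/(\omega_d f_{\max}\rch_{\min}^d)\bigr)^{1/(d+1)}$ only balances the last two terms. In the regime $\sqrt{\eta/\rch_{\min}}\gg\bigl(\tau/(\omega_d f_{\max}\rch_{\min}^d)\bigr)^{1/(d+1)}$ your $h$ is too small: it violates the hypothesis $\eta\le h/\sqrt 2$ of \Cref{lem:covariance_decomposed}, and even formally the $\eta/h$ term diverges as $\tau\to 0$, so the bound you display does not follow. The fix, and what the paper does, is to take $h=\rch_{\min}\max\bigl\{\sqrt{\eta/\rch_{\min}},\,(\tau/(\omega_d f_{\max}\rch_{\min}^d))^{1/(d+1)}\bigr\}$; the first branch balances $\eta/h$ against $h/\rch_{\min}$, the second balances $h/\rch_{\min}$ against $\tau/h^d$, and it is precisely this two-branch choice that generates the $\max$ in the stated precision. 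One further small omission: \Cref{lem:mean_matrix_estimation} outputs $\normF{\hat\Sigma-\Sigma_D}\le C_0\normNuc{\Sigma_D-\Sigma_D^{(d)}}/\sqrt d+C_1\tau$, not just $C'\tau$; the extra nuclear-norm term is bounded by $\normNuc{R}$ and absorbed into the bias, so it does not affect the rate, but it should not be dropped.
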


\subsection{Seed Point}
    \label{subsec:SQ-seed-presentation}

Given a ball of radius $R>0$ guaranteed to encompass 
$M = \supp(D) \subseteq \B(0,R)$, and a target precision $\eta> 0$, we aim at finding a point that is $\eta$-close to $M$ with statistical queries to $\STAT(\tau)$.
The strategy we propose is as follows:
\begin{itemize}[leftmargin=*]
    \item
        Starting from $\B(0,R)$, we use a divide and conquer strategy (\Cref{thm:routine_detection_raw_point}).
        The algorithm (\ABS) queries indicator functions of an interactively chosen union of balls (i.e. the queried balls depend on the previous answers of the oracle), stops when there is only one ball left and outputs its center $\hat{x}_0^{raw}$.
        This method only uses estimates on the local mass of balls for $D$ (\Cref{lem:intrinsic_ball_mass}), and forgets about the differential structure and $\mathcal{C}^2$-smoothness of $M$. 
        Hence, although efficient, it only obtains a precision $O(\max\set{\eta,\tau^{1/d}})$, that can be much larger than the prescribed one $O(\max\set{ \eta, \tau^{2/(d+1)}})$.
    \item 
        Starting from $\hat{x}_0^{raw}$, we then refine this detected point by iterating the SQ projection routine $\hat{\pi}(\cdot)$ (\Cref{thm:routine_projection}), which does use extensively the $\mathcal{C}^2$-smoothness of $M$.
        As $\hat{x}_0^{raw}$ is close to $M$, this procedure is guaranteed to enhance precision quadratically at each step, and is hence satisfactory (i.e. has precision $\eta$) after a logarithmic number of iterations.

\end{itemize}
The proof of these results are to be found in \Cref{sec:SQ-seed-appendix}.

    \begin{theorem}[SQ Point Detection]
    \label{thm:routine_detection_seed_point}
        Let $D \in \distributionsball{n}{d}{\rch_{\min}}{f_{\min}}{f_{\max}}{L}{R}$ have support $M = \supp(D) \subseteq \B(0,R)$.
        Assume that
        \[
        \frac{\tau}{\omega_d f_{\min}  \rch_{\min}^d}
        \le 
        c \Gamma^d
        \min\set{
        c^{d}
        ,
            \left(
                n
                \log\left(
                        R/(\Gamma \rch_{\min}
                \right)
        \right)^{-1/2}
        }^{d},
        \text{  and  }        
        \eta
        \leq
        \frac{\rch_{\min}}{16}
        \]
        for some small enough $c>0$, where 
        $
            \Gamma 
            = 
            \Gamma_{f_{\min},f_{\max},L} 
            = 
            \frac{f_{\min}}{f_{\max} + L \rch_{\min}} 
            ,
        $
        
        Then there exists a deterministic SQ algorithm making at most $6 n \log(6R/\eta)$ queries to $\STAT(\tau)$, 
        and that outputs a point $\hat{x}_0\in \B(0,R)$ such that 
        \[
            \dd(\hat{x}_0,M)
            \le
            \max\set{
                \eta
                ,
                C^d\Gamma^{\frac{2}{d+1}-1}
                \rch_{\min}
                \left(\frac{\tau}{\omega_d f_{\min} \rch_{\min}^d} \right)^{\frac{2}{d+1}}
            }
            ,
        \]
        where $C>0$ is the absolute constant of \Cref{thm:routine_projection}.
    \end{theorem}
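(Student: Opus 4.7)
The plan is to execute the two-stage strategy outlined just above the theorem statement: a coarse \ABS pass yielding a raw point close to $M$, followed by quadratic refinement via iterated calls to the SQ projection routine $\hat{\pi}(\cdot)$ of \Cref{thm:routine_projection}. I would first verify that the numerical hypothesis on $\tau$ in the statement is exactly what is needed to chain the two stages together.

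\textbf{Stage 1.} I would invoke the raw point detection result (\Cref{thm:routine_detection_raw_point}) to produce some $\hat{x}_0^{\mathrm{raw}} \in \B(0,R)$ with $\dd(\hat{x}_0^{\mathrm{raw}}, M) \leq \Lambda_0$ for an intermediate scale $\Lambda_0$. Up to $\Gamma$-dependent constants, this scale should be the smallest ball radius $r$ for which the $D$-mass of an $r$-ball centred near $M$ provably exceeds $2\tau$, so that the binary search over unions of balls never loses track of $\supp(D)$. By \Cref{lem:intrinsic_ball_mass}, this gives $\Lambda_0$ of the order $\rch_{\min} (\tau/(\omega_d f_{\min} \rch_{\min}^d))^{1/d}$. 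The cost is $O(n \log(R/\Lambda_0))$ queries of ball-indicator type.

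\textbf{Stage 2.} Starting from $\hat{x}_0^{(0)} = \hat{x}_0^{\mathrm{raw}}$, I would iterate $\hat{x}_0^{(k+1)} = \hat{\pi}(\hat{x}_0^{(k)})$ using \Cref{thm:routine_projection}. Writing $\Lambda_k = \dd(\hat{x}_0^{(k)}, M)$, that theorem gives
\[
    \Lambda_{k+1} \leq \frac{C^d}{\Gamma}
    \max\set{\frac{\Lambda_k^{2}}{\rch_{\min}}, \, \Gamma^{\frac{2}{d+1}} \rch_{\min} \left(\frac{\tau}{\omega_d f_{\min} \rch_{\min}^d}\right)^{\frac{2}{d+1}}}
\]
as long as $\Lambda_k \leq \rch_{\min}/16$. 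The condition on $\tau$ in the hypothesis is tailored so that $\Lambda_0$ lies well inside the contractive basin, i.e.\ $(C^d/\Gamma)\Lambda_0/\rch_{\min} \leq 1/2$, so the first branch of the max contracts $\Lambda_k$ at least geometrically (and then quadratically) until it hits the $\tau$-imposed floor
\[
    \Lambda_\infty = C^d \, \Gamma^{\frac{2}{d+1}-1} \, \rch_{\min} \left(\frac{\tau}{\omega_d f_{\min} \rch_{\min}^d}\right)^{\frac{2}{d+1}}.
\]
A logarithmic (really doubly-logarithmic) number of iterations suffices to reach $\max\set{\eta, \Lambda_\infty}$, and with $2n+1$ queries per iteration, the total cost of this refinement is dominated by that of Stage~1. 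Summing and tracking the absolute constants carefully should yield the announced $6n \log(6R/\eta)$ upper bound on the number of queries.

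\textbf{Main obstacle.} The delicate part is aligning three numerical constraints: (a) the hypothesis on $\tau$ required by \Cref{thm:routine_detection_raw_point} to drive \ABS down to scale $\Lambda_0$; (b) the per-iteration hypothesis $\Lambda_k \leq \rch_{\min}/16$ of \Cref{thm:routine_projection}, which must hold uniformly over the refinement loop; and (c) the contraction condition $(C^d/\Gamma) \Lambda_0/\rch_{\min} \leq 1/2$ guaranteeing that the $\Lambda_k^{2}/\rch_{\min}$ branch of the recursion is active from $k=0$. The precise shape of the assumption, and especially the factor $(n \log(R/(\Gamma \rch_{\min})))^{-1/2}$ inside the $\min$, strongly suggests that (a) demands uniformity across the $\log(R/\Lambda_0)$ levels of the binary search, and reconciling that with (c) is where the bookkeeping will concentrate. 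Once these thresholds are matched, the two-stage chaining itself is essentially mechanical.
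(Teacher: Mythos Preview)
Your two-stage plan is exactly the paper's approach, and your identification of the three numerical constraints (a)--(c) is on point. The one substantive difference is the choice of the intermediate scale $\Lambda_0$. You propose to drive \ABS down to the smallest scale it can reach, namely $\Lambda_0$ of order $\sqrt{n\log(R/\Lambda_0)}\,\rch_{\min}(\tau/(\omega_d f_{\min}\rch_{\min}^d))^{1/d}$. The paper instead takes the \emph{fixed}, $\tau$-independent value
\[
    \Lambda_0 = \max\Bigl\{\eta,\ \min\bigl\{\tfrac{1}{16},\tfrac{\Gamma}{2C^d}\bigr\}\rch_{\min}\Bigr\},
\]
i.e.\ it stops the binary search right at the boundary of the contractive basin of $\hat{\pi}(\cdot)$. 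This choice dissolves the ``main obstacle'' you describe: constraint (c) holds \emph{by construction}, constraint (b) is immediate since $\Lambda_0 \le \rch_{\min}/16$, and constraint (a) reduces to a single check that the hypothesis of \Cref{thm:routine_detection_raw_point} is satisfied at this fixed scale---which is precisely what the $(n\log(R/(\Gamma\rch_{\min})))^{-1/2}$ term in the assumption on $\tau$ provides. The refinement loop then contracts geometrically, $\dd(\hat{y}_k,M) \le \max\{\Lambda_0/2^k,\Lambda_\infty\}$, so $k_0 = \lceil\log_2(\Lambda_0/\eta)\rceil$ iterations suffice and the query count $3n\log(6R/\Lambda_0) + k_0(2n+1) \le 6n\log(6R/\eta)$ follows directly. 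Your variant would also go through, but the paper's choice of $\Lambda_0$ makes the bookkeeping you anticipated essentially disappear.
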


    \section{Manifold Estimation with Statistical Queries}
    \label{sec:SQ-manifold-estimation}
    We are now in position to state the main results of this work, namely bounds on the statistical query complexity of manifold estimation in $\STAT(\tau)$.
We split the results into the two studied models 
$\distributionspoint{n}{d}{\rch_{\min}}{f_{\min}}{f_{\max}}{L}{0}$ and $ \distributionsball{n}{d}{\rch_{\min}}{f_{\min}}{f_{\max}}{L}{R}$.
For each model, we get an upper bound by combining the results of \Cref{sec:mpa,sec:SQ-routines}.
It is followed by an informational and a computational lower bound, coming from the general lower bound techniques of \Cref{sec:lower-bounds} and the constructions of \Cref{sec:manifold-lower-bound-constructions}.

\subsection{Fixed Point Model}

In $\distributionspoint{n}{d}{\rch_{\min}}{f_{\min}}{f_{\max}}{L}{0}$, the origin $0 \in \R^n$ is known to belong to $M$. 
The SQ algorithm we propose consists in running \MPA with seed point $\hat{x}_0 = 0$ and the SQ projection and tangent space routines of \Cref{thm:routine_projection,thm:routine_tangent}. This leads to the following upper bound.
Let us mention that one could easily extend this result and relax the assumption that $0 \in M$ to $\dd(0,M)$ being small enough.
\begin{theorem}
\label{thm:SQ_upper_bound_point}
    Let $D \in \distributionspoint{n}{d}{\rch_{\min}}{f_{\min}}{f_{\max}}{L}{0}$ have support $M = \supp(D)$.
    Writing  
    $
        \Gamma 
        = 
        \Gamma_{f_{\min},f_{\max},L} 
        = 
        \frac{f_{\min}}{f_{\max} + L \rch_{\min}} 
        ,
    $
    let us assume that 
    \[
        \frac{\tau}{f_{\min} \rch_{\min}^d} 
        \leq
        c^d \Gamma^{7(d+1)/2}
        \text{~and~}
        \varepsilon 
        \leq 
        \tilde{c}^d \Gamma^3
        \rch_{\min}
        ,
    \]
    for some small enough absolute constants $c,\tilde{c}>0$.
    Then there exists a deterministic SQ algorithm making at most
    \[
         q \leq 
         n \log^6 n \frac{C_d}{f_{\min}\rch_{\min}^d}
         \left( \frac{\rch_{\min}}{\varepsilon} \right)^{d/2}
    \]
    queries to $\STAT(\tau)$, and that outputs a finite triangulation $\hat{M} \subseteq \R^n$ that has the same topology as $M$, and such that
    \begin{align*}
        \dHaus(M,\hat{M})
        \leq
        \max\set{
        \varepsilon
        ,
        \frac{\tilde{C}^d}{\Gamma^3} \rch_{\min}\left( \frac{\tau}{ f_{\min} \rch_{\min}^d}  \right)^{2/(d+1)}
        }
        ,
    \end{align*}
    where $C_d>0$ depends on $d$ and $\tilde{C}>0$ is an absolute constant.
\end{theorem}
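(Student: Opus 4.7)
The strategy is to cascade the three tools already in place: run \MPA (\Cref{thm:MPA_properties}) with the SQ implementations of its routines (\Cref{thm:routine_projection,thm:routine_tangent}), seeded trivially at $\hat{x}_0 = 0 \in M$, and then apply the point cloud reconstruction result (\Cref{thm:manifold_reconstruction_deterministic}) to the output. Driving all of the subsequent calculations is the relation $\dHaus(M,\hat{M}) \lesssim \Delta^2/\rch_{\min}$ granted by \Cref{thm:manifold_reconstruction_deterministic}, which dictates the choice
\[
\Delta \asymp \sqrt{\rch_{\min}\,\max\!\set{\varepsilon,\,\tfrac{\tilde{C}^d}{\Gamma^3}\rch_{\min}(\tau/(f_{\min}\rch_{\min}^d))^{2/(d+1)}}},
\]
together with $\delta = \Delta/2 \in [3\Delta/10, 7\Delta/10]$ and a small absolute $\alpha$ (e.g.\ $\sin\alpha = 1/64$). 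The standing hypotheses on $\tau$ and $\varepsilon$ in the statement ensure that both $\Delta \le \rch_{\min}/24$ and that the SQ routines are in their applicable regimes.

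Next I would calibrate the routine precisions $(\eta,\theta,\Lambda)$ to meet simultaneously the hypotheses of \MPA and the hypotheses of \Cref{thm:manifold_reconstruction_deterministic}. The reconstruction result demands $\eta \le \Delta^2/(1140\rch_{\min})$ and $\sin\theta \le \Delta/(1140\rch_{\min})$; the \MPA assumptions further require $\eta \le \Delta/64$, $\sin\theta \le 1/64$, and $5\Delta^2/(8\rch_{\min}) + \eta + \Delta\sin\theta \le \Lambda$. Setting $\Lambda \asymp \Delta^2/\rch_{\min}$ (within the $\rch_{\min}/16$ ceiling of \Cref{thm:routine_projection}), the projection routine delivers
$\eta \lesssim (C^d/\Gamma)\Lambda^2/\rch_{\min} \asymp \Delta^4/\rch_{\min}^3$ (floored by the $\tau$-term) and the tangent routine delivers $\sin\theta \lesssim (\tilde{C}^d f_{\max}/f_{\min})\sqrt{\eta/\rch_{\min}}$ (again floored by a $\tau$-term). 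The two $\tau$-floors are precisely what dictate the $\max$ appearing in the final bound on $\dHaus(M,\hat{M})$, and the constants $c^d\Gamma^{7(d+1)/2}$ and $\tilde{c}^d\Gamma^3$ in the hypotheses are exactly those that ensure these $\tau$-floors do not dominate $\Delta^2/\rch_{\min}$ (and that the preconditions $\eta \ll \Delta$, $\sin\theta \ll 1$ are met for $d \ge 1$).

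With all preconditions verified, \MPA terminates (\Cref{thm:MPA_properties}) and returns $(\mathcal{O},\mathbb{T}_{\mathcal{O}})$ with $\max_{x\in\mathcal{O}}\dd(x,M) \le \eta$, $\max_{p\in M}\dd(p,\mathcal{O})\le \Delta + \eta$ and $\max_{x\in\mathcal{O}}\normop{\pi_{T_{\pi_M(x)}M} - \pi_{\hat{T}(x)}} \le \sin\theta$. Feeding this input to \Cref{thm:manifold_reconstruction_deterministic} yields a triangulation $\hat{M}$, ambient isotopic to $M$ (hence of the same topology), with $\dHaus(M,\hat{M}) \le C_d\Delta^2/\rch_{\min}$; by our choice of $\Delta$ this is at most the stated bound.

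Finally, the query count is obtained as (number of \MPA iterations) $\times$ (queries per iteration). By \Cref{thm:MPA_properties} combined with the volume upper bound $\Haus^d(M) \le 1/f_{\min}$ implicit in $\distributionspoint{n}{d}{\rch_{\min}}{f_{\min}}{f_{\max}}{L}{0}$,
\[
N_{\mathrm{loop}} \le \frac{\Haus^d(M)}{\omega_d(\delta/32)^d} \le \frac{C_d}{f_{\min}\,\Delta^d} \asymp \frac{C_d}{f_{\min}\rch_{\min}^d}\left(\frac{\rch_{\min}}{\varepsilon}\right)^{d/2}.
\]
Each iteration performs one tangent space estimate ($O(dn\log^6 n)$ queries) and at most $O(1)^d$ projection calls ($O(n)$ queries each), giving $O(n\log^6 n)$ queries per loop. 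Multiplying gives the announced bound on $q$. The main technical obstacle throughout is purely bookkeeping: threading the chain of inequalities $\tau \Rightarrow (\eta,\sin\theta) \Rightarrow$ \MPA preconditions $\Rightarrow$ reconstruction preconditions with the same choice of $\Delta$, which is why the hypotheses of the theorem involve the specific powers $\Gamma^{7(d+1)/2}$ and $\Gamma^3$ rather than cleaner exponents.
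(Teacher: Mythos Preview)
Your proposal is correct and follows essentially the same approach as the paper: run \MPA with seed $\hat{x}_0=0$, the SQ projection and tangent routines, then feed $(\mathcal{O},\mathbb{T}_{\mathcal{O}})$ into \Cref{thm:manifold_reconstruction_deterministic}, with $\Delta\asymp\sqrt{\rch_{\min}\varepsilon}$ (floored by the $\tau$-term) and the query count obtained from the loop bound and $\Haus^d(M)\le 1/f_{\min}$. The only cosmetic difference is that the paper fixes a target $\eta\asymp\Delta^2/\rch_{\min}$ and then verifies the projection output $\eta'$ undershoots it, whereas you directly carry $\eta\asymp\Delta^4/\rch_{\min}^3$; both close the same chain of inequalities. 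Your per-iteration query accounting (one tangent call plus up to $O(1)^d$ projection calls) is in fact slightly more careful than the paper's phrasing, though the conclusion is identical after absorbing the $d$-dependent constant into $C_d$.
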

The algorithm of \Cref{thm:SQ_upper_bound_point} has a statistical query complexity comparable to the optimal sample complexity 
$s = O(\varepsilon^{-2/d}\log(1/\varepsilon))$ 
over the model $\distributions{n}{d}{\rch_{\min}}{f_{\min}}{f_{\max}}{L}$~\cite{Genovese12b,Kim2015,Aamari18},
and can provably achieve precision $O(\tau^{2/(d+1)})$.
Furthermore, the assumptions made as well as the final precision are completely insensitive to $n$.
The ambient dimension $n$ only appears as a quasi-linear factor in the query complexity. This contrasts with the sample complexity which does not depends on $n$. However, notice that a single sample nearby $M \subseteq \R^n$ consists of $n$ coordinates, while statistical queries are forced to be real-valued (one dimensional) pieces of information, which explains this apparent discrepancy.

Discussing its optimality, one may first wonder if the assumption made on $\tau$ is necessary, and whether the precision barrier of order $O(\tau^{2/(d+1)})$ is improvable in $\STAT(\tau)$. 
The following statement answers to these questions, regardless the statistical query complexity.
\begin{theorem}
\label{thm:SQ_lower_bound_point_informational}
   	    Let $\alpha < 1/2$ be a probability of error. Assume that $f_{\min} \leq f_{\max}/4$ and
   	    \[
    	    2^{d+1}\sigma_d f_{\min} \rch_{\min}^d
    	    \leq
    	    1
    	    .
   	    \]
        Then no randomized SQ algorithm can estimate $M = \supp(D)$ over $\distributionspoint{n}{d}{\rch_{\min}}{f_{\min}}{f_{\max}}{L}{0}$
        with precision 
        \[
            \varepsilon 
            <
            \frac{\rch_{\min}}{2^{21}}
            \min\set{
                \frac{1}{2^{20}d^2}
                ,
                \left(
                \frac{\tau}{\omega_d f_{\min} \rch_{\min}^d}
                \right)^{2/d}
            }
        \]
        and probability $1 - \alpha$, no matter its number of queries.
\end{theorem}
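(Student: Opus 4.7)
The plan is to invoke the two-point Le Cam lemma for SQ's (\Cref{thm:lecam_SQ}), which reduces the lower bound to exhibiting two distributions $D_0,D_1 \in \distributionspoint{n}{d}{\rch_{\min}}{f_{\min}}{f_{\max}}{L}{0}$ such that $\TV(D_0,D_1) \leq \tau/2$ while $\dHaus(\supp D_0,\supp D_1) \geq 2\varepsilon$. In that case an adversarial $\STAT(\tau)$ oracle can answer $\answer(r) = \tfrac{1}{2}(\E_{x \sim D_0}[r(x)]+\E_{x \sim D_1}[r(x)])$ to every query, which is simultaneously a valid $\STAT(\tau)$ answer for both $D_0$ and $D_1$, rendering the two supports indistinguishable to any randomized learner with unbounded number of queries and hence forbidding precision better than $\varepsilon$ with probability $1-\alpha > 1/2$.

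To build such a pair I would start from a base manifold $M_0 \in \manifoldspoint{n}{d}{\rch_{\min}}{0}$ of volume $\Haus^d(M_0) = \Theta(1/f_{\min})$, which exists thanks to the non-degeneracy assumption $2^{d+1}\sigma_d f_{\min}\rch_{\min}^d \leq 1$ (for instance a suitably rescaled round $d$-sphere passing through $0$, or a large-volume construction as in \Cref{subsec:building-large-volume-manifold}), and equip it with the uniform distribution $D_0 = \mathrm{vol}_{M_0}/\Haus^d(M_0)$. Its constant density $1/\Haus^d(M_0)$ lies in $[f_{\min},f_{\max}]$ by the volume estimate combined with $f_{\min} \leq f_{\max}/4$, and is trivially $L$-Lipschitz, so $D_0$ belongs to the model.

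I would then manufacture $M_1$ from $M_0$ by locally deforming a small patch far from the origin: excise a geodesic disk of intrinsic radius $\delta$ and replace it with a $\mathcal{C}^2$ cap of normal height $\varepsilon$, using a transition profile whose second fundamental form stays bounded by $1/\rch_{\min}$. Provided $\varepsilon \lesssim \delta^2/\rch_{\min}$, this keeps $M_1 \in \manifoldspoint{n}{d}{\rch_{\min}}{0}$ and yields $\dHaus(M_0,M_1) \geq \varepsilon$ by construction. Let $D_1$ be the pushforward of $D_0$ under the identity-outside-the-bump deformation $M_0 \to M_1$; since the two measures agree off the bumped patch and each places mass at most $f_{\max}\omega_d \delta^d$ on it, one obtains $\TV(D_0,D_1) \leq 2 f_{\max}\omega_d \delta^d$. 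Tuning $\delta$ to saturate $\TV(D_0,D_1) = \tau/2$ gives $\delta^d \sim \rch_{\min}^d \cdot \tau/(\omega_d f_{\min}\rch_{\min}^d)$ (absorbing the ratio $f_{\min}/f_{\max}$ via $f_{\min} \leq f_{\max}/4$), which when substituted into $\varepsilon \leq \delta^2/\rch_{\min}$ yields the $\tau^{2/d}$ branch of the announced minimum. The alternative $1/(2^{20}d^2)$ branch covers the saturated regime in which the TV budget allows $\delta$ to exceed a constant fraction of $\rch_{\min}$, beyond which the reach constraint alone caps the bump height at a geometric constant times $\rch_{\min}/d^2$ independently of $\tau$.

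The main obstacle will be realizing the bump with all constants explicit: ensuring that $\rch_{M_1} \geq \rch_{\min}$ with the stated numerical constants, and that the pushforward density of $D_1$ stays in $[f_{\min},f_{\max}]$ and remains $L$-Lipschitz. Both are purely geometric issues: a careful design of the cap, for instance a piece of a $\rch_{\min}$-sphere smoothly joined to the ambient tangent disk by a $\mathcal{C}^2$ collar, controls the reach, while keeping $\varepsilon \ll \delta$ keeps the Jacobian of the deformation uniformly close to $1$, so that $D_1$ inherits the density and Lipschitz bounds of $D_0$ up to additive terms that are absorbed in the final constants $2^{21}$ and $2^{20}d^2$.
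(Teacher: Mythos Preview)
Your approach is essentially the paper's: apply the SQ Le Cam lemma (\Cref{thm:lecam_SQ}) to a pair $D_0,D_1$ obtained by locally bumping a round $d$-sphere through $0$ of volume $\Theta(1/f_{\min})$, with bump width $\delta$ and normal height $\eta \asymp \delta^2/\rch_{\min}$ (the paper packages this as \Cref{prop:hypotheses_for_le_cam_local_variations} and \Cref{lem:hypotheses_for_le_cam}).

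There is one genuine slip. You bound the mass of the bumped patch by $f_{\max}\omega_d\delta^d$ and then write that you ``absorb the ratio $f_{\min}/f_{\max}$ via $f_{\min}\leq f_{\max}/4$'' to land on $\delta^d \sim \tau/(\omega_d f_{\min})$. That inequality goes the wrong way: it gives no upper bound on $f_{\max}/f_{\min}$, so you cannot trade $f_{\max}$ for $f_{\min}$ in the denominator. The repair is that $D_0$ is uniform on $M_0$, with density exactly $1/\Haus^d(M_0) \asymp f_{\min}$ by your own volume choice; hence the patch mass is $\lesssim f_{\min}\,\omega_d\delta^d$ directly (via \Cref{lem:intrinsic_ball_mass}), and $f_{\max}$ never enters the TV bound. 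The role of $f_{\min}\leq f_{\max}/4$ is only to ensure the constant density $1/\Haus^d(M_1)$ of $D_1$ still lies below $f_{\max}$.

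A minor simplification the paper makes over your sketch: instead of taking $D_1$ as the pushforward of $D_0$ (which forces you to track the Jacobian of the deformation to verify the density and Lipschitz constraints, as you flag), it takes $D_1$ uniform on $M_1$. Then the density is constant, and membership in the model reduces to the volume comparison $\Haus^d(M_0)/2 \leq \Haus^d(M_1)\leq 2\Haus^d(M_0)$, which follows immediately from the diffeomorphism stability estimate (\Cref{prop:diffeomorphism_stability}).
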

This result justifies why the quantity $\tau/(\omega_d f_{\min} \rch_{\min}^d)$ is required to be small enough in \Cref{thm:SQ_upper_bound_point}: this actually is necessary so as to reach a precision of order at least $O(\rch_{\min})$.
Second, this informational lower bound shows that the learner cannot hope to achieve precision better than 
$$
\varepsilon
=
\Omega\bigl(
\rch_{\min} 
\left(
\tau/(\omega_d f_{\min} \rch_{\min}^d)
\right)^{2/d}
\bigr)
,
$$
even with the most costly randomized SQ algorithms. In fact, the precision $O(\tau^{2/(d+1)})$ of \Cref{thm:SQ_upper_bound_point} is nearly optimal.
Here, the assumptions made on $f_{\min},f_{\max}$ and $\rch_{\min}$ are also necessary to ensure non-degeneracy of the model $\distributionspoint{n}{d}{\rch_{\min}}{f_{\min}}{f_{\max}}{L}{0}$, as mentioned in \Cref{subsubsec:implicit-bounds-on-parameters}.
Beyond the above informational considerations, we turn to the computational one, i.e. to the minimal number of queries to $\STAT(\tau)$ that a learner must make to achieve precision $\varepsilon$.

\begin{theorem}
\label{thm:SQ_lower_bound_point_computational}
    Let $\alpha < 1$ be a probability of error, and $\varepsilon \leq \rch_{\min}/(2^{34}d^2)$. 
    Assume that $f_{\min} \leq f_{\max}/4$ and
   	    \[
    	    2^{d+1}\sigma_d f_{\min} \rch_{\min}^d
    	    \leq
    	    1
    	    .
   	    \]
    Then any randomized SQ algorithm estimating $M = \supp(D)$ over $\distributionspoint{n}{d}{\rch_{\min}}{f_{\min}}{f_{\max}}{L}{0}$ with precision $\varepsilon$ and with probability of success at least $1 - \alpha$ must make at least 
        \[
            q
            \geq 
            \left(
                n
        		\frac{1}{\omega_d f_{\min} \rch_{\min}^d}
    	    	\left(
        		\frac{\rch_{\min}}{2^{21} \varepsilon}
        		\right)^{d/2}
        		+
        		\log(1-\alpha)
            \right)
            \big/
            \log (1+1/\tau)
        \]
        queries to $\STAT(\tau)$.
\end{theorem}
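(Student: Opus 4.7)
The plan is to follow the two-step scheme outlined in Section~\ref{section:intro-lower-bound}: first reduce the problem to a lower bound on a Hausdorff packing number of the model, then construct a large packing. For the reduction, I would restrict the adversary without loss of generality to the rounded oracle $\answer = \tau\lfloor \E_{x\sim D}[r(x)]/\tau\rfloor$, which is a valid $\STAT(\tau)$ answer and takes at most $1+1/\tau$ distinct values per query. Any deterministic algorithm $\Algorithm{A}$ making $q$ such queries therefore has at most $(1+1/\tau)^q$ possible outputs as the oracle behaviour varies. To handle randomization, fix a finite family $\{D_1,\dots,D_N\} \subseteq \distributionspoint{n}{d}{\rch_{\min}}{f_{\min}}{f_{\max}}{L}{0}$ whose supports $M_i$ are pairwise separated by more than $2\varepsilon$ in $\dHaus$. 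If the randomized algorithm $\RandmizedAlgorithm{A}$ succeeds with probability $\ge 1-\alpha$ on each $D_i$, then the expected number of indices on which a deterministic sample of $\RandmizedAlgorithm{A}$ succeeds is at least $(1-\alpha)N$, so some deterministic $\Algorithm{A}$ in its support succeeds on at least $\lceil(1-\alpha)N\rceil$ of them. By the $2\varepsilon$-separation, the outputs on these distributions must be pairwise distinct, giving $(1+1/\tau)^q \ge (1-\alpha)N$ and hence
\[
q \ge \frac{\log N + \log(1-\alpha)}{\log(1+1/\tau)}.
\]

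The core work is then to exhibit such a family with $\log N \gtrsim n/(\omega_d f_{\min}\rch_{\min}^d)\cdot (\rch_{\min}/\varepsilon)^{d/2}$, using the local-bumping scheme of \Cref{prop:packing_local_variations} sketched in Section~\ref{section:intro-lower-bound}. I would start from a base manifold $M_0 \in \manifoldspoint{n}{d}{\rch_{\min}}{0}$ of near-maximal volume $\Haus^d(M_0)=\Theta(1/f_{\min})$ containing $0$; such $M_0$ can be produced by chaining reach-preserving widgets (cf.\ Section~\ref{subsec:building-large-volume-manifold}) arranged so that $0\in M_0$. On $M_0$, I select $N_{\mathrm{loc}}=\Omega\bigl(\Haus^d(M_0)/(\omega_d(\rch_{\min}\varepsilon)^{d/2})\bigr)$ bump centers, pairwise separated at geodesic distance $\gtrsim \sqrt{\rch_{\min}\varepsilon}$ and lying outside a small neighborhood of $0$. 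At each center I apply a reach-preserving bump of height $\Theta(\varepsilon)$ spread over a geodesic ball of radius $\Theta(\sqrt{\rch_{\min}\varepsilon})$, choosing its orientation independently among $\asymp n-d$ unit normal vectors with two signs each. Standard reach calculus ensures each bumped manifold remains in $\manifoldspoint{n}{d}{\rch_{\min}}{0}$, and a (mollified) uniform-like density on it yields a distribution in the model. Any two manifolds differing in at least one location differ in $\dHaus$ by $\Omega(\varepsilon)$, so
\[
\log N \;\ge\; N_{\mathrm{loc}}\,\log(2(n-d)) \;\gtrsim\; \frac{n}{\omega_d f_{\min}\rch_{\min}^d}\left(\frac{\rch_{\min}}{\varepsilon}\right)^{d/2},
\]
after absorbing dimension-independent constants into the prefactor $2^{21}$ of the statement.

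Inserting this bound on $\log N$ into the inequality of the first paragraph then yields the claim. The main technical difficulty lies in the packing construction: the local-bumping scheme must simultaneously preserve (i) $\rch_M \ge \rch_{\min}$, (ii) $0 \in M$ (which requires a base manifold of near-maximal volume that also contains $0$, and forbids bumping too close to the origin), (iii) the Lipschitz/density constraints $f_{\min}\le f\le f_{\max}$ and $|f(x)-f(y)|\le L\norm{x-y}$ after renormalizing the volume measure of each bumped manifold, and (iv) pairwise Hausdorff separation by more than $2\varepsilon$, all with sharp enough constants to collapse into the single prefactor $2^{21}$. The linear-in-$n$ factor in the final bound (as opposed to a $\log n$ factor that would result from one-directional bumps) arises specifically from exploiting the $\asymp n-d$ available codimensions at each bump site independently, which is precisely what allows this lower bound to match \Cref{thm:SQ_upper_bound_point} up to $\operatorname{polylog}(n, 1/\tau)$ factors.
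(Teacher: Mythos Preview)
Your reduction step matches the paper's approach (\Cref{theorem:lower_bound_computational} and \Cref{thm:prob_covering_VS_packing}): a discretized oracle with at most $1+\lfloor 1/\tau\rfloor$ answers per query, together with an averaging argument over an $\varepsilon$-packing, gives $q \ge (\log N + \log(1-\alpha))/\log(1+1/\tau)$.

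The packing construction, however, has a genuine counting gap. You write
\[
\log N \;\ge\; N_{\mathrm{loc}}\,\log\bigl(2(n-d)\bigr)
\]
from choosing, at each bump site, one orientation among $\asymp 2(n-d)$ fixed unit normal vectors. This quantity scales like $\log n$ in the ambient dimension, not like $n$, so the claimed inequality $N_{\mathrm{loc}}\log(2(n-d)) \gtrsim n/(\omega_d f_{\min}\rch_{\min}^d)\,(\rch_{\min}/\varepsilon)^{d/2}$ is false, and your construction as described only proves a $\log n$ lower bound. Your final paragraph correctly says the linear-in-$n$ factor comes from ``exploiting the $n-d$ codimensions'', but picking one of $2(n-d)$ directions does not exploit them: what is needed is $\sim 2^{n-d}$ choices per site. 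The paper (\Cref{prop:packing_local_variations}) takes at each site a full $(1/8)$-packing of the unit sphere in the normal space $(T_{p_i}M_0)^\perp$, which has cardinality $\ge 2^{n-d}$ by \Cref{prop:packing_covering_ball_sphere}, yielding $(n-d)\log 2 \ge n\log 2/(2d)$ bits per site. This in turn forces a nontrivial separation estimate: one must show that if $\mathbf{w}$ and $\mathbf{w}'$ differ at site $i$ with $\norm{w_i-w'_i} > 1/4$, then $\dHaus(M_{\mathbf{w}},M_{\mathbf{w}'}) \gtrsim \eta\,\norm{w_i-w'_i}$, which is the main computation in the proof of \Cref{prop:packing_local_variations}.

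Two minor simplifications relative to your sketch. In the fixed-point model there is no diameter constraint, so the paper simply takes $M_0$ to be a round $d$-sphere of radius $\bigl(1/(2\sigma_d f_{\min})\bigr)^{1/d}$ through the origin (\Cref{prop:packing_lower_bound_local}); the widget chain of \Cref{subsec:building-large-volume-manifold} is only needed for the bounding-ball model. And no mollification is required: the \emph{uniform} distributions on the bumped manifolds have $L=0$, and the volume control $\Haus^d(M_0)/2 \le \Haus^d(M_s) \le 2\Haus^d(M_0)$ together with $f_{\min}\le f_{\max}/4$ already places their constant densities in $[f_{\min},f_{\max}]$.
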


For deterministic SQ algorithms ($\alpha = 0$), the statistical query complexity of the algorithm of \Cref{thm:SQ_upper_bound_point} is therefore optimal up to $\operatorname{polylog}(n,1/\tau)$ factors.
It even performs nearly optimally within all the possible randomized algorithms, provided that their probability of error $\alpha$ is not too close to $1$, which would allow for a naive random pick among an $\varepsilon$-covering of the space $\bigl(\manifoldspoint{n}{d}{\rch_{\min}}{0},\dHaus\bigr)$ (with zero query to $\STAT(\tau)$) to be a valid algorithm.

\subsection{Bounding Ball Model}

In $\distributionsball{n}{d}{\rch_{\min}}{f_{\min}}{f_{\max}}{L}{R}$, no distinguished point of $\R^n$ is known to belong to $M$, but a location area $\B(0,R)$ containing $M$ is available to the learner. Hence, the strategy of the previous section cannot initialize directly.
However, \Cref{thm:routine_detection_seed_point} allows to find a seed point $\hat{x}_0$ close to $M$ using a limited number of queries to $\STAT(\tau)$.
Starting from $\hat{x}_0$ and, as above, running \MPA with the SQ projection and tangent space routines of \Cref{thm:routine_projection,thm:routine_tangent} leads to the following upper bound.

\begin{theorem}
\label{thm:SQ_upper_bound_ball}
    Let $D \in \distributionsball{n}{d}{\rch_{\min}}{f_{\min}}{f_{\max}}{L}{R}$ have support $M = \supp(D)$.
    Writing  
    $
        \Gamma 
        = 
        \Gamma_{f_{\min},f_{\max},L} 
        = 
        \frac{f_{\min}}{f_{\max} + L \rch_{\min}} 
        ,
    $
    let us assume that 
    \[
        \frac{\tau}{f_{\min} \rch_{\min}^d} 
        \leq
        \min\set{
            c^d \Gamma^{7(d+1)/2}
        ,
            \Gamma^d \left(
                n
                \log\left(
                        R/(\Gamma \rch_{\min}
                \right)
        \right)^{-d/2}
        }
    \]
    and
    \[
    \varepsilon 
        \leq 
        \tilde{c}^d \Gamma^3
        \rch_{\min}
        ,
    \]
    for some small enough absolute constants $c,\tilde{c}>0$.
    Then there exists a deterministic SQ algorithm making at most
    \[
     q \leq 
     Cn \log \left( \frac{R}{\varepsilon} \right)
     +
     n \log^6 n \frac{C_d}{f_{\min}\rch_{\min}^d}
     \left( \frac{\rch_{\min}}{\varepsilon} \right)^{d/2}
    \]
    queries to $\STAT(\tau)$, and that outputs a finite triangulation $\hat{M} \subseteq \R^n$ that has the same topology as $M$, and such that
    \begin{align*}
        \dHaus(M,\hat{M})
        \leq
        \max\set{
        \varepsilon
        ,
        \frac{\tilde{C}^d}{\Gamma^3} \rch_{\min}\left( \frac{\tau}{ f_{\min} \rch_{\min}^d}  \right)^{2/(d+1)}
        }
        ,
    \end{align*}
    where $C_d>0$ depends on $d$ and $\tilde{C}>0$ is an absolute constant.
    \end{theorem}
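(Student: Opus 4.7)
The plan is to reduce this result to a combination of the \MPA algorithm (\Cref{thm:MPA_properties}), the three SQ geometric routines (\Cref{thm:routine_projection,thm:routine_tangent,thm:routine_detection_seed_point}), and the manifold reconstruction guarantee of \Cref{thm:manifold_reconstruction_deterministic}, following the same scheme as the fixed-point upper bound (\Cref{thm:SQ_upper_bound_point}) with one extra initialization step. Since no point of $M$ is known \emph{a priori} in the bounding ball model, I first invoke the \ABS-based seed point routine of \Cref{thm:routine_detection_seed_point} to produce a point $\hat{x}_0 \in \B(0,R)$ with $\dd(\hat{x}_0,M) \le \eta$, at the cost of $O(n\log(R/\eta)) = O(n\log(R/\varepsilon))$ queries to $\STAT(\tau)$. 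This accounts for the additive $n\log(R/\varepsilon)$ term in the total query count; the hypothesis $\tau/(f_{\min}\rch_{\min}^d) \le \Gamma^d\bigl(n\log(R/(\Gamma\rch_{\min}))\bigr)^{-d/2}$ is exactly what makes \Cref{thm:routine_detection_seed_point} valid down to that precision.

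Once $\hat{x}_0$ is available, I run \MPA with scale parameter $\Delta \asymp \sqrt{\rch_{\min}\varepsilon}$, spacing $\delta = \Delta/2$ and fixed $\sin\alpha = 1/64$, instantiating the projection routine from \Cref{thm:routine_projection} with radius $\Lambda \asymp \Delta$ and the tangent space routine from \Cref{thm:routine_tangent}. Under the global assumption $\tau/(f_{\min}\rch_{\min}^d) \le c^d \Gamma^{7(d+1)/2}$ these routines respectively achieve $\eta \lesssim \Delta^2/\rch_{\min} \asymp \varepsilon$ and $\sin\theta \lesssim \sqrt{\eta/\rch_{\min}}$, precisely the rates required by the compatibility conditions of \Cref{thm:MPA_properties}. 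Each main-loop iteration then performs one tangent query (cost $O(d n \log^6 n)$) and $O(1)$ projection queries (cost $O(n)$ each), while the number of iterations is bounded by $\Haus^d(M)/(\omega_d(\delta/32)^d) \lesssim C_d/(f_{\min}(\rch_{\min}\varepsilon)^{d/2})$ via the volume bound $\Haus^d(M) \le 1/f_{\min}$. Summing produces the second term of the query bound. The output $(\mathcal{O}, \mathbb{T}_{\mathcal{O}})$ satisfies the hypotheses of \Cref{thm:manifold_reconstruction_deterministic}, which then produces an ambient-isotopic triangulation $\hat{M}$ with $\dHaus(M,\hat{M}) \lesssim \Delta^2/\rch_{\min} \le \varepsilon$.

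The main obstacle is the bookkeeping: one must verify that the numerous smallness constraints coming from \Cref{thm:manifold_reconstruction_deterministic,thm:MPA_properties,thm:routine_projection,thm:routine_tangent,thm:routine_detection_seed_point} hold simultaneously for the chosen $\Delta, \eta, \Lambda, \theta$. This is what forces the assumption $\varepsilon \le \tilde{c}^d \Gamma^3 \rch_{\min}$, which buys the margins needed for $\Delta \le \rch_{\min}/24$, $64\eta \le \Delta$ and $\tfrac{5}{8}\Delta^2/\rch_{\min}+\eta+\Delta\sin\theta \le \Lambda$ in \Cref{thm:MPA_properties}, and which also keeps the precision floors of the projection and tangent routines below $\varepsilon$. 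The $\max$ inside the final Hausdorff bound is inherited from those floors, which plateau at $\Omega\bigl(\rch_{\min}\Gamma^{-3}(\tau/(f_{\min}\rch_{\min}^d))^{2/(d+1)}\bigr)$ once the routines reach their adversarial limit in $\STAT(\tau)$. The combined hypothesis on $\tau$ is then exactly the maximum of the separate requirements of \ABS and of the two \MPA routines, ensuring that the whole pipeline remains valid end-to-end.
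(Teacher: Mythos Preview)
Your proposal is correct and follows essentially the same approach as the paper: invoke \Cref{thm:routine_detection_seed_point} to obtain a seed point $\hat{x}_0$ at cost $O(n\log(R/\varepsilon))$, then rerun verbatim the argument of \Cref{thm:SQ_upper_bound_point} (the paper's proof is literally a two-line reduction to that theorem). Your identification of which hypothesis on $\tau$ feeds which routine, and of the origin of the $\max$ in the final precision, matches the paper's bookkeeping.
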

Compared to \Cref{thm:SQ_upper_bound_point}, observe the extra $O(n\log(R/\varepsilon))$ queries made in \Cref{thm:SQ_upper_bound_ball}, which come from the seed point search performed at initialization.
Passed this difference, the two results are tightly similar.
In the same fashion as above, we first discuss the necessity of the assumptions made on $\tau$ and the precision threshold $O(\tau^{2/(d+1)})$.

\begin{theorem}
\label{thm:SQ_lower_bound_ball_informational}
    Let $\alpha < 1/2$ be a probability of error.
        Assume that $\rch_{\min} \leq R/144$ and $f_{\min} \leq f_{\max}/96$, and
        \[
            \min_{1 \leq k \leq n}
        	\left(\frac{192\rch_{\min} \sqrt{k}}{R}\right)^k
    	       \leq
            36 \times 8^{d} \sigma_{d-1} f_{\min} \rch_{\min}^d
        	\leq
        	1
        	.
        \]
        Then no randomized SQ algorithm can estimate $M = \supp(D)$ over $\distributionsball{n}{d}{\rch_{\min}}{f_{\min}}{f_{\max}}{L}{R}$
        with precision 
        \[
            \varepsilon 
            <
                \frac{\rch_{\min}}{2^{31}}
                \min\set{
                    \frac{1}{2^{10}d^2}
                    ,
                    \left(
                    \frac{\tau}{\omega_d f_{\min} \rch_{\min}^d}
                    \right)^{2/d}
                }
        \]
        and probability $1 - \alpha$, no matter its number of queries.
\end{theorem}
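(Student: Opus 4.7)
The plan is to mimic the two-point Le Cam argument already invoked for \Cref{thm:SQ_lower_bound_point_informational}, but with a base manifold adapted to the bounding ball constraint $M\subseteq \B(0,R)$. Concretely, I intend to exhibit two distributions $D_0,D_1\in \distributionsball{n}{d}{\rch_{\min}}{f_{\min}}{f_{\max}}{L}{R}$ whose supports $M_0,M_1$ are separated by $\dHaus(M_0,M_1)\ge 2\varepsilon$, yet $\TV(D_0,D_1)\le \tau/2$; the adversarial $\STAT(\tau)$ oracle can then swap its answers on any query, so by the SQ version of Le Cam's lemma, no randomized algorithm with success probability $> 1/2$ can distinguish them, forcing the error to exceed $\varepsilon$.

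First, I would pick a base manifold $M_0 \in \manifoldsball{n}{d}{\rch_{\min}}{R}$ with volume of order $1/f_{\min}$; because of the bounding ball restriction, this manifold cannot be just a round sphere, and one must use the ``long path of linkable widgets'' construction sketched in \Cref{section:intro-lower-bound} (the one invoked later in the computational lower bound \Cref{thm:SQ_lower_bound_ball_computational}). The volume condition $36\cdot 8^d \sigma_{d-1} f_{\min} \rch_{\min}^d\le 1$ guarantees that this construction is feasible, while $\rch_{\min}\le R/144$ and the bound $\min_k(192\rch_{\min}\sqrt{k}/R)^k \le 36\cdot 8^d\sigma_{d-1}f_{\min}\rch_{\min}^d$ are precisely what is needed to fit such a widget path inside $\B(0,R)$ while maintaining reach at least $\rch_{\min}$ and the lower volume assumption. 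Equip $M_0$ with a suitable density $f_0$ bounded between $f_{\min}$ and $f_{\max}/2$ (using $f_{\min}\le f_{\max}/96$ to leave headroom for the perturbation and Lipschitz continuity).

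Next, I would bump $M_0$ at a single location of radius $\delta$, where $\delta = c\,(\tau/f_{\min})^{1/d}$ for an absolute constant $c$, producing a manifold $M_1$ that still lies in $\B(0,R)$ and in $\manifoldsball{n}{d}{\rch_{\min}}{R}$. By the $\mathcal{C}^2$ constraint encoded in the reach, the maximal admissible bump height is $\varepsilon_\star = c'\,\delta^2/\rch_{\min}$, which after substitution yields the announced rate $\varepsilon_\star = \Omega(\rch_{\min}(\tau/(\omega_d f_{\min}\rch_{\min}^d))^{2/d})$. The bump produces a symmetric difference of $d$-volume $O(\omega_d \delta^d)$, so transporting mass from the original region to the bumped region with densities still within $[f_{\min},f_{\max}]$ and $L$-Lipschitz gives $\TV(D_0,D_1) \le f_{\max} \cdot O(\omega_d \delta^d) \le \tau/2$ after tuning the constant in $\delta$. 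The upper cap $\varepsilon < \rch_{\min}/(2^{31}\cdot 2^{10}d^2)$ covers the regime where even the geometric bump radius (rather than $\tau$) is the binding constraint, corresponding to the flat manifold version of the argument.

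Finally, I would invoke the Le Cam SQ lemma (\Cref{thm:lecam_SQ}) to conclude that any randomized SQ algorithm with probability of failure $\alpha < 1/2$ must produce the same output on $D_0$ and $D_1$ with probability at least $1-2\alpha>0$, so its worst-case Hausdorff error is at least $\dHaus(M_0,M_1)/2 \ge \varepsilon$. The main obstacle I anticipate is the geometric verification that the widget-path base manifold $M_0$ admits a local bump at an interior point of one of its widgets without violating the reach lower bound or escaping $\B(0,R)$; this requires choosing the bump location away from the gluing regions between widgets and checking that the standard $\mathcal{C}^2$ bump construction (as used in \Cref{thm:SQ_lower_bound_point_informational}) localizes inside a single widget, which should follow from the quantitative conditions on $\rch_{\min}/R$ and on $f_{\min}\rch_{\min}^d$ stated in the hypothesis.
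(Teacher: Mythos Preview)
Your proposal is correct and follows essentially the same route as the paper: build a base manifold $M_0\subseteq\B(0,R)$ of volume $\Theta(1/f_{\min})$ via the widget-path construction (\Cref{prop:manifold_of_prescribed_volume}), apply a single localized bump (\Cref{prop:hypotheses_for_le_cam_local_variations}), and invoke the SQ Le~Cam lemma (\Cref{thm:lecam_SQ}).

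Two minor points where the paper is cleaner. First, the paper takes $D_0,D_1$ to be the \emph{uniform} distributions on $M_0,M_1$; the density is then $1/\Haus^d(M_i)\in[f_{\min},48 f_{\min}]\subseteq[f_{\min},f_{\max}]$, and the total variation bound reads $\TV(D_0,D_1)\lesssim D_0(\B(p_0,\delta))\lesssim \omega_d\delta^d/\Haus^d(M_0)\asymp f_{\min}\,\omega_d\delta^d$. Your write-up sets $\delta\asymp(\tau/f_{\min})^{1/d}$ but then bounds $\TV$ by $f_{\max}\cdot O(\omega_d\delta^d)$, which would not give $\tau/2$ unless $f_{\max}/f_{\min}$ is bounded; using the uniform law removes this slip. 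Second, the obstacle you anticipate about placing the bump away from widget junctions does not arise: the paper bumps via a global $\mathcal{C}^\infty$ diffeomorphism $\Phi_{w_0}$ (\Cref{lem:multiple_bump_map_is_nice}) and controls the reach of $\Phi_{w_0}(M_0)$ through \Cref{prop:diffeomorphism_stability}, which only uses $\rch_{M_0}\ge 2\rch_{\min}$ and the $\mathcal{C}^2$ norms of $\Phi_{w_0}$, with no reference to the internal structure of $M_0$.
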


As above, we emphasize the fact that the assumptions made on $f_{\min}$, $f_{\max}$, $\rch_{\min}$ and $R$ are necessary to guarantee the non-degeneracy of the model $\distributionsball{n}{d}{\rch_{\min}}{f_{\min}}{f_{\max}}{L}{R}$, and hence a non-trivial estimation problem (see \Cref{subsubsec:implicit-bounds-on-parameters}).
As for the fixed point model, we hence see that the assumptions made on $\tau$ and the precision $\varepsilon$ cannot be omitted.
Let us though notice the slightly more stringent assumption made on $\tau$ that depends on $n$ in the upper bound (\Cref{thm:SQ_upper_bound_ball}) but not in the lower bound (\Cref{thm:SQ_lower_bound_ball_informational}).
This dependency originates from the seed point detection method that we developed (\Cref{thm:routine_detection_seed_point}) and we do not claim it to be optimal.
As about the computational lower bound for this model, we state the following result.

\begin{theorem}
\label{thm:SQ_lower_bound_ball_computational}
    
    Let $\alpha < 1$ be a probability of error, and $\varepsilon \leq \rch_{\min}/(2^{34}d^2)$.
    Assume that $\rch_{\min} \leq R/144$, $f_{\min} \leq f_{\max}/96$, and
        \[
            \min_{1 \leq k \leq n}
        	\left(\frac{192\rch_{\min} \sqrt{k}}{R}\right)^k
    	       \leq
            36 \times 8^{d} \sigma_{d-1} f_{\min} \rch_{\min}^d
        	\leq
        	1
        	.
        \]
    Then any randomized SQ algorithm estimating $M = \supp(D)$ over $\distributionsball{n}{d}{\rch_{\min}}{f_{\min}}{f_{\max}}{L}{R}$ with precision $\varepsilon$ and with probability of success at least $1 - \alpha$ must make at least 
        \[
            q
            \geq 
            \dfrac{
                n
        		\max\set{
        		    \log\left(\frac{R}{4\varepsilon} \right)
            		,
            		\frac{1}{\omega_d f_{\min} \rch_{\min}^d}
    	        	\left(
            		\frac{\rch_{\min}}{2^{31} \varepsilon}
            		\right)^{d/2}
        		}
        		+
        		\log(1-\alpha)
        	}{
            \log (1+1/\tau)
            }
        \]
        queries to $\STAT(\tau)$.

\end{theorem}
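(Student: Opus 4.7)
The plan is to apply the general randomized computational SQ lower bound announced in \Cref{section:intro-lower-bound} and proved in \Cref{sec:lower-bounds}: the rounding oracle has only $(1+1/\tau)$ distinct answers per query, so any deterministic SQ algorithm performing $q$ queries yields at most $(1+1/\tau)^q$ possible outputs; a standard averaging argument across a discrete packing then extends this to randomized algorithms, picking up an additive $\log(1-\alpha)$ term. Consequently, to get the announced query bound it suffices to construct two $\varepsilon$-packings of $\distributionsball{n}{d}{\rch_{\min}}{f_{\min}}{f_{\max}}{L}{R}$ (measured by $\dHaus$ on the supports), with respective cardinalities at least $\exp\!\bigl(n\log(R/(4\varepsilon))\bigr)$ and $\exp\!\bigl(\tfrac{n}{\omega_d f_{\min}\rch_{\min}^d}(\rch_{\min}/(2^{31}\varepsilon))^{d/2}\bigr)$, and to take the larger of the two resulting lower bounds.

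For the first term, I would fix a single base distribution $D_0 \in \distributionsball{n}{d}{\rch_{\min}}{f_{\min}}{f_{\max}}{L}{R/2}$, whose existence is guaranteed by the non-degeneracy hypotheses on $(\rch_{\min},R,f_{\min},f_{\max})$ discussed in \Cref{subsubsec:implicit-bounds-on-parameters}. Translation in $\R^n$ preserves reach, density, and Lipschitz constant, hence translating $D_0$ by the vectors of an $\varepsilon$-packing of $\B(0,R/2)$ of cardinality $\geq(R/(4\varepsilon))^n$ (volume comparison) yields an $\varepsilon$-packing of $\distributionsball{n}{d}{\rch_{\min}}{f_{\min}}{f_{\max}}{L}{R}$ for $\dHaus$; indeed two translates of $\supp(D_0)$ by ambient vectors at Euclidean distance $\geq 2\varepsilon$ have Hausdorff distance $\geq \varepsilon$.

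For the second term, I would apply the local-bumping scheme of \Cref{prop:packing_local_variations} (announced in \Cref{section:intro-lower-bound}) to a base manifold $M_0 \subseteq \B(0,R)$ of surface area $\Haus^d(M_0) = \Theta(1/f_{\min})$, itself obtained by gluing linkable widgets along a long path in a cubic grid inside $\B(0,R)$ (see \Cref{subsec:building-large-volume-manifold}). This manifold admits $N = \Omega(\Haus^d(M_0)/(\rch_{\min}\varepsilon)^{d/2})$ disjoint bumping sites, each of spatial width $\Omega(\sqrt{\rch_{\min}\varepsilon})$ and of bump height $\Theta(\varepsilon)$, that can be independently switched on or off along each of the $n-d$ available codimensions. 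Counting bit-wise yields a $2^{cnN}$-packing for some absolute $c>0$, which, upon equipping each bumped manifold with its normalized volume measure and tracking constants carefully, produces the desired second packing.

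The main technical obstacle is the simultaneous compatibility of the widget-gluing base construction with \emph{all} model parameters: the glued $M_0$ must stay inside $\B(0,R)$, maintain reach $\geq \rch_{\min}$, carry a density that lies in $[f_{\min},f_{\max}]$ with Lipschitz constant $\leq L$ before \emph{and} after bumping, and still host $N$ well-separated bumping sites. This is exactly what the two-sided hypothesis $\min_{1\le k\le n}(192\rch_{\min}\sqrt{k}/R)^k \leq 36\cdot 8^{d}\sigma_{d-1}f_{\min}\rch_{\min}^d \leq 1$ is tailored for: the lower inequality guarantees enough room in $\B(0,R)$ to realize a manifold of surface area of order $1/f_{\min}$ along some effective dimension $k\leq n$, while the upper inequality keeps this volume compatible with the density floor $f_{\min}$; the slack $f_{\min}\leq f_{\max}/96$ then absorbs the density fluctuations introduced by the bumps. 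Combining both packings with the general lemma of \Cref{sec:lower-bounds} gives the announced $q\geq (n\max\{\cdots\}+\log(1-\alpha))/\log(1+1/\tau)$ bound.
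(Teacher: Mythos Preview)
Your proposal is correct and follows essentially the same route as the paper: apply \Cref{theorem:lower_bound_computational_manifold} and bound $\PK_{(\mathcal{M}_R,\dHaus)}(\varepsilon)$ from below via two packings, one by ambient translations (\Cref{prop:packing_lower_bound_ambient}) and one by local bumping of a large-volume widget manifold (\Cref{prop:packing_lower_bound_local}) with $\mathcal{V}=1/f_{\min}$. One small slip: translates by vectors at Euclidean distance $>2\varepsilon$ have Hausdorff distance \emph{equal} to that Euclidean distance (\Cref{lem:hausdorff_distance_between_translations}), hence $>2\varepsilon$, not merely $\geq\varepsilon$; this stronger fact is what actually gives the $\varepsilon$-packing.
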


As a result, the extra $O(n \log(R / \varepsilon))$ queries of \Cref{thm:SQ_upper_bound_ball} are necessary in the SQ framework.
This contrasts sharply with the sample model, where no prior location information is necessary appears in the sample complexity~\cite{Genovese12b,Kim2015,Aamari18}.
Roughly speaking, this is explained by the fact that a single sample (the first, say) does provide location information for free, while in the SQ framework, the learner is left with the whole ball $\B(0, R)$ to explore at initialization.
However, as mentioned above, note that the term $\Omega(n \log(R / \varepsilon))$ attributable to this initialization step would only dominate in the regime where $R$ is exponentially bigger than $\rch_{\min}$.    

    \section{Further Directions}
    \label{sec:conclusion}
    As mentioned above, a byproduct of these results is that manifold estimation is possible in a locally private way. However, the transformation used to pass from statistical
query learning to locally private learning has a polynomial blowup~\cite{KLNRS11:what-can-we-learn-privately}.
Hence, the derived locally private upper bound may not be optimal, so that a close study of the private framework directly is still necessary.
Coming back to SQ's, the derived bounds on the best achievable precision $\varepsilon$ in $\STAT(\tau)$ do not match, as they are respectively of the form
$
    \varepsilon
    =
    O\left(
    \rch_{\min}
    \left(
        \frac{\tau}{f_{\min}\rch_{\min}^d}
    \right)^{2 / (d + 1)}
    \right)
$ 
for the upper bounds, and
$
    \varepsilon
    =
    \Omega
    \left(
    \rch_{\min}
    \left(
        \frac{\tau}{f_{\min}\rch_{\min}^d}
    \right)^{2 / d}
    \right)
$
for the lower bounds, so that this gap remains to be breached.

The case of smoother $\mathcal{C}^k$ manifolds ($k>2$) would be of fundamental interest, as their estimation is at the basis of plugin methods of higher order. This includes, for instance, density estimation~\cite{Berenfeld20}, or distribution estimation in Wasserstein distance~\cite{Divol21}. 
For $k>2$, local linear approximations are not optimal \cite{Aamari19b}, and local polynomials of higher order --- that were shown to be optimal in the sample framework --- might adapt to statistical queries.

On a more technical side, note that we have assumed throughout that the density $f$ is $L$-Lipschitz and satisfies $f_{\min} \leq f \leq f_{\max}$, although the lower bounds do not let $L$ and $f_{\max}$ appear, but only $f_{\min}$.
While the Lipschitz assumption could be dropped for the tangent space routine, it actually is crucial in the proposed projection routine to bound the bias term. Hence, it remains unclear to us how to design an efficient projection routine without this assumption, as well as how to carry the whole analysis with $f_{\max} = \infty$.

\section*{Acknowledgements}
\addcontentsline{toc}{section}{Acknowledgements}
This work was partially funded by CNRS PEPS JCJC.
The authors would like to thank warmly the \textit{Department of Mathematics of UC San Diego}, the \textit{Steklov Institute}, and all the members of the \textit{Laboratoire de Probabilités, Statistiques et Modélisation} for their support and insightful comments.

    \appendix

    \section{Proofs of the Properties of Manifold Propagation}
    \label{sec:mpa-tech}
    When running \MPA, linear approximations of the manifold are done via its (approximate) tangent spaces. A key point in the proof of its correctness is the (quantitative) validity of this approximation, which is ensured by the reach assumption $\rch_M \geq \rch_{\min}$, which bounds curvature.
Recall from \eqref{eq:offset} that $M^r = \{z \in \R^n, \dd(z,M) \leq r\}$ stands for the $r$-offset of $M$.
\begin{lemma}
        \label{lem:MPA_tangent_error_propagation}
        Let $M \in \manifolds{n}{d}{\rch_{\min}}$, and $x \in M^\eta$ with $\eta < \rch_{\min}$. Take
        $T \in \Gr{n}{d}$ such that $\normop{ \pi_{T_{\pi_M(x)}M}-\pi_{T}} \le \sin \theta$.
        Then for all $\Delta \le \rch_{\min}/4$, and all unit vector $v \in T$,
        \[
            \dd\bigl( x + \Delta v, M\bigr) 
            \le
            \frac{5}{8} \frac{\Delta^2}{\rch_{\min}} + \eta + \Delta \sin \theta
            .
        \]
    \end{lemma}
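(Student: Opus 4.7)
The strategy is to exhibit an explicit point on $M$ that lies within the stated distance of $x + \Delta v$, by moving from $p := \pi_M(x)$ along an arc-length geodesic of $M$ whose initial velocity is the tangentially-projected direction of $v$.

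First, I would set $p = \pi_M(x) \in M$, so that $\|x - p\| = \dd(x,M) \leq \eta$ by assumption \emph{(i)} of the enclosing theorem (well, directly by hypothesis here). Next, decompose the unit vector $v$ along $T_{p} M$ and its orthogonal complement as $v = v_\parallel + v_\perp$, with $v_\parallel = \pi_{T_p M}(v)$ and $v_\perp = v - v_\parallel \perp T_p M$. Since $v \in T$ one has $\pi_T(v) = v$, hence
\[
\|v_\perp\| = \|\pi_T(v) - \pi_{T_p M}(v)\| \leq \normop{\pi_T - \pi_{T_p M}} \|v\| \leq \sin \theta.
\]
Writing $\cos\phi := \|v_\parallel\|$ and $\sin\phi := \|v_\perp\|$, orthogonality gives $\cos^2\phi + \sin^2\phi = \|v\|^2 = 1$ and $\sin\phi \le \sin\theta$.

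Then I would consider the arc-length parametrized geodesic $\gamma : \R \to M$ with $\gamma(0) = p$ and $\gamma'(0) = v_\parallel/\|v_\parallel\|$ (well-defined globally since $M$ is compact and $\rch_M > 0$). By \Cref{lem:geodesic_comparison}, $\|\gamma''(s)\| \le 1/\rch_{\min}$ for every $s$, so Taylor's formula with integral remainder yields
\[
\gamma(t) - p - t \frac{v_\parallel}{\|v_\parallel\|}
=
\int_0^t (t - s)\, \gamma''(s)\,\dd s,
\quad
\text{hence}
\quad
\left\|\gamma(t) - p - t \frac{v_\parallel}{\|v_\parallel\|}\right\|
\le
\frac{t^2}{2\rch_{\min}}.
\]
The key trick is to pick $t = \Delta \cos\phi = \Delta \|v_\parallel\|$, so that $t \cdot v_\parallel/\|v_\parallel\| = \Delta v_\parallel$ and the remainder is bounded by $\Delta^2 \cos^2\phi/(2\rch_{\min}) \le \Delta^2/(2\rch_{\min})$.

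Finally, I would conclude by triangle inequality. Since $\gamma(\Delta \cos\phi) \in M$, we have
\begin{align*}
\dd(x + \Delta v, M)
&\le \|x + \Delta v - \gamma(\Delta \cos\phi)\|
\\
&\le \|x - p\| + \|\Delta v - \Delta v_\parallel\| + \left\|\gamma(\Delta \cos\phi) - p - \Delta v_\parallel \right\|
\\
&\le \eta + \Delta \|v_\perp\| + \frac{\Delta^2}{2\rch_{\min}}
\\
&\le \eta + \Delta \sin \theta + \frac{\Delta^2}{2\rch_{\min}},
\end{align*}
which is tighter than the announced $5\Delta^2/(8\rch_{\min})$ bound since $1/2 \le 5/8$.

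\textbf{Main obstacle.} The argument is essentially a careful bookkeeping of first-order approximations; the only genuinely delicate point is the choice of travel length $t = \Delta \cos\phi$ along the geodesic (rather than $\Delta$), which is what absorbs the tilt between $T$ and $T_p M$ into a clean $\Delta \sin\theta$ term instead of the $2\Delta \sin(\phi/2)$ one would get from comparing unit vectors in $T$ and $T_p M$ directly. The condition $\Delta \le \rch_{\min}/4$ plays no essential role beyond keeping everything in the regime where the reach-based curvature bound is comfortably valid.
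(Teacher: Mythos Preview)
Your proof is correct and in fact sharper than the paper's, delivering the constant $1/2$ in place of $5/8$. Both arguments exhibit a witness point on $M$ by flowing along a geodesic from $p := \pi_M(x)$, but they differ in how the direction and travel length are chosen. The paper picks a \emph{unit} vector $v' \in T_p M$ with $\norm{v - v'} \le \sin\theta$, travels distance $\Delta$ via the exponential map, and invokes \cite[Lemma~1]{Aamari19b} for the bound $\norm{\exp_p^M(\Delta v') - (p + \Delta v')} \le 5\Delta^2/(8\rch_{\min})$; you instead keep the (sub-unit) orthogonal projection $v_\parallel$, travel the shorter distance $\Delta\norm{v_\parallel}$, and extract the remainder directly from Taylor's formula together with the curvature bound of \Cref{lem:geodesic_comparison}. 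Your route is self-contained and, incidentally, sidesteps a small imprecision in the paper's argument: the existence of a \emph{unit} $v' \in T_p M$ with $\norm{v - v'} \le \sin\theta$ is not guaranteed in general (the sharp bound between unit vectors related by a principal angle $\theta$ is $2\sin(\theta/2)$), whereas your decomposition gives $\norm{v_\perp} \le \sin\theta$ exactly. One cosmetic point to tidy: the degenerate case $v_\parallel = 0$ leaves $\gamma'(0)$ undefined, but then $\sin\theta \ge \norm{v_\perp} = 1$ and the claim is immediate from $\dd(x + \Delta v, M) \le \norm{x + \Delta v - p} \le \eta + \Delta$.
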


    \begin{proof}[\proofof \Cref{lem:MPA_tangent_error_propagation}]
        By assumption on $T$, there exists a unit vector $v' \in T_{\pi_M(x)}M$ such that 
        $\norm{v-v'} \le \sin \theta$. Hence, since $\dd(\cdot,M)$ is $1$-Lipschitz, we have
        \begin{align*}
            \dd\bigl(x+\Delta v,M\bigr) 
            &\le
            \dd\bigl(x+\Delta v',M\bigr) + \Delta \sin \theta
            \\
            &\le
            \dd\bigl(\pi_M(x)+\Delta v',M\bigr) + \eta + \Delta \sin \theta
            \\
            &\le
            \norm{\pi_M(x)+\Delta v' - \exp_{\pi_M(x)}^M(\Delta v')} + \eta + \Delta \sin \theta
            \\
            &\le
            \frac{5}{8} \frac{\Delta^2}{\rch_{\min}} + \eta + \Delta \sin \theta
            ,
        \end{align*}
        where the last inequality follows from \cite[Lemma 1]{Aamari19b}.
    \end{proof}

We are now in position to prove \Cref{lem:MPA_distance_maintained}, that guarantees that \MPA builds point clouds that do not deviate from $M$.

\begin{proof}[\proofof \Cref{lem:MPA_distance_maintained}]
    The points added to $\mathcal{O}$ are all first added to $\mathcal{Q}$: therefore, it is sufficient 
    to check that all the points $x$ added to $\mathcal{Q}$ satisfy
    $\dd(x,M) \le \eta$.
    To see this, proceed by induction:
    \begin{itemize}[leftmargin=*]
        \item As $\mathcal{Q}$ is initialized to $\set{\hat{x}_0}$ with $\dd(\hat{x}_0,M) \le \eta$,
            the inequality holds true at Line 1, before the first loop.
        \item If $\bar{x} \neq \hat{x}_0$ was added to $\mathcal{Q}$, it can be written as 
            $\bar{x} = \hat{\pi}(x_0+\Delta v_i)$, for some point 
            $x_0 \in \mathcal{Q}$ and a unit vector $v_i \in \hat{T}(x_0)$.
            By induction, we have $\dd(x_0,M)\leq \eta$. 
            But since $\hat{T}(\cdot)$ is assumed to have precision $\sin \theta$ over $M^\eta$, we hence obtain that
            $\normop{ \pi_{T_{\pi_M(x_0)}M}-\pi_{\hat{T}(x_0)}} \le \sin \theta$.
            As a result, from \Cref{lem:MPA_tangent_error_propagation},
            \begin{align*}
                \dd(x_0+\Delta v_i,M)
                \le
                \frac{5}{8} \frac{\Delta^2}{\rch_{\min}} + \eta + \Delta \sin \theta \le \Lambda,
            \end{align*}
            and therefore 
            \[
                \dd(\bar{x},M) 
                \le 
                \norm{\bar{x} - \pi_M(x_0 + \Delta v_i)}
                = 
                \norm{\hat{\pi}(x_0 + \Delta v_i) - \pi_M(x_0 + \Delta v_i)} 
                \le 
                \eta
            \]
            since $\hat{\pi}(\cdot)$ is assumed to have precision $\eta$ over $M^\Lambda$.
    \end{itemize}
    This concludes the induction and hence the proof.
\end{proof}

Next we show \Cref{lem:MPA_sparsity_maintained}, asserting that the radius of sparsity of the point clouds built by \MPA is maintained at all times.
\begin{proof}[\proofof \Cref{lem:MPA_sparsity_maintained}]
    At initialization of \MPA, $\mathcal{Q} \cup \mathcal{O} = \set{\hat{x}_0}$, so that the inequality
    trivially holds at Line 1. Then, if a point $\bar{x}$ is added to $\mathcal{Q}$ at Line 8, it
    means that it can be written as $\bar{x} = \hat{\pi}(x_0+\Delta v_{i_0})$, with 
    $\dd(x_0 + \Delta v_{i_0}, \mathcal{Q} \cup \mathcal{O}) \ge \delta$. Consequently, by 
    induction, we have
    \begin{align*}
        \min_{\substack{x, y \in \mathcal{Q} \cup \mathcal{O} \cup \set{\bar{x}} \\ x \neq y}}
        \norm{x - y} 
        &=
        \min\Bigl\{
            \min_{\substack{x, y \in \mathcal{Q} \cup \mathcal{O} \\ x \neq y}}
            \norm{x-y},
            \dd\left(\bar{x}, \mathcal{Q} \cup \mathcal{O}\right)
        \Bigr\}
        \\
        &\ge
        \min\Bigl\{
            \delta -  \frac{5}{8} \frac{\Delta^2}{\rch_{\min}} - 2\eta - \Delta \sin \theta,
            \\
            &
            \hspace{4em}
            \dd(x_0+\Delta v_{i_0}, \mathcal{Q} \cup \mathcal{O})
            -
            \norm{(x_0+\Delta v_{i_0}) - \bar{x}}
        \Bigr\}
        \\
        &\ge
        \min\left\{
            \delta -  \frac{5}{8} \frac{\Delta^2}{\rch_{\min}} - 2\eta - \Delta \sin \theta,
            \delta - \norm{(x_0+\Delta v_{i_0}) - \bar{x}}
        \right\}
        .
    \end{align*}
    In addition, \Cref{lem:MPA_tangent_error_propagation} and 
    \Cref{lem:MPA_distance_maintained} combined yield
    \begin{align*}
        \norm{(x_0+\Delta v_{i_0}) - \bar{x}}
        &=
        \norm{(x_0+\Delta v_{i_0}) - \hat{\pi}(x_0+\Delta v_{i_0})}
        \\
        &\le
        \norm{\hat{\pi}(x_0+\Delta v_{i_0}) - \pi_M(x_0+\Delta v_{i_0})}
        \\
        &~~~+
        \norm{\pi_M(x_0+\Delta v_{i_0}) - (x_0+\Delta v_{i_0})}
        \\
        &\le
        \eta
        +
        \left(
            \frac{5}{8} \frac{\Delta^2}{\rch_{\min}} + \eta + \Delta \sin \theta
        \right)
        .
    \end{align*}
    As a result, after the update $\mathcal{Q} \leftarrow \mathcal{Q}\cup \set{\bar{x}}$, the announced inequality still holds. 
    Finally, we notice that Line 11, which swaps a point from $\mathcal{Q}$ to $\mathcal{O}$, leaves $\mathcal{Q} \cup \mathcal{O}$ 
    unchanged. By induction, this concludes the proof.
\end{proof}

Finally we prove \Cref{lem:MPA_geodesic_covering}, that states that if \MPA terminates, it outputs a point cloud dense enough nearby $M$.

\begin{proof}[\proofof \Cref{lem:MPA_geodesic_covering}]
    Assume for contradiction that there exists $p_0 \in M$ such that for all $x \in \mathcal{O}$, 
    $\dd_M\bigl(p_0, \pi_M(x)\bigr) > \Delta$. 
    Let $x_0 \in \mathcal{O}$ (which is not empty since $\hat{x}_0 \in \mathcal{O}$) be such that 
    \[
        \dd_M\bigl(p_0, \pi_M(x_0)\bigr) 
        = 
        \min_{x \in \mathcal{O}} \dd_M(p_0, \pi_M(x)\bigr) 
        := r_0
        >
        \Delta
        ,
    \]
    and write $y_0 := \pi_M(x_0)$. 
    Let $\gamma := \gamma_{y_0 \to p_0} : [0, r_0] \to M$ denote an arc-length 
    parametrized geodesic joining $y_0$ and $p_0$. Finally, set 
    $q_0 := \gamma(\Delta) \in M$ and $v_0 := \gamma'(0) \in T_{y_0}M$.
    
    Consider the sets $\mathcal{Q}$ and $\mathcal{O}$ of \MPA right 
    after $x_0$ was removed from $\mathcal{Q}$ and added to $\mathcal{O}$ (Line 11).
    By construction, all the elements $v_1, \dots, v_k$ of a maximal 
    $(\sin \alpha)$-packing of $\Sphere^{d-1}_{\hat{T}(x_0)}$ were tested to 
    enter $\mathcal{Q}$ (Loop from Line 6 to Line 10). Because the packing is maximal, it is also a $(2 \sin \alpha)$-covering of $\Sphere^{d-1}_{\hat{T}(x_0)}$ (see the proof of \Cref{prop:packing_covering_link}).
    As a result, by assumption on the precision of 
    $\hat{T}(x_0)$, there exists $v_{i_0}$ in this packing   such that 
    $\norm{v_0 - v_{i_0}} \le 2 \sin \alpha + \sin \theta$.
    
    As $\gamma$ is a distance-minimizing path on $M$ from $y_0$ to $p_0$, so it is 
    along its two sub-paths with endpoint $q_0$, as otherwise, one could build a 
    strictly shorter path between $y_0$ and $p_0$. In particular, since 
    $\Delta < r_0 = \dd_M(y_0,p_0)$, we have 
    $\dd_M(y_0,q_0) = \dd_M(y_0, \gamma(\Delta)) = \Delta$ and $\dd_M(p_0,q_0) = 
    \dd_M(p_0, \gamma(\Delta)) =  r_0 - \Delta$.
    As a result,
    \begin{align}
            \label{eq:lem:MPA_geodesic_covering:pushed_point_distance}
            \dd_M\left( p_0, \pi_M\left(\hat{\pi}\left(x_0+\Delta v_{i_0}\right)\right) \right)
            &\le
            \dd_M\left( p_0, q_0 \right)
            +
            \dd_M\left( q_0, \pi_M\left(\hat{\pi}\left(x_0+\Delta v_{i_0}\right)\right) \right)
            \nonumber
            \\
            &=
            r_0 - \Delta
            +
            \dd_M\left( q_0, \pi_M\left(\hat{\pi}\left(x_0+\Delta v_{i_0}\right)\right) \right).
        \end{align}
        But from \Cref{lem:geodesic_comparison}, we get
        \begin{align}
            \label{eq:lem:MPA_geodesic_covering:geodesic}
            \dd_M\left( q_0, \pi_M\left(\hat{\pi}\left(x_0+\Delta v_{i_0}\right)\right) \right)
            &\le
            2 \rch_{\min}
            \arcsin{\left(\frac{\norm{q_0-\pi_M\left(\hat{\pi}\left(x_0+\Delta v_{i_0}\right)\right)}}{2\rch_{\min}}\right)}
            \nonumber
            \\
            &\le
            \frac{
                \norm{q_0-\pi_M\left(\hat{\pi}\left(x_0+\Delta v_{i_0}\right)\right)}
            }{
                \sqrt{
                    1 - \left( 
                        \frac{
                            \norm{
                                q_0 - \pi_M \left( 
                                    \hat{\pi} \left( x_0 + \Delta v_{i_0}\right)
                                \right)
                            }
                        }{
                            2\rch_{\min}
                        } 
                    \right)^2 
                }
            },
        \end{align}
        and furthermore,
        \begin{align}
        \label{eq:lem:MPA_geodesic_covering:triangle}
            \norm{q_0-\pi_M\left(\hat{\pi}\left(x_0+\Delta v_{i_0}\right)\right)}
            &\le
            \norm{q_0 - (y_0 + \Delta v_0)}
            +
            \norm{(y_0 + \Delta v_0) - (x_0 + \Delta  v_{i_0})}
            \nonumber
            \\
            &~~~
            +
            \norm{
                (x_0 + \Delta  v_{i_0}) 
                - 
                \hat{\pi}\left(x_0+\Delta v_{i_0}\right)
                }
            \nonumber
            \\
            &~~~
            +
            \norm{
                \hat{\pi}\left(x_0+\Delta v_{i_0}\right)
                -
                \pi_M\left(\hat{\pi}\left(x_0+\Delta v_{i_0}\right)\right)
                }
            .
        \end{align}
        We now bound the right hand side of \Cref{eq:lem:MPA_geodesic_covering:triangle} term by term.
        The first term is bounded by
        \begin{align*}
            \norm{q_0 - (y_0 + \Delta v_0)}
            &=
            \norm{\gamma(\Delta) - (\gamma(0) + \Delta \gamma'(0))}
            \le
            \frac{\Delta^2}{2\rch_{\min}},
        \end{align*}
        where the inequality follows from a Taylor expansion and \Cref{lem:geodesic_comparison}. 
        For the second term, write
        \begin{align*}
            \norm{(y_0 + \Delta v_0) - (x_0 + \Delta  v_{i_0})}
            &\le
            \norm{y_0 - x_0}
            +
            \Delta \norm{v_0 - v_{i_0}}
            \le
            \eta + \Delta (2 \sin \alpha + \sin \theta).
        \end{align*}
        For the third term, we combine \Cref{lem:MPA_tangent_error_propagation} and 
        \Cref{lem:MPA_distance_maintained} to get
        \begin{align*}
            \norm{
                (x_0 + \Delta  v_{i_0}) 
                - 
                \hat{\pi}\left(x_0+\Delta v_{i_0}\right)
                }
            &\le
                \dd\left(x_0 + \Delta  v_{i_0},M\right)
            \\
            &~~~+
            \norm{
                \pi_M(x_0 + \Delta  v_{i_0}) 
                - 
                \hat{\pi}\left(x_0+\Delta v_{i_0}\right)
                }
            \\
            &\le
            \frac{5}{8} \frac{\Delta^2}{\rch_{\min}} + 2\eta + \Delta \sin \theta
            ,
        \end{align*}
        and for the fourth term, applying again \Cref{lem:MPA_tangent_error_propagation} and \Cref{lem:MPA_distance_maintained} yields
        \begin{align*}
            \norm{
                \hat{\pi}\left(x_0+\Delta v_{i_0}\right)
                -
                \pi_M\left(\hat{\pi}\left(x_0+\Delta v_{i_0}\right)\right)
                }
                &=
                \dd\bigl( \hat{\pi}\left(x_0+\Delta v_{i_0}\right) , M \bigr)
                \\
                &\le
                \eta
                .
        \end{align*}
        Plugging these four bounds in \Cref{eq:lem:MPA_geodesic_covering:triangle}, we have shown that
        \begin{align}
            \label{eq:lem:MPA_geodesic_covering:triangle_simplified}
            \norm{q_0-\pi_M\left(\hat{\pi}\left(x_0+\Delta v_{i_0}\right)\right)}
            &\le
            \frac{9\Delta^2}{8\rch_{\min}} + 4\eta + 2\Delta(\sin \alpha + \sin \theta)
            .
        \end{align}
        Combining \Cref{eq:lem:MPA_geodesic_covering:triangle_simplified},
        \Cref{eq:lem:MPA_geodesic_covering:geodesic}, and the assumptions on the parameters 
        $\Delta, \eta, \theta, \alpha$ hence yields
        \begin{align*}
             \dd_M\left( y_0, \pi_M\left(\hat{\pi}\left(x_0+\Delta v_{i_0}\right)\right) \right)
             \le
             2 \norm{q_0-\pi_M\left(\hat{\pi}\left(x_0+\Delta v_{i_0}\right)\right)}
             \le
             \Delta/2,
        \end{align*}
        so that \Cref{eq:lem:MPA_geodesic_covering:pushed_point_distance} gives
        \begin{align*}
            \dd_M\left( p_0, \pi_M\left(\hat{\pi}\left(x_0+\Delta v_{i_0}\right)\right) \right)
            &\le
            r_0 - \Delta + \Delta/2 
            \\
            &< r_0 
            \\
            &= 
            \dd_M(p_0, \pi_M(x_0)) 
            = 
            \min_{x \in \mathcal{O}} \dd_M(p_0, \pi_M(x)\bigr).
        \end{align*}
        In particular, $\hat{\pi}\left(x_0+\Delta v_{i_0}\right)$ was not added to $\mathcal{Q}$ in the
        Loop of Lines 6 to 10 investigating the neighbors of $x_0$ (i.e.  when $x_0$ was picked Line 3). 
        Since $\mathcal{Q} \cup \mathcal{O}$ is an increasing sequence of sets as \MPA runs and
        that $\mathcal{Q} = \emptyset$ when it terminates, this means that there exists $x_1$ in the 
        final output $\mathcal{O}$ such that $\norm{x_0 + \Delta v_{i_0} - x_1} \le \delta$.
        
        The existence of this particular point $x_1$ in $\mathcal{O}$ which is $\delta$-close to 
        $x_0 + \Delta v_{i_0}$ will lead us to a contradiction: we will show that $\pi_M(x_1)$ will be 
        closer to $p_0$ than $\pi_M(x_0)$ is in geodesic distance.        
        To get there, we first notice that any such $x_1 \in \mathcal{O}$ would satisfy 
        $\dd(x_1,M) \le \eta$ from \Cref{lem:MPA_distance_maintained}, so that   
        \begin{align*}
            \norm{\pi_M\left(x_0+\Delta v_{i_0}\right) - \pi_M(x_1)}
            &\le
            \norm{\pi_M\left(x_0+\Delta v_{i_0}\right) - \left(x_0+\Delta v_{i_0}\right)}
            \\
            &~~~+
            \norm{\left(x_0+\Delta v_{i_0}\right) - x_1}
            +
            \norm{x_1 - \pi_M(x_1)}
            \\
            &\le
            \delta + \frac{5}{8} \frac{\Delta^2}{\rch_{\min}} + 2\eta + \Delta \sin \theta
            \\
            &\le
            \delta + \frac{17}{192} \Delta
            \le
            \frac{25}{192}\rch_{\min}
            ,
        \end{align*}
        where the last-but-one line follows from \Cref{lem:MPA_tangent_error_propagation}, and the last 
        one from the assumptions on the parameters $\Delta, \eta, \theta$ and $\delta$.
        As a result, from \Cref{lem:geodesic_comparison},
        \begin{align}
            \label{eq:lem:MPA_geodesic_covering:contradiction_1}
            \dd_M(\pi_M\left(x_0+\Delta v_{i_0}\right), \pi_M(x_1))
            &\le
            \frac{\norm{\pi_M\left(x_0+\Delta v_{i_0}\right) - \pi_M(x_1)}}{\sqrt{1- \left(\frac{25}{2 \times 192} \right)^2}}
            \nonumber
            \\
            &\le
            \left(1 + \frac{3}{10000} \right) \left( \delta + \frac{17}{192} \Delta \right)
            .
        \end{align}
        Furthermore, using a similar decomposition as for
        \Cref{eq:lem:MPA_geodesic_covering:triangle_simplified}, we have
        \begin{align*}
            \norm{q_0 - \pi_M\left(x_0+\Delta v_{i_0}\right)}
            &\le
            \norm{q_0 - \left(y_0+\Delta v_{0}\right)}
            + \norm{\left(y_0+\Delta v_{0}\right) - \left(x_0+\Delta v_{i_0}\right)}
            \\
            &~~~+
            \norm{\left(x_0+\Delta v_{i_0}\right) - \pi_M\left(x_0+\Delta v_{i_0}\right)}
            \\
            &\le
            \frac{\Delta^2}{2 \rch_{\min}}
            +\left( \eta + \Delta \left( 2 \sin \alpha + \sin \theta \right) \right) 
            \\
            &~~~+
            \left( \frac{5}{8} \frac{\Delta^2}{\rch_{\min}} + \eta + \Delta \sin \theta \right)
            \\
            &\le
            \frac{11}{64} \Delta \le \frac{11}{1536} \rch_{\min}
            ,
        \end{align*}
        from which we finally get
        \begin{align}
            \label{eq:lem:MPA_geodesic_covering:contradiction_2}
            \dd_M\left( q_0, \pi_M\left(x_0+\Delta v_{i_0}\right)\right)
            &\le
            \frac{\norm{q_0 - \pi_M\left(x_0+\Delta v_{i_0}\right)}}{\sqrt{1- \left(\frac{11}{2\times 1536} \right)^2}}
            \nonumber
            \\
            &\le
            \frac{3}{16} \Delta
            .
        \end{align}
        This takes us to the desired contradiction, since:
        \begin{itemize}[leftmargin=*]
        \item 
            on one hand, $x_1 \in \mathcal{O}$ forces to have
            \begin{align*}
                \dd_M(p_0, \pi_M(x_1))
                \geq
                r_0
                =
                \min_{x \in \mathcal{O}} \dd_M(p_0, \pi_M(x)\bigr)
                =
                \dd_M(p_0, \pi_M(x_0))
                ;
            \end{align*}
        \item
            on the other hand, \Cref{eq:lem:MPA_geodesic_covering:contradiction_1} and \Cref{eq:lem:MPA_geodesic_covering:contradiction_2} combined yield 
            \begin{align*}
                \dd_M(p_0, \pi_M(x_1))
                &\le
                \dd_M(p_0,q_0) 
                + 
                \dd_M\left( q_0, \pi_M\left(x_0+\Delta v_{i_0}\right)\right) 
                \\
                &~~~+ 
                \dd_M(\pi_M\left(x_0+\Delta v_{i_0}\right), \pi_M(x_1))
                \\
                &\le
                r_0 - \Delta 
                + 
                \frac{3}{16} \Delta
                + 
                \left(1 + \frac{3}{10000} \right) \left( \delta +     \frac{17}{192} \Delta \right) 
                \\
                & < r_0,
            \end{align*}
            where we used that $\delta \le 7 \Delta / 10$. 
        \end{itemize}
        As a result, we have proved that
        \[
            \max_{p \in M} \min_{x \in \mathcal{O}} \dd_M(p, \pi_M(x)\bigr) \le \Delta,
        \]
        which is the announced result.
    \end{proof}

    \section{Preliminary Geometric Results}
    \label{sec:misc}
    \subsection{Local Mass of Balls Estimates}

To prove the properties of the statistical query routines, we will need the following two geometric results about manifolds with bounded reach.
In what follows, $t_+ := \max\{0,t \}$ stands for the positive part of $t \in \R$.

\begin{proposition}[{\cite[Proposition 8.2]{Aamari18}}]
\label{prop:ball_projection}
    Let $M \in \manifolds{n}{d}{\rch_{\min}}$, $x \in \R^n$ such that $\dd(x,M) \le  \rch_{\min}/8$, 
    and $h \le \rch_{\min}/8$. Then,
    \begin{align*}
         \B\left(\pi_M(x), r_h^- \right )\cap M 
         \subseteq 
         \B(x,h) \cap M 
         \subseteq 
         \B \left(\pi_M(x), r_h^+ \right ) \cap M,
    \end{align*}
    where $r_h = (h^2- \dd(x,M)^2)_+^{1/2}$, $(r_h^-)^2 = 
    \left (1-\frac{\dd(x,M)}{\rch_{\min}}\right )r_h^2$,  
    and $(r_h^+)^2 = \left (1+\frac{ 2 \dd(x,M)}{\rch_{\min}}\right )r_h^2$.
\end{proposition}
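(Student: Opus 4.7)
Set $p := \pi_M(x)$ and $\delta := \dd(x,M) = \norm{x-p}$; the assumption $\delta \leq \rch_{\min}/8 < \rch_{\min}$ guarantees that $p$ is well-defined and, by the standard reach-based first-order optimality, that $x - p$ is normal to $M$ at $p$, i.e.\ $x - p \in (T_p M)^\perp$. The strategy is to derive a sharp two-sided estimate of $\norm{x - q}^2$ in terms of $\delta$ and $\norm{p - q}^2$, valid for every $q \in M$, and then translate it into the claimed ball inclusions by elementary algebra.

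For the main estimate, I would fix $q \in M$, decompose $q - p = v + n$ along $T_p M \oplus (T_p M)^\perp$, and invoke the second characterization of the reach from \Cref{def:reach} to obtain $\norm{n} = \dd(q - p, T_p M) \leq \norm{q - p}^2/(2 \rch_{\min})$. Since $x - p \perp T_p M$, only the normal component contributes to the cross-term, so
\[
    \bigl|\inner{x - p}{q - p}\bigr| = \bigl|\inner{x - p}{n}\bigr| \leq \delta \cdot \frac{\norm{q-p}^2}{2 \rch_{\min}}.
\]
Expanding $\norm{x-q}^2 = \norm{x-p}^2 + \norm{p-q}^2 - 2 \inner{x-p}{q-p}$ then yields the two-sided bound
\[
    \delta^2 + \norm{p-q}^2 \Bigl(1 - \frac{\delta}{\rch_{\min}}\Bigr) \leq \norm{x-q}^2 \leq \delta^2 + \norm{p-q}^2 \Bigl(1 + \frac{\delta}{\rch_{\min}}\Bigr).
\]

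From here the two inclusions fall out algebraically. For the first, if $q \in \B(p, r_h^-) \cap M$, then plugging $\norm{p-q}^2 \leq (r_h^-)^2 = (1 - \delta/\rch_{\min}) r_h^2$ into the upper estimate gives $\norm{x-q}^2 \leq \delta^2 + r_h^2 \bigl(1 - (\delta/\rch_{\min})^2 \bigr) \leq \delta^2 + r_h^2 = h^2$, so $q \in \B(x,h)$. For the second, if $q \in \B(x,h) \cap M$, the lower estimate yields $\norm{p - q}^2 \leq r_h^2 / (1 - \delta/\rch_{\min})$, and I would then verify the elementary inequality $1/(1 - \delta/\rch_{\min}) \leq 1 + 2\delta/\rch_{\min}$, which rearranges to $2(\delta/\rch_{\min})^2 \leq \delta/\rch_{\min}$ and thus holds under $\delta \leq \rch_{\min}/2$, a fortiori under the hypothesis $\delta \leq \rch_{\min}/8$. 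This gives $\norm{p-q}^2 \leq (r_h^+)^2$, as required. The only genuine (and modest) obstacle is the normality $x - p \perp T_p M$, which is a standard consequence of $\delta < \rch_{\min}$ and $\mathcal{C}^2$-smoothness and can be cited directly from the reach literature; the asymmetry between the coefficients $(1 - \delta/\rch_{\min})$ and $(1 + 2\delta/\rch_{\min})$ in $r_h^\pm$ is precisely what is needed to absorb the quadratic remainder in that algebraic step, and note that the condition $h \leq \rch_{\min}/8$ plays no role in the derivation itself (it is inherited from the ambient context).
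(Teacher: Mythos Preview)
The paper does not supply its own proof of this proposition; it is quoted as \cite[Proposition~8.2]{Aamari18} and used as a black box. Your argument is correct and self-contained: the normality $x - \pi_M(x) \in (T_{\pi_M(x)} M)^\perp$ is exactly \cite[Theorem~4.8(12)]{Federer59} (invoked elsewhere in the paper), the bound $\norm{n} \leq \norm{q-p}^2/(2\rch_{\min})$ is the second formula in \Cref{def:reach}, and the algebraic step $1/(1-t) \leq 1+2t$ for $t \leq 1/2$ is precisely what produces the asymmetric constant in $r_h^+$. This is the natural direct proof and almost certainly coincides with the argument in the cited source.
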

    
As a result, one may show that any ball has large mass with respect to a measure 
$D \in \distributions{n}{d}{\rch_{\min}}{f_{\min}}{f_{\max}}{L}$.
\begin{lemma}
\label{lem:intrinsic_ball_mass}
    Let $D \in \distributions{n}{d}{\rch_{\min}}{f_{\min}}{f_{\max}}{L}$ have support
    $M = \supp(D)$.
    \begin{itemize}
        \item For all $p \in M$ and  $h \le \rch_{\min}/4$,
            \[
                a_d f_{\min} h^d
                \le
                D\bigl(\B(p,h) \bigr)
                \le
                A_d f_{\max} h^d,
            \]
            where $a_d = 2^{-d} \omega_d$ and $A_d = 2^d \omega_d$.
        \item For all $x_0 \in \R^n$ and $h \le \rch_{\min}/8$,
            \[
                a'_d f_{\min} (h^2-\dd(x_0,M)^2)_+^{d/2}
                \le
                D\bigl(\B(x_0,h) \bigr)
                \le
                A'_d f_{\max} (h^2-\dd(x_0,M)^2)_+^{d/2},
            \]
            where $a'_d = (7/8)^{d/2}a_d$ and $A'_d = (5/4)^{d/2}A_d$.
    \end{itemize}
\end{lemma}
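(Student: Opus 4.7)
The strategy is to bound the mass by reducing to $d$-dimensional Hausdorff measure estimates of spherical caps in $M$, and then invoking the two geometric tools already available: the Euclidean/geodesic ball comparison from \Cref{lem:geodesic_comparison}, and the off-manifold ball comparison from \Cref{prop:ball_projection}. The first bullet point is proved directly, and the second bullet point is obtained from the first by projecting to $M$.

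For the first bullet, since $D$ has density $f$ with respect to $\mathrm{vol}_M = \indicator{M}\Haus^d$ with $f_{\min} \le f \le f_{\max}$, one immediately has
\[
    f_{\min} \Haus^d(\B(p,h)\cap M) \le D(\B(p,h)) \le f_{\max} \Haus^d(\B(p,h)\cap M).
\]
First I would apply \Cref{lem:geodesic_comparison} to sandwich the Euclidean slice $\B(p,h)\cap M$ between the geodesic balls $\B_M(p, h(1-(h/\rch_{\min})^2/24))$ and $\B_M(p, h(1+(h/\rch_{\min})^2/4))$. Since $h \le \rch_{\min}/4$, both radii are within a fixed factor of $h$. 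Then I would invoke the standard volume comparison for geodesic balls in submanifolds with reach bounded below (the statement $\Haus^d(\B_M(p,s)) = \omega_d s^d (1 + O((s/\rch_{\min})^2))$, which is stated as \Cref{prop:volume_bounds_under_reach_constraint} in the paper). The constants $a_d = 2^{-d}\omega_d$ and $A_d = 2^d\omega_d$ are deliberately loose, so one only needs the rough implications $\omega_d (h/2)^d \le \Haus^d(\B_M(p,h)) \le \omega_d (2h)^d$ in the regime $h \le \rch_{\min}/4$, which follows easily from that volume bound.

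For the second bullet, apply \Cref{prop:ball_projection}: writing $q = \pi_M(x_0)$ (which is well-defined since $\dd(x_0,M) \le h \le \rch_{\min}/8 < \rch_{\min}$), one has
\[
    \B(q, r_h^-) \cap M \subseteq \B(x_0,h) \cap M \subseteq \B(q, r_h^+) \cap M
\]
with $r_h = (h^2 - \dd(x_0,M)^2)_+^{1/2}$ and $(r_h^\pm)^2 = (1 \mp \dd(x_0,M)/\rch_{\min})\cdot\ldots$ as stated. Since $r_h^+ \le h \sqrt{1 + 2\dd(x_0,M)/\rch_{\min}} \le (\rch_{\min}/8)\sqrt{5/4} \le \rch_{\min}/4$, the first bullet applies to the on-manifold balls $\B(q, r_h^\pm)$, giving
\[
    a_d f_{\min} (r_h^-)^d \le D(\B(q, r_h^-)) \le D(\B(x_0,h)) \le D(\B(q, r_h^+)) \le A_d f_{\max} (r_h^+)^d.
\]
Substituting the formulas for $(r_h^\pm)^2$ and using $\dd(x_0,M)/\rch_{\min} \le 1/8$ converts the factor $(r_h^-)^d$ into $(7/8)^{d/2} r_h^d$ and $(r_h^+)^d$ into at most $(5/4)^{d/2} r_h^d$, producing exactly the announced constants $a'_d = (7/8)^{d/2} a_d$ and $A'_d = (5/4)^{d/2} A_d$.

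I do not anticipate a real obstacle: the proof is an exercise in chaining three standard inputs (density bounds, reach-based geodesic/Euclidean ball comparison, and the off-manifold ball projection lemma). The only care needed is bookkeeping the regime of validity: the assumption $h \le \rch_{\min}/8$ in the second bullet is precisely what guarantees that the auxiliary radius $r_h^+$ lands in the regime $[0,\rch_{\min}/4]$ where the first bullet applies, which is in turn exactly the regime where the geodesic/Euclidean comparison of \Cref{lem:geodesic_comparison} yields multiplicative distortion absorbable into the crude constants $2^d$ and $2^{-d}$.
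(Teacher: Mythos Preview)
Your approach is essentially the paper's: the second bullet is obtained from the first via \Cref{prop:ball_projection} exactly as you describe, and the paper's own proof of the first bullet simply defers to \cite[Propositions~8.6 \&~8.7]{Aamari18} rather than writing out the geodesic-ball volume comparison.

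There is, however, one concrete error to fix. \Cref{prop:volume_bounds_under_reach_constraint} does \emph{not} state the local estimate $\Haus^d(\B_M(p,s)) = \omega_d s^d(1 + O((s/\rch_{\min})^2))$ that you attribute to it; that proposition is a \emph{global} volume bound ($\Haus^d(M) \ge \sigma_d \rch_{\min}^d$, and an upper bound in terms of the radius $R$ of an enclosing ball), and says nothing about small geodesic balls. The input you actually need is the Jacobian bound for the exponential map under a reach condition --- as used elsewhere in the paper, e.g.\ \cite[Proposition~8.7]{Aamari18} (invoked in the proof of \Cref{lem:covariance_decomposed}) or \cite[Proposition~A.1]{Aamari19} (invoked in the proof of \Cref{lem:local_conditional_mean_geodesic_bias}) --- from which one writes $\Haus^d(\B_M(p,s)) = \int_{\B_d(0,s)} |\det d_v \exp_p^M|\,dv$ and uses that the integrand lies in $[c^d, C^d]$ for $\norm{v} \le \rch_{\min}/4$. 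With that citation corrected, your argument goes through and matches the paper's.
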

        
\begin{proof}[\proofof \Cref{lem:intrinsic_ball_mass}]
The first statement is a direct consequence of 
                \cite[Propositions~8.6 \&~8.7]{Aamari18}.
The second one follows by combining the previous point with 
            \Cref{prop:ball_projection}.
\end{proof}

\subsection{Euclidean Packing and Covering Estimates}
\label{subsec:euclidean-packing-and-covering}
For sake of completeness, we include in this section some standard packing and covering bounds that are used in our analysis. We recall the following definitions. 

A $r$-covering of $K \subseteq \R^n$ is a subset $\mathcal{X}= \set{ x_1,\ldots,x_k } \subseteq K$ such that for all $x \in K$, $\dd(x,\mathcal{X}) \leq r$. 
A $r$-packing of $K$ is a subset $\mathcal{Y} = \left\lbrace y_1,\ldots,y_k \right\rbrace \subseteq K$ such that for all $y,y' \in \mathcal{Y}$, $\B(y,r) \cap \B(y',r) = \emptyset$ (or equivalently $\norm{y'-y}>2r$).

\begin{definition}[Covering and Packing numbers]\label{def:packing_covering}
For $K \subseteq \R^n$ and $r>0$, the covering number $\CV_K(r)$ of $K$ is the minimum number of balls of radius $r$ that are necessary to cover $K$:
\begin{align*}
\CV_K(r)
&=
\min
\set{
{k > 0}~|
\text{ there exists a } r\text{-covering of cardinality } k
}
.
\end{align*}
The packing number $\PK_K(r)$ of $K$ is the maximum number of disjoint balls of radius $r$ that can be packed in $K$:
\begin{align*}
\PK_K(r)
&=
\max
\set{
{k > 0}~|
\text{ there exists a } r\text{-packing of cardinality } k
}
.
\end{align*}
\end{definition}

Packing and covering numbers are tightly related, as shown by the following well-known statement.

\begin{proposition}
    \label{prop:packing_covering_link}
    For all subset $K \subseteq \R^n$ and $r>0$,
    \begin{align*}
    \PK_K(2r)
    \leq 
    \CV_K(2r)
    \leq 
    \PK_K(r).
    \end{align*}
\end{proposition}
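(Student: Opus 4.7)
The plan is to establish each inequality separately using two classical pigeonhole/maximality arguments.

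For the left inequality $\PK_K(2r) \leq \CV_K(2r)$, I would fix a $(2r)$-packing $\{y_1,\dots,y_k\} \subseteq K$ of maximal cardinality $k = \PK_K(2r)$, and a $(2r)$-covering $\{x_1,\dots,x_m\} \subseteq K$ of minimal cardinality $m = \CV_K(2r)$. For each $i$, the covering property gives an index $j(i)$ with $\|y_i - x_{j(i)}\| \le 2r$. The key step is to show that the map $i \mapsto j(i)$ is injective: if two distinct packing points $y_i, y_{i'}$ shared the same covering center $x_j$, then the triangle inequality would yield $\|y_i - y_{i'}\| \le 4r$, which directly contradicts the packing condition $\|y_i - y_{i'}\| > 2\cdot 2r = 4r$. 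Injectivity then forces $k \le m$.

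For the right inequality $\CV_K(2r) \leq \PK_K(r)$, I would use a maximality argument. Pick any maximal $r$-packing $\mathcal{Y} = \{y_1,\dots,y_\ell\}$ of $K$ (maximal in the sense of set inclusion). The goal is to show that $\mathcal{Y}$ is automatically a $(2r)$-covering. Suppose for contradiction there existed $x \in K$ with $\dd(x,\mathcal{Y}) > 2r$; then for each $i$ we would have $\|x - y_i\| > 2r$, i.e.\ $\B(x,r) \cap \B(y_i,r) = \emptyset$, so $\mathcal{Y} \cup \{x\}$ would still be an $r$-packing, contradicting the maximality of $\mathcal{Y}$. Hence every $x \in K$ lies within distance $2r$ of some $y_i$, which shows $\CV_K(2r) \le \ell \le \PK_K(r)$.

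Neither step presents a real obstacle; the only subtlety is the strict-vs-weak inequality in the packing definition (balls disjoint iff $\|y-y'\|>2r$), which is exactly what powers the contradiction in both halves. The existence of a maximal packing in the second step is immediate when $\PK_K(r)<\infty$, and otherwise the inequality $\CV_K(2r) \le \PK_K(r) = \infty$ is vacuous, so no Zorn-type argument is needed.
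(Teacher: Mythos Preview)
Your proof is correct and follows essentially the same approach as the paper: for the left inequality you use the pigeonhole argument that each covering ball of radius $2r$ can contain at most one point of a $(2r)$-packing, and for the right inequality you use the fact that a maximal $r$-packing is automatically a $(2r)$-covering. Your write-up is simply a more explicit version of the paper's two-sentence proof.
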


\begin{proof}[\proofof \Cref{prop:packing_covering_link}]
    For the left-hand side inequality, notice that if $K$ is covered by a family of balls of radius $2r$, each of these balls contains at most one point of a maximal $2r$-packing.
    Conversely, the right-hand side inequality follows from the fact that a maximal $r$-packing is always a $2r$-covering. Indeed, if it was not the case one could add a point $x_0 \in K$ that is $2r$-away from all of the $r$-packing elements, which would contradict the maximality of this packing.
\end{proof}

We then bound the packing and covering numbers of the submanifolds with reach bounded below. Note that these bounds depend only on the intrinsic dimension and volumes, but not on the ambient dimension.

\begin{proposition}
    \label{prop:packing_covering_manifold}
    For all $M \in \manifolds{n}{d}{\rch_{\min}}$ and $r \leq \rch_{\min} / 8$,
    \[
        \PK_M(r) 
        \geq 
        \frac{\Haus^d(M)}{\omega_d (4r)^d}
        ,
    \]
    and
    \[
        \CV_M(r) 
        \leq
        \frac{\Haus^d(M)}{\omega_d (r/4)^d}
        .
    \]    
\end{proposition}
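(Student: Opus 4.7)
The plan is to apply the local mass estimates of \Cref{lem:intrinsic_ball_mass} to the uniform distribution $D = \indicator{M} \Haus^d / \Haus^d(M)$ on $M$, which belongs to the model $\distributions{n}{d}{\rch_{\min}}{1/\Haus^d(M)}{1/\Haus^d(M)}{0}$. Since $D(\B(p,h)) = \Haus^d(\B(p,h) \cap M)/\Haus^d(M)$, the first item of that lemma immediately translates into the two-sided bound
\[
2^{-d} \omega_d h^d \;\leq\; \Haus^d\!\left(\B(p,h) \cap M\right) \;\leq\; 2^d \omega_d h^d,
\]
valid for all $p \in M$ and all $h \leq \rch_{\min}/4$. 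Both inequalities of the proposition will then follow from standard packing/covering manipulations, combined with a union bound or a disjointness argument.

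For the packing lower bound, I would fix a maximal $r$-packing $\set{p_1,\ldots,p_k}$ of $M$: by definition $\norm{p_i - p_j} > 2r$ for $i \neq j$, and by maximality no point of $M$ can sit at distance more than $2r$ from all the $p_i$'s (otherwise it could be appended to the packing), whence $M \subseteq \bigcup_{i=1}^k \B(p_i,2r)$. Since $2r \leq \rch_{\min}/4$, the upper volume bound above applied at radius $h = 2r$ yields
\[
\Haus^d(M) \;\leq\; \sum_{i=1}^k \Haus^d\!\left(\B(p_i,2r) \cap M\right) \;\leq\; k \cdot 2^d \omega_d (2r)^d \;=\; k \cdot \omega_d (4r)^d,
\]
and rearranging gives $\PK_M(r) \geq k \geq \Haus^d(M)/(\omega_d (4r)^d)$.

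For the covering upper bound, I would invoke \Cref{prop:packing_covering_link} to reduce to $\CV_M(r) \leq \PK_M(r/2)$, and then bound the latter directly: if $\set{q_1,\ldots,q_\ell}$ is any $(r/2)$-packing, then $\norm{q_i - q_j} > r$, so the balls $\B(q_i,r/2)$ are pairwise disjoint, and hence so are the subsets $\B(q_i,r/2) \cap M \subseteq M$. Since $r/2 \leq \rch_{\min}/16 \leq \rch_{\min}/4$, the lower volume bound at $h = r/2$ then gives
\[
\Haus^d(M) \;\geq\; \sum_{i=1}^\ell \Haus^d\!\left(\B(q_i,r/2) \cap M\right) \;\geq\; \ell \cdot 2^{-d} \omega_d (r/2)^d \;=\; \ell \cdot \omega_d (r/4)^d,
\]
so that $\PK_M(r/2) \leq \Haus^d(M)/(\omega_d (r/4)^d)$ and the claim follows. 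There is no substantive obstacle in this argument; the only care needed is in tracking constants, and it is precisely the crude $2^{\pm d}$ factors that appear in \Cref{lem:intrinsic_ball_mass} that are designed to absorb exactly into the $(4r)^d$ and $(r/4)^d$ slack stated in the proposition (so there is no need to invoke the tighter intrinsic/extrinsic ball comparisons of \Cref{lem:geodesic_comparison}).
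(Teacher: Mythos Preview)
Your proof is correct and follows essentially the same approach as the paper's: both arguments apply the volume bounds of \Cref{lem:intrinsic_ball_mass} to the uniform distribution on $M$, combining the upper bound with a union bound over a $2r$-covering for the packing inequality, and the lower bound with disjointness of an $(r/2)$-packing (via \Cref{prop:packing_covering_link}) for the covering inequality. The only cosmetic difference is that for the first inequality the paper phrases the step through $\PK_M(r) \geq \CV_M(2r)$ and a \emph{minimal} $2r$-covering, whereas you take a \emph{maximal} $r$-packing and observe it is a $2r$-covering---these are the same argument.
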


\begin{proof}[\proofof \Cref{prop:packing_covering_manifold}]
First, we have $\PK_{M}(r) \geq \CV_{M}(2r)$ from \Cref{prop:packing_covering_link}. In addition, if $\set{p_i}_{1 \leq i \leq N} \subseteq M$ is a minimal $(2r)$-covering of $M$, then by considering the uniform distribution $D_M = \indicator{M} \Haus^d /\Haus^d(M)$ over $M$, using a union bound and applying \Cref{lem:intrinsic_ball_mass}, we get
    \begin{align*}
        1
        =
        D_M\left( \cup_{i = 1}^N \B(p_i,2r) \right)
        \leq
        \sum_{i = 1}^N
        D_M(\B(p_i,2r))
        \leq
        N 2^d \omega_d (2r)^d / \Haus^d(M)
        .
    \end{align*}
    As a result, 
    $
        \PK_{M}(r) 
        \geq 
        \CV_{M}(2r) 
        = 
        N
        \geq
        \frac{\Haus^d(M)}{\omega_d (4r)^d}
    .
    $
    
    For the second bound, use again \Cref{prop:packing_covering_link} to get $\CV_{M}(r) \leq \PK_{M}(r/2)$. Now, by definition, a maximal $(r/2)$-packing $\set{q_j}_{1 \leq j \leq N'}\subseteq M$ of $M$ provides us with a family of disjoint balls of radii $r/2$. Hence, from  \Cref{lem:intrinsic_ball_mass}, we get
    \begin{align*}
        1
        \geq
        D_M\left( \cup_{i = j}^{N'} \B(q_j,r/2) \right)
        =
        \sum_{j = 1}^{N'}
        D_M(\B(q_j,r/2))
        \geq
        N' 2^{-d} \omega_d (r/2)^d / \Haus^d(M)
        ,
    \end{align*}
    so that     
    $
        \CV_{M}(r) 
        \leq 
        \PK_{M}(r/2) 
        =
        N'
        \leq
        \frac{\Haus^d(M)}{\omega_d (r/4)^d}
    .
    $
\end{proof}

Bounds on the same discretization-related quantities computed on the Euclidean $n$-balls and $k$-spheres will also be useful.

\begin{proposition}
    \label{prop:packing_covering_ball_sphere}
    \begin{itemize}
        \item 
        For all $r > 0$,
        \[
            \PK_{\B(0,R)}(r) \geq \left(\frac{R}{2r}\right)^n
            \text{  and  }
            \CV_{\B(0,R)}(r)
            \leq
             \left(1+\frac{2R}{r}\right)^n
             .
        \]
        \item
        For all integer $1 \leq k < n$ and $r \leq 1/8$,
        \[
            \PK_{\Sphere^{k}(0,1)}(r) 
            \geq 
            2 \left(\frac{1}{4r}\right)^k
            .
        \]        
    \end{itemize}
\end{proposition}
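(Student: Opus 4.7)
The plan is to derive all three bounds from two standard ingredients already in the paper: the packing/covering duality of \Cref{prop:packing_covering_link}, and volume comparison in Lebesgue measure for the ambient balls, respectively \Cref{prop:packing_covering_manifold} for the sphere.

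For the lower bound $\PK_{\B(0,R)}(r) \geq (R/(2r))^n$, I would take a maximal $r$-packing $\{x_1,\ldots,x_N\} \subseteq \B(0,R)$. By maximality, any $x \in \B(0,R)$ lies within $2r$ of some $x_i$ (otherwise we could add $x$ to the packing), so the dilated balls $\B(x_i,2r)$ cover $\B(0,R)$; comparing $n$-dimensional Lebesgue volumes yields $\omega_n R^n \leq N \omega_n (2r)^n$, and rearranging gives the claim. For the upper bound $\CV_{\B(0,R)}(r) \leq (1+2R/r)^n$, I would chain $\CV_K(r) \leq \PK_K(r/2)$ from \Cref{prop:packing_covering_link} and then bound $\PK_{\B(0,R)}(r/2)$ by noting that the disjoint balls $\B(x_j,r/2)$ of any such packing all lie inside $\B(0,R+r/2)$; the volume inequality $N \omega_n (r/2)^n \leq \omega_n (R+r/2)^n$ closes the argument.

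For the sphere bound I would simply apply \Cref{prop:packing_covering_manifold} to $\Sphere^k(0,1)$, which is a compact connected boundaryless $d=k$-dimensional submanifold of $\R^{k+1}$ with reach exactly $1$, so that the hypotheses are met as soon as $r \leq 1/8$. This yields
\[
\PK_{\Sphere^k(0,1)}(r) \geq \frac{\Haus^k(\Sphere^k(0,1))}{\omega_k (4r)^k} = \frac{\sigma_k}{\omega_k (4r)^k},
\]
and it remains only to absorb the ratio $\sigma_k/\omega_k$ into the advertised constant $2$.

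The only mildly delicate point is therefore the inequality $\sigma_k \geq 2 \omega_k$ for $k \geq 1$, which I would verify via the identity $\sigma_k = (k+1)\omega_{k+1}$ combined with the closed form $\omega_n = \pi^{n/2}/\Gamma(n/2+1)$: for $k=1,2$ one computes $\sigma_k/\omega_k = \pi$ and $4$ respectively, and for $k \geq 3$ a standard lower bound on the Gamma-function ratio $\Gamma(k/2+1)/\Gamma(k/2+3/2)$ shows that $(k+1)\omega_{k+1}/\omega_k \geq \sqrt{2\pi(k+1)^2/(k+2)} \geq 2$. This elementary dimension-free check is the only non-cosmetic step; the rest is a direct concatenation of \Cref{prop:packing_covering_link} and \Cref{prop:packing_covering_manifold} with Lebesgue volume inequalities.
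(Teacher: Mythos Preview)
Your proposal is correct and follows essentially the same route as the paper: volume comparison via \Cref{prop:packing_covering_link} for the ball estimates, and \Cref{prop:packing_covering_manifold} applied to $\Sphere^k(0,1)$ (reach $1$, volume $\sigma_k$) for the sphere bound. The only cosmetic difference is in verifying $\sigma_k/\omega_k \geq 2$: the paper writes this ratio directly as $2\sqrt{\pi}\,\Gamma(k/2+1)/\Gamma((k+1)/2)$ and observes it is at least $2$, whereas you route through $\sigma_k = (k+1)\omega_{k+1}$ and a Wendel-type bound on $\Gamma(x)/\Gamma(x+1/2)$; both arrive at the same inequality.
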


\begin{proof}[\proofof \Cref{prop:packing_covering_ball_sphere}]
    \begin{itemize}[leftmargin=*]
        \item 
        From \Cref{prop:packing_covering_link}, we have $\PK_{\B(0,R)}(r) \geq \CV_{\B(0,R)}(2r)$. Furthermore, if $\cup_{i = 1}^N \B(x_i,2r) \supseteq \B(0,R)$ is a minimal $2r$-covering of $\B(0,R)$, then by a union bound, $
            \omega_n R^n
            =
            \Haus^n(\B(0,R))
            \leq
            N \omega_n (2r)^n
            ,
        $
        so that $\PK_{\B(0,R)}(r) \geq \CV_{\B(0,R)}(2r) = N \geq (R/(2r))^n$.
        
        For the second bound, we use again \Cref{prop:packing_covering_link} to get $\CV_{\B(0,R)}(r) \leq \PK_{\B(0,R)}(r/2)$, and we notice that any maximal $(r/2)$-packing of $\B(0,R)$ with cardinality $N'$ provides us with a family of disjoint balls of radii $r/2$, all contained in $\B(0,R)^{r/2} = \B(0,R+r/2)$. A union bound hence yields
        $
            \omega_n (R+r/2)^n 
            = 
            \Haus^n(\B(0,R+r/2))
            \geq
            N' \Haus^n(\B(0,r/2))
            =
            N' \omega_n (r/2)^n
        $,
        yielding $\CV_{\B(0,R)}(r) \leq \PK_{\B(0,R)}(r/2) = N' \leq (1+2R/r)^n$.
        \item
        Notice that $\Sphere^{k}(0,1) \subseteq \R^n$ is a compact $k$-dimensional submanifold without boundary, reach $\rch_{\Sphere^{k}(0,1)} = 1$, and volume $\Haus^k(\Sphere^k(0,1)) = \sigma_k$. 
        Applying \Cref{prop:packing_covering_manifold} together with elementary calculations hence yield
	    \begin{align*}
	        \PK_{\Sphere^k(0,1)}(r)
	        &\geq
	        \frac{\sigma_k}{\omega_k}
	        \left(\frac{1}{4r}\right)^k
	        \\
	        &=
	       \left(
	            \dfrac{
	                2\pi^{(k+1)/2}
	            }{
	                \Gamma\left( \frac{k+1}{2} \right)
	            }
	       \right)
	       \left(
	            \dfrac{
	                \pi^{k/2}
	            }{
	                \Gamma\left( \frac{k}{2} +1 \right)
	            }
            \right)^{-1}
	        \left(\frac{1}{4r}\right)^k
	        \\
	        &=
	        2\sqrt{\pi}
	        \frac{\Gamma\left( \frac{k}{2} +1 \right)}{\Gamma\left( \frac{k+1}{2} \right)}
	        \left(\frac{1}{4r}\right)^k
	        \\
	        &\geq
	        2 \left(\frac{1}{4r}\right)^k
	        .
	    \end{align*}
    \end{itemize}
    
\end{proof}

    \subsection{Global Volume Estimates}
    
    The following bounds on the volume and diameter of low-dimensional submanifolds of $\R^n$ with positive reach are at the core of \Cref{subsubsec:implicit-bounds-on-parameters}. They exhibit some implicit constraints on the parameters for the statistical models not to be degenerate.
    \begin{proposition}
        \label{prop:volume_bounds_under_reach_constraint}
        For all $M \in \manifolds{n}{d}{\rch_{\min}}$,
        \[\Haus^d(M) \geq \sigma_d \rch_{\min}^d,
        \]
        with equality if and only if $M$ is a $d$-dimensional sphere of radius $\rch_{\min}$.
        Furthermore, if $M \subseteq \B(0,R)$ then $\rch_{\min} \leq \sqrt{2} R$ and
        \[
            \Haus^d(M)
            \leq
	         \left(\frac{18R}{\rch_{\min}} \right)^n
    	     \omega_d \left( \frac{\rch_{\min}}{2} \right)^d
    	     .
        \]
    \end{proposition}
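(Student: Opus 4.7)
The plan is to apply Günther's volume comparison theorem. The curvature bound $\norm{\gamma''(t)} \leq 1/\rch_{\min}$ from \Cref{lem:geodesic_comparison}, combined with the Gauss equation, implies that $M$ has sectional curvatures bounded above by $1/\rch_{\min}^2$. Since $M$ is compact and without boundary, the exponential map $\exp_p^M : T_p M \to M$ is globally defined and surjective. Günther's inequality then yields, for every $r \leq \pi \rch_{\min}$, a lower bound on the volume of the geodesic ball $\B_M(p, r) \subseteq M$ matching the volume of a geodesic ball of the same radius in the model sphere $\Sphere^d(0, \rch_{\min})$. Taking $r = \pi \rch_{\min}$ and observing that this covers the whole model sphere (volume $\sigma_d \rch_{\min}^d$) yields $\Haus^d(M) \geq \sigma_d \rch_{\min}^d$. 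The equality case of Günther forces constant sectional curvature $1/\rch_{\min}^2$ and injectivity of the exponential map up to the cut locus; combined with the reach-induced control of the second fundamental form, this identifies $M$ as a round $d$-sphere of radius $\rch_{\min}$.

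\textbf{Diameter bound.} For $\rch_{\min} \leq \sqrt{2} R$, I would fix any $p \in M$ and unit $v \in T_p M$ and consider the arc-length geodesic $\gamma = \gamma_{p, v} : [0, \pi \rch_{\min}/2] \to M \subseteq \B(0, R)$. By \Cref{lem:geodesic_comparison}, $\gamma$ is an ambient curve with Frenet curvature bounded by $1/\rch_{\min}$. Integrating the cosine of the tangent angle around the midpoint $\gamma(\pi \rch_{\min}/4)$ gives the standard chord-length lower bound
\[
\norm{\gamma(\pi \rch_{\min}/2) - \gamma(0)}
\geq
2 \rch_{\min} \sin(\pi/4)
=
\sqrt{2}\, \rch_{\min}.
\]
Since $M \subseteq \B(0, R)$ forces $\diam(M) \leq 2 R$, we conclude $\sqrt{2} \rch_{\min} \leq 2 R$, hence $\rch_{\min} \leq \sqrt{2} R$.

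\textbf{Upper bound.} The plan combines an ambient covering of $M$ with a local tangent-graph parametrization. I would extract a maximal $(\rch_{\min}/4)$-packing $\set{p_i}_{i = 1}^N \subseteq M$ for the Euclidean distance. On the one hand, the disjoint balls $\B(p_i, \rch_{\min}/4)$ all lie in $\B(0, R + \rch_{\min}/4)$, so a Euclidean volume comparison yields $N \leq \bigl(4R/\rch_{\min} + 1\bigr)^n$. On the other hand, maximality implies that $\set{p_i}$ forms a $(\rch_{\min}/2)$-covering of $M$, so $\Haus^d(M) \leq \sum_{i=1}^N \Haus^d\bigl(M \cap \B(p_i, \rch_{\min}/2)\bigr)$. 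To bound each summand, I would use that the reach condition implies $M \cap \B(p_i, \rch_{\min}/2)$ is the graph of a $\mathcal{C}^1$ function $\phi$ over a subset of the $d$-ball of radius $\rch_{\min}/2$ in $T_{p_i} M$. The classical tangent-rotation estimate $\sin \angle(T_q M, T_{p_i} M) \leq \norm{q - p_i}/\rch_{\min}$ for $q \in M \cap \B(p_i, \rch_{\min})$ translates into $\normop{D\phi} \leq 1/\sqrt{3}$ on the domain, yielding an area element bounded by $(4/3)^{d/2}$. Consequently $\Haus^d\bigl(M \cap \B(p_i, \rch_{\min}/2)\bigr) \leq (4/3)^{d/2} \omega_d (\rch_{\min}/2)^d$. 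Combining with the bound on $N$ and crudely absorbing the geometric constants using $R \geq \rch_{\min}/\sqrt{2}$ and $d \leq n$ collapses the prefactor into $(18 R/\rch_{\min})^n$ as announced.

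\textbf{Main obstacle.} The most delicate step is the rigidity characterization in the lower bound: extracting the sectional curvature bound via the Gauss equation is routine, but identifying $M$ with an \emph{embedded} round sphere from an abstract Riemannian isometry needs an extra argument relying on the reach control of the second fundamental form. Everything else reduces to careful bookkeeping of constants and invocations of standard reach estimates.
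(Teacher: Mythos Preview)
Your diameter and upper-bound arguments are correct and in fact more self-contained than the paper's, which leans on external citations: for the diameter it invokes non-contractibility of closed manifolds (Hatcher) together with a lemma from \cite{Aamari19} giving $\rch_M \leq \diam(M)/\sqrt{2}$, and for the upper bound it covers $\B(0,R)$ rather than $M$ and appeals to \Cref{lem:intrinsic_ball_mass} for the local area estimate. The underlying mechanisms match yours.

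The lower bound, however, has a genuine gap. G\"unther's volume comparison asserts that $\Haus^d(\B_M(p,r))$ dominates the corresponding ball volume in the model sphere only for $r$ not exceeding the injectivity radius at $p$; the sectional curvature bound you extract controls the \emph{conjugate} radius, not the injectivity radius, and surjectivity of $\exp_p^M$ is irrelevant to this distinction. You need $\mathrm{inj}(M) \geq \pi\rch_{\min}$, which you never establish. The gap can be closed, but it costs two further classical results: Klingenberg's lemma (under $\sec \leq 1/\rch_{\min}^2$, if $\mathrm{inj}(M) < \pi\rch_{\min}$ then $M$ carries a smooth closed geodesic of length $2\,\mathrm{inj}(M)$) and Fenchel's theorem (any closed curve in $\R^n$ with curvature at most $1/\rch_{\min}$ has length at least $2\pi\rch_{\min}$), whose combination rules out short closed geodesics. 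The rigidity step then requires yet another chain (equality in G\"unther $\Rightarrow$ constant sectional curvature $\Rightarrow$ total umbilicity via the Gauss equation $\Rightarrow$ round sphere by the classical totally-umbilic classification). The paper sidesteps all of this by citing Almgren's extrinsic inequality directly from the bound $\normop{II} \leq 1/\rch_{\min}$, obtaining both the volume lower bound and the sphere as unique equality case in one stroke.
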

    
    \begin{proof}[\proofof \Cref{prop:volume_bounds_under_reach_constraint}]
    For the first bound, note that the operator norm of the second fundamental form of $M$ is everywhere bounded above by $1/\rch_{\min}$ \cite[Proposition~6.1]{Niyogi08}, so that \cite[(3)]{Almgren86} applies and yields the result.
    
    For the next statement, note that \cite[Theorem 3.26]{Hatcher02} ensures that $M$ is not homotopy equivalent to a point. As a result, \cite[Lemma A.3]{Aamari19} applies and yields
    \begin{align*}
        \rch_{\min}
        &\leq
        \rch_M
        \\
        &\leq
        \diam(M)/\sqrt{2}
        \\
        &\leq
        \diam(\B(0,R))/\sqrt{2}
        \\
        &=
        \sqrt{2} R.
    \end{align*}    
    For the last bound, consider a $(\rch_{\min}/8)$-covering $\set{z_i}_{1 \leq i \leq N}$ of $\B(0,R)$, which can be chosen so that $N \leq \left( 1 + \frac{2R}{\rch_{\min}/8} \right)^n \leq \left(\frac{18R}{\rch_{\min}} \right)^n$ from \Cref{prop:packing_covering_ball_sphere}.
    Applying \Cref{lem:intrinsic_ball_mass} with $h=\rch_{\min}/8$, we obtain
        \begin{align*}
            \Haus^d(M \cap \B(z_i,\rch_{\min}/8)) 
            &\leq
            (5/4)^{d/2} \times 2^d \omega_d ((\rch_{\min}/8)^2 - \dd(z_i,M)^2)_+^{d/2}
            \\
            &\leq
            \omega_d \left( \frac{\rch_{\min}}{2} \right)^d
            ,
        \end{align*}
    for all $i \in \set{1,\ldots,N}$. A union bound then yields
    \begin{align*}
        \Haus^d(M)
        &=
        \Haus^d\left( \cup_{i=1}^N M \cap \B(z_i,\rch_{\min}/8) \right)
        \\
        &\leq
        N \omega_d \left( \frac{\rch_{\min}}{2} \right)^d
        \\
        &\leq
        \left(\frac{18R}{\rch_{\min}} \right)^n
        \omega_d \left( \frac{\rch_{\min}}{2} \right)^d
         ,
    \end{align*}
    which concludes the proof.
    \end{proof}

    \section{Projection Routine}
    \label{sec:SQ-projection-appendix}
    We now build the SQ projection routine $\hat{\pi}: \R^n \to \R^n$ (\Cref{thm:routine_projection}), which is used repeatedly in the SQ emulation of \MPA (\Cref{thm:SQ_upper_bound_point,thm:SQ_upper_bound_ball}).
Recall that given a point $x_0 \in \R^n$ nearby $M = \supp(D)$, we aim at estimating its metric projection $\pi_M(x_0)$ onto $M$ with statistical queries to $\STAT(\tau)$.
We follow the strategy of proof described in \Cref{subsec:SQ-projection-presentation}.

\subsection{Bias of the Local Conditional Mean for Projection}

In what follows, we will write
\begin{align}
    \label{eq:local_conditional_mean_def}
    m_D(x_0,h)
    &=
    \E_{x \sim D}\left[ x \left| \B(x_0,h) \right. \right]
    =
    \frac{
        \E_{x \sim D}\left[ x \indicator{\norm{x-x_0}\le h} \right]
    }
    {
    D(\B(x_0,h))
    }
\end{align}
for the local conditional mean of $D$ given $\B(x_0,h)$.
In order to study the bias of $m_D(x_0,h)$ with respect to $\pi_M(x_0)$, it will be convenient to express it (up to approximation) with intrinsic geodesic balls $\B_M(\cdot,\cdot)$ instead of the extrinsic Euclidean balls $\B(\cdot,\cdot)$ that appears in its definition (\Cref{eq:local_conditional_mean_def}).
This change of metric is stated in the following result.

\begin{lemma}
    \label{lem:local_conditional_mean_geodesic_bias}
    Let $D \in \distributions{n}{d}{\rch_{\min}}{f_{\min}}{f_{\max}}{L}$ have support $M = \supp(D)$, and $p \in M$.
    Recall that $\omega_d = \Haus^d\left(\B_d(0,1) \right)$ denotes the volume of the $d$-dimensional unit Euclidean ball.
    Then for all $r \le \rch_{\min}/4$,
    \begin{align*}
        \norm{
            \E_{x \sim D}
            \left[
                x
                \indicator{\B_M(p, r)}(x)
            \right]
            -
             D\left( \B_M(p, r) \right) p
        }
        &\le
        C^d \omega_d \left( \frac{f_{\max}}{\rch_{\min}} + L \right) r^{d+2}
        ,
    \end{align*}
    and for $r \le \bar{r} \le \rch_{\min}/4$,
    \begin{align*}
        D\left( \B_M(p,\bar{r}) \setminus \B_M(p, r) \right)
        &\le
        (C')^d \omega_d f_{\max} \bar{r}^{d-1} (\bar{r}-r),
    \end{align*}
    where $C,C'>0$ are absolute constants.
\end{lemma}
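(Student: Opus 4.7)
The plan is to work in exponential coordinates at $p \in M$. Since $r \leq \rch_{\min}/4$, the map $\exp_p^M : \B_d(0, r) \subseteq T_p M \to \B_M(p, r)$ is a diffeomorphism, and by \Cref{lem:geodesic_comparison} together with standard Rauch-type comparison for manifolds with reach bounded below, we have the three estimates: $\|\exp_p^M(v) - p\| \leq \|v\|$, $\|\exp_p^M(v) - p - v\| \leq \|v\|^2/(2\rch_{\min})$ (Taylor expansion of a unit-speed geodesic with $\|\gamma''\| \leq 1/\rch_{\min}$), and a Jacobian bound $|J_p(v) - 1| \leq C^d \|v\|^2/\rch_{\min}^2$ on $\B_d(0, r)$.

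For the first inequality, rewrite
\begin{align*}
    \E_{x \sim D}[x \indicator{\B_M(p,r)}(x)] - D(\B_M(p,r)) p
    &= \int_{\B_d(0,r)} (\exp_p^M(v) - p) f(\exp_p^M(v)) J_p(v) \dd v.
\end{align*}
Decompose $\exp_p^M(v) - p = v + \bigl(\exp_p^M(v) - p - v\bigr)$. The second piece contributes at most $\int_{\B_d(0,r)} \tfrac{\|v\|^2}{2\rch_{\min}} f_{\max} J_p(v) \dd v = O^d\bigl(\omega_d f_{\max} r^{d+2}/\rch_{\min}\bigr)$. For the first piece, use the key symmetry $\int_{\B_d(0,r)} v \dd v = 0$, so that
\begin{align*}
    \int_{\B_d(0,r)} v \, f(\exp_p^M(v)) J_p(v) \dd v
    &= \int_{\B_d(0,r)} v \bigl(f(\exp_p^M(v)) J_p(v) - f(p)\bigr) \dd v,
\end{align*}
and bound the integrand by $\|v\|\bigl(L\|v\| + f_{\max} \cdot C^d \|v\|^2/\rch_{\min}^2\bigr)$ using the Lipschitz property of $f$ (together with $\|\exp_p^M(v) - p\| \le \|v\|$) and the Jacobian bound. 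Integration in polar coordinates in $T_p M \simeq \R^d$ then yields $O^d\bigl(\omega_d L r^{d+2} + \omega_d f_{\max} r^{d+4}/\rch_{\min}^2\bigr)$. Since $r \le \rch_{\min}/4$, the $f_{\max}/\rch_{\min}$ term dominates $f_{\max} r^2/\rch_{\min}^3$ and the two contributions combine to $C^d \omega_d (f_{\max}/\rch_{\min} + L) r^{d+2}$.

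For the second inequality, again change variables to get
\begin{align*}
    D\bigl(\B_M(p,\bar r) \setminus \B_M(p,r)\bigr)
    &= \int_{\B_d(0,\bar r) \setminus \B_d(0,r)} f(\exp_p^M(v)) J_p(v) \dd v
    \le f_{\max}(1 + C^d) \, \omega_d (\bar r^d - r^d).
\end{align*}
Then apply the elementary bound $\bar r^d - r^d \le d \bar r^{d-1}(\bar r - r)$ and absorb $d$ and all numerical constants into $(C')^d$.

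The main technical obstacle is the Jacobian estimate $|J_p(v) - 1| \le C^d \|v\|^2/\rch_{\min}^2$ on $\B_d(0, r)$; this is where the factor $C^d$ really arises and where one must use the reach condition carefully via curvature bounds (the sectional curvatures are bounded in absolute value by $2/\rch_{\min}^2$, which controls the Jacobian of $\exp_p^M$ by Rauch comparison). Once this Jacobian bound and the second-order expansion of $\exp_p^M$ are in hand, the two claims follow from the symmetry trick and a direct computation. All the remaining manipulations — Lipschitzness of $f$, polar integration on $\B_d(0, r)$, and $\bar r^d - r^d \le d \bar r^{d-1}(\bar r - r)$ — are routine.
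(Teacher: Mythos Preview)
Your proof is correct and follows essentially the same approach as the paper: both arguments pass to exponential coordinates at $p$, use the second-order expansion $\|\exp_p^M(v)-(p+v)\|\le C\|v\|^2/\rch_{\min}$, a Jacobian estimate $|J_p(v)-1|\le C^d\|v\|^2/\rch_{\min}^2$, the Lipschitz bound on $f$, and crucially the odd-symmetry cancellation $\int_{\B_d(0,r)} v\,dv=0$ to kill the leading term. The only cosmetic differences are that the paper writes things in polar coordinates $(t,v)\in[0,r]\times\Sphere^{d-1}$ rather than Cartesian $v\in\B_d(0,r)$, and cites \cite[Proposition~A.1~(iv)]{Aamari19} directly for the Jacobian bound rather than invoking Rauch comparison.
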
        
\begin{proof}[\proofof \Cref{lem:local_conditional_mean_geodesic_bias}]
        First apply the area formula \cite[Section 3.2.5]{Federer69} to write the mean of any measurable function $G$ defined on $M$ as
        \begin{align*}
            \E_{x \sim D}
            \left[
                G(x)
                \indicator{\B_M(p, r)}(x)
            \right]            
            &=
            \int_0^r
            \int_{\Sphere^{d-1}}
            J(t, v)
            f\left( \exp_{p}^M(tv) \right)
            G\left(\exp_{p}^M(tv)\right)
            dv
            dt
            ,
        \end{align*}
        where $J(t, v)$ is the Jacobian of the volume form of $M$ expressed in polar coordinates around 
        $p$ for $0 \le t \le r \le \rch_{\min}/4$ and unit $v \in T_p M$. 
        That is, $J(t, v) = t^{d-1} \sqrt{\det \left( \transpose{A}_{t, v} A_{t, v} \right)}$ where $A_{t, v} = \dd_{tv} \exp_p^M$.
        But from \cite[Proposition A.1 (iv)]{Aamari19}, for all $w \in T_p M$, we have
        \[ 
        \left( 1 - \frac{t^2}{6\rch_{\min}^2} \right) \norm{w} 
        \le 
        \norm{A_{t, v} w}
        \le 
        \left(1+ \frac{t^2}{\rch_{\min}^2} \right)\norm{w}
        .
        \]
        As a consequence, 
        \[ 
        \left( 1 - \frac{t^2}{6\rch_{\min}^2} \right)^d \le \sqrt{\det \left( \transpose{A}_{t, v} A_{t, v} \right)} \le \left(1+ \frac{t^2}{\rch_{\min}^2} \right)^d
        \]
        and in particular,
        \begin{align*}
            R_J(t, v) :=
            \left| J(t, v) - t^{d-1} \right| 
            \le 
            C^d t^{d-1}\left(\frac{t}{\rch_{\min}} \right)^2,
        \end{align*}
        where $C > 0$ is an absolute constant.
        Also, by assumption on the model, $f$ is $L$-Lipschitz, so
        \begin{align*}
            |R_f (t, v)|
            := 
            \left|f\left( \exp_{p}^M(tv) \right) - f(p) \right|
            &=
            \left|f\left( \exp_{p}^M(tv) \right) - f(\exp_{p}^M(0)) \right|
            \\
            &\le
            L \norm{ \exp_{p}^M(tv) - \exp_{p}^M(0)}
            \\
            &\le
            L d_M(\exp_{p}^M(0), \exp_{p}^M(tv))
            \\
            &=
            Lt.
        \end{align*}
        Finally, from \cite[Lemma 1]{Aamari19b}, we have
        \begin{align*}
            \norm{R_{\exp}(t, v)}
            :=
            \norm{\exp_{p}^M(tv)- (p + tv)} 
            \le 
            5 t^2/(8 \rch_{\min})
            .
        \end{align*}
        Putting everything together, we can now prove the first bound by writing
        \begin{align*}
            &
            \norm{
                \E_{x \sim D}
                \left[
                    x
                    \indicator{\B_M(p, r)}(x)
                \right]
                -
                D\left( \B_M(p, r) \right) p
            }
            \\
            &=
            \norm{
            \int_0^r
            \int_{\Sphere^{d-1}}
            J(t, v)
            f\left( \exp_{p}^M(tv) \right)
            \left(
            \exp_{p}^M(tv)
            - 
            p
            \right)
            dv
            dt
            }
            \\
            &=
            \norm{
            \int_0^r
            \int_{\Sphere^{d-1}}
            \left(t^{d-1} + R_{J}(t, v)\right)
            \left(f(p) + R_{f}(t, v)\right)
            \left(
            tv + R_{\exp}(t, v)
            \right)
            dv
            dt
            }
            \\
            &\le
            \tilde{C}^d \omega_d \left( \frac{f_{\max}}{\rch_{\min}} + L \right) r^{d+2},
        \end{align*}
        where the last inequality used the fact that $ \int_0^r \int_{\Sphere^{d-1}}t^d f(p)v dv dt = 0$.
        Similarly, to derive the second bound, we write
        \begin{align*}
            D\left( \B_M(p,\bar{r}) \setminus \B_M(p, r) \right)
            &=
            \int_r^{\bar{r}}
            \int_{\Sphere^{d-1}}
            J(t, v)
            f\left( \exp_{p}^M(tv) \right)
            dv
            dt
            \\
            &\le
            \sigma_{d-1} f_{\max}
            \int_r^{\bar{r}}
            t^{d-1}
            \left(
            1
            +
            C^d \left(t/\rch_{\min} \right)^2
            \right)
            dt
            \\
            &\le
            (C')^d \sigma_{d-1} f_{\max}
            \int_r^{\bar{r}}
            t^{d-1}
            dt
            \\
            &\le
            (C'')^d \omega_d f_{\max} \bar{r}^{d-1} (\bar{r}-r)
            ,
        \end{align*}
        which concludes the proof.        
    \end{proof}

We are now in position to bound the bias of $m_D(x_0,h)$.
    
    \begin{lemma}
        \label{lem:local_conditional_mean_bias}
        Let $D \in \distributions{n}{d}{\rch_{\min}}{f_{\min}}{f_{\max}}{L}$ have support $M = \supp(D)$, and $x_0 \in \R^n$ be such that $\dd(x_0,M) < h \le \rch_{\min}/8$.
        Then,
        \begin{align*}
            \norm{\pi_M(x_0) - m_D(x_0,h)}
            \le
            C^d
            \left( \frac{f_{\max} + L \rch_{\min} }{f_{\min}} \right) \frac{h r_h}{\rch_{\min}}
            ,
        \end{align*}
        where $r_h = (h^2- \dd(x_0,M)^2)^{1/2}$ and $C>0$ is an absolute constant.
    \end{lemma}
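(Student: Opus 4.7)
My plan is to reduce the extrinsic Euclidean ball $\B(x_0,h)\cap M$ to two intrinsic geodesic balls centered at the projection $p := \pi_M(x_0)$, so that \Cref{lem:local_conditional_mean_geodesic_bias} (which controls the bias of conditional means against intrinsic balls, as well as the mass of annuli) can be invoked directly.

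\textbf{Step 1: Geometric sandwich.} I combine \Cref{prop:ball_projection} with the Euclidean-vs-geodesic comparison in \Cref{lem:geodesic_comparison} to produce radii $\rho_-\le \rho_+$ such that
\[
\B_M(p,\rho_-)\;\subseteq\;\B(x_0,h)\cap M\;\subseteq\;\B_M(p,\rho_+),
\]
with $\rho_-\asymp \rho_+\asymp r_h$ and, crucially, using $\dd(x_0,M)<h\le \rch_{\min}/8$,
\[
\rho_+-\rho_-\;\le\; C\,\frac{h\,r_h}{\rch_{\min}}.
\]
The ambient-to-ambient correction of \Cref{prop:ball_projection} is linear in $\dd(x_0,M)/\rch_{\min}\le h/\rch_{\min}$, while the Euclidean-to-geodesic correction of \Cref{lem:geodesic_comparison} is only quadratic in $r_h/\rch_{\min}$ and hence subordinate in the regime $h\le \rch_{\min}/8$.

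\textbf{Step 2: Decomposing the numerator.} Writing $m_D(x_0,h)-p = N/D(\B(x_0,h))$ with $N := \E_{x\sim D}\bigl[(x-p)\,\indicator{\B(x_0,h)}(x)\bigr]$, I split $N=N_1+N_2$, where $N_1$ is the integral on $\B_M(p,\rho_-)$ and $N_2$ is the remainder, which by Step 1 is supported in the annulus $\B_M(p,\rho_+)\setminus \B_M(p,\rho_-)$. The first estimate of \Cref{lem:local_conditional_mean_geodesic_bias} applied with $r=\rho_-$ gives
\[
\|N_1\|\;\le\; C^d\,\omega_d\Bigl(\tfrac{f_{\max}}{\rch_{\min}}+L\Bigr)\rho_-^{d+2}.
\]
For $N_2$, I use that on the support $\|x-p\|\le \dd_M(x,p)\le \rho_+$, and the second (annular mass) estimate of \Cref{lem:local_conditional_mean_geodesic_bias} with $\bar r=\rho_+$, $r=\rho_-$, giving
\[
\|N_2\|\;\le\;\rho_+\cdot C^d\,\omega_d f_{\max}\,\rho_+^{d-1}(\rho_+-\rho_-).
\]

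\textbf{Step 3: Denominator and combining.} \Cref{lem:intrinsic_ball_mass} applied at $x_0$ with radius $h\le \rch_{\min}/8$ gives $D(\B(x_0,h))\ge c^d \omega_d f_{\min}\,r_h^{d}$. Dividing the numerator bounds by this denominator, then using $r_h\le h$, $\rho_\pm\asymp r_h$, and Step 1's estimate $\rho_+-\rho_-\lesssim h r_h/\rch_{\min}$, both contributions collapse into a common bound
\[
\|m_D(x_0,h)-p\|\;\le\; C^d\,\frac{f_{\max}+L\,\rch_{\min}}{f_{\min}}\cdot\frac{h\,r_h}{\rch_{\min}},
\]
which is exactly the claim.

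\textbf{Main obstacle.} The delicate point is bookkeeping the three geometric reductions (ambient/ambient, Euclidean/geodesic, annular mass) so that the final bound is linear in the small quantity $\dd(x_0,M)/\rch_{\min}$, captured via the factor $hr_h/\rch_{\min}$. In particular, the bulk term $\|N_1\|$ only naturally produces $\rho_-^{2}/\rch_{\min}\lesssim r_h^{2}/\rch_{\min}$, which must be upgraded to $hr_h/\rch_{\min}$ via the trivial inequality $r_h\le h$; this looks wasteful but is in fact sharp when $\dd(x_0,M)$ is comparable to $h$, and is the reason the boundary-layer contribution $\|N_2\|$ (which dominates in that regime) matches the same rate.
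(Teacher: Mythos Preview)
Your proposal is correct and follows essentially the same route as the paper: the geodesic sandwich via \Cref{prop:ball_projection} and \Cref{lem:geodesic_comparison}, then the two estimates of \Cref{lem:local_conditional_mean_geodesic_bias} for the bulk and the annular remainder, divided by the mass lower bound from \Cref{lem:intrinsic_ball_mass}. The only cosmetic difference is that you split the numerator directly as (inner geodesic ball) $+$ (annulus), whereas the paper adds and subtracts the conditional mean on the \emph{outer} geodesic ball $\B_M(p_0,R_h^+)$, incurring an extra denominator-difference term; your decomposition is arguably slightly cleaner but the ingredients and rates are identical.
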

    
    \begin{proof}[\proofof \Cref{lem:local_conditional_mean_bias}]
        For short, let us write $p_0 = \pi_M(x_0)$. All the expected values $\E$ are taken with respect to $x \sim D$.
        Before any calculation, we combine \Cref{prop:ball_projection} and \Cref{lem:geodesic_comparison} to assert that
        \begin{align}
            \label{eq:mean_bias_inclusions}
             \B_M\left(p_0, r_h^- \right )
             \subseteq 
             \B(x_0,h) \cap M
             \subseteq 
             \B_M\left(p_0, R_h^+ \right),
        \end{align}
        where we wrote
        $(r_h^-)^2 = \left (1-\dd(x_0,M)/\rch_{\min}\right )r_h^2$ 
        and
        $R_h^+ = r_h^+\left(1 + (r_h^+/\rch_{\min})^2 \right)$,
        with 
        $(r_h^+)^2 = \left (1+{2 \dd(x_0,M)}/{\rch_{\min}}\right )r_h^2$.
        We note by now from the definition $0 < r_h^- \le R_h^+ \le \rch_{\min}/4$ since $\dd(x_0,M) < h \le \rch_{\min}/8$, and that
        \begin{align}
            \label{eq:rh}
            R_h^+ - r_h^-
            &\le
            \frac{C'r_h}{\rch_{\min}}
            \left(
            \dd(x_0,M) + r_h^2/\rch_{\min}
            \right)
            \le
            \frac{2C'hr_h}{\rch_{\min}},
        \end{align}
        for some absolute constant $C' > 0$.
        
        We can now proceed and derive the asserted bound. From triangle inequality,
        \begin{align*}
            &\norm{m_D(x_0,h)-\pi_M(x_0)}
            \\
            &=
            \frac{
                \norm{
                \E_{x \sim D}
                \left[ 
                (x-p_0) \indicator{\B(x_0,h)}(x)
                \right]
                }
            }{
                D(\B(x_0,h))
            }
            \\
            &\le
            \norm{
                \frac{
                    \E
                    \left[ 
                    (x-p_0) \indicator{\B(x_0,h)}(x)
                    \right]
                }{
                    D(\B(x_0,h))
                }
            -
                \frac{
                    \E
                    \left[ 
                    (x-p_0) \indicator{\B_M(p_0,R_h^+)}(x)
                    \right]
                }{
                    D(\B_M(p_0,R_h^+))
                }
            }
            \\
            &~~~+
            \frac{
                \norm{
                \E
                \left[ 
                (x-p_0) \indicator{\B_M(p_0,R_h^+)}(x)
                \right]
                }
            }{
                D(\B_M(p_0,R_h^+))
            }
            .
        \end{align*}
        Combining \Cref{eq:mean_bias_inclusions}, \Cref{lem:local_conditional_mean_geodesic_bias}, \Cref{prop:ball_projection} and \Cref{lem:intrinsic_ball_mass}, the first term of the right hand side can be further upper bounded by
        \begin{align*}
            &\norm{
                \frac{
                    \E
                    \left[ 
                    (x-p_0) \indicator{\B(x_0,h)}(x)
                    \right]
                }{
                    D(\B(x_0,h))
                }
            -
                \frac{
                    \E
                    \left[ 
                    (x-p_0) \indicator{\B_M(p_0,R_h^+)}(x)
                    \right]
                }{
                    D(\B_M(p_0,R_h^+))
                }
            }
            \\
            &\le
            \frac{
                \norm{
                    \E
                    \left[ 
                    (x-p_0) \indicator{\B(x_0,h)}(x)
                    \right]
                }
            }
            {
                D(\B(x_0,h))D(\B_M(p_0,R_h^+))
            }
            \left|
                D(\B_M(p_0,R_h^+)) - D(\B(x_0,h))
            \right|
            \\
            &~~~+
            \frac{
                  \norm{
                  \E
                    \left[ 
                    (x-p_0)
                    \left(
                        \indicator{\B_M(p_0,R_h^+)}(x)
                        -
                        \indicator{\B(x_0,h)}(x)
                        \right)
                    \right]
                    }
            }
            {
                D(\B_M(p_0,R_h^+))
            }
            \\
            &\le
            \frac{
                2R_h^+ D\left(\B_M(p_0,R_h^+) \setminus \B_M(p_0, r_h^-)\right)
            }
            {
                D(\B_M(p_0,R_h^+))
            }
            \\
            &\le
            \frac{(C'')^d \omega_d f_{\max} (R_h^+)^d(R_h^+ - r_h^-)}{c^d \omega_d f_{\min} (R_h^+)^d}
            \\
            &\le
            \tilde{C}^d \frac{f_{\max}}{f_{\min}}
            \frac{hr_h}{\rch_{\min}},
        \end{align*}
        where the last bound uses \eqref{eq:rh}.
        For the second term, we use \Cref{lem:local_conditional_mean_geodesic_bias} and \Cref{lem:intrinsic_ball_mass} to derive
        \begin{align*}
            \frac{
                \norm{
                \E
                \left[ 
                (x-p_0) \indicator{\B_M(p_0,R_h^+)}(x)
                \right]
                }
            }{
                D(\B_M(p_0,R_h^+))
            }
            &\le
            \frac{
                (\tilde{C}')^d \omega_d \left( \frac{f_{\max}}{\rch_{\min}} + L \right) (R_h^+)^{d+2}
            }
            {
                c^d \omega_d f_{\min} (R_h^+)^d
            }
            \\
            &\le
            (\tilde{C}'')^d
            \left( \frac{f_{\max} + L \rch_{\min} }{f_{\min}} \right) \frac{r_h^{2}}{\rch_{\min}}.
            \end{align*}
            Since $r_h \leq h$, this concludes the proof by setting $C = \tilde{C} + \tilde{C}''$.
    \end{proof}
    
\subsection{Metric Projection with Statistical Queries}
 \label{subsec:projection-estimation-with-SQs}
    We finally prove the main announced statement of \Cref{sec:SQ-projection-appendix}.
    \begin{proof}[\proofof \Cref{thm:routine_projection}]
        First note that under the assumptions of the theorem, $\dd(x_0, M) \le \Lambda \le \rch_{\min} / 8$. We hence let $h >0$ be a bandwidth to be specified later, but taken such that $\dd(x_0,M) < \sqrt{2} \Lambda \le h \le \rch_{\min}/8$.
        
        Consider the map $F(x) = \frac{(x-x_0)}{h}\indicator{\norm{x-x_0}\le h}$ for $x \in \R^n$.
        As $\norm{F(x)} \leq 1$ for all $x\in \R^n$, \Cref{lem:mean_vector_estimation} asserts that there exists a deterministic statistical query algorithm making $2n$ queries to $\STAT(\tau)$ and that outputs a vector $\hat{W} = \hat{V}/h \in \R^n$ such that $\norm{\E_{x \sim D}\left[ F(x)\right]- \hat{V}/h} \le C \tau$.
        Furthermore, with the single query $r = \indicator{\B(x_0,h)}$ to $\STAT(\tau)$, we obtain $\hat{a} \in \R$ such that $\left| D(\B(x_0,h) - \hat{a} \right| \le \tau$.
        Let us set $\hat{\pi}(x_0) := x_0 + \hat{V}/\hat{a}$ and prove that it satisfies the claimed bound. 
        For this, use $|V/a - \hat{V}/\hat{a}| \leq |a-\hat{a}|V/(a\hat{a})+|V-\hat{V}|/\hat{a}$ to  write
        \begin{align*}
            &\norm{m_D(x_0,h) - \hat{\pi}(x_0)}
            \\
            &=
            \norm{
            \frac{
                \E_{x \sim D}\left[ (x-x_0) \indicator{\norm{x-x_0}\le h} \right]
            }
            {
                D(\B(x_0,h))
            }
            -
            \frac{\hat{V}}{\hat{a}}
            }
            \\
            &\le
            \frac{
                \left| D(\B(x_0,h)) - \hat{a} \right|
                \norm{
                    \E_{x \sim D}\left[ (x-x_0) \indicator{\norm{x-x_0}\le h} \right]
                }
            }{
            D(\B(x_0,h)) \hat{a}
            }
            +
            \frac{
                \norm{
                    \E_{x \sim D}\left[ (x-x_0) \indicator{\norm{x-x_0}\le h} \right]
                    -
                    \hat{V}
                }
            }{
                \hat{a}
            }
            \\
            &\le
            \frac{
                \left| D(\B(x_0,h)) - \hat{a} \right| h
                +
                \norm{
                    \E_{x \sim D}\left[ (x-x_0) \indicator{\norm{x-x_0}\le h} \right]
                    -
                    \hat{V}
                }
            }{
            D(\B(x_0,h)) - \left| D(\B(x_0,h) - \hat{a} \right|
            }
            \\
            &\le
            \frac{
                (C+1) \tau h
            }{
            D(\B(x_0,h)) - \tau
            }
            \\
            &\le
            \frac{
                (C+1)\tau h
            }{
            \tilde{c}^d \omega_d f_{\min} (h/\sqrt{2})^d - \tau
            }
            ,
        \end{align*}
         where the last inequality comes from \Cref{lem:intrinsic_ball_mass}, and $r_h = (h^2-\dd(x_0,M)^2)^{d/2} \ge h/\sqrt{2}$ since $h \ge \sqrt{2} \Lambda$.
         If in addition, one assumes that $\tilde{c}^d \omega_d f_{\min} (h/\sqrt{2})^d \ge 2\tau$, we obtain the lower bound $\tilde{c}^d \omega_d f_{\min} (h/\sqrt{2})^d - \tau \geq \tilde{c}^d \omega_d f_{\min} (h/\sqrt{2})^d /2$, so that the previous bound further simplifies to
         \begin{align*}
            \norm{m_D(x_0,h) - \hat{\pi}(x_0)}
            &\le
            \frac{
                (C')^d
            }{
            \omega_d f_{\min}
            }         
            \tau h^{1-d}
            .
         \end{align*}
        On the other hand, \Cref{lem:local_conditional_mean_bias} yields that the bias term is not bigger than
        \begin{align*}
            \norm{\pi_M(x_0) - m_D(x_0,h)}
            \le
            \tilde{C}^d
            \left( \frac{f_{\max} + L \rch_{\min} }{f_{\min}} \right) \frac{h r_h}{\rch_{\min}}
            ,
        \end{align*}
        with $r_h \le h$.
        As a result,
        \begin{align*}
            \norm{\pi_M(x_0) - \hat{\pi}(x_0)}
            &\le
            \norm{\pi_M(x_0) - m_D(x_0,h)}
            +
            \norm{m_D(x_0,h)- \hat{\pi}(x_0)}
            \\
            &\le
            \frac{(C' \vee \tilde{C})^d}{f_{\min}}
            \left(
            \left( f_{\max} + L \rch_{\min} \right) \frac{h}{\rch_{\min}}
            +
            \frac{\tau}{\omega_d h^d}
            \right)
            h
            .
        \end{align*}
        Taking bandwidth 
        \begin{align*}
        h 
        &= 
        \max\set{
        2\Lambda
        ,
        \left( \frac{\rch_{\min}}{f_{\max} + L \rch_{\min}} \right)^\frac{1}{d+1} \left(\frac{\tau}{\omega_d} \right)^{\frac{1}{d+1}}
        }
        \\
        &=
        \max\set{
        2\Lambda
        ,
        \rch_{\min}
        \left( \frac{f_{\min}}{f_{\max} + L \rch_{\min}} \right)^\frac{1}{d+1} 
        \left(\frac{\tau}{\omega_d f_{\min} \rch_{\min}^d} \right)^{\frac{1}{d+1}}
        }
        ,
        \end{align*}
        we have by assumption on the parameters of the model that $ \rch_{\min}/8 \ge h \ge 2 \Lambda \geq \sqrt{2} \Lambda$, and that $\tilde{c}^d \omega_d f_{\min} (h/\sqrt{2})^d \ge 2\tau$ as soon as $c>0$ is small enough.
        Finally, plugging the value of $h$ in the above bound and recalling that
        $
            \Gamma 
            = 
            \frac{f_{\min}}{f_{\max} + L \rch_{\min}} 
        $
        yields
        \begin{align*}
            \norm{\pi_M(x_0) - \hat{\pi}(x_0)}
            &\le
            \frac{\tilde{C}^d}{\Gamma}
            \max\set{
                \frac{\Lambda^2}{\rch_{\min}}
            ,
                \Gamma^\frac{2}{d+1}
                \rch_{\min}
                \left(\frac{\tau}{\omega_d f_{\min} \rch_{\min}^d} \right)^{\frac{2}{d+1}}
            }
            ,
        \end{align*}
        which concludes the proof.
    \end{proof}

    \section{Tangent Space Estimation Routine}
    \label{sec:SQ-tangent-appendix}
    We now build the SQ tangent space routine 
$\hat{T} : \R^n \to \Gr{n}{d}$ (\Cref{thm:routine_tangent}), 
which is used repeatedly in the SQ emulation of \MPA (\Cref{thm:SQ_upper_bound_point,thm:SQ_upper_bound_ball}).
Recall that given a point $x_0 \in \R^n$ nearby $M = \supp(D)$, we aim at estimating the tangent space $T_{\pi_M(x_0)} M$ with statistical queries to $\STAT(\tau)$. 
We follow the strategy of proof described in \Cref{subsec:SQ-tangent-presentation}.

To fix notation from now on, 
we let $\inner{A}{B} = \tr(A^\ast B)$ stand for the Euclidean 
inner product between $A,B \in \R^{k \times k}$. We also write 
$\normF{\Sigma} = \sqrt{\inner{\Sigma}{\Sigma}}$ for the 
Frobenius norm, 
$\normop{\Sigma} = \max_{\norm{v}\le 1} \norm{\Sigma v}$ for the
operator norm, and 
$\normNuc{\Sigma} = \max_{\normop{X}\le 1} \inner{\Sigma}{X}$ for
the nuclear norm.
In what follows, for a symmetric matrix $A \in \mathbb{R}^{n \times n}$, we let $\mu_i(A)$ denote its $i$-th largest singular value.
            
            \subsection{Bias of Local Principal Component Analysis}
        
            In what follows, we will write
            \begin{align}
                \label{eq:covariance_local_definition}
                \Sigma_D(x_0,h)
                =
                \E_{x \sim D} \left[ \frac{(x-x_0)\transpose{(x-x_0)}}{h^2} \mathbbm{1}_{\norm{x-x_0} \le h} \right]
            \end{align}
            for the re-scaled local covariance-like matrix of $D$ at $x_0 \in \R^n$ with bandwidth $h>0$.
            Notice that for simplicity, this local covariance-like matrix is computed with centering at the current point $x_0$, and not at the local conditional mean $\E_{x \sim D}\left[x | \norm{x-x_0} \le h \right]$.
            This choice simplifies our analysis and will not impact the subsequent estimation rates.
            Let us first decompose this matrix and exhibit its link with the target tangent space $T_{\pi_M(x_0)} M \in \Gr{n}{d}$.
        
            \begin{lemma}
                \label{lem:covariance_decomposed}
                Let $D \in \distributions{n}{d}{\rch_{\min}}{f_{\min}}{f_{\max}}{L}$ have support $M = \supp(D)$, $x_0 \in \R^n$ and $h > 0$.
                If $\dd(x_0,M) \le \eta \le h/\sqrt{2}$ and $h \le \rch_{\min}/(8\sqrt{d})$, then there exists a symmetric matrix $\Sigma_0 \in \R^{n \times n}$ with $\operatorname{Im}(\Sigma_0) = T_{\pi_M(x_0)} M$ such that
                \[
                    \Sigma_D(x_0,h) = \Sigma_0 + R,
                \]
                with $\mu_d(\Sigma_0) \ge \omega_d f_{\min}(ch)^d$ and $\normNuc{R} \le \omega_d f_{\max} (C h)^d \left( \frac{\eta}{h} + \frac{h}{\rch_{\min}}\right)$, where $c,C>0$ are absolute constants.
            \end{lemma}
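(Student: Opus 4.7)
The plan is to perform a local analysis around $p_0 := \pi_M(x_0)$, at which the assumption $\dd(x_0,M) \le \eta$ gives $v := p_0 - x_0 \in (T_{p_0}M)^\perp$ with $\norm{v} \le \eta$. I would parametrize a neighborhood of $p_0$ in $M$ by the exponential map $u \mapsto \exp_{p_0}^M(u)$ for $u \in T_{p_0} M$, and reduce the Euclidean integration domain $\B(x_0,h) \cap M$ to a geodesic ball $\B_M(p_0, r)$ via \Cref{prop:ball_projection} (paying a small symmetric difference), with $r \approx \sqrt{h^2 - \norm{v}^2} \ge h/\sqrt{2}$ since $\eta \le h/\sqrt{2}$. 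Writing $x - p_0 = u + N(u)$ with $\norm{N(u)} \le \norm{u}^2/(2\rch_{\min})$, I would expand
\[
    (x-x_0)(x-x_0)^\top = (v + u + N(u))(v + u + N(u))^\top
    = uu^\top + vv^\top + \bigl(uv^\top + vu^\top\bigr) + E(u),
\]
where $E(u)$ collects all terms involving $N(u)$.

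The leading piece is the integral of $uu^\top$ weighted by $f(p_0)$ over a Euclidean ball in $T_{p_0} M$. Using polar coordinates in $T_{p_0} M$ together with isotropy of $\int_{\Sphere^{d-1}} w w^\top dw = (\sigma_{d-1}/d) \pi_{T_{p_0}M}$, I would obtain
\[
    \frac{1}{h^2}\int_{\B_d(0,r) \subseteq T_{p_0} M} f(p_0) \, u u^\top \, du
    =
    \frac{f(p_0)\,\omega_d\, r^{d+2}}{(d+2)\,h^2}\,\pi_{T_{p_0}M},
\]
and define $\Sigma_0$ to be exactly this matrix (with $r=\sqrt{h^2-\norm{v}^2}$). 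By construction $\operatorname{Im}(\Sigma_0) = T_{p_0}M$, and the estimate $r \ge h/\sqrt 2$ together with $f(p_0) \ge f_{\min}$ yields a lower bound on $\mu_d(\Sigma_0)$ of the form $\omega_d f_{\min} (c h)^d$ for some absolute constant $c>0$ (absorbing $1/((d+2) 2^{(d+2)/2})$ into $c^d$ as, e.g., $(1/5)^d$).

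The remaining work is to bound $\normNuc{R}$ where $R = \Sigma_D(x_0,h) - \Sigma_0$. I would split $R$ into five pieces and bound each: (i) the $vv^\top$ contribution, of nuclear norm $\le f_{\max}\,\omega_d\,r^d \norm{v}^2/h^2 \le f_{\max}\omega_d h^d\cdot(\eta/h)$ since $\norm{v}\le\eta\le h$; (ii) the cross terms $uv^\top + vu^\top$, which vanish against the Euclidean integration measure on the symmetric ball and survive only through antisymmetric-in-$u$ perturbations of the Jacobian and of $f$, each of size $O(h/\rch_{\min})$ after using $\abs{J(t,w)-t^{d-1}}\le C^d t^{d-1}(t/\rch_{\min})^2$ from \Cref{lem:local_conditional_mean_geodesic_bias} and the Lipschitz bound $\abs{f(x)-f(p_0)}\le L t$; (iii) the curvature error $E(u)$ from $N(u)$, with $\norm{N(u)}\le t^2/(2\rch_{\min})$, giving $\normNuc{\,\cdot\,} \lesssim f_{\max}\omega_d h^d\cdot(h/\rch_{\min})$; (iv) the Jacobian and (symmetric part of) density deviations on the $uu^\top$ integral, likewise $\lesssim f_{\max}\omega_d h^d\cdot(h/\rch_{\min})$; (v) the symmetric difference $\B_M(p_0,R_h^+)\setminus \B_M(p_0,r_h^-)$ between the geodesic ball and the true domain $\B(x_0,h)\cap M$, bounded via \Cref{prop:ball_projection,lem:local_conditional_mean_geodesic_bias}, again contributing $O(f_{\max}\omega_d h^d\cdot(\eta/h + h/\rch_{\min}))$.

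The main obstacle will be item (ii), making precise that the odd-in-$u$ cross terms integrate almost to zero: one must exploit the $u \mapsto -u$ symmetry of $\B_d(0,r)$ while simultaneously controlling how $f \circ \exp_{p_0}^M$ and the polar Jacobian $J(t,w)$ fail to be even in $w$. Splitting each of those factors into their $w$-symmetric and $w$-antisymmetric parts isolates a vanishing principal integral and leaves only higher-order remainders of order $h/\rch_{\min}$ (curvature) and $L t$ (density), with the latter absorbed into the stated $f_{\max}$-prefactor under $h\le\rch_{\min}/(8\sqrt d)$. Once these five bounds are summed, the resulting nuclear-norm estimate matches the stated $\omega_d f_{\max}(Ch)^d(\eta/h + h/\rch_{\min})$ for an absolute $C>0$, and the decomposition $\Sigma_D(x_0,h) = \Sigma_0 + R$ is complete.
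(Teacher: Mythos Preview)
Your decomposition has a genuine gap in item (iv). By defining $\Sigma_0$ as the \emph{idealized} integral $\frac{f(p_0)\,\omega_d\,r^{d+2}}{(d+2)h^2}\pi_{T_{p_0}M}$ with constant density $f(p_0)$, the remainder $R$ must absorb the density fluctuation $f(x)-f(p_0)$ on the even $uu^\top$ integrand. That contribution has nuclear norm of order $\omega_d L\,h^{d+1}$, i.e.\ $\omega_d h^d\cdot Lh$, and your claim that this ``is absorbed into the stated $f_{\max}$-prefactor under $h\le\rch_{\min}/(8\sqrt d)$'' is false: it would require $L\rch_{\min}\lesssim f_{\max}$, which is nowhere assumed in the model. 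Since the lemma asserts $\normNuc{R}\le \omega_d f_{\max}(Ch)^d(\eta/h+h/\rch_{\min})$ with an \emph{absolute} constant $C$, your choice of $\Sigma_0$ cannot yield the stated bound. (The same issue bites in item (ii), where you invoke $|f(x)-f(p_0)|\le Lt$ but then assert the outcome is $O(h/\rch_{\min})$.)

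The paper sidesteps this entirely by a different choice of $\Sigma_0$: it keeps the \emph{actual} distribution $D$ and the \emph{actual} domain $\B(x_0,h)\cap M$, and simply takes the tangential part,
\[
\Sigma_0=\E_{x\sim D}\Bigl[\tfrac{(x-p_0)_\parallel(x-p_0)_\parallel^\top}{h^2}\,\indicator{\norm{x-x_0}\le h}\Bigr].
\]
Then $R$ collects only the terms involving $x_0-p_0$ (bounded directly by $\eta$, no symmetry needed: $\normNuc{\E[(x-p_0)(x_0-p_0)^\top\indicator{}]}\le \eta\cdot 2h\cdot D(\B(x_0,h))$) and the terms involving $(x-p_0)_\perp$ (bounded by $\norm{x-p_0}^2/(2\rch_{\min})$). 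Neither requires approximating $f$ or the Jacobian, so no $L$ appears. The exponential-map and density analysis is used only to \emph{lower} bound $\mu_d(\Sigma_0)$, where one can afford crude one-sided estimates. Incidentally, this also makes your ``main obstacle'' (ii) evaporate: the direct bound already gives the $\eta/h$ term without any parity argument.
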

        
            \begin{proof}[\proofof \Cref{lem:covariance_decomposed}]
                This proof roughly follows the ideas of \cite[Section E.1]{Aamari18}, with a different center point in the covariance matrix ($x_0$ itself instead of the local mean around $x_0$) and finer (nuclear norm) estimates on residual terms.
                For brevity, we let $p_0 = \pi_M(x_0)$. 
                We first note that the integrand defining $h^2\Sigma_D(x_0,h)$ decomposes as
                \begin{align}
                    \label{eq:covariance_decomposed_1}
                    (x-x_0)\transpose{(x-x_0)}
                    &=
                    (x-p_0)\transpose{(x-p_0)}
                    +
                    (x_0-p_0)\transpose{(x_0-p_0)}
                    \\
                    &~~~~+
                    (x-p_0)\transpose{(x_0-p_0)}
                    +
                    (x_0-p_0)\transpose{(x-p_0)},
                    \nonumber
                \end{align}
                for all $x \in \B(x_0,h) \cap M$.
                After integrating them with respect to $x \sim D$, we bound the last two terms, by writing
                \begin{align*}
                     \normNuc{\E_{x \sim D}\left[(x-p_0)\transpose{(x_0-p_0)}\mathbbm{1}_{\norm{x-x_0} \le h}\right]}
                     &=
                     \normNuc{\E_{x \sim D}\left[(x_0-p_0)\transpose{(x-p_0)}\mathbbm{1}_{\norm{x-x_0} \le h}\right]}
                     \\
                     &\le
                     \E_{x \sim D}\left[
                        \normNuc{
                        (x_0-p_0)\transpose{(x-p_0)}}
                        \mathbbm{1}_{\norm{x-x_0} \le h}
                    \right]
                    \\
                    &=
                     \E_{x \sim D}
                        \left[
                            \norm{x_0-p_0}\norm{x-p_0}\mathbbm{1}_{\norm{x-x_0} \le h}
                        \right]       
                    \\
                    &\le
                    \eta h D(\B(x_0,h))
                    \\
                    &\le
                    C^d \omega_d f_{\max} \eta h^{d+1},
                \end{align*}
                where the last inequality uses \Cref{lem:intrinsic_ball_mass}.
                Similarly, for the second term of \Cref{eq:covariance_decomposed_1}, we have
                \begin{align*}
                     \normNuc{\E_{x \sim D}\left[(x_0-p_0)\transpose{(x_0-p_0)}\mathbbm{1}_{\norm{x-x_0} \le h}\right]}
                     &=
                     \norm{x_0-p_0}^2 D(\B(x_0,h))
                     \\
                     &\le
                     C^d \omega_d f_{\max} \eta^2 h^{d}.
                \end{align*}
                Given $v\in \R^n$, write $v_\sslash = \pi_{T_{p_0} M}(v)$ and $v_\perp = v - v_\sslash = \pi_{T_{p_0} M^\perp}(v)$. We now focus on the first term of \Cref{eq:covariance_decomposed_1}, which we further decompose as
                \begin{align}
                    \label{eq:covariance_decomposed_2}
                    (x-p_0)\transpose{(x-p_0)}
                    &=
                    (x-p_0)_\sslash\transpose{(x-p_0)_\sslash}
                    +
                    (x-p_0)_\perp\transpose{(x-p_0)_\perp}
                    \\
                    &~~~~+
                    (x-p_0)_\perp\transpose{(x-p_0)_\sslash}
                    +
                    (x-p_0)_\sslash\transpose{(x-p_0)_\perp}
                    ,
                    \nonumber
                \end{align}
                for all $x \in \B(x_0,h) \cap M$. Note that for those points $x \in \B(x_0,h) \cap M$, we have $\norm{(x-p_0)_\sslash} \le \norm{x-p_0} \le 2 h$, and from \cite[Theorem 4.18]{Federer59}, $\norm{(x-p_0)_\perp} \le \norm{x-p_0}^2/(2\rch_{\min}) \le 4 h^2/(2\rch_{\min})$.
                Hence, for the last two terms of \Cref{eq:covariance_decomposed_2},
                \begin{align*}
                     \normNuc{\E_{x \sim D}\bigl[(x-p_0)_\perp\transpose{(x-p_0)_\sslash}\mathbbm{1}_{\norm{x-x_0} \le h}\bigr]}
                     &=
                     \normNuc{\E_{x \sim D}\left[(x-p_0)_\sslash\transpose{(x-p_0)_\perp}\mathbbm{1}_{\norm{x-x_0} \le h}\right]}
                     \\
                     &\le
                     \E_{x \sim D} 
                        \bigl[
                        \norm{(x-p_0)_\sslash}\norm{(x-p_0)_\perp}
                        \mathbbm{1}_{\norm{x-x_0} \le h}
                        \bigr]
                     \\
                     &\le
                     C^d \omega_d f_{\max} h^{d+3}/\rch_{\min}
                     ,
                \end{align*}
                where we used \Cref{lem:intrinsic_ball_mass} again. 
                Dealing now with the second term of \Cref{eq:covariance_decomposed_2},
                \begin{align*}
                     \normNuc{\E_{x \sim D}
                     \bigl[(x-p_0)_\perp\transpose{(x-p_0)_\perp}\mathbbm{1}_{\norm{x-x_0} \le h}\bigr]}
                     &\le
                     \E_{x \sim D} 
                        \bigl[
                        \norm{(x-p_0)_\perp}\norm{(x-p_0)_\perp}
                        \mathbbm{1}_{\norm{x-x_0} \le h}
                        \bigr]
                    \\
                    &\le
                    C^d \omega_d f_{\max} h^{d+4}/(4 \rch_{\min}^2).
                \end{align*}
                Finally, let us write
                \begin{align*}
                    \Sigma_0
                    =
                    \E_{x \sim D} \left[ \frac{(x-p_0)_\sslash\transpose{(x-p_0)_\sslash}}{h^2} \mathbbm{1}_{\norm{x-x_0} \le h} \right]
                    .
                \end{align*}
                The matrix $\Sigma_0$ is symmetric and clearly has image $\operatorname{Im}(\Sigma_0) \subseteq T_{p_0} M$. Furthermore, since $\dd(x_0,M) \le \eta \le h/\sqrt{2}$ and $h \le \rch_{\min}/8$, \Cref{prop:ball_projection} and \Cref{lem:geodesic_comparison} yield that $M \cap \B(x_0,h) \supseteq M \cap \B\bigl(p_0, \sqrt{7}h/4\bigr) \supseteq \B_M(p_0,h/2)$. Hence, for all $u \in T_{p_0} M$,
                \begin{align*}
                    h^2\inner{\Sigma_0 u}{u}
                    &=
                    \E_{x \sim D} \left[ \inner{(x-p_0)_\sslash}{u}^2 \mathbbm{1}_{\norm{x-x_0} \le h} \right]
                    \\
                    &=
                    \E_{x \sim D} \left[ \inner{x-p_0}{u}^2 \mathbbm{1}_{\norm{x-x_0} \le h} \right]
                    \\
                    &\ge
                    f_{\min}
                    \int_{\B_M(p_0,h/2)}
                    \inner{x-p_0}{u}^2
                    \dd \Haus^d(x)
                    \\
                    &=
                    f_{\min}
                    \int_{\B_d(0,h/2)}
                    \inner{\exp_{p_0}^M(v)-p_0}{u}^2
                    \left| \det \left( \dd_v \exp_{p_0}^M \right) \right|
                    \dd v
                    ,
                \end{align*}
                where $\Haus^d$ is the $d$-dimensional Hausdorff measure on $\R^n$, and $\exp_{p_0}^M : T_{p_0} M \to M$ is the exponential map of $M$ at $p_0$.
                But \cite[Proposition 8.7]{Aamari18} states that there exists $c > 0$ such that for all $v \in \B_d(0, \rch_{\min}/4)$, $\left| \det \left( \dd_v \exp_{p_0}^M \right) \right| \ge c^d$, and \cite[Lemma 1]{Aamari19b} yields the bound $\norm{\exp_{p_0}^M(v)- (p_0 + v)} \le 5\norm{v}^2/(8 \rch_{\min})$. As a result, using the fact that $(a-b)^2 \ge a^2/2 - 3b^2$ for all $a,b \in \R$, we have
                \begin{align*}
                    h^2\inner{\Sigma_0 u}{u}
                    &\ge
                    c^d f_{\min}
                    \int_{\B_d(0,h/2)}
                    \left(
                     \inner{v}{u}
                     -
                      \inner{\exp_{p_0}^M(v)-(p_0+v)}{u}
                    \right)^2
                    \dd v
                    \\
                    &\ge
                    c^d f_{\min} \int_{\B_d(0,h/2)}
            \inner{v}{u}^2/2
             -
             3 \inner{\exp_{p_0}^M(v)-(p_0+v)}{u}^2
            \dd v
            \\
            &\ge
            c^d f_{\min} \int_{\B_d(0,h/2)}
            \inner{v}{u}^2/2
            -
            3 \norm{u}^2 \left( 5\norm{v}^2/(8 \rch_{\min}) \right)^2
            \dd v
            \\
            &=
            c^d f_{\min} \sigma_{d-1} \left( \frac{1}{2d(d+2)} - \frac{3(5/8)^2}{d+4} \left( \frac{h}{2\rch_{\min}} \right)^2 \right) \left(\frac{h}{2}\right)^{d+2}  \norm{u}^2
            \\
            &\ge
            (c')^d \omega_d f_{\min} h^{d+2} \norm{u}^2,
        \end{align*}
        as soon as $h \leq \rch_{\min}/\sqrt{d}$.
        In particular, the last bound shows that the image of $\Sigma_0$ is
        exactly $T_{p_0} M$, and that $\mu_d(\Sigma_0) \ge \omega_d f_{\min}(c'h)^d$.
        Summing up the above, we have shown that 
        \[
            \Sigma_D(x_0,h) = \Sigma_0 + R,
        \] 
        where $\Sigma_0$ is symmetric, $\operatorname{Im}(\Sigma_0) = T_{\pi_M(x_0)} M$, 
        $\mu_d(\Sigma_0) \ge \omega_d f_{\min}(c'h)^d$, and 
        \begin{align*}
            \normNuc{R} 
            &\le \omega_d f_{\max} (C'h)^d
            \left( \frac{\eta}{h} + \frac{\eta^2}{h^2} + \frac{h}{\rch_{\min}} + \frac{h^2}{\rch_{\min}^2}\right)
            \\
            &\le \omega_d f_{\max} (C'' h)^d \left( \frac{\eta}{h} + \frac{h}{\rch_{\min}} \right),
        \end{align*}
        which is the announced result.
    \end{proof}

            \subsection{Matrix Decomposition and Principal Angles}
                The following lemma ensures that the principal components of a matrix $A$ are stable to perturbations, provided that $A$ has a large-enough spectral gap.
                For a symmetric matrix $A \in \mathbb{R}^{n \times n}$, recall that $\mu_i(A)$ denotes its $i$-th largest singular value.
            \begin{lemma}[Davis-Kahan]
                \label{lem:angledeviation}
                Let $\hat{A}, A \in \R^{n\times n}$ be symmetric matrices such that $\rank(A) = d$.
                If $\hat{T} \in \Gr{n}{d}$ denotes the linear space spanned by the first $d$ eigenvectors of $\hat{A}$, and $T = \operatorname{Im}(A) \in \Gr{n}{d}$, then
                \[
                    \angle\bigl( T, \hat{T} \bigr)
                    :=
                    \normop{\pi_{\hat{T}} - \pi_{T}}
                    \le 
                    \frac{2 \normF{\hat{A}- A}}{\mu_d(A)}.
                \]
            \end{lemma}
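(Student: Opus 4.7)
The plan is to follow the classical Davis--Kahan $\sin\Theta$ strategy, reducing the claim to controlling $\normop{(I-P)\hat{P}}$ (with $P=\pi_T$, $\hat{P}=\pi_{\hat{T}}$), and to get the constant $2$ from Weyl's inequality. First I invoke the standard projection identity
\[
\normop{\pi_T - \pi_{\hat{T}}} \;=\; \normop{(I-P)\hat{P}}
\;=\; \normop{P(I-\hat{P})},
\]
which holds because $T$ and $\hat{T}$ have the same dimension $d$ (so the singular values of $(I-P)\hat{P}\colon \hat{T}\to T^\perp$ and $P(I-\hat{P})\colon \hat{T}^\perp \to T$ coincide, both encoding the sines of the principal angles between $T$ and $\hat{T}$). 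Hence bounding $\normop{(I-P)\hat{P}}$ suffices.

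Next I exploit the two invariant-subspace identities $(I-P)A = 0$, which follows from $T=\operatorname{Im}(A)$ and the symmetry of $A$ (so $T^\perp = \ker A$), and $\hat{A}\hat{P} = \hat{P}\hat{A}\hat{P}$, which follows from $\hat{T}$ being an invariant subspace of $\hat{A}$. Subtracting and adding $A\hat{P}$,
\[
(I-P)\hat{A}\hat{P} \;=\; (I-P)(\hat{A}-A)\hat{P} + (I-P)A\hat{P} \;=\; (I-P)(\hat{A}-A)\hat{P},
\]
so $\normop{(I-P)\hat{A}\hat{P}} \le \normop{\hat{A}-A} \le \normF{\hat{A}-A}$.

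To convert this into a bound on $\normop{(I-P)\hat{P}}$, I invert $\hat{A}$ on $\hat{T}$. In the trivial regime $\normF{\hat{A}-A}\ge \mu_d(A)/2$ the claim reads $\normop{\pi_T - \pi_{\hat{T}}}\le 1\le 2\normF{\hat{A}-A}/\mu_d(A)$ and is immediate. Otherwise Weyl's inequality for singular values gives
\[
\mu_d(\hat{A}) \;\ge\; \mu_d(A) - \normop{\hat{A}-A} \;>\; \mu_d(A)/2 \;>\; 0,
\]
so $\hat{A}\big|_{\hat{T}}\colon \hat{T}\to\hat{T}$ is invertible with $\normop{(\hat{A}\big|_{\hat{T}})^{-1}}\le 2/\mu_d(A)$. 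Setting $B = (\hat{A}\big|_{\hat{T}})^{-1}\hat{P}$ (extended by zero on $\hat{T}^\perp$) one has $\hat{A}B = \hat{P}$, whence
\[
(I-P)\hat{P} \;=\; (I-P)\hat{A}B \;=\; (I-P)(\hat{A}-A)B,
\]
and taking operator norms yields $\normop{(I-P)\hat{P}} \le \normop{\hat{A}-A}\cdot 2/\mu_d(A) \le 2\normF{\hat{A}-A}/\mu_d(A)$, which is the desired bound.

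The only genuine subtlety is a notational one: ``first $d$ eigenvectors of $\hat{A}$'' must be taken in the sense of $d$ largest absolute eigenvalues, so that $\hat{T}$ is the top-$d$ singular invariant subspace and $\mu_d(\hat{A})$ really is the smallest singular value of $\hat{A}\big|_{\hat{T}}$, making the Weyl bound above applicable; in the intended application to the positive semidefinite local covariance matrix $\Sigma_D(x_0,h)$ this convention is automatic and the ambiguity disappears.
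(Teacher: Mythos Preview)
Your argument is correct. The projection identity, the use of $(I-P)A=0$, the inversion of $\hat{A}$ on $\hat{T}$ via Weyl's inequality, and the trivial/nontrivial case split all go through cleanly, and your closing remark about the reading of ``first $d$ eigenvectors'' is an accurate diagnosis of a genuine ambiguity in the statement (one that is indeed harmless in the paper's PSD application).

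The paper's own proof, however, is not a self-contained Davis--Kahan computation at all: it is a one-line citation of \cite[Theorem~2]{Yu15} (the Yu--Wang--Samworth variant of Davis--Kahan) with $r=1$, $s=d$, using that for rank-$d$ (positive semidefinite) $A$ the relevant eigengap $\lambda_d(A)-\lambda_{d+1}(A)$ equals $\mu_d(A)$. So your route is genuinely different: you reprove the $\sin\Theta$ bound from scratch via the invariant-subspace identity $(I-P)\hat P = (I-P)(\hat A - A)B$ and Weyl, whereas the paper outsources the entire inequality to a black-box reference. Your approach has the advantage of being self-contained and of making explicit where the constant $2$ and the interpretation of ``first $d$ eigenvectors'' enter; the paper's approach is shorter and leans on a result that also delivers the Frobenius-norm version of the bound for free.
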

            
            \begin{proof}[\proofof \Cref{lem:angledeviation}]
                    It is a direct application of \cite[Theorem 2]{Yu15} with $r = 1$ and $s = d$.
            \end{proof}

\subsection{Low-rank Matrix Recovery}

Proceeding further in the strategy described in \Cref{subsec:SQ-tangent-presentation}, we now explain how to estimate the local covariance matrix $\Sigma_D(x_0,h) \in \R^{n \times n}$ (\Cref{eq:covariance_local_definition}) in $\STAT(\tau)$.

Because $\Sigma_D(x_0,h) \in \R^{n \times n} = \R^{n^2}$ can be seen as a mean vector with respect to the unknown distribution $D$, $2n^2$ queries to $\STAT(\tau)$ would yield error $O(\tau)$ from \Cref{lem:mean_vector_estimation}.
However, this would not use the low-rank structure of $\Sigma_D(x_0,h)$, i.e. some redundancy of its entries. To mitigate the query complexity of this estimation problem, we will use compressed sensing techniques~\cite{Fazel08}.
Mimicking the vector case (\Cref{lem:mean_vector_estimation}), we put our problem in the broader context of the estimation of $\Sigma = \E_{x \sim D}[F(x)] \in \R^{k \times k}$ in $\STAT(\tau)$, where $F : \R^n \to \R^{k \times k}$ and $\Sigma$ are approximately low rank (see \Cref{lem:mean_matrix_estimation}).

\subsubsection{Restricted Isometry Property and Low-Rank Matrix Recovery}
    \label{subsubsec:RIP-implies-recovery}
Let us first present some fundamental results coming of matrix recovery.
Following \cite[Section II]{Fazel08}, assume that we observe 
$y \in \R^q$ such that
\begin{align}
    y = \sampling(\Sigma) + z,
\end{align}
where $\Sigma \in \R^{k \times k}$ is the matrix of interest, 
$\sampling : \R^{k\times k} \to \R^q$ is a linear map seen as a 
sampling operator, and $z \in \R^q$ encodes noise and has small 
Euclidean norm $\norm{z} \le \xi$.

In general, when $q < k^2$, $\sampling$ has non-empty kernel, and hence one has no hope to recover $\Sigma$ only from $y$, even with no noise. 
However, if $\Sigma$ is (close to being) low-rank and that $\sampling$ does not shrink low-rank matrices too much, $\sampling(\Sigma)$ may not actually censor information on $\Sigma$, while compressing the dimension from $k^2$ to $q$. 
A way to formalize this idea states as follows.
\begin{definition}[Restricted Isometry Property]
\label{def:RIP}
    Let $\sampling : \R^{k\times k} \to \R^q$ be a linear map, and 
    $d \le k$. We say that $\sampling$ satisfies the $d$-restricted
    isometry property with constant $\delta > 0$ if for all matrix 
    $X \in \R^{k \times k}$ of rank at most~$d$,
    \begin{align*}
        (1 - \delta) \normF{X}
        \le
        \norm{\sampling(X)}
        \le
        (1 + \delta) \normF{X}.
    \end{align*}
    We let $\delta_d(\sampling)$ denote the smallest such $\delta$. 
\end{definition}
To recover $\Sigma$ only from the knowledge of $y$, consider the 
convex optimization problem (see~\cite{Fazel08}) over $X \in \R^{k \times k}$:
            \begin{align}
                \label{eq:matrix_minimization_problem}
                \begin{array}{ll}
                    \text{minimize} & \normNuc{X}  \\
                    \text{subject to} & \norm{y - \sampling(X)} \le \xi.
                \end{array}
            \end{align}
            Let $\Sigma_\opt$ denote the solution of \Cref{eq:matrix_minimization_problem}. To give insights, the nuclear norm is seen here as a convex relaxation of the rank function \cite{Fazel08}, so that \Cref{eq:matrix_minimization_problem} is expected to capture a low-rank matrix close to $\Sigma$. 
            If $\sampling$ satisfies the restricted isometry property, the next result states that \eqref{eq:matrix_minimization_problem} does indeed capture such a low-rank matrix.
            In what follows, we let $\Sigma^{(d)} \in \R^{k \times k}$ denote the matrix closest to $\Sigma$ among all the matrices of rank $d$, where closeness is indifferently measured in nuclear, Frobenius, or operator norm. That is, $\Sigma^{(d)}$ is the truncated singular value decomposition of~$\Sigma$.
    
            \begin{theorem}[{\cite[Theorem 4]{Fazel08}}]
                \label{thm:RIP_implies_Stable_Reconstruction}
                Assume that $\delta_{5d} < 1/10$. Then the solution $\Sigma_\opt$ of \Cref{eq:matrix_minimization_problem} satisfies
                 \begin{align*}
                    \normF{\Sigma_\opt - \Sigma}
                    \le
                    C_0
                    \frac{\normNuc{\Sigma - \Sigma^{(d)}}}{\sqrt{d}}
                    +
                    C_1 \xi,
                \end{align*}
                where $C_0, C_1 > 0$ are universal constants.
            \end{theorem}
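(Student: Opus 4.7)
The plan is to adapt the classical Cand\`es--Tao compressed sensing proof to the nuclear-norm/low-rank setting, following the now-standard approach of Recht--Fazel--Parrilo. Set $H = \Sigma_\opt - \Sigma$. Since both $\Sigma_\opt$ and $\Sigma$ are feasible for \Cref{eq:matrix_minimization_problem}, the triangle inequality gives $\norm{\sampling(H)} \le 2\xi$, and by the nuclear-norm optimality of $\Sigma_\opt$ one has $\normNuc{\Sigma_\opt} \le \normNuc{\Sigma}$. The goal is then to upgrade these two inequalities, via the restricted isometry property $\delta_{5d}(\sampling) < 1/10$, into a Frobenius-norm bound on $H$ of the announced form.

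The core technical device is a block decomposition $H = H_0 + H_1 + H_2 + \cdots$ based on the singular value decomposition of $H$, aligned around $\Sigma^{(d)}$: the block $H_0$ is chosen to live in the row/column span of $\Sigma^{(d)}$ together with a rank-$d$ correction capturing the top singular components of the orthogonal residual, so that $\rank(H_0) \le 3d$; each subsequent $H_i$ ($i \ge 1$) is rank at most $2d$, with singular values bounded by the average of those of $H_{i-1}$. This last property yields, exactly as in the vector case, the tail bound $\sum_{i \ge 2} \normF{H_i} \le \normNuc{H_1}/\sqrt{2d}$. Expanding $\normNuc{\Sigma_\opt} = \normNuc{\Sigma + H} \le \normNuc{\Sigma}$ around $\Sigma^{(d)}$ and using the orthogonality of supports then produces the cone-type inequality
\begin{equation*}
    \normNuc{H_1} \;\le\; \normNuc{H_0} + 2\,\normNuc{\Sigma - \Sigma^{(d)}},
\end{equation*}
controlling the ``off-support'' part of $H$ by the ``on-support'' part plus the unavoidable tail $\normNuc{\Sigma - \Sigma^{(d)}}$.

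From there, I would combine the RIP bounds. On the one hand, $(1-\delta_{3d})\normF{H_0}^2 \le \norm{\sampling(H_0)}^2$. On the other hand, since each $H_0 \pm H_i$ has rank at most $5d$ for $i \ge 1$, the RIP yields the near-orthogonality estimate $|\inner{\sampling(H_0)}{\sampling(H_i)}| \le \delta_{5d} \normF{H_0}\normF{H_i}$. Writing $\sampling(H_0) = \sampling(H) - \sum_{i \ge 1}\sampling(H_i)$ and taking inner products with $\sampling(H_0)$ produces
\begin{equation*}
    (1-\delta_{5d})\normF{H_0}^2
    \;\le\;
    \normF{H_0}\left(2\xi + \delta_{5d}\sum_{i \ge 1}\normF{H_i}\right),
\end{equation*}
and the tail bound together with the cone inequality replaces $\sum_{i \ge 1}\normF{H_i}$ by a quantity of order $\normF{H_0} + \normNuc{\Sigma - \Sigma^{(d)}}/\sqrt{d}$. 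The assumption $\delta_{5d} < 1/10$ is precisely what makes the coefficient of $\normF{H_0}$ strictly positive after rearranging, yielding $\normF{H_0} \lesssim \xi + \normNuc{\Sigma - \Sigma^{(d)}}/\sqrt{d}$, and a final application of $\normF{H} \le \normF{H_0} + \sum_{i \ge 1}\normF{H_i}$ closes the argument.

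The main obstacle I anticipate is the accounting in the cone-decomposition step around $\Sigma^{(d)}$: since $\Sigma$ itself is not low rank, one cannot reduce to a single rank-$d$ subspace as in the exactly-sparse case, and must work simultaneously with the row span and the column span of $\Sigma^{(d)}$ plus a rank-$d$ ``spillover.'' Any looseness in this step directly inflates both the rank budget appearing in the RIP index --- explaining the specific exponent $5d$ --- and the final constants $C_0, C_1$. Once that bookkeeping is carried out carefully, the remaining manipulations are routine RIP arithmetic.
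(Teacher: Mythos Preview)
The paper does not prove this statement at all: it is stated as a direct citation of \cite[Theorem~4]{Fazel08} and used as a black box. Your sketch is the standard Recht--Fazel--Parrilo argument from that very reference, so there is nothing to compare against in the present paper, and your outline is correct as a reconstruction of the cited proof.
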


    \subsubsection{Building a Good Matrix Sensing Operator}
    \label{subsubsec:pauli_measurement}
    We now detail a standard way to build a sampling operator $\sampling$ that satisfies the restricted isometry property (\Cref{def:RIP}), thereby allowing to recover low-rank matrices from a few measurements (\Cref{thm:RIP_implies_Stable_Reconstruction}). For purely technical reasons, we shall present a construction over the complex linear space $\C^{k \times k}$. This will eventually enable us to recover results over $\R^{k \times k}$ via the isometry $\R^{k \times k} \hookrightarrow \C^{k \times k}$.
        
    First, we note that given an orthonormal $\C$-basis $\mathbb{W} = (W_1, \dots, W_{k^2})$ of $\C^{k \times k}$ for the Hermitian inner product $\inner{A}{B} = \tr(A^* B)$, we can build a sampling operator $\sampling_{\mathbb{S}}: \C^{k\times k} \to \C^q$ by projecting orthogonally onto the space spanned by only $q$ (randomly) pre-selected $\mathbb{S} \subseteq \mathbb{W}$ elements of the basis.
 
    When $k = 2^\ell$, an orthonormal basis of $\C^{k \times k}$ of particular interest is the so-called \emph{Pauli} basis~\cite{Liu11}. Its construction goes as follows:
    \begin{itemize}[leftmargin=*]
        \item For $k = 2$ ($\ell = 1$), it is defined by  $W^{(1)}_i = \sigma_i / \sqrt{2}$, where
            \[
                \sigma_1 
                = 
                \begin{pmatrix}
                    0 & 1 \\
                    1 & 0
                \end{pmatrix}
                ,
                \quad
                \sigma_2 
                = 
                \begin{pmatrix}
                    0 & -i \\
                    i & 0
                \end{pmatrix}
                ,
                \quad
                \sigma_3 
                = 
                \begin{pmatrix}
                    1 & 0 \\
                    0 & -1
                \end{pmatrix}
                ,
                \quad
                \sigma_4 
                = 
                \begin{pmatrix}
                    1 & 0 \\
                    0 & 1
                \end{pmatrix}.
            \]
            Note that the $\sigma_i$'s have two eigenvalues, both belonging to $\set{-1,1}$, so that they are both Hermitian and unitary. In particular, $\normop{W^{(1)}_i} = 1 / \sqrt{2}$ and $\normF{W^{(1)}_i} = 1$ for all $i \in \set{1, \dots, 4}$.  One easily checks that $\bigl(W^{(1)}_i\bigr)_{1 \le i \le 4}$ is an orthonormal basis of $\C^{2 \times 2}$. 
        \item For $k = 2^\ell$ ($\ell \ge 2$), the Pauli basis $\bigl(W^{(\ell)}_i\bigr)_{1 \le i \le 2^\ell}$ is composed of matrices acting on the tensor space $\left(\C^2\right)^{\otimes \ell} \simeq \C^{2^\ell}$, and defined as the family of all the possible $\ell$-fold tensor products of elements of $\bigl(W^{(1)}_i\bigr)_{1 \le i \le 4}$. As tensor products preserve orthogonality, we get that $\bigl(W^{(\ell)}_i\bigr)_{1 \le i \le 2^\ell}$ is an orthonormal basis of $\C^{2^\ell \times 2^\ell}$. Furthermore, as $\normop{W \otimes W'} = \normop{W} \normop{W'}$, we get that for all $i \in \set{1, \dots, 2^\ell}$,
                \begin{align}
                    \label{eq:Pauli_operator_norm}
                    \normop{ W^{(k)}_i}
                    =
                    \left(\frac{1}{\sqrt{2}}\right)^\ell
                    =
                    \frac{1}{\sqrt{k}}
                    .
                \end{align}
                Since $\normF{W} \le \sqrt{k} \normop{W}$, the value $1/\sqrt{k}$ actually is the smallest possible common operator norm of an orthonormal basis of $\C^{k \times k}$.
                As will be clear in the proof of \Cref{lem:Pauli_Satisfy_RIP}, this last property --- called incoherence in the matrix completion literature \cite{Liu11} --- is key to design a good sampling operator.
        \end{itemize}
        Still considering the case $k = 2^\ell$, we let $\samplingPauli: \C^{k \times k} \to \C^q$ denote the random sampling operator defined by
        \begin{align}
        \label{eq:samplingPauli}
            \samplingPauli(X)
            =
            \left(
                \frac{k}{\sqrt{q}}\inner{W_{I_i}^{(\ell)}}{X}
            \right)_{1 \le i \le q},
        \end{align}
        where $(I_i)_{1 \le i \le q}$ is an i.i.d. sequence with uniform distribution over $\set{1, \dots, k^2}$.
        Up to the factor $k/\sqrt{q}$, $\samplingPauli$ is the orthogonal projector onto the space spanned by $(W^{(\ell)}_{I_1}, \dots,W^{(\ell)}_{I_q})$. This normalisation $k/\sqrt{q}$ is chosen so that for all $X \in \C^{k \times k}$,
        \begin{align*}
            \E\bigl[\norm{\samplingPauli(X)}^2 \bigr] 
            &=
            \frac{k^2}{q}
            \sum_{i = 1}^{k^2}
            q \Pr\left(I_1 = i\right)\left|\inner{W_{i}^{(\ell)}}{X}\right|^2
            \\
            &=
            \sum_{i = 1}^{k^2}
            \left|\inner{W_{i}^{(\ell)}}{X}\right|^2
            =
            \normF{X}^2
            .
        \end{align*}
        That is, roughly speaking, $\samplingPauli$ satisfies the restricted isometry property (RIP, \Cref{def:RIP}) on average.
        Actually, as soon as $q$ is large enough compared to $d$, the result below states that $\samplingPauli$ does fulfill a restricted isometry property with high probability.
    
            \begin{lemma}
                \label{lem:Pauli_Satisfy_RIP}
                Assume that $k = 2^\ell$, and fix $0 < \alpha \le 1$. 
                There exist universal constants $c_0,c_1 > 0$ such that if $q \ge c_0 k d \log^6(k) \log(c_1/\alpha)$, then with probability at least $1 - \alpha$, the following holds.
                
                For all $X \in \R^{k \times k}$ such that $\normNuc{X} \le \sqrt{5d} \normF{X}$,
                \[
                \left| \norm{\samplingPauli(X)} - \normF{X} \right| \le \frac{\normF{X}}{20}.
                \]
                In particular, on the same event of probability at least $1 - \alpha$, $\delta_{5d}\left(\samplingPauli\right) < 1/10$.
            \end{lemma}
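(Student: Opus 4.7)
The plan is to first establish a rank-restricted isometry property of $\samplingPauli$ on rank-$5d$ matrices via Pauli-type matrix concentration, and then pass to the larger effective-rank cone in the statement via a singular-value block decomposition.

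Step 1 rests on the fact that the Pauli basis is maximally incoherent: by \eqref{eq:Pauli_operator_norm}, each basis element satisfies $\normop{W_i^{(\ell)}} = 1/\sqrt{k}$, which is the smallest operator norm allowed for a unit-Frobenius matrix. This incoherence, combined with the identity $\samplingPauli^* \samplingPauli = \frac{k^2}{q}\sum_{i=1}^q W_{I_i}^{(\ell)}(W_{I_i}^{(\ell)})^*$, puts us in the regime where matrix Bernstein gives the right variance proxy pointwise on each fixed $X$. Uniformizing this bound over an $\epsilon$-net of the rank-$5d$ unit-Frobenius sphere (whose entropy scales as $O(kd \log(1/\epsilon))$) through Dudley-style chaining yields the Pauli RIP theorem of Liu, i.e.\ $\delta_{5d}(\samplingPauli) \leq \delta_0$ with probability at least $1 - \alpha$ whenever $q \geq c_0 k d \log^6(k) \log(c_1/\alpha)$, where $c_0$ depends on the fixed target $\delta_0$. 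The $\log^6(k)$ factor reflects the entropy integral for Pauli measurements.

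For Step 2, given $X$ with $\normNuc{X} \leq \sqrt{5d}\normF{X}$, I would partition its singular values into contiguous blocks of size $5d$ and write $X = X_0 + X_1 + X_2 + \cdots$, with $X_0$ the best rank-$5d$ approximation. Since each singular value in $X_j$ is at most the average singular value $\normNuc{X_{j-1}}/(5d)$ of the preceding block, one has $\normF{X_j} \leq \normNuc{X_{j-1}}/\sqrt{5d}$, and telescoping gives $\sum_{j \geq 1} \normF{X_j} \leq \normNuc{X}/\sqrt{5d} \leq \normF{X}$. Applying the rank-$5d$ RIP on $X_0$ and handling the cross-terms between blocks via polarization together with the rank-$10d$ RIP on the sums $X_i \pm X_j$, one converts $\delta_{5d}(\samplingPauli) \leq \delta_0$ into the cone bound $|\norm{\samplingPauli(X)} - \normF{X}| \leq \normF{X}/20$ after choosing $\delta_0$ small enough (which only enlarges the absolute constants $c_0, c_1$). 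The ``in particular'' statement then follows for free, since every rank-$5d$ matrix satisfies $\normNuc{X} \leq \sqrt{5d}\normF{X}$ by Cauchy-Schwarz.

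The hard part is Step 1: the $\log^6(k)$ dependence is sharp for Pauli sampling and demands the full power of the generic chaining machinery on low-rank matrix sets, because pointwise matrix-Bernstein bounds alone do not suffice. By contrast, Step 2 is essentially bookkeeping but requires enough care on absolute constants so that the target tolerance $1/20$ is actually achieved rather than replaced by a worse value coming from triangle and polarization losses.
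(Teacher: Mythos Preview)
Your approach is valid but takes a more circuitous route than the paper's. The paper simply invokes Liu's Theorem~2.1 as a black box: that result already delivers the near-isometry on the full nuclear-norm cone $\{X : \normNuc{X} \le \sqrt{r}\,\normF{X}\}$, not merely on rank-$r$ matrices. With the Pauli incoherence parameter $K = \sqrt{k}\max_i \normop{W_i^{(\ell)}} = 1$ and the choices $r = 5d$, $\delta = 1/20$, Liu's theorem yields the first display in one line. The bound $\delta_{5d}(\samplingPauli) < 1/10$ is then the \emph{corollary}, via Cauchy--Schwarz ($\normNuc{X} \le \sqrt{\rank(X)}\,\normF{X}$), so the paper's logical flow is cone $\to$ rank --- exactly the reverse of yours.

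You instead sketch how Liu's theorem is proved (matrix Bernstein plus chaining over the low-rank set) to obtain rank-$5d$ RIP, and then separately lift to the cone via the Cand\`es-style singular-value block decomposition. This works, but two remarks: (i) it duplicates effort already contained in Liu's proof, whose chaining is in fact run directly over the nuclear-norm cone rather than the rank set; and (ii) your polarization on $X_i \pm X_j$ needs rank-$10d$ RIP, whereas your Step~1 as written only targets $\delta_{5d}$ --- a harmless constant adjustment, but one you should track explicitly. What your route buys is self-containedness; what the paper's route buys is a two-line proof.
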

    
            \begin{proof}[\proofof \Cref{lem:Pauli_Satisfy_RIP}]
                The Pauli basis is an orthonormal basis of $\C^{k \times k}$, and from \Cref{eq:Pauli_operator_norm}, its elements all have operator norm smaller than $1/\sqrt{k}$. 
                Hence, applying \cite[Theorem 2.1]{Liu11} with $K=\sqrt{k} \max_{1 \le i \le k} \normop{W^{(\ell)}_i} = 1$, $r = 5d$, $C = c_0 \log(c_1/\alpha)$, and $\delta = 1/20$ yields the first bound.
                The second one follows by recalling that any rank-$r$ matrix $X \in \R^{k \times k}$ satisfies $\normNuc{X} \le \sqrt{r} \normF{X}$.
            \end{proof}

        \subsubsection{Mean Matrix Completion with Statistical Queries}    
        
        The low-rank matrix recovery of \Cref{subsubsec:RIP-implies-recovery,subsubsec:pauli_measurement} combined with mean vector estimation in $\STAT(\tau)$ for the Euclidean norm (see \Cref{lem:mean_vector_estimation}) lead to the following result.
        
            \begin{lemma}
            \label{lem:mean_matrix_estimation}
                For all $\alpha \in (0,1]$, there exists a family of statistical query algorithms indexed by maps $F: \R^n \to \R^{k \times k}$ such that the following holds on an event of probability at least $1-\alpha$ (uniformly over $F$).
                
                Let $D$ be a Borel probability distribution over $\R^n$, and $F: \R^n \to \R^{k \times k}$ be a map such that for all $x \in \R^n$, $\normF{F(x)}\le 1$ and $\normNuc{F(x)} \le \sqrt{5d} \normF{F(x)}$.
                Write $\Sigma = \E_{x \sim D}\left[ F(x) \right]$, and $\Sigma^{(d)}$ for the matrix closest to $\Sigma$ among all the matrices of rank $d\le k$.
                Assume that $\Sigma \in \Xi$, where $\Xi \subseteq \R^{k \times k}$ is a known linear subspace of $\R^{k \times k}$.
            
                Then, there exists a statistical query algorithm making at most $c_0 dk\log^6(k) \log(c_1/\alpha)$ queries to $\STAT(\tau)$, and that outputs a matrix $\hat{\Sigma} \in \Xi$ that satisfies
                \begin{align*}
                    \normF{\hat{\Sigma} - \Sigma}
                    \le
                    C_0
                    \frac{\normNuc{\Sigma - \Sigma^{(d)}}}{\sqrt{d}}
                    +
                    C_1 \tau
                \end{align*}
                on the event of probability at least $1-\alpha$ described above, where $C_0,C_1 > 0$ are universal constants.
        \end{lemma}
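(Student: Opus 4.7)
The plan is to compress $\Sigma$ via a randomly drawn Pauli sampling operator $\samplingPauli$ of dimension $q = O(dk\log^6(k)\log(1/\alpha))$, estimate $\samplingPauli(\Sigma)\in\C^q$ with the vector SQ routine of \Cref{lem:mean_vector_estimation}, reconstruct $\Sigma$ via the low-rank recovery of \Cref{thm:RIP_implies_Stable_Reconstruction}, and project the output onto the linear constraint $\Xi$.

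\textbf{Setup.} Without loss of generality reduce to $k=2^\ell$ by padding $F$ with zero rows and columns up to the next power of two; this at most doubles $k$ and preserves both $\normF{F(x)}$ and $\normNuc{F(x)}$, hence all hypotheses. Once and for all, draw i.i.d.\ indices $I_1,\dots,I_q$ uniformly in $\{1,\dots,k^2\}$ with $q=c_0\,dk\log^6(k)\log(c_1/\alpha)$ and form $\samplingPauli$ as in \eqref{eq:samplingPauli}. By \Cref{lem:Pauli_Satisfy_RIP}, on a data-independent event $\mathcal{E}$ of probability at least $1-\alpha$, the operator $\samplingPauli$ satisfies $\delta_{5d}(\samplingPauli)<1/10$ \emph{and} the stronger cone inequality $\bigl|\,\|\samplingPauli(X)\|-\normF{X}\,\bigr|\le \normF{X}/20$ uniformly over all $X$ with $\normNuc{X}\le\sqrt{5d}\,\normF{X}$. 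Condition on $\mathcal{E}$ in what follows; note that $\mathcal{E}$ does not depend on $F$, so the same sampling operator works uniformly over the claimed family.

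\textbf{Vector estimation.} Under the hypothesis $\normNuc{F(x)}\le\sqrt{5d}\,\normF{F(x)}\le\sqrt{5d}$, the cone inequality above yields the \emph{pointwise} bound $\|\samplingPauli(F(x))\|\le 21/20$ for every $x\in\R^n$. Applying \Cref{lem:mean_vector_estimation} to the map $x\mapsto (20/21)\samplingPauli(F(x))\in\C^q\simeq\R^{2q}$ (querying real and imaginary parts separately) therefore uses $4q = O(dk\log^6(k)\log(1/\alpha))$ queries to $\STAT(\tau)$ and produces a vector $\tilde{y}\in\C^q$ satisfying $\|\tilde{y}-\samplingPauli(\Sigma)\|\le C'\tau$ for a universal constant $C'>0$, using linearity $\samplingPauli(\Sigma)=\E_{x\sim D}[\samplingPauli(F(x))]$.

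\textbf{Reconstruction, projection, and main obstacle.} Solve the convex program $\Sigma_{\mathrm{opt}}\in\argmin\{\normNuc{X}\,:\,\|\tilde{y}-\samplingPauli(X)\|\le C'\tau\}$ and apply \Cref{thm:RIP_implies_Stable_Reconstruction} (valid since $\delta_{5d}(\samplingPauli)<1/10$ on $\mathcal{E}$) to obtain $\normF{\Sigma_{\mathrm{opt}}-\Sigma}\le C_0\,\normNuc{\Sigma-\Sigma^{(d)}}/\sqrt{d}+C_1 C'\tau$. Finally set $\hat{\Sigma}=\pi_\Xi(\Sigma_{\mathrm{opt}})$; since $\Sigma\in\Xi$ and orthogonal projection onto a linear subspace is $1$-Lipschitz for $\normF{\cdot}$, the same bound carries over to $\normF{\hat{\Sigma}-\Sigma}$. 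The key technical point --- and the one place where the proof could easily go wrong --- is securing the pointwise bound $\|\samplingPauli(F(x))\|=O(1)$: a naive estimate via the operator norm $\normop{\samplingPauli}=O(k/\sqrt{q})$ would only deliver $\|\samplingPauli(F(x))\|=O(\sqrt{dk})$, resulting in a final error of order $\sqrt{dk}\,\tau$ and destroying the query/accuracy trade-off. It is essential that the cone inequality of \Cref{lem:Pauli_Satisfy_RIP} holds uniformly over all matrices with $\normNuc\le\sqrt{5d}\,\normF$, not just over matrices of rank at most $d$, since $F(x)$ is only assumed approximately low-rank in this sense.
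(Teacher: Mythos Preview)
Your proposal is correct and follows essentially the same route as the paper: random Pauli sensing, the cone inequality of \Cref{lem:Pauli_Satisfy_RIP} to bound $\|\samplingPauli(F(x))\|$ pointwise, the vector SQ routine of \Cref{lem:mean_vector_estimation} on $\C^q\simeq\R^{2q}$, nuclear-norm minimization via \Cref{thm:RIP_implies_Stable_Reconstruction}, and finally projection onto $\Xi$. The only cosmetic difference is your normalization constant $20/21$ in place of the paper's $1/2$, and your explicit emphasis on why the cone inequality (rather than mere RIP on rank-$5d$ matrices) is what makes the pointwise bound go through --- a point the paper uses but does not highlight.
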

        
        \begin{proof}[\proofof \Cref{lem:mean_matrix_estimation}]
                Without loss of generality, we can assume that $k = 2^\ell$. Indeed, one can always embed $\R^{k\times k}$ isometrically into $\R^{2^\ell \times 2^\ell}$, with $2^\ell = 2^{\ceil{\log_2(k)}} \le 2k $, via the linear map
                \[
                    \R^{k \times k} 
                    \ni 
                    A
                    \longmapsto
                    \tilde{A}
                    = \left( 
                        \begin{array}{c|c} 
                            A & 0 \\
                            \hline
                            0 &  0  
                        \end{array} 
                    \right)
                    \in
                    \R^{2^\ell \times 2^\ell},
                \]
                which preserves both the rank, the Frobenius and nuclear norms.
                
                Let $q \ge 1$ be a fixed integer to be specified later, and $(I_i)_{1 \le i \le q}$ be and i.i.d. sequence with uniform distribution over $\set{1, \dots, k^2}$, and for $X \in \R^{k \times k}$, write
                    \begin{align}
                        \samplingPauli(X)
                        =
                        \left(
                        \frac{k}{\sqrt{q}}\inner{W_{I_i}^{(\ell)}}{X}
                        \right)_{1 \le i \le q}
                        \in 
                        \C^{q}
                        =
                        \R^{2q}
                        ,
                    \end{align}
                as in \Cref{eq:samplingPauli}.
                For $x \in \R^n$, write $G(x) = \samplingPauli(F(x))/2 \in \R^{2q}$.
                From \Cref{lem:Pauli_Satisfy_RIP}, with probability at least $1- \alpha$ (over the randomness of $(I_i)_{1 \le i \le q}$),
                \begin{align*}
                    \norm{G(x)}
                    &=
                    \norm{\samplingPauli(F(x))}/2
                    \le
                    (1 + 1/20)\normF{F(x)}/2
                    \le
                    \normF{F(x)}
                    \le
                    1
                \end{align*}
                holds simultaneously for all the described $F: \R^{k\times k} \to \R^{2q}$.
                Hence, on this event of probability at least $1-\alpha$, \Cref{lem:mean_vector_estimation} applies to $G$ and provides a deterministic statistical query algorithm making $4q$ queries to $\STAT(\tau)$, and that outputs a vector $y \in \R^{2q}$ such that
                \[
                    \norm{
                    y
                    -
                    \E_{x \sim D}\left[G(x)\right]
                    }
                    \le
                    C \tau
                    ,
                \]
                where $C>0$ is a universal constant.
                But on the other hand, by linearity,
                \begin{align*}
                    \E_{x \sim D}\left[2 G(x) \right]
                    &=
                    \E_{x \sim D}\left[\samplingPauli(F(x)) \right]
                    =
                    \samplingPauli\left(\E_{x \sim D}\left[F(x)\right]\right)
                    =
                    \samplingPauli(\Sigma),
                \end{align*}
                where all the expected values are taken with respect $D$, conditionally on $(I_i)_{1 \le i \le q}$.
                Hence, as soon as $q \ge c_0 dk\log^6(k) \log(c_1/\alpha)$, \Cref{thm:RIP_implies_Stable_Reconstruction} and \Cref{lem:Pauli_Satisfy_RIP} combined together yields the following: on the same event of probability at least $1 - \alpha$ as before, the solution $\Sigma_\opt$ to the convex optimization problem over $X \in \R^{k \times k}$ given by
                \begin{align*}
                    \begin{array}{ll}
                        \text{minimize} & \normNuc{X}  \\
                        \text{subject to} & \norm{2y - \samplingPauli(X)} \le 2 C\tau,
                    \end{array}
                \end{align*}
                satisfies
                \begin{align*}
                    \normF{\Sigma_\opt - \Sigma}
                    \le
                    C_0
                    \frac{\normNuc{\Sigma - \Sigma^{(d)}}}{\sqrt{d}}
                    +
                    C_1 (2 C \tau)
                    .
                \end{align*} 
                Hence, the projected solution $\hat{\Sigma} = \pi_\Xi(\Sigma_\opt)$ onto $\Xi \subseteq \R^{k \times k}$ belongs to $\Xi$ and satisfies
                \begin{align*}
                    \normF{\hat{\Sigma} - \Sigma}
                    =
                    \normF{\pi_\Xi(\Sigma_\opt - \Sigma)}
                    &\le
                    \normF{\Sigma_\opt - \Sigma}
                    \\
                    &\le
                    C_0
                    \frac{\normNuc{\Sigma - \Sigma^{(d)}}}{\sqrt{d}}
                    +
                    C_1'\tau
                    ,
                \end{align*}
                which concludes the proof.
        \end{proof}
     
    \subsection{Tangent Space Estimation with Statistical Queries}
        \label{subsec:tangent-space-estimation-with-SQs}
        We finally prove the main announced statement of \Cref{sec:SQ-tangent-appendix}.
    \begin{proof}[\proofof \Cref{thm:routine_tangent}]
        Let $h >0$ be a bandwidth to be specified later, such that 
        $\eta \le h/\sqrt{2}$ and $h \le \rch_{\min}/(8\sqrt{d})$.
        First note that 
        $\Sigma_D(x_0,h) = \E_{x \sim D} \left[ F(x) \right]$, where 
        the function $F(x) = (x-x_0)\transpose{(x-x_0)}/h^2 \mathbbm{1}_{\norm{x-x_0} \le h}$ is defined for all $x \in \R^n$, and is such that $\normF{F(x)}\le 1$ and 
        $\rank(F(x)) \le 1$. In particular, 
        $\normNuc{F(x)} = \normF{F(x)} \le \sqrt{5d} \normF{F(x)}$ 
        for all $x \in \R^{n}$. Furthermore, $\Sigma_D(x_0,h)$ belongs to
        the linear space $\Xi$ of symmetric matrices. 
        Working on the event on which \Cref{lem:mean_matrix_estimation} holds (with $\alpha = 1/2$, say), yields the existence of a deterministic
        SQ algorithm making at most $c_0 dn\log^6(n) \log(2c_1)$
        queries to $\STAT(\tau)$, and that outputs a symmetric matrix $\hat{\Sigma}$ that satisfies
        \begin{align*}
            \normF{\hat{\Sigma} - \Sigma_D(x_0,h)}
            \le
            C_0
            \frac{\normNuc{\Sigma_D(x_0,h) - \Sigma^{(d)}_D(x_0,h)}}{\sqrt{d}}
            +
            C_1 \tau
            ,
        \end{align*}
        with probability at least $1 - \alpha$.
        On the other hand, from \Cref{lem:covariance_decomposed}, provided that $\sqrt{2} \eta \le h \le \rch_{\min}/(8\sqrt{d})$, one can write
        \[
            \Sigma_D(x_0,h) = \Sigma_0 + R,
        \]
        where the symmetric matrix $\Sigma_0$ satisfies 
        $\operatorname{Im}(\Sigma_0) = T_{\pi_M(x_0)} M$, 
        $\mu_d(\Sigma_0) \ge \omega_d f_{\min}(ch)^d$
        and 
        $\normF{R} \le \normNuc{R} \le \omega_d f_{\max} (C h)^d \left( \frac{\eta}{h} + \frac{h}{\rch_{\min}}\right)$.
        As $\rank(\Sigma_0) = d$, we have in particular that, 
        \[
            \normNuc{\Sigma_D(x_0,h) - \Sigma^{(d)}_D(x_0,h)}
            \le
            \normNuc{\Sigma_D(x_0,h) - \Sigma_0}
            =
            \normNuc{R}. 
        \]
        Therefore, taking $\hat{T}(x_0)$ as the linear space 
        spanned by the first $d$ eigenvectors of $\hat{\Sigma}$, \Cref{lem:angledeviation} yields
        \begin{align*}
            \angle\bigl(T_{\pi_M(x_0)} M, \hat{T}(x_0)\bigr)
            &=
            \normop{\pi_{\hat{T}(x_0)} - \pi_{T_{\pi_M(x_0)} M}}
            \\
            &\le
            \frac{
                2 \normF{\hat{\Sigma} - \Sigma_0}
            }{
                \mu_d(\Sigma_0)
            }
            \\
            &\le
            2
            \frac{
                \normF{\hat{\Sigma} - \Sigma_D(x_0,h)} +  
                    \normF{\Sigma_D(x_0,h) - \Sigma_0}
            }
            {
                \mu_d(\Sigma_0)
            }
            \\
            &\le
            \frac{
                2
            }{
                 \omega_d f_{\min}(ch)^d
            }
            \left(
                C_0
               \frac{\normNuc{R}}{\sqrt{d}}
                +
                C_1 \tau
                +
                \normF{R}
            \right)
            \\
            &\le
            \frac{C'^{d}}{\omega_d f_{\min}}
            \left( 
                \omega_d f_{\max}
                \left\{
                    \frac{\eta}{h} + \frac{h}{\rch_{\min}} 
                \right\}
                + 
                \frac{\tau}{h^d}
            \right)
            .
        \end{align*}
        We conclude by setting $
        h 
        = 
        \rch_{\min}
        \left\{
            \sqrt{\frac{\eta}{\rch_{\min}}}
            \vee 
            \left( \frac{\tau}{\omega_d f_{\max} \rch_{\min}^d} \right)^{1/(d+1)}
        \right\}
        $
        in this last bound.
        This value for $h$ does satisfy 
        $\sqrt{2} \eta \le h \le \rch_{\min}/(8\sqrt{d})$ since $\eta \le \rch_{\min}/(64d)$ and 
        $\frac{\tau}{\omega_d f_{\max} \rch_{\min}^d} \leq \left(\frac{1}{8\sqrt{d}}\right)^{d+1}$,
        so that the whole analysis applies, and yields the announced result.
    \end{proof}

    \section{Seed Point Detection}
    \label{sec:SQ-seed-appendix}
    We now build the SQ point detection algorithm $\hat{x}_0 \in \R^n$
(\Cref{thm:routine_detection_seed_point}), which is used to initialize
in the SQ emulation of \MPA yielding the SQ reconstruction algorithm 
in the model 
$\distributionsball{n}{d}{\rch_{\min}}{f_{\min}}{f_{\max}}{L}{R}$ 
where no seed point is available (\Cref{def:models_bounded}).

Recall that given a ball of radius $R>0$ guaranteed to encompass 
$M = \supp(D) \subseteq \B(0,R)$, and a target precision $\eta > 0$,
we aim at finding a point that is $\eta$-close to $M$ with 
statistical queries to $\STAT(\tau)$. We follow the strategy of proof 
described in \Cref{subsec:SQ-seed-presentation}.

\subsection{Detecting a Raw Initial Point}

Starting from the whole ball $\B(0,R)$, the following result allows us to 
find a point nearby $M$ using a binary search, with best precision of 
order $\Omega(\tau^{1/d})$. Let us note that it does not explicitly rely on
any differential property of $M$, but only the behavior of the mass of
balls for $D$ (\Cref{lem:intrinsic_ball_mass}).
    \begin{theorem}
    \label{thm:routine_detection_raw_point}
        Let $D \in \distributions{n}{d}{\rch_{\min}}{f_{\min}}{f_{\max}}{L}$
        have support $M = \supp(D) \subseteq \B(0,R)$. Let 
        $\Lambda_0 \leq \rch_{\min}/8$ be fixed, and assume that
        $
            \frac{\Lambda_0}{\sqrt{\log(6R/\Lambda_0)}}
            \geq
            21 \rch_{\min} \sqrt{n}
            \left(\frac{\tau}{\omega_d f_{\min}\rch_{\min}^d}\right)^{1/d} 
        $.
        
        Then there exists a deterministic statistical query algorithm making
        at most $3 n \log(6R/\Lambda_0)$ queries to $\STAT(\tau)$, 
        and that outputs a point $\hat{x}_0^{raw} \in \B(0,R)$ such that 
        \[
            \dd(\hat{x}_0^{raw},M)
            \le 
            \Lambda_0
            .
        \]
    \end{theorem}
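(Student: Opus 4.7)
We design $\ABS$ as a coordinate-wise binary search over the ambient cube $[-R,R]^n$, which contains $\B(0,R) \supseteq M$. The algorithm maintains an axis-aligned box $B_t$ (with $B_0 = [-R,R]^n$), together with the invariant that $B_t$ contains a ball $\B(p_t,\rho_t)$ centered at a point $p_t \in M$. At step $t$, we bisect $B_t$ along coordinate $j_t := 1+(t \bmod n)$ into two slightly overlapping sub-boxes $B_t^L, B_t^R$ (say, each covering three fifths of the $j_t$-range, so any ball of radius at most a tenth of the current side length that is centered in $B_t$ lies entirely in at least one of them). We query $\indicator{B_t^L}$ and $\indicator{B_t^R}$, allowing also for one bookkeeping/tie-breaking query, and set $B_{t+1}$ to be the half whose oracle answer exceeds $\tau$. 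After $T := n\lceil \log_2(6R/\Lambda_0)\rceil$ rounds the box $B_T$ has diameter at most $\Lambda_0$, so we output $\hat{x}_0^{raw}$ as its center. This gives a total query count $\le 3n\log(6R/\Lambda_0)$.

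The correctness of the selection step hinges on a quantitative form of the invariant: at every $t\le T$, there exist $p_t \in M \cap B_t$ and $\rho_t \ge c\, \Lambda_0/\sqrt{n \log(6R/\Lambda_0)}$ with $\B(p_t,\rho_t)\subseteq B_t$, for some absolute constant $c>0$. Since $\rho_t \le \Lambda_0 \le \rch_{\min}/8$, \Cref{lem:intrinsic_ball_mass} applies and gives $D(B_t) \ge D(\B(p_t,\rho_t)) \ge a_d f_{\min} \rho_t^d$; because $\B(p_t,\rho_t)$ is entirely contained in $B_t^L$ or $B_t^R$ by our overlap choice, one of the two halves carries the full mass. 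Under the stated hypothesis $\Lambda_0/\sqrt{\log(6R/\Lambda_0)} \ge 21\,\rch_{\min}\sqrt{n}\,(\tau/(\omega_d f_{\min}\rch_{\min}^d))^{1/d}$, this mass is at least $2\tau$, so the $\STAT(\tau)$ oracle cannot hide it: the corresponding indicator query necessarily returns an answer $\ge \tau$, correctly identifying a half that still meets $M$, and thus preserves $B_{t+1} \cap M \ne \emptyset$.

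The main technical obstacle is propagating the invariant through a bisection while losing as little of $\rho_t$ as possible. If $p_t$ still lies sufficiently deep in the chosen half $B_{t+1}$, one may keep $p_{t+1}=p_t$ and lose only the portion of the ball truncated by the cut, which is bounded by the overlap margin. If $p_t$ lies close to the cut, one re-centers $p_{t+1}$ to a nearby point of $M$ deeper in $B_{t+1}$, with the tangential displacement controlled by the $\mathcal{C}^2$ structure of $M$ and the reach lower bound $\rch_{\min}$ (so the drift of the ball radius is only quadratic in the displacement, of order $\rho_t^2/\rch_{\min}$). Choosing the overlap margin of order $\Lambda_0/\sqrt{n\log(6R/\Lambda_0)}$ makes each per-step loss small enough that, summed over the $T$ bisections, $\rho_T$ remains above the threshold needed for the mass estimate; this is precisely the role of the $\sqrt{\log(6R/\Lambda_0)}$ factor in the hypothesis. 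Since $B_T$ has diameter $\le \Lambda_0$ and meets $M$, its center $\hat{x}_0^{raw}$ satisfies $\dd(\hat{x}_0^{raw},M)\le \Lambda_0$, concluding the proof.
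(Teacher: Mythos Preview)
Your high-level scheme (binary search over the ambient space, selection via indicator queries, mass lower bound from \Cref{lem:intrinsic_ball_mass}) is the right picture, but the invariant you try to propagate is too strong and the ``re-centering'' step has a genuine gap.

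The selection rule ``take the half whose answer exceeds $\tau$'' only certifies $D(B_{t+1})>0$, i.e.\ $M\cap B_{t+1}\neq\emptyset$; it does \emph{not} certify that $B_{t+1}$ contains a ball $\B(p_{t+1},\rho_{t+1})$ around a point of $M$ with $\rho_{t+1}$ bounded below. Concretely, when both halves return an answer $>\tau$ (which an adversarial oracle can arrange whenever both have any positive mass), you may pick the half in which $M$ only grazes the boundary. Your re-centering argument then asks to slide along $M$ from $p_t$ to a point deeper in $B_{t+1}$, but nothing forces such a direction to exist: if $T_{p_t}M$ is (nearly) contained in the cut hyperplane---which is generic since $d\le n-1$---moving along $M$ does not increase the distance to the cut at first order, and the quadratic reach correction $O(s^2/\rch_{\min})$ goes the wrong way for you. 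In that regime $\rho_{t+1}$ can collapse to $0$, the next mass lower bound fails, and the induction breaks. A third ``tie-breaking'' query cannot repair this: no single indicator query distinguishes ``$M$ has a deep point here'' from ``$M$ just touches here''.

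The paper sidesteps the issue by maintaining a \emph{weaker} invariant and compensating with an \emph{inflating} radius. It works over a fixed $(\Lambda_0/2)$-covering $\{x_i\}_{i\le N}$ of $\B(0,R)$ (so $N\le(6R/\Lambda_0)^n$), halves the index set $\mathcal I$ at each step, and keeps only the invariant $\min_{i\in\mathcal I}\dd(x_i,M)\le h$ for a dynamic $h$. The query at each step uses the enlarged radius $\sqrt{h^2+\Delta^2}$ with $\Delta\asymp\rch_{\min}(\tau/(\omega_d f_{\min}\rch_{\min}^d))^{1/d}$; by \Cref{lem:intrinsic_ball_mass} the witnessing ball then carries mass $\gtrsim f_{\min}\Delta^d>2\tau$ \emph{regardless of how shallow the witness is}, so the selection is always sound. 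After updating $h\leftarrow\sqrt{h^2+\Delta^2}$, the total growth over $\le\log_2 N$ steps is $h_{\text{final}}\le\sqrt{(\Lambda_0/2)^2+3n\log(6R/\Lambda_0)\,\Delta^2}\le\Lambda_0$, which is exactly where the hypothesis $\Lambda_0/\sqrt{\log(6R/\Lambda_0)}\ge 21\sqrt n\,\rch_{\min}(\tau/(\omega_d f_{\min}\rch_{\min}^d))^{1/d}$ enters. This radius-inflation trick is the missing idea in your argument. (As a minor aside, your count of $3$ queries per round over $n\lceil\log_2(6R/\Lambda_0)\rceil$ rounds also overshoots $3n\log(6R/\Lambda_0)$ by a constant factor; the paper uses one query per halving of $\mathcal I$.)
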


    \begin{remark}
        Recall from \Cref{subsubsec:implicit-bounds-on-parameters} that we always assume that $R \geq \rch_{\min}/\sqrt{2}$ to ensure that the model is nonempty. As a result $\log(6R/\Lambda_0) \geq 0$ for all $\Lambda_0 \leq \rch_{\min}/8$.
    \end{remark}

    \begin{proof}[\proofof \Cref{thm:routine_detection_raw_point}]
        The idea is to use a divide and conquer strategy over a covering $\set{x_i}_{1 \leq i \leq N}$ of $\B(0,R)$. The algorithm recurses over a subset of indices $\mathcal{I} \subseteq \set{1,\ldots,N}$ that is maintained to fulfill $\cup_{i \in \mathcal{I}} \B(x_i,h) \cap M \neq \emptyset$ for some known $h>0$.
        This property can be checked with the single query $r = \indicator{\cup_{i \in \mathcal{I}} \B(x_i,h)}$ to $\STAT(\tau)$, provided that $D(\cup_{i \in \mathcal{I}} \B(x_i,h)) > \tau$. To ensure the later, the radius $h>0$ is dynamically increased at each iteration. The algorithm stops when $\mathcal{I}$ is reduced to a singleton.
        More formally, we consider \ABS.
        \begin{algorithm}
            \begin{algorithmic}[1]
                \REQUIRE ~\\
                Model parameters $d,\rch_{\min},f_{\min}$ \\
                Precision $\Lambda_0 > 0$
                ~
                \STATE Initialize value $h \leftarrow \Lambda_0/2$, and set $\Delta = 6\rch_{\min} \left(\frac{\tau}{\omega_d f_{\min} \rch_{\min}^d}\right)^{1/d}$
                \STATE
                Consider a minimal $(\Lambda_0/2)$-covering $\set{x_i}_{1 \leq i \leq N}$ of $\B(0,R)$, where $N = \CV_{\B(0,R)}(\Lambda_0/2)$
                \STATE Initialize sets $\mathcal{I} \leftarrow \set{1,\ldots,N}$, $\mathcal{L} \leftarrow \emptyset$ and $\mathcal{R} \leftarrow \emptyset$
                \WHILE{$|\mathcal{I}| > 1$}
                    \STATE Split $\mathcal{I} = \mathcal{L} \cup \mathcal{R}$ into two disjoint sets $\mathcal{L} \cap \mathcal{R} = \emptyset$ such that $\left| |\mathcal{L}| - |\mathcal{R}| \right| \leq 1$
                    \STATE Query $r= \indicator{\cup_{i \in \mathcal{L}} \B\left(x_i,\sqrt{h^2+\Delta^2}\right)}$ to the $\STAT(\tau)$ oracle
                    \STATE $\answer$ $\leftarrow$ Value answered by the oracle
                    \IF{$\answer > \tau$}
                        \STATE $\mathcal{I} \leftarrow \mathcal{L}$
                    \ELSE
                        \STATE $\mathcal{I} \leftarrow \mathcal{R}$
                    \ENDIF
                    \\
                    $h \leftarrow \sqrt{h^2 + \Delta^2}$
                \ENDWHILE
                \RETURN The only element of $\hat{x}_0^{raw}$ of $\set{x_i}_{i \in \mathcal{I}}$
            \end{algorithmic} 
            \caption{\ABS}
            \label{alg:routine_detection_raw_point}
        \end{algorithm}
        
        Because $|\mathcal{I}|$ is a decreasing sequence of integers, it is clear that \ABS terminates, and that $|\mathcal{I}_{final}|=1$ so that the output $\hat{x}_0^{raw}$ is well defined. 
        As each \emph{while} loop does only one query to $\STAT(\tau)$, and that $N = \CV_{\B(0,R)}(\Lambda_0/2) \leq (6R/\Lambda_0)^n$ from \Cref{prop:packing_covering_ball_sphere} and $\Lambda_0 \leq R$, it makes at most $\floor{\log_2(N) +1 } \leq \floor{ n \log(6R/\Lambda_0)/\log(2) +1} \leq 3 n \log(6R/\Lambda_0)$ queries in total.
        
        Let us now prove that the output $\hat{x}_0^{raw}$ satisfies $\dd(\hat{x}_0^{raw},M) \leq \Lambda_0$.
         For this, we show that when running \ABS, the inequality $\min_{i \in \mathcal{I}} \dd(x_i,M) \leq h$ is maintained (recall that both $\mathcal{I}$ and $h$ are dynamic), or equivalently that $\cup_{i \in \mathcal{I}} \B\left(x_i,h\right) \cap M \neq \emptyset$.
         At initialization, this is clear because $\mathcal{I} = \set{1,\ldots,N}$, $h= \Lambda_0/2$, and $\set{x_i}_{1 \leq i \leq N}$ is a $(\Lambda_0/2)$-covering of $ \B(0,R) \supseteq M$.
         Then, proceeding by induction, assume that $\cup_{i \in \mathcal{I}} \B\left(x_i,h\right) \cap M \neq \emptyset$ when entering an iteration of the \emph{while} loop. Let $i_0 \in \mathcal{I}$ be such that $\dd(x_{i_0},M) \leq h$.
         From \Cref{lem:intrinsic_ball_mass}, provided that $\sqrt{h^2+\Delta^2} \leq \rch_{\min}/8$, we have
            \begin{align}
                D\left(
                    \cup_{i \in \mathcal{I}}
                    \B\left(x_i,\sqrt{h^2+\Delta^2}\right)
                \right)
                &\geq
                D\left( \B\left(x_{i_0},\sqrt{h^2+\Delta^2}\right) \right)
                \notag
                \\
                &\geq
                (\sqrt{7/24})^d \omega_d f_{\min} \left( (h^2+\Delta^2) - \dd(x_{i_0},M)^2 \right)^{d/2}
                \notag
                \\
                &\geq
                (\sqrt{7/24})^d \omega_d f_{\min} \Delta^{d}
                \notag
                \\
                &=
                (\sqrt{7/24})^d 6^d \tau
                \notag
                \\
                &>
                2 \tau
                .
                \label{eq:mass_thm:routine_detection_raw_point}
            \end{align}
            Hence, if we let $\answer$ denote the answer of the oracle to the query 
            $r= \indicator{\cup_{i \in \mathcal{L}} \B\left(x_i,\sqrt{h^2+\Delta^2}\right)}$,
            we have:
            \begin{itemize}[leftmargin=*]
                \item 
                If $\answer > \tau$, then 
                \begin{align*}
                D\left(
                    \cup_{i \in \mathcal{L}}
                    \B\left(x_i,\sqrt{h^2+\Delta^2}\right)
                \right)
                \geq 
                \answer - \tau 
                >
                0
                ,
                \end{align*}
                so that after the updates $\mathcal{I} \leftarrow \mathcal{L}$ and $h \leftarrow \sqrt{h^2 + \Delta^2}$, we still have 
                $
                \cup_{i \in \mathcal{I}}
                \B\left(x_i,h\right) \cap M \neq \emptyset
                .
                $
                \item 
                Otherwise $a \leq \tau$, so that from \Cref{eq:mass_thm:routine_detection_raw_point},
                \begin{align*}
                D\left(
                    \cup_{i \in \mathcal{R}}
                    \B\left(x_i,\sqrt{h^2+\Delta^2}\right)
                \right)
                &\geq
                D\left(
                    \cup_{i \in \mathcal{I}}
                    \B\left(x_i,\sqrt{h^2+\Delta^2}\right)
                \right)
                -
                D\left(
                    \cup_{i \in \mathcal{L}}
                    \B\left(x_i,\sqrt{h^2+\Delta^2}\right)
                \right)
                \\
                &>
                2\tau - (\answer+\tau)
                \\
                &\geq
                0
                .
                \end{align*}
                So as above, after the updates $\mathcal{I} \leftarrow \mathcal{R}$ and $h \leftarrow \sqrt{h^2 + \Delta^2}$, we still have 
                $
                \cup_{i \in \mathcal{I}}
                \B\left(x_i,h\right) \cap M \neq \emptyset
                .
                $
            \end{itemize}
            Consequently, when the algorithm terminates, we have 
            \begin{align*}
                \dd(\hat{x}_0^{raw},M)
                &\leq
                h_{final}
                \\
                &\leq
                \sqrt{
                    \left( \frac{\Lambda_0}{2} \right)^2
                    +
                    3n \log(6R/\Lambda_0)
                    \Delta^2
                }
                \\
                &\leq
                \frac{\Lambda_0}{2}
                +
                \sqrt{3n \log(6R/\Lambda_0) }
                6 \rch_{\min} \left(\frac{\tau}{\omega_d f_{\min}\rch_{\min}^d}\right)^{1/d}
                \\
                &\leq
                \Lambda_0
                ,
            \end{align*}
            since 
            $
            \frac{\Lambda_0}{\sqrt{\log(6R/\Lambda_0)}}
            \geq
            21 \rch_{\min} \sqrt{n}
            \left(\frac{\tau}{\omega_d f_{\min}\rch_{\min}^d}\right)^{1/d} 
            $.
            The above also shows that when running the algorithm we have $\sqrt{h^2+\Delta^2} \leq h_{final} \leq \Lambda_0 \leq \rch_{\min}/8$, which ensures that \Cref{eq:mass_thm:routine_detection_raw_point} is valid throughout and concludes the proof.
    \end{proof}
    
\subsection{Refined Point Detection}
    \label{subsec:seed-point-with-SQs}
    
    We finally prove the main announced statement of \Cref{sec:SQ-seed-appendix}.
    
\begin{proof}[\proofof \Cref{thm:routine_detection_seed_point}]
    The idea is to first detect a possibly coarse base point $\hat{x}_0^{raw}$ using a divide and conquer strategy in the ambient space (\Cref{thm:routine_detection_raw_point}), and then refine it by considering iterated projections of $\hat{x}_0^{raw}$ given by the local conditional mean (\Cref{thm:routine_projection}).
    More precisely, let $\hat{x}_0^{raw}$ be the output of the point detection SQ algorithm of \Cref{thm:routine_detection_raw_point} applied with parameter
    \[
        \Lambda_0
        =
        \max
        \set{
        \eta
        ,
        \min\set{\frac{1}{16} ,\frac{\Gamma}{2C^d} } \rch_{\min}
        }
        ,
    \]
    where $C^d,\Gamma>0$ are the constants of \Cref{thm:routine_projection}.
    From the assumptions on the parameters (recall also that we necessarily have $R \geq \rch_{\min}/\sqrt{2}$, see \Cref{subsubsec:implicit-bounds-on-parameters}), we have $\Lambda_0 \leq \rch_{\min}/8$ and 
    \[
        \frac{\Lambda_0}{\sqrt{\log(6R/\Lambda_0)}}
        \geq
        21 \rch_{\min} \sqrt{n}
        \left(\frac{\tau}{\omega_d f_{\min}\rch_{\min}^d}\right)^{1/d}
        ,
    \]
    so that \Cref{thm:routine_detection_raw_point} applies and guarantees that $\hat{x}_0^{raw}$ can be obtained with at most $3 n \log(6R/\Lambda_0)$ queries to $\STAT(\tau)$ and satisfies $\dd(\hat{x}_0^{raw},M) \leq \Lambda_0$.
    
    If $\Lambda = \eta$ --- condition which can be checked by the learner since the parameters $\eta, \Gamma, d$ and $\rch_{\min}$ are assumed to be known ---, then $\hat{x}_0 := \hat{x}_0^{raw}$ clearly satisfies $\dd(\hat{x}_0,M) = \dd(\hat{x}_0^{raw},M) \leq \eta$, and has required at most $3 n \log(6R/\Lambda_0) = 3 n \log(6R/\eta)$ queries to $\STAT(\tau)$.
    Otherwise, $\eta \leq \Lambda_0$, and we iterate the SQ approximate projections $\hat{\pi}(\cdot)$ given by \Cref{thm:routine_projection}.
    Namely, we let $\hat{y}_0 = \hat{x}_0^{raw}$ and for all integer $k\geq 1$, $\hat{y}_k = \hat{\pi}(\hat{y}_{k-1})$.
    In total, note that the computation of $\hat{y}_k$ requires at most $3 n \log(6R/\eta) + k(2n+1) \leq 3n\bigl(\log(6R/\eta)+k\bigr)$ queries to $\STAT(\tau)$. Similarly as above, from the assumptions on the parameters, one easily shows by induction that since $\dd(\hat{y}_0,M) \leq \Lambda_0 \leq \frac{\rch_{\min}}{16}$, \Cref{thm:routine_projection} applies to each $\hat{y}_k$ and guarantees that
    \begin{align*}
        \dd(\hat{y}_k,M)
        &=
        \dd(\hat{\pi}(\hat{y}_{k-1}),M)
        \\
        &\leq
        \norm{\hat{\pi}(\hat{y}_{k-1}) - \pi_M(\hat{y}_{k-1}))}
        \\
        &\leq
            \max\set{
                \frac{C^d\dd(\hat{y}_{k-1},M)^2}{\Gamma \rch_{\min}}
            ,
                C^d\Gamma^{\frac{2}{d+1}-1}
                \rch_{\min}
                \left(\frac{\tau}{\omega_d f_{\min} \rch_{\min}^d} \right)^{\frac{2}{d+1}}
            }
            \\
            &\leq
            \max\set{
                \frac{\dd(\hat{y}_{k-1},M)}{2}
            ,
                C^d\Gamma^{\frac{2}{d+1}-1}
                \rch_{\min}
                \left(\frac{\tau}{\omega_d f_{\min} \rch_{\min}^d} \right)^{\frac{2}{d+1}}
            }
            \\
            &\leq
            \max\set{
                \frac{\Lambda_0}{2^k}
            ,
                C^d\Gamma^{\frac{2}{d+1}-1}
                \rch_{\min}
                \left(\frac{\tau}{\omega_d f_{\min} \rch_{\min}^d} \right)^{\frac{2}{d+1}}
            }
            .
    \end{align*}
    To conclude, fix $k_0 := \ceil{\log_2\left(\Lambda_0/\eta\right)} \leq
    \log\left(6 \Lambda_0/\eta\right)$, and set $\hat{x}_0 := \hat{y}_{k_0}$.
    From the previous bound, we obtain that
    \begin{align*}
        \dd(\hat{x}_0,M)
        &\leq
        \max\set{
            \eta
        ,
            C^d\Gamma^{\frac{2}{d+1}-1}
            \rch_{\min}
            \left(\frac{\tau}{\omega_d f_{\min} \rch_{\min}^d} \right)^{\frac{2}{d+1}}
        }
        ,
    \end{align*}
    with $\hat{x}_0$ requiring at most $3n\bigl(\log(6R/\eta)+\log\left(6 \Lambda_0/\eta\right)\bigr) \leq 6 n \log(6R/\eta)$ queries to $\STAT(\tau)$ to be computed, which concludes the proof.
\end{proof}

    \section{Proof for the Main Statistical Query Manifold Estimators}
    \label{sec:SQ-upper-bounds}
    This section is devoted to the proof of the two SQ manifold estimation upper bounds: the first one in the fixed point model $\distributionspoint{n}{d}{\rch_{\min}}{f_{\min}}{f_{\max}}{L}{0}$ (\Cref{thm:SQ_upper_bound_point}), and the second one for the bounding ball model $\distributionsball{n}{d}{\rch_{\min}}{f_{\min}}{f_{\max}}{L}{R}$ (\Cref{thm:SQ_upper_bound_ball}).

\begin{proof}[\proofof \Cref{thm:SQ_upper_bound_point}]
Let us write 
$$
\Delta := \rch_{\min} \max\set{\sqrt{ \varepsilon/(\rch_{\min}\bar{C}_d)}, \mathbf{C}^\frac{3}{2}\bigl( \tau/(\omega_d f_{\min} \rch_{\min}^d) \bigr)^{\frac{1}{d+1}}}
,
$$
for some large enough $\bar{C}_d>0$ depending on $d$ and $\mathbf{C}$ to be chosen later, and $\delta = \Delta/2$. 
We will run \MPA with scale parameters $\Delta$, $\delta$, angle $\sin \alpha = 1/64$, and initialization point $\hat{x}_0 = 0 \in M$, the SQ projection routine $\hat{\pi}(\cdot)$ of \Cref{thm:routine_projection} and the SQ tangent space routine $\hat{T}(\cdot)$ of \Cref{thm:routine_tangent}. 
If we prove that these routines are precise enough, then \Cref{thm:MPA_properties} will assert that the output point cloud $\mathcal{O}$ and associated tangent space estimates $\mathbb{T}_\mathcal{O}$ of \MPA fulfill the assumptions of \Cref{thm:manifold_reconstruction_deterministic}. This will hence allow to reconstruct $M$ with a good triangulation, as claimed.

Note by now that at each iteration \MPA, exactly one call to each SQ routine $\hat{\pi}(\cdot)$ and $\hat{T}(\cdot)$ are made, yielding at most $(2n+1) + Cdn \log^6(n) \leq C'd n \log^6(n)$ statistical queries.
But if \Cref{thm:MPA_properties} applies, we get that the number of iteration $N_{\mathrm{loop}}$ of \MPA satisfies
\begin{align*}
    N_{\mathrm{loop}}
    &\leq
    \dfrac{\Haus^d(M)}{\omega_d (\delta/32)^d}
    \\
    &\leq
    \frac{\bar{C}'_d}{f_{\min}(\sqrt{\rch_{\min} \varepsilon})^d}
    \\
    &=
    \frac{\bar{C}'_d}{f_{\min}\rch_{\min}^d}
    \left( \frac{\rch_{\min}}{\varepsilon} \right)^{d/2}
    ,
\end{align*}
where the second inequality comes from the fact that $1 = \int_{M} f \dd \Haus^d \geq f_{\min} \Haus^d(M)$.
In total, the resulting SQ algorithm hence makes at most
\begin{align*}
    q
    &\leq
    \bigl(C'd n \log^6(n)\bigr)
    \frac{\bar{C}'_d}{f_{\min}\rch_{\min}^d}
    \left( \frac{\rch_{\min}}{\varepsilon} \right)^{d/2}
    \\
    &=    
    n \log^6 n \frac{C_d}{f_{\min}\rch_{\min}^d}
     \left( \frac{\rch_{\min}}{\varepsilon} \right)^{d/2}
\end{align*}
queries to $\STAT(\tau)$, which is the announced complexity.
It only remains to verify that the SQ routines $\hat{\pi}(\cdot)$ and $\hat{T}(\cdot)$ are indeed precise enough so that \Cref{thm:MPA_properties} applies, and to bound the final precision given by the triangulation of \Cref{thm:manifold_reconstruction_deterministic}.

To this aim, we notice that the assumption made on $\tau$ puts it in the regime of validity of \Cref{thm:routine_projection} and \Cref{thm:routine_tangent}.
Let us write
\begin{align*}
    \mathbf{C}
    :=
    \max\set{
    C^d \Gamma^{\frac{2}{d+1}-1}
    ,
    \Tilde{C}^d \frac{f_{\max}}{f_{\min}}
    }
    \leq
    \frac{(\max\{C, \Tilde{C}\})^d}{\Gamma}
    ,
\end{align*}
where $C>0$ is the constant of \Cref{thm:routine_projection} and $\Tilde{C}>1$ that of \Cref{thm:routine_tangent}.
Note by now that since $f_{\max} \geq f_{\min}$, we have $\mathbf{C} \geq 1$.
For short, we also let $\ttau := \tau/(\omega_d f_{\min} \rch_{\min}^d)$.
    
    At initialization, and since $D \in \distributionspoint{n}{d}{\rch_{\min}}{f_{\min}}{f_{\max}}{L}{0}$, the seed point $\hat{x}_0 = 0$ belongs to $M$, meaning that
    \begin{align*}
    \dd(\hat{x}_0,M)
    =
    0 
    \leq
    \eta
    :=
    \rch_{\min} 
    \max \set{
    \frac{1}{\mathbf{C}^2}
    \left(\frac{\Delta}{\rch_{\min}}\right)^2
    ,
    \mathbf{C} \ttau^{\frac{2}{d+1}}
    }
    .
    \end{align*}
    Note that from the assumptions on the parameters, $\eta \leq \rch_{\min}/(64d)$. Hence, on the $\eta$-offset $M^\eta$ of $M$, \Cref{thm:routine_tangent} asserts that $\hat{T}(\cdot)$ has precision
    \begin{align*}
    \sin \theta
    &\leq
    \max \set{
        \Tilde{C}^d \frac{f_{\max}}{f_{\min}} 
        \frac{1}{\mathbf{C}}
        \frac{\Delta}{\rch_{\min}}
    ,
         \Tilde{C}^d \frac{f_{\max}}{f_{\min}}
         \sqrt{\mathbf{C}} \ttau^{\frac{1}{d+1}}
    }
    \\
    &\leq
    \max \set{
        \frac{\Delta}{\rch_{\min}}
    ,
         \mathbf{C}^{\frac{3}{2}} \ttau^{\frac{1}{d+1}}
    }
    \end{align*}
    for estimating tangent spaces.
    As a result, we have
    \begin{align*}
        \frac{5\Delta^2}{8\rch_{\min}}
        +
        \eta
        +
        \Delta \sin \theta
        &\leq
        3 \rch_{\min}
        \max\set{
            \left(\frac{\Delta}{\rch_{\min}}\right)^2
            ,
            \frac{\eta}{\rch_{\min}}
            ,
            \frac{\Delta}{\rch_{\min}}
            \sin \theta 
        }
        \\
        &\leq
        3 \rch_{\min}
        \max\set{
            \left(\frac{\Delta}{\rch_{\min}}\right)^2
            ,
            \frac{\eta}{\rch_{\min}}
            ,
            \sin^2 \theta 
        }
        \\
        &\leq
        3\rch_{\min}
        \max\set{
            \left(\frac{\Delta}{\rch_{\min}}\right)^2
            ,
            \mathbf{C}^3 \ttau^{\frac{2}{d+1}}
        }
        \\
        &:=
        \Lambda
        .
    \end{align*}
    Using again the assumptions on the parameters, we have $\Lambda \leq \rch_{\min}/8$.
    Hence, applying \Cref{thm:routine_projection} and elementary simplifications given by the assumptions on the parameters yield that, over the $\Lambda$-offset $M^\Lambda$ of $M$, the projection $\hat{\pi}(\cdot)$ has precision at most
    \begin{align*}
        \eta'
        &\leq
        \rch_{\min}
        \max\left\{
            \left(
                \frac{9 C^d}{\Gamma}
                \left(\frac{\Delta}{\rch_{\min}}\right)^2
            \right)
            \left(\frac{\Delta}{\rch_{\min}}\right)^2
            ,
            \right.
        \\
        &\hspace{8em}
        \left.
            \max\set{
                \frac{9 C^d \mathbf{C}^6}{\Gamma} \ttau^{2/(d+1)}
                ,
                C^d \Gamma^{\frac{2}{d+1}-1}
            }
            \ttau^{2/(d+1)}
        \right\}
        \\
        &=
        \rch_{\min}
        \max\set{
            \left(
                \frac{9 C^d}{\Gamma}
                \left(\frac{\Delta}{\rch_{\min}}\right)^2
            \right)
            \left(\frac{\Delta}{\rch_{\min}}\right)^2
            ,
                C^d \Gamma^{\frac{2}{d+1}-1}
            \ttau^{2/(d+1)}
        }
        \\
        &\leq
        \rch_{\min}
        \max\set{
            \frac{1}{\mathbf{C}^2}
            \left(\frac{\Delta}{\rch_{\min}}\right)^2
            ,
            \mathbf{C}
            \ttau^{2/(d+1)}
        }
        \\
        &=
        \eta
        .
    \end{align*}
    Additionally, one easily checks that $\Delta \leq \rch_{\min}/24$, $\eta \leq \Delta/24$ and $\max\set{\sin \alpha, \sin \theta} \leq 1/64$, so that \Cref{thm:MPA_properties} applies:
    \MPA terminates and outputs a finite point cloud $\mathcal{O}$ such that
    $\max_{x \in \mathcal{O}} \dd(x,M) \le \eta$
    and 
    $\max_{p \in M} \dd(p, \mathcal{O}) \le \Delta + \eta \leq 2\Delta$,  together with tangent space estimates $\mathbb{T}_\mathcal{O}$ with error at most $\sin \theta$.
    Hence, applying \Cref{thm:manifold_reconstruction_deterministic} with parameters $\Delta' = 2\Delta$, $\eta$ and $\sin \theta$ (for which one easily checks that they fulfill its assumptions), we get that the triangulation $\hat{M}$ of \Cref{thm:manifold_reconstruction_deterministic} computed over $\mathcal{O}$ and $\mathbb{T}_\mathcal{O}$ achieves precision
    \begin{align*}
        \dHaus(M,\hat{M})
        &\leq
        \frac{C_d \Delta'^2}{\rch_{\min}}
        \leq
        \max\set{
        \varepsilon
        ,
        \mathbf{C}^3
        \rch_{\min}
        \ttau^\frac{2}{d+1}
        },
    \end{align*}
    which yields the announced result since $\mathbf{C} \leq (C \vee \tilde{C})^d/\Gamma$.
\end{proof}

\begin{proof}[\proofof \Cref{thm:SQ_upper_bound_ball}]
The proof follows the same lines as that of \Cref{thm:SQ_upper_bound_point}, except for the seed point $\hat{x}_0$ which is not trivially available, and requires extra statistical queries.
More precisely, we let $\hat{x}_0$ be the output point given by the SQ detection algorithm of \Cref{thm:routine_detection_seed_point} applied with precision parameter $\varepsilon/2$.
This point requires no more than $ 6 n \log(6R/\varepsilon)$ statistical queries to $\STAT(\tau)$. Furthermore, adopting the same notation as in the proof of \Cref{thm:SQ_upper_bound_point} we have
\begin{align*}
    \dd(\hat{x}_0,M)
    &\leq
    \max\set{
        \frac{\varepsilon}{2}
        ,
        C^d\Gamma^{\frac{2}{d+1}-1}
        \rch_{\min}
        \left(\frac{\tau}{\omega_d f_{\min} \rch_{\min}^d} \right)^{\frac{2}{d+1}}
    }
    \\
    &\leq
    \rch_{\min} 
    \max \set{
        \frac{1}{\mathbf{C}^2}
        \left(\frac{\Delta}{\rch_{\min}}\right)^2
        ,
        \mathbf{C} \ttau^{\frac{2}{d+1}}
    },
\end{align*}
so that the rest of the proof runs exactly as that of \Cref{thm:SQ_upper_bound_point}, and yields the result.
\end{proof}

    \section{Statistical Query Lower Bounds in Metric Spaces}
    \label{sec:lower-bounds}
    In spirit, the lower bound techniques developed below are similar to the \emph{statistical dimension}
of~\cite{Feldman17b}, developed for general search problems.
However, when working with manifold models, this tool appears difficult
to handle, due to the singular nature of low-dimensional distributions, yielding non-dominated models.
Indeed, if $D_0$ and $D_1$ are distributions that have supports being 
$d$-dimensional submanifolds $M_0,M_1 \subseteq \R^n$, and that 
$M_0 \neq M_1$, then $D_0$ and $D_1$ cannot be absolutely continuous with respect to one another.
As a result, any lower bound technique involving Kullback-Leibler or 
chi-squared divergences becomes non-informative 
(see for instance \cite{Feldman17b,DKS17}).

Instead, we present techniques that are well-suited for non-dominated models. They apply for SQ estimation in metric spaces $(\metric,\dist)$ (see~\Cref{subsec:SQ-general-setting}), as opposed to the (more general)
setting of \emph{search problems} of~\cite{Feldman17b}.
We decompose these results into two different types of lower bounds:
\begin{itemize}[leftmargin=*]
    \item
        (\Cref{subsec:lower-bound-general-informational})
        The \emph{information-theoretic} ones, yielding a maximal estimation
        precision $\varepsilon = \varepsilon(\tau)$ given a tolerance $\tau$;
    \item 
        (\Cref{subsec:lower-bound-general-computational})
        The \emph{computational} ones, yielding a minimal number of queries 
        $q = q(\varepsilon)$ to achieve a given precision $\varepsilon$.    
\end{itemize}

\subsection{Information-Theoretic Lower Bound for Randomized SQ Algorithms}
\label{subsec:lower-bound-general-informational}
    
The proofs of the informational lower bounds 
\Cref{thm:SQ_lower_bound_point_informational,thm:SQ_lower_bound_ball_informational}
are based on the following \Cref{thm:lecam_SQ}, which is similar to so-called 
\emph{Le Cam's Lemma}~\cite{Yu97}. To introduce this result we define the 
\emph{total variation distance} between probability distributions.
\begin{definition}[Total Variation Distance]
    \label{def:tv}
    Given two probability distributions $D_0$ and $D_1$ over 
    $(\R^n, \mathcal{B}(\R^n))$, the \emph{total variation distance} between them
    is defined by
    \begin{align*}
        \TV(D_0,D_1)
        &=
        \sup_{B \in \mathcal{B}(\R^n)}
        \bigl| D_0(B) - D_1(B) \bigr|
        \\
        &=
        \sup_{\substack{r : \R^n \to [-1,1] \\ \text{measurable}}}
        \frac{1}{2}
        \bigl| \E_{D_0}[r] - \E_{D_1}[r] \bigr|
        .
    \end{align*}
\end{definition}
The second formula above for the total variation suggests how it can measure an
impossibility of estimation with $\STAT(\tau)$ oracles: two distributions that
are close in total variation distance provide a malicious oracle to make them 
--- and their parameter of interest --- indistinguishable using SQ's, .
This lower bound insight is what underlies Le Cam's Lemma~\cite{Yu97} in the sample
model, and it adapts easily to (randomized) SQ's in the following way.

\begin{theorem}[Le Cam's Lemma for Statistical Queries]
\label{thm:lecam_SQ}
    Consider a model $\model$ and a parameter of interest
    $\functional : \model \to \metric$ in the metric space $(\metric,\dist)$.
    Assume that there exist \emph{hypotheses} $D_0, D_1 \in \model$, such that 
    \[
    \TV(D_0, D_1) \le \tau/2
    \text{~and~}
    \delta < \dist\bigl(\functional(D_0), \functional(D_1)\bigr)/2
    .
    \]
    If $\alpha < 1 / 2$, then no $\STAT(\tau)$ randomized SQ algorithm can estimate
    $\functional$ with precision $\varepsilon \le \delta$ and probability of success
    $1 - \alpha$ over $\model$ (no matter how many queries it does).
\end{theorem}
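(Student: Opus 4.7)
The plan is to exhibit a single adversarial $\STAT(\tau)$ oracle that is simultaneously valid for both hypotheses $D_0$ and $D_1$, so that the algorithm cannot tell them apart, and then derive a contradiction via the triangle inequality in $(\metric,\dist)$.

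First, I will use the assumption on the total variation distance to define a \emph{midpoint oracle}. By the variational formula in \Cref{def:tv}, any measurable $r : \R^n \to [-1,1]$ satisfies $|\E_{D_0}[r] - \E_{D_1}[r]| \le 2\TV(D_0,D_1) \le \tau$. Therefore, the choice $\answer(r) := \tfrac{1}{2}(\E_{D_0}[r] + \E_{D_1}[r])$ lies within $\tau/2 \le \tau$ of each $\E_{D_i}[r]$, making it a valid answer for a $\STAT(\tau)$ oracle for both $D_0$ and $D_1$. I will then extend this construction to the interactive setting of \Cref{def:deterministic-SQ}: given a deterministic SQ algorithm $\Algorithm{A} = (r_1,\dots,r_q,\hat{\theta})$, define the adversarial oracle $\oracle^\star = (\answer_1^\star,\dots,\answer_q^\star)$ inductively by $\answer_k^\star := \tfrac{1}{2}\bigl(\E_{D_0}[r_k(\answer_1^\star,\dots,\answer_{k-1}^\star)] + \E_{D_1}[r_k(\answer_1^\star,\dots,\answer_{k-1}^\star)]\bigr)$. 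The same bound shows that $\oracle^\star$ is a valid $\STAT(\tau)$ oracle for both $D_0$ and $D_1$ simultaneously, and by construction it produces the \emph{same} sequence of answers in both cases.

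Next, I will conclude for deterministic algorithms and then lift to randomized ones. Because the interaction trace is identical, the output $\hat{\theta}(r_1,\dots,r_q;\oracle^\star)$ depends only on $\Algorithm{A}$, not on which of $D_0,D_1$ is the true distribution. Now assume for contradiction that a randomized SQ algorithm $\RandmizedAlgorithm{A}$ achieves precision $\varepsilon \le \delta$ with probability $\ge 1 - \alpha$ on every $D \in \model$ and every valid oracle. Then for each $i \in \{0,1\}$, running $\RandmizedAlgorithm{A}$ against $\oracle^\star$ (valid for $D_i$) yields an output $\hat{\theta}$ such that, in a single probability space driven by the internal randomness of $\RandmizedAlgorithm{A}$,
\[
\Pr\bigl[\dist(\functional(D_i), \hat{\theta}) \le \delta\bigr] \ge 1 - \alpha,\qquad i \in \{0,1\}.
\]
By the triangle inequality and the assumption $\dist(\functional(D_0),\functional(D_1)) > 2\delta$, the two events $A_i = \{\dist(\functional(D_i),\hat{\theta}) \le \delta\}$ are disjoint, hence $\Pr[A_0]+\Pr[A_1] \le 1$. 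Combining with the two probability bounds yields $2(1-\alpha) \le 1$, i.e. $\alpha \ge 1/2$, contradicting the hypothesis $\alpha < 1/2$.

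The only point requiring care — more a modeling subtlety than a genuine obstacle — is the interactive and randomized nature of the framework. One must check that the midpoint construction extends inductively across adaptive queries and that the coupling argument in the last step is carried out in the single probability space induced by the algorithm's internal randomness, so that the two high-probability events live on the same sample space and can be compared via disjointness. Once the oracle is defined to be blind to the identity of the true distribution, this is automatic.
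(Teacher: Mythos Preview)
Your proof is correct and follows essentially the same approach as the paper: construct a single oracle whose answers are valid for both $D_0$ and $D_1$ and identical regardless of which is the true distribution, then use disjointness of the events $\{\dist(\functional(D_i),\hat\theta)\le\varepsilon\}$ to force $2(1-\alpha)\le 1$. The only cosmetic difference is the choice of oracle --- you take the midpoint $\tfrac12(\E_{D_0}[r]+\E_{D_1}[r])$, while the paper simply answers $\E_{D_0}[r]$ even when the true distribution is $D_1$ --- but both exploit $|\E_{D_0}[r]-\E_{D_1}[r]|\le\tau$ in the same way.
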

\begin{proof}[\proofof \Cref{thm:lecam_SQ}]
    We prove the contrapositive. For this purpose, assume that a randomized SQ
    algorithm $\Algorithm{A} \sim \RandmizedAlgorithm{A}$ estimates $\theta$ with 
    precision $\varepsilon \le \delta$ and probability at least $1-\alpha$ over 
    $\model$. We will show that $\alpha \ge 1 / 2$.
        
    Consider the oracle which, given a query $r:\R^n \to [-1,1]$ to the distribution 
    $D \in \model$, returns the answer:
    \begin{itemize}
        \item $\answer = \E_{D_0}[r]$ if $D = D_1$;
        \item $\answer = \E_D[r]$ if $D \in \model \setminus \set{D_1}$.
    \end{itemize}
    As for all query $r : \R^n \to [-1, 1]$, 
    $|\E_{D_0}[r] - \E_{D_1}[r]| \le 2 \TV(D_0,D_1) \le \tau$, it is a valid 
    $\STAT(\tau)$ oracle. 
    Furthermore, notice that the answers of this oracle
    are the same for $D = D_0$ and $D = D_1$. Writing 
    $\Algorithm{A} = (r_1, \dots, r_q, \hat{\theta}) \sim \RandmizedAlgorithm{A}$,
    we denote these answers by $\answer_1, \dots, \answer_q$. 
    The $\answer_i$'s are random variables, with randomness driven by the randomness of 
    $\Algorithm{A} \sim \RandmizedAlgorithm{A}$. 
    For $i \in \set{0, 1}$, let us consider the event 
    \[
        B_i = 
        \set{ 
            \rho\left(
                \functional(D_i), \hat{\theta}(\answer_1, \dots, \answer_q)
            \right) \le \varepsilon
        }
        .
    \] 
    The fact that $\RandmizedAlgorithm{A}$ estimates $\functional$ 
    with precision $\varepsilon$ and probability at least $1-\alpha$ over $\model$
    translates into 
    $\Pr_{\Algorithm{A} \sim \RandmizedAlgorithm{A}}(B_i) \ge 1 - \alpha$, for 
    $i\in \set{0, 1}$. But since 
    $\varepsilon \le \delta < \rho(\functional(D_0), \functional(D_1)) / 2$, 
    the events $B_0$ and $B_1$ are disjoint (i.e. $B_0 \cap B_1 = \emptyset$). As a result,
    \begin{align*}
        1 
        \ge 
        \Pr_{\Algorithm{A} \sim \RandmizedAlgorithm{A}}\left(B_0 \cup B_1\right)
        =
        \Pr_{\Algorithm{A} \sim \RandmizedAlgorithm{A}}(B_0) + 
            \Pr_{\Algorithm{A} \sim \RandmizedAlgorithm{A}}(B_1)
        \ge
        2(1 - \alpha),
     \end{align*}
    which yields $\alpha \ge 1/2$ and concludes the proof.
\end{proof}
    
\subsection{Computational Lower Bound}
\label{subsec:lower-bound-general-computational}
This section is dedicated to prove the following 
\Cref{theorem:lower_bound_computational_manifold}, that provides a computational lower
bound for support estimation in Hausdorff distance. It involves the generalized notion
of metric packing, which is defined right below.

\begin{theorem}
\label{theorem:lower_bound_computational_manifold}
    Given a model $\model$ over $\R^n$, any randomized SQ algorithm estimating 
    $M = \supp(D) \subseteq \R^n$ with precision $\varepsilon$ for the Hausdorff distance, 
    and with probability of success at least $1 - \alpha$, must make at least 
    \[
        q
        \geq 
        \frac{
            \log \bigl( (1-\alpha) \PK_{(\parameterspace, \dHaus)}(\varepsilon) \bigr)
        }{
            \log (1+\floor{1/\tau})
        }
    \]
    queries to $\STAT(\tau)$, where $\parameterspace = \set{\supp(D), D \in \model}$.
\end{theorem}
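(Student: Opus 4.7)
The plan is to convert the adversarial tolerance into a codomain bound for the algorithm: since the oracle can answer any value within $\tau$ of the true mean, it may in particular round every answer to a fixed finite grid of values, after which a deterministic SQ algorithm has only finitely many possible output trajectories. Comparing this finite set against a metric $\varepsilon$-packing of $\parameterspace$ will yield the claimed bound.

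First, I would fix the \emph{rounding} adversarial strategy: to every query $r$ on distribution $D$ the oracle answers $\answer(r) = g(\E_{x \sim D}[r(x)])$, where $g : [-1,1] \to \mathcal{G}$ maps each real number to the closest element of a fixed grid $\mathcal{G} \subseteq [-1,1]$ of cardinality $N_\tau := 1+\floor{1/\tau}$ chosen so that $|g(m) - m| \leq \tau$ for all $m \in [-1,1]$. This defines a valid $\STAT(\tau)$ oracle, and critically, its answer set $\mathcal{G}$ is finite and independent of $D$. Consequently, for any deterministic SQ algorithm $\Algorithm{A} = (r_1,\dots,r_q,\hat\theta)$ making $q$ queries against this oracle, the tuple of answers lies in $\mathcal{G}^q$, so the set of possible outputs of $\Algorithm{A}$ as $D$ ranges over $\model$ has cardinality at most $N_\tau^{\,q}$.

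Second, I would extract a maximal $\varepsilon$-packing $\set{M_1,\dots,M_P} \subseteq \parameterspace$ of cardinality $P = \PK_{(\parameterspace,\dHaus)}(\varepsilon)$, so that $\dHaus(M_i,M_j) > 2\varepsilon$ for $i \neq j$, and pick $D_i \in \model$ with $\supp(D_i)=M_i$ for each $i$. Separation of the packing implies that the closed Hausdorff balls $\bar{\B}_i := \set{K \subseteq \R^n : \dHaus(K,M_i) \leq \varepsilon}$ are pairwise disjoint. Now for any realization $\Algorithm{A} \sim \RandmizedAlgorithm{A}$, running $\Algorithm{A}$ against the rounding oracle on each of the $P$ distributions $D_i$ produces a family of outputs $\set{\hat\theta(r_1,\dots,r_q;\oracle_{D_i})}_{1 \leq i \leq P}$ of total support size at most $N_\tau^{\,q}$, and each such output lies in at most one $\bar{\B}_i$. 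Therefore, writing $S_i(\Algorithm{A}) = \indicator{\{\text{$\Algorithm{A}$ succeeds on } D_i\}}$, we have $\sum_{i=1}^P S_i(\Algorithm{A}) \leq N_\tau^{\,q}$ deterministically.

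Taking expectations over $\Algorithm{A} \sim \RandmizedAlgorithm{A}$ and using Fubini together with the $1-\alpha$ success guarantee applied to each hypothesis $D_i$,
\[
(1-\alpha)P \leq \sum_{i=1}^P \Pr_{\Algorithm{A} \sim \RandmizedAlgorithm{A}}\bigl[S_i(\Algorithm{A})=1\bigr] = \E_{\Algorithm{A}}\!\left[\sum_{i=1}^P S_i(\Algorithm{A})\right] \leq N_\tau^{\,q},
\]
and taking logarithms yields $q \geq \log\bigl((1-\alpha)P\bigr)/\log(1+\floor{1/\tau})$, as required. The only delicate points are the construction of the grid $\mathcal{G}$ of size exactly $N_\tau$ with rounding error at most $\tau$ (an elementary one-dimensional calculation) and the careful bookkeeping that the output of a deterministic algorithm depends solely on the answer sequence; the remainder is a standard packing/output-counting argument, so I do not expect a real obstacle here.
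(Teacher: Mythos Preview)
Your argument is correct and rests on the same two ingredients as the paper's: a rounding oracle that collapses the answer space to $1+\floor{1/\tau}$ values, and a packing of $(\parameterspace,\dHaus)$. The difference is in how the randomization is handled. The paper routes the argument through an auxiliary notion of \emph{probabilistic $(\varepsilon,\alpha)$-covering number} $\CV_{(\metric,\dist)}(\varepsilon,\alpha)$ (Definition~\ref{def:epsilon-alpha-covering}): it shows that the rounding oracle turns $\RandmizedAlgorithm{A}$ into such a covering by $N_\tau^{\,q}$ points, and then separately proves (Theorems~\ref{theorem:approximate-cv-lower-bound} and~\ref{thm:prob_covering_VS_packing}) that $\CV_{(\metric,\dist)}(\varepsilon,\alpha)\geq (1-\alpha)\PK_{(\metric,\dist)}(\varepsilon)$ via a Fubini argument against the uniform measure on a packing. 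You carry out essentially the same Fubini computation directly on the success indicators $S_i(\Algorithm{A})$, without naming the intermediate object. Your route is shorter and more elementary; the paper's abstraction buys a reusable lemma (the probabilistic-covering lower bound) that could in principle be applied with measures $\nu$ other than the uniform one on a packing, though this extra generality is not exploited here.
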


Similarly to \Cref{subsec:lower-bound-general-informational}, we put 
\Cref{theorem:lower_bound_computational_manifold} in the broader context of SQ estimation 
in metric spaces (see \Cref{subsec:SQ-general-setting}), and state the more general 
\Cref{theorem:lower_bound_computational} below. To this aims, and similarly to the Euclidean
case (\Cref{def:packing_covering}), let us recall the definitions of metric packings and
coverings. We let $(\metric,\dist)$ be a metric space, $\mathcal{M} \subseteq \metric$ a subset
of $\Theta$, and a radius $\varepsilon>0$.
\begin{itemize}[leftmargin=*]
    \item 
        An \emph{$\varepsilon$-covering} of $\mathcal{M}$ is a subset 
        $\set{\theta_1, \dots, \theta_k } \subseteq \mathcal{M}$ such that for all 
        $\theta \in \mathcal{M}$, we have
        $\min_{1 \leq i \leq k}\dist(\theta,\theta_i) \leq \varepsilon$. 
        The covering number $\CV_{(\mathcal{M},\dist)}(\varepsilon)$ of $\mathcal{M}$ at scale
        $\varepsilon$ is the smallest cardinality $k$ of such an $\varepsilon$-covering.
    \item 
        An \emph{$\varepsilon$-packing} of $\mathcal{M}$ is a subset 
        $\left\lbrace \theta_1, \dots,\theta_k \right\rbrace \subseteq \mathcal{M}$ such that for 
        all $1 \leq i < j \leq k$, 
        $\Bmetric(\theta_i,\varepsilon) \cap \Bmetric(\theta_j,\varepsilon) = \emptyset$
        (or equivalently $\dist(\theta_i,\theta_j) > 2\varepsilon$), where 
        $\Bmetric(\theta,\varepsilon) = 
            \set{\theta' \in \metric, \dist(\theta,\theta')\leq \varepsilon}$ is the closed 
        ball in $(\metric,\dist)$. The covering number $\PK_{(\mathcal{M},\dist)}(\varepsilon)$
        of $\mathcal{M}$ at scale $\varepsilon$ is the largest cardinality $k$ of such an 
        $\varepsilon$-packing.
\end{itemize}

\begin{theorem}
\label{theorem:lower_bound_computational}
    Given a model $\model$ and a parameter of interest $\functional : \model \to \metric$ in the
    metric space $(\metric,\dist)$, any randomized SQ algorithm estimating $\functional(D)$ over 
    $\model$ with precision $\varepsilon$ and probability of success at least $1 - \alpha$, must
    make at least 
    \[
        q
        \geq 
        \frac{\log \bigl( (1-\alpha) \PK_{(\functional(\model), \dist)}(\varepsilon) \bigr)}{\log (1+\floor{1/\tau})}
    \]
    queries to $\STAT(\tau)$, where $\functional(\model) = \set{\functional(D), D \in \model}$.
\end{theorem}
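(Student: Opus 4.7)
The plan is to exploit the fact that an adversary can answer each query from a very small fixed set, drastically limiting the number of distinct transcripts --- and hence outputs --- a deterministic algorithm can produce, and then to combine this with an $\varepsilon$-packing argument.

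First, I would construct an adversarial oracle whose answers, for any query $r \in \mathfrak{F}$, lie in a universal finite set $\mathcal{A} \subseteq [-1,1]$ of cardinality at most $N := 1 + \floor{1/\tau}$, while still being a valid $\STAT(\tau)$ oracle. Concretely, one covers $[-1,1]$ by $N$ closed intervals of half-width $\tau$ and defines $\answer(r)$ to be the center of an interval containing $\E_{x \sim D}[r(x)]$; this choice is always within $\tau$ of the true mean, hence valid, and by construction takes at most $N$ values. Applied coordinate-wise to the adaptive answer maps $\answer_i(r_1,\ldots,r_i)$ of \Cref{def:deterministic-SQ}, this produces a $\STAT(\tau)$ oracle $\oracle^\star_D$ for each $D \in \model$.

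Next, I would bound the output cardinality of deterministic algorithms interacting with such rounding oracles. For any $\Algorithm{A} = (r_1,\ldots,r_q,\hat{\theta})$, as we vary $D \in \model$ the transcript $(\answer_1(r_1),\ldots,\answer_q(r_1,\ldots,r_q))$ lies in $\mathcal{A}^q$, so the set $S_{\Algorithm{A}} := \{\hat{\theta}(r_1,\ldots,r_q;\oracle^\star_D) : D \in \model\}$ has cardinality at most $N^q$. Now fix a maximal $\varepsilon$-packing $\{\theta_1,\ldots,\theta_k\} \subseteq \functional(\model)$ with $k = \PK_{(\functional(\model),\dist)}(\varepsilon)$, and pick $D_i \in \model$ with $\functional(D_i) = \theta_i$. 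Writing $Y_i(\Algorithm{A}) := \hat{\theta}(r_1,\ldots,r_q;\oracle^\star_{D_i})$, the disjointness of the balls $\Bmetric(\theta_i,\varepsilon)$ forces the ``successful'' indices $i$ (those with $\dist(Y_i(\Algorithm{A}),\theta_i) \leq \varepsilon$) to yield pairwise distinct elements of $S_{\Algorithm{A}}$, so their number is at most $N^q$.

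Finally, for the randomized extension, let $\RandmizedAlgorithm{A}$ be a randomized SQ algorithm with precision $\varepsilon$ and failure probability at most $\alpha$ uniformly on $\model$, and let $\Algorithm{A} \sim \RandmizedAlgorithm{A}$. By assumption and Fubini,
\begin{align*}
    k(1-\alpha)
    &\leq
    \sum_{i=1}^k \Pr_{\Algorithm{A} \sim \RandmizedAlgorithm{A}}\bigl[\dist(Y_i(\Algorithm{A}),\theta_i) \leq \varepsilon \bigr]
    \\
    &=
    \E_{\Algorithm{A} \sim \RandmizedAlgorithm{A}}\Bigl[ \#\bigl\{ i : \dist(Y_i(\Algorithm{A}),\theta_i) \leq \varepsilon \bigr\} \Bigr]
    \leq
    N^q,
\end{align*}
where the last inequality is the deterministic bound applied pointwise. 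Taking logarithms and rearranging yields the announced $q \geq \log((1-\alpha) \PK_{(\functional(\model),\dist)}(\varepsilon))/\log(1+\floor{1/\tau})$.

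The one delicate point is matching the exact constant $N = 1 + \floor{1/\tau}$: a careless rounding of $[-1,1]$ to multiples of $\tau$ would give roughly $2/\tau$ values rather than $1/\tau$. The factor-of-two improvement comes from covering $[-1,1]$ by intervals of full length $2\tau$ (so of half-width $\tau$, matching the tolerance), which needs only $\lceil 1/\tau \rceil \leq 1 + \floor{1/\tau}$ pieces; everything else in the argument is bookkeeping. This is really the only step where anything nontrivial happens --- the packing/union bound combinatorics is standard once the rounded oracle is in place.
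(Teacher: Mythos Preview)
Your proof is correct and follows the same core idea as the paper's --- construct a rounding oracle with at most $1+\floor{1/\tau}$ possible answers, bound the number of distinct outputs by $(1+\floor{1/\tau})^q$, and conclude via a packing argument --- but your route is more elementary. The paper introduces an intermediate abstraction, the \emph{probabilistic $(\varepsilon,\alpha)$-covering number} $\CV_{(\metric,\dist)}(\varepsilon,\alpha)$, proves a separate lemma (\Cref{thm:prob_covering_VS_packing}) that $\CV_{(\metric,\dist)}(\varepsilon,\alpha) \geq (1-\alpha)\PK_{(\metric,\dist)}(\varepsilon)$ via a Fubini argument against the uniform measure on a packing, and then shows that the pushforward of $\RandmizedAlgorithm{A}$ through the output map is such a probabilistic covering. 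You instead fix the packing $\{\theta_1,\ldots,\theta_k\}$ up front, observe that successful indices must map to distinct outputs (hence at most $N^q$ of them), and average directly over $\Algorithm{A}\sim\RandmizedAlgorithm{A}$. This collapses the paper's two auxiliary theorems into a single three-line Fubini computation. The paper's abstraction is potentially reusable (e.g.\ if one wanted bounds from measures $\nu$ other than uniform on a packing, as in \Cref{theorem:approximate-cv-lower-bound}), but for the statement at hand your direct argument is cleaner and loses nothing.
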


\begin{proof}[\proofof \Cref{theorem:lower_bound_computational_manifold}]
Apply \Cref{theorem:lower_bound_computational} with parameter of interest $\theta(D) = \supp(D)$ and distance $\dist = \dHaus$.
\end{proof}

\subsubsection{Probabilistic Covering and Packing Number}

To prove \Cref{theorem:lower_bound_computational}, we will use the following notion of probabilistic covering.
Given a set $S$ and an integer $k \geq 0$, we denote by $\binom{S}{\le k}$ the set of all subsets of $S$ of cardinality at
most $k$.

\begin{definition}
\label{def:epsilon-alpha-covering}
    Let $(\metric, \dist)$ be a metric space. We say that a probabilistic 
    measure $\mu$ over $\binom{\metric}{\le d}$ is a probabilistic 
    $(\varepsilon, \alpha)$-covering of $(\metric, \dist)$ by $d$ points if for
    all $\theta \in \metric$,
    \[
        \Pr_{\mathbf{p} \sim \mu}\left[
            \theta \in \bigcup_{q \in \mathbf{p}} \Bmetric(q,\varepsilon)
        \right] \ge 1 - \alpha.
    \]
    We denote by $\CV_{(\metric, \dist)}(\varepsilon, \alpha)$ the minimal 
    $d$ such that there is a probabilistic $(\varepsilon, \alpha)$-covering
    of $(\metric, \dist)$ with $d$ points.
\end{definition}
This clearly generalizes (deterministic) coverings, since 
$\CV_{(\metric, \dist)}(\varepsilon, \alpha = 0)$ coincides with the standard
covering number $\CV_{(\metric, \dist)}(\varepsilon)$. 
However, this quantity might be involved to compute since it involves randomness.
Before proving \Cref{theorem:lower_bound_computational}, let us show how to 
lower bound $\CV_{(\metric, \dist)}(\varepsilon, \alpha)$ in practice.
\begin{theorem}
\label{theorem:approximate-cv-lower-bound}
    Let $(\metric, \dist)$ be a metric space. Assume that there is a 
    probability measure $\nu$ on $\metric$ such that 
    for all $q_1, \dots, q_\ell \in \metric$,
    \begin{equation*}
    \label{equation:pseudo-packing-measure}
        \nu\left(
            \bigcup_{i = 1}^\ell \Bmetric(q_i,\varepsilon)
        \right) < 1 - \alpha.
    \end{equation*}
    Then  $\CV_{(\metric, \dist)}(\varepsilon, \alpha) > \ell$.
\end{theorem}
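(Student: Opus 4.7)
The plan is to argue by contrapositive: assume that there exists a probabilistic $(\varepsilon,\alpha)$-covering $\mu$ of $(\metric,\dist)$ by $\ell$ points, and derive a contradiction with the hypothesis on $\nu$. The core of the argument is a Fubini-type swap between integration against $\nu$ and randomness over $\mathbf{p} \sim \mu$.

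More precisely, by \Cref{def:epsilon-alpha-covering}, for every $\theta \in \metric$,
\[
    \Pr_{\mathbf{p} \sim \mu}\!\left[\theta \in \bigcup_{q \in \mathbf{p}} \Bmetric(q,\varepsilon)\right] \ge 1-\alpha.
\]
Integrating this inequality with respect to $\nu$ over $\theta \in \metric$, and then swapping the order of integration via Fubini's theorem (which requires joint measurability of $(\theta,\mathbf{p}) \mapsto \indicator{\theta \in \bigcup_{q \in \mathbf{p}} \Bmetric(q,\varepsilon)}$, granted by equipping $\binom{\metric}{\le \ell}$ with a suitable product $\sigma$-algebra and using that closed balls are Borel), I would obtain
\[
    1-\alpha
    \le
    \int_{\metric} \Pr_{\mathbf{p} \sim \mu}\!\left[\theta \in \bigcup_{q \in \mathbf{p}} \Bmetric(q,\varepsilon)\right] \dd\nu(\theta)
    =
    \E_{\mathbf{p} \sim \mu}\!\left[\nu\!\left(\bigcup_{q \in \mathbf{p}} \Bmetric(q,\varepsilon)\right)\right].
\]

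Now, for any realization $\mathbf{p} \sim \mu$ with $|\mathbf{p}| \le \ell$, one may list its elements with repetition as $q_1,\ldots,q_\ell \in \metric$ (padding if needed) so that $\bigcup_{q \in \mathbf{p}} \Bmetric(q,\varepsilon) = \bigcup_{i=1}^\ell \Bmetric(q_i,\varepsilon)$. The hypothesis then yields $\nu(\bigcup_{q \in \mathbf{p}} \Bmetric(q,\varepsilon)) < 1-\alpha$ almost surely under $\mu$. Writing $X = \nu(\bigcup_{q \in \mathbf{p}} \Bmetric(q,\varepsilon))$, we have $X < 1-\alpha$ $\mu$-a.s., so $1-\alpha - X > 0$ $\mu$-a.s., hence $\E_{\mathbf{p}\sim\mu}[X] < 1-\alpha$, contradicting the displayed bound.

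The main obstacle, modest in nature, is the measurability needed to justify the Fubini exchange: one must ensure that $\mathbf{p} \mapsto \nu(\bigcup_{q \in \mathbf{p}} \Bmetric(q,\varepsilon))$ is $\mu$-measurable. This can be handled by identifying $\binom{\metric}{\le \ell}$ with (a quotient of) $\metric^\ell$ endowed with the product Borel $\sigma$-algebra, under which the map $(q_1,\ldots,q_\ell,\theta)\mapsto \indicator{\min_i \dist(\theta,q_i)\le \varepsilon}$ is Borel because $\dist$ is continuous. The only other subtle point is to argue that a strict inequality almost surely passes to the expectation, which follows from the elementary fact that a nonnegative random variable that is almost surely positive has strictly positive expectation.
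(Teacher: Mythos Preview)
Your argument is correct and follows essentially the same approach as the paper: both proofs hinge on the Fubini swap between $\nu$-integration over $\theta$ and $\mu$-expectation over $\mathbf{p}$, together with the observation that a strictly positive random variable on a probability space has strictly positive expectation. The only cosmetic difference is that the paper integrates the hypothesis $\nu(\cup_i \Bmetric(q_i,\varepsilon)) < 1-\alpha$ against an arbitrary $\mu$ and then extracts a single $\theta_\mu$ at which the covering condition fails, whereas you run the contrapositive by integrating the covering inequality against $\nu$; the two are logically equivalent rearrangements of the same computation.
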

\begin{proof}[\proofof \Cref{theorem:approximate-cv-lower-bound}]
    Take any probability measure $\mu$ over $\binom{\metric}{\le \ell}$, 
    and consider the map $f(\mathbf{p}, \theta) = 
    \indicator{\cup_{q \in \mathbf{p}}\Bmetric(q,\varepsilon)}(\theta)$ 
    for all $\mathbf{p} \in \binom{\metric}{\le \ell}$ and $\theta \in \metric$.
    By assumption, for all fixed $\mathbf{p} \in \binom{\metric}{\le \ell}$,
    \[
        1 - \alpha 
        >
        \nu
        \left(
            \bigcup_{q \in \mathbf{p}}\Bmetric(q,\varepsilon)
        \right)
        = \int_{\metric}
        f(\mathbf{p}, \theta)
        \nu(d\theta);
    \]
    hence, by integration with respect to $\mu(d \mathbf{p})$ and 
    Fubini--Tonelli,
    \begin{align*}
        1 - \alpha
        &>
        \int_{\binom{\metric}{\le \ell}} \left( \int_{\metric}
        f(\mathbf{p}, \theta)
        \nu(d\theta) \right)
        \mu(d \mathbf{p})
        =
        \int_{\metric} \left(\int_{\binom{\metric}{\le \ell}}
        f(\mathbf{p}, \theta)
        \mu(d \mathbf{p}) \right)
        \nu(d\theta).
    \end{align*}
    As $\nu$ is a probability distribution, this yields the existence of a fixed 
    $\theta = \theta_\mu \in \metric$ such that 
    \[ 
        1 - \alpha
        >
        \int_{\binom{\metric}{\le \ell}} f(\mathbf{p}, \theta) \mu(d\mathbf{p}) 
        =
        \Pr_{\mathbf{p} \sim \mu}\left[
        \theta \in \bigcup_{q \in \mathbf{p}} \Bmetric(q,\varepsilon)
        \right] 
        .
    \]
    In other words, we have shown that no probability distribution $\mu$ over 
    $\binom{\metric}{\le \ell}$ can be an $(\varepsilon, \alpha)$-covering of 
    $(\metric, \rho)$ (\Cref{def:epsilon-alpha-covering}). Hence, 
    $\CV_{(\metric, \dist)}(\varepsilon, \alpha) > \ell$.
\end{proof}
As a byproduct of \Cref{theorem:approximate-cv-lower-bound}, we can now show that probabilistic coverings are closely related to the usual notions of metric covering and packing numbers. 
\begin{theorem}
\label{thm:prob_covering_VS_packing}
    Let $(\metric, \dist)$ be a metric space, and $\alpha < 1$. 
    Then, 
    \[
        \CV_{(\metric, \dist)}(\varepsilon) 
        \ge
        \CV_{(\metric, \dist)}(\varepsilon, \alpha) 
        \ge
        (1-\alpha) \PK_{(\metric, \dist)}(\varepsilon)
        .
    \]
\end{theorem}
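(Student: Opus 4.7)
The statement splits into two inequalities, one essentially trivial and one that is the heart of the matter.

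For the left inequality $\CV_{(\metric,\dist)}(\varepsilon) \ge \CV_{(\metric,\dist)}(\varepsilon,\alpha)$, I will simply observe that any deterministic $\varepsilon$-covering $\{q_1,\ldots,q_k\}$ of $(\metric,\dist)$ yields a probabilistic $(\varepsilon,\alpha)$-covering of the same size by taking $\mu$ to be the Dirac mass at $\{q_1,\ldots,q_k\} \in \binom{\metric}{\le k}$: for every $\theta \in \metric$ the required probability equals $1 \ge 1-\alpha$. Taking $k = \CV_{(\metric,\dist)}(\varepsilon)$ gives the bound.

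For the right inequality $\CV_{(\metric,\dist)}(\varepsilon,\alpha) \ge (1-\alpha) \PK_{(\metric,\dist)}(\varepsilon)$, the plan is to apply \Cref{theorem:approximate-cv-lower-bound} to a uniform measure supported on a maximal $\varepsilon$-packing. Set $P := \PK_{(\metric,\dist)}(\varepsilon)$, fix a maximal $\varepsilon$-packing $\{\theta_1,\ldots,\theta_P\}$, and let $\nu$ be the uniform probability measure on this finite set. The key combinatorial observation is that each ball $\Bmetric(q,\varepsilon)$ contains at most one element of the packing: if $\theta_i,\theta_j \in \Bmetric(q,\varepsilon)$ with $i \neq j$, then $q \in \Bmetric(\theta_i,\varepsilon) \cap \Bmetric(\theta_j,\varepsilon)$, contradicting the packing property. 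Consequently, for any $q_1,\ldots,q_\ell \in \metric$,
\[
\nu\!\left(\bigcup_{i=1}^{\ell}\Bmetric(q_i,\varepsilon)\right)
\;\le\; \frac{\ell}{P}.
\]

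To conclude, I will argue by contradiction: suppose $\CV_{(\metric,\dist)}(\varepsilon,\alpha) = \ell < (1-\alpha)P$. Then $\ell/P < 1-\alpha$, so the measure $\nu$ above satisfies the hypothesis of \Cref{theorem:approximate-cv-lower-bound} at level $\ell$, which forces $\CV_{(\metric,\dist)}(\varepsilon,\alpha) > \ell$, a contradiction. Hence $\CV_{(\metric,\dist)}(\varepsilon,\alpha) \ge (1-\alpha)P$, finishing the proof. The only subtlety is keeping strict versus non-strict inequalities straight when $(1-\alpha)P$ is an integer, which is handled cleanly by the contradiction formulation above; no delicate estimate is needed.
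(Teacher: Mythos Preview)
Your proposal is correct and follows essentially the same approach as the paper: the left inequality by embedding a deterministic covering as a degenerate probabilistic one, and the right inequality by applying \Cref{theorem:approximate-cv-lower-bound} to the uniform measure on an $\varepsilon$-packing, using that each $\varepsilon$-ball contains at most one packing point. The only cosmetic differences are that the paper handles the case $\PK_{(\metric,\dist)}(\varepsilon)=\infty$ in a preliminary sentence and phrases the final step directly via $\ell=\lceil(1-\alpha)P\rceil-1$ rather than by contradiction.
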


\begin{proof}[\proofof \Cref{thm:prob_covering_VS_packing}]
    If any of the three terms is infinite, then all the terms involved clearly are infinite, so that the announced bounds hold.
    Otherwise, any given  $\varepsilon$-covering of $(\metric,\dist)$ is also a $(\varepsilon, \alpha)$-covering (where we identify a finite set to the uniform measure on it), which gives the left-hand bound.
    For the right-hand bound, write $k = \PK_{(\metric, \dist)}(\varepsilon) < \infty$, and let $\{\theta_1, \dots, \theta_k\}$ be an
    $\varepsilon$-packing  of $(\metric, \dist)$. 
    That is, for all $i \neq j$, $\rho(\theta_i, \theta_j) > 2\varepsilon$.
    
    Take $\nu$ to be the uniform probability distribution over this packing, that is set
    $\nu(S) = |\{\theta_1, \dots, \theta_k\} \cap S|/k$ for all $S \subseteq \metric$.
    Note that since $\{\theta_1, \dots, \theta_k\}$ is an $\varepsilon$-packing, we have 
    $\nu\bigl(\Bmetric(\theta,\varepsilon)\bigr) \le 1 / k$ for all $\theta \in \metric$, 
    and as a result,
    \[
        \nu\left(
            \bigcup_{i = 1}^\ell \Bmetric(\theta_i,\varepsilon)
        \right) \le \frac{\ell}{k}
    \]
    for all $\theta_1, \dots, \theta_\ell \in \metric$.
        
    Taking $\ell = \ceil{(1 - \alpha) k} - 1$, \Cref{theorem:approximate-cv-lower-bound}
    implies that $\CV_{(\metric, \dist)}(\varepsilon, \alpha) > \ceil{(1 - \alpha) k} - 1$, and hence 
    \[
        \CV_{(\metric, \dist)}(\varepsilon, \alpha) \ge 
        \ceil{(1 - \alpha) k} 
        \ge 
        (1-\alpha)k 
        = 
        (1 - \alpha) \PK_{(\metric, \dist)}(\varepsilon)
        .
    \]
\end{proof}

\subsubsection{Proof of the Computational Lower Bounds for Randomized SQ Algorithms}
    
We are now in position to prove the lower bounds on (randomized) SQ algorithms in general metric spaces.
\begin{proof}[\proofof \Cref{theorem:lower_bound_computational}]
    For all $i \in \set{0, \dots, \ceil{1 / \tau}}$, write 
    $L_i = \min\set{-1 + (2i + 1) \tau, 1}$. The $L_i$'s form a $\tau$-cover
    of $[-1,1]$, meaning that for all $t \in [-1,1]$, there is a least one 
    $0 \le i \le \floor{1 / \tau}$ with $|L_i - t| \le \tau$.
    Hence we can define $f : [-1, 1] \to [-1, 1]$ by $f(t) = L_{i_0}$, 
    where $L_{i_0}$ is smallest $L_i$ such that $|L_{i}-t| \le \tau$.
    Note that $f$ takes only $\floor{1 / \tau} + 1$ different values, and that
    $|f(t) - t| \le \tau$ for all $t \in [-1,1]$.
        
    Let us now consider the oracle $\oracle$ which, given a query
    $r : \R^n \to [-1, 1]$ to the distribution $D$, returns the answer 
    $\answer_D(r) = f(\E_D [r])$.
    Roughly speaking, the oracle discretizes the 
    segment $[-1, 1]$ into $\floor{1 /\tau}+1$ points and returns the projection
    of the correct mean value $\E_D [r]$ onto this discretization. Clearly, 
    $\oracle$ is a valid $\STAT(\tau)$ oracle since $|f(t) - t| \le \tau$ for all $t \in [-1,1]$.
    
    Let $\RandmizedAlgorithm{A}$ be a randomized SQ algorithm estimating $\functional$ over $\model$, and 
    $\Algorithm{A} = (r_1, \dots,r_q,\hat{\theta}) \sim \RandmizedAlgorithm{A}$.
    Let us write $d = (\floor{1 /\tau}+1)^q$, 
    and consider the random subset of $\metric$ given by
    \[
        C(\Algorithm{A}) = \set{
            \hat{\theta}
            \biggl(
                \answer_D(r_1),
                \dots,
                \answer_D(r_q)
            \biggr)
        }_{D \in \model}
        .
    \]
    Note that by construction of the oracle $\oracle$, $C(\Algorithm{A}) \in \binom{\model}{\le d}$.
    Let us consider the probability distribution $\mu$ over $\binom{\model}{\le d}$ such that the measure of a set $S$ is equal to 
    $\Pr_{\Algorithm{A} \sim \RandmizedAlgorithm{A}}[C(\Algorithm{A}) \in S].$
        
    It is clear that if a deterministic algorithm $\Algorithm{A}_0$ estimates $\theta(D)$ with precision $\varepsilon$ using the oracle $\oracle$, then $\theta(D) \in \cup_{t \in C(\Algorithm{A}_0)} \Bmetric(t,\varepsilon)$.
    As $\RandmizedAlgorithm{A}$ estimates $\functional$ with precision $\varepsilon$ and probability at least $1 - \alpha$ over $\model$, this means that $\mu$ is a probabilistic $(\varepsilon, \alpha)$-covering of 
    $\functional(\model)$ with $(\floor{1 / \tau} + 1)^q$ points (\Cref{def:epsilon-alpha-covering}). As a result, by definition of $\CV_{(\theta(\model), \dist)}(\varepsilon, \alpha)$, we have
    $(\floor{1 / \tau}+1)^q \ge \CV_{(\theta(\model), \dist)}(\varepsilon, \alpha)$. Finally, from \Cref{thm:prob_covering_VS_packing} we have 
    $\CV_{(\theta(\model), \dist)}(\varepsilon, \alpha) \geq (1-\alpha)\PK_{(\theta(\model), \dist)}(\varepsilon)$, which gives the announced result.
\end{proof}

    \section{Lower Bounds for Manifold Models}
    \label{sec:manifold-lower-bound-constructions}
    \subsection{Diffeomorphisms and Geometric Model Stability }

The following result will allow us to build different elements of 
$\manifolds{n}{d}{\rch_{\min}}$ in a simple way, by considering 
diffeomorphic smooth perturbations of a base manifold $M_0$.
Here and below, $I_n$ is the identity map of $\R^n$. Given a regular map $\Phi : \R^n \rightarrow \R^n$, $d_x\Phi$ and $d_x^2 \Phi$ stand for its first and second order differentials at $x \in \R^n$.

\begin{proposition}
\label{prop:diffeomorphism_stability}
    Let $M_0 \in  \manifolds{n}{d}{2 \rch_{\min}}$ and 
    $\Phi : \R^n \rightarrow \R^n$ be a proper $\mathcal{C}^2$ map, i.e. 
    $\lim_{\norm{x} \to \infty} \norm{\Phi(x)} = \infty$.
    If $\sup_{x \in \R^n} \normop{I_n - d_x \Phi} \le 1/(10d)$ and 
    $\sup_{x \in \R^n} \normop{d^2_x \Phi} \le 1/\left(4\rch_{\min}\right)$, 
    then $\Phi$ is a global diffeomorphism, and 
    $
        \Phi(M_0) \in \manifolds{n}{d}{\rch_{\min}}
    $.
    Furthermore, $1/2 \le \Haus^d(\Phi(M_0))/\Haus^d(M_0) \le 2$.
\end{proposition}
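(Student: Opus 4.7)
My plan is to prove the three conclusions in turn, leveraging the factor-$2$ slack in the hypothesis $\rch_{M_0} \geq 2\rch_{\min}$ together with the smallness of both $\normop{I_n - d_x\Phi}$ and $\normop{d_x^2\Phi}$.

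For the \textbf{global diffeomorphism} claim, the hypothesis $\normop{I_n - d_x\Phi} \leq 1/(10d) < 1$ makes every $d_x\Phi$ invertible as a Neumann perturbation of the identity, so $\Phi$ is a local $\mathcal{C}^2$-diffeomorphism. For injectivity, the fundamental theorem of calculus applied to distinct $x, y \in \R^n$ gives $\Phi(y) - \Phi(x) = (y - x) + \int_0^1 (d_{x + t(y-x)}\Phi - I_n)(y - x)\, dt$, whose norm is at least $(1 - 1/(10d))\norm{y - x} > 0$. Surjectivity then follows from a clopen argument on $\Phi(\R^n)$: open (local diffeomorphism), closed (by properness, since a convergent $\Phi(x_n) \to y$ forces $\{x_n\}$ bounded and thus clustering), and nonempty.

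For the \textbf{reach bound}, $\Phi(M_0)$ is automatically a compact connected boundaryless $\mathcal{C}^2$ $d$-submanifold with $T_{\Phi(p_0)} \Phi(M_0) = d_{p_0}\Phi(T_{p_0} M_0)$. I would verify it via the Federer identity from \Cref{def:reach}. Fix $p_0 \neq q_0 \in M_0$, set $p = \Phi(p_0)$, $q = \Phi(q_0)$, $w = q_0 - p_0$, $\Lambda = \norm{w}$, and decompose $w = u + v$ along $T_{p_0} M_0 \oplus T_{p_0} M_0^\perp$. Taylor's formula with integral remainder yields $q - p = d_{p_0}\Phi(w) + R$ with $\norm{R} \leq \Lambda^2 / (8\rch_{\min})$, the mean value estimate from the previous paragraph gives $\norm{q - p} \geq (1 - 1/(10d))\Lambda$, and $\rch_{M_0} \geq 2\rch_{\min}$ forces $\norm{v} \leq \Lambda^2/(4\rch_{\min})$. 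Since $d_{p_0}\Phi(u) \in T_p\Phi(M_0)$,
\[
\dd\bigl(q - p, T_p\Phi(M_0)\bigr)
\leq
\norm{d_{p_0}\Phi(v)} + \norm{R}
\leq
\bigl(1 + \tfrac{1}{10d}\bigr) \tfrac{\Lambda^2}{4\rch_{\min}} + \tfrac{\Lambda^2}{8\rch_{\min}},
\]
and for $d \geq 1$ a short arithmetic yields the Federer ratio $\norm{q-p}^2/\bigl(2\dd(q-p, T_p\Phi(M_0))\bigr) \geq \tfrac{81}{80}\rch_{\min} \geq \rch_{\min}$ (the trivial bound $\dd \leq \norm{q-p}$ handles large $\Lambda$).

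For the \textbf{volume equivalence}, the area formula applied to $\Phi|_{M_0}$ gives $\Haus^d(\Phi(M_0)) = \int_{M_0} J_\Phi(x)\, d\Haus^d(x)$, where $J_\Phi(x) = \sqrt{\det\bigl((d_x\Phi|_{T_x M_0})^\ast (d_x\Phi|_{T_x M_0})\bigr)}$ is the tangential $d$-Jacobian. All singular values of $d_x\Phi|_{T_x M_0}$ lie in $[1 - 1/(10d),\, 1 + 1/(10d)]$, so $J_\Phi(x) \in [(1-1/(10d))^d, (1+1/(10d))^d] \subseteq [1/2, 2]$ for every $d \geq 1$ (in fact both bounds are essentially $e^{\pm 1/10}$). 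The nontrivial step is the reach bound: its Federer ratio is tightest in the infinitesimal regime $\Lambda \to 0$, converging to exactly $\tfrac{81}{80}\rch_{\min}$ at $d = 1$, so the three numerical slacks in the hypothesis (the factor $2$ in $\rch_{M_0} \geq 2\rch_{\min}$, the size $1/(10d)$ of $\normop{I_n - d_x\Phi}$, and $1/(4\rch_{\min})$ on $\normop{d_x^2\Phi}$) are calibrated precisely to leave a margin above $\rch_{\min}$; weakening any one would collapse the bound.
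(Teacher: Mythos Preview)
Your proof is correct but proceeds differently from the paper's. The paper dispatches all three claims by citation: the global diffeomorphism follows from the Hadamard--Cacciopoli theorem (properness plus local invertibility), the reach bound from Federer's reach-stability result \cite[Theorem~4.19]{Federer59} applied with the Lipschitz constants of $\Phi$, $\Phi^{-1}$, and $d\Phi$, and the volume comparison from a Lipschitz Hausdorff-measure lemma. You instead argue everything from first principles: injectivity by a mean-value estimate and surjectivity by a clopen argument; the reach by directly bounding the Federer ratio $\norm{q-p}^2/\bigl(2\,\dd(q-p,T_p\Phi(M_0))\bigr)$ via a Taylor expansion of $\Phi$ combined with the tangent/normal decomposition on $M_0$; and the volume by the area formula with singular-value bounds on $d_x\Phi|_{T_xM_0}$. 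Your route is more self-contained and makes the numerical slack transparent (indeed your infimum $81/80\,\rch_{\min}$ shows exactly how tight the constants are), while the paper's route is shorter and situates the result within existing reach-stability machinery. One small remark: your parenthetical about ``large $\Lambda$'' is unnecessary, since your Federer-ratio bound is $\Lambda$-independent and holds uniformly.
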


\begin{proof}[\proofof \Cref{prop:diffeomorphism_stability}]
    As $\sup_x \normop{d_x \Phi - I_n} < 1$, $d_x \Phi$ is invertiblefor all $x \in \R^n$. Hence, the inverse function theorem yields 
    that $\Phi$ is everywhere a local diffeomorphism. As, 
    $\lim_{\norm{x} \rightarrow \infty}\norm{\Phi(x)} = \infty$ this 
    diffeomorphism is global by the Hadamard-Cacciopoli 
    theorem~\cite{DeMarco1994}. In particular, $\Phi(M_0)$ is a compact 
    connected $d$-dimensional submanifold of $\R^n$ without boundary.
    In addition, by Taylor's theorem, $\Phi$ is Lipschitz with constant 
    $\sup_x \normop{d_x \Phi} \le (1 + \sup_x \normop{I_n - d_x \Phi }) 
        \le 11/10$, $\Phi^{-1}$ is Lipschitz with constant 
    $\sup_x\normop{d_x \Phi^{-1}} \le 
        ({1-\sup_x \normop{ I_n - d_x \Phi}})^{-1} \le 10/9$, and 
    $d \Phi$ is Lipschitz with constant 
    $\sup_x \normop{d^2_x \Phi } \le 1/(4\rch_{\min}) \le 1 / (2\rch_{M_0})$. 
    Hence,~\cite[Theorem 4.19]{Federer59} yields
    \begin{align*}
        \rch_{\Phi(M)} 
        &\geq 
        \frac{
            (2\rch_{M_0})(1 - \sup_x\normop{I_n - d_x \Phi })^2
        }{
            \sup_x \normop{d^2_x \Phi } (2\rch_{M}) + 
                (1 + \sup_x \normop{  I_n - d \Phi })
        } 
        \geq
        (2\rch_{M_0})/2
        \geq
        \rch_{\min}
        .
    \end{align*}
    As a result, we have $\Phi(M_0) \in \manifolds{n}{d}{\rch_{\min}}$.
    For the last claim, we use the properties of the Hausdorff measure $\Haus^d$ under Lipschitz maps
   ~\cite[Lemma 6]{Arias13} to get
    \begin{align*}
        \Haus^d(\Phi(M)) 
        &\le 
        \sup_x \normop{d_x \Phi}^d \Haus^d(M)
        \\
        &\le
        \left(1 + 1/(10d) \right)^d
        \Haus^d(M)
        \\
        &\le
        2
        \Haus^d(M),
    \end{align*}
    and symmetrically,
    \begin{align*}
        \Haus^d(M) 
        &\le 
        \sup_x \normop{d_x \Phi^{-1}}^d \Haus^d(\Phi(M))
        \\
        &\le
        \frac{1}{\left(1 - 1/(10d) \right)^d} \Haus^d(\Phi(M))
        \\
        &=
        2
        \Haus^d(\Phi(M))
        ,
    \end{align*}
    which concludes the proof.
\end{proof}

Among the smooth perturbations $\Phi: \R^n \to \R^n$ nearly preserving $\manifolds{n}{d}{\rch_{\min}}$, the following localized bump-like functions will be of particular interest for deriving lower bounds.

\begin{lemma}
    \label{lem:multiple_bump_map_is_nice}
    Let $\delta, \eta >0$ be positive reals.
    Fix $p_1, \dots,p_N \in \R^n$ be such that $\norm{p_i - p_j} > 2\delta$
    for all $i \neq j \in \set{1, \dots,N}$.
    Given a family of unit vectors 
    $\mathbf{w} = (w_i)_{1 \le i \le N} \in \left( \R^n \right)^N$, we let 
    $\Phi_{\mathbf{w}}$ be the function that maps any $x\in \R^n$ to
    \begin{align*}
        \Phi_{\mathbf{w}}(x)
        &=
        x
        +
        \eta
        \left(
            \sum_{i = 1}^N
            \phi\left(\frac{x-p_i}{\delta}\right)
            w_i
        \right)
        ,
    \end{align*}
	where $\phi : \R^n \to \R$ the real-valued bump function defined by
	\begin{align*}
		 \phi: \R^n &\longrightarrow \R
		 \\
		 y &\longmapsto \exp\left(-{\norm{y}^2}/{(1-\norm{y}^2)} \right) \indicator{\B(0,1)}(y)
		 .
	\end{align*}
	Then $\Phi_\mathbf{w}$ is $\mathcal{C}^\infty$ smooth, 
	$\lim_{\norm{x} \to \infty} \norm{\Phi_\mathbf{w}(x)} = \infty$, and
	$\Phi_\mathbf{w}$ satisfies $\sup_{x \in \R^n} \norm{x - \Phi_\mathbf{w}(x)} \le \eta$,
	\begin{align*}
        \sup_{x \in \R^n} \normop{I_n - d_x \Phi_\mathbf{w}} \le \frac{5 \eta}{2 \delta}
        \text{ and }
        \sup_{x \in \R^n} \normop{d^2_x \Phi_\mathbf{w}} \le \frac{23 \eta}{\delta^2}
        .
	\end{align*}
\end{lemma}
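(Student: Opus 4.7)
\medskip

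\noindent\textbf{Proof plan.} The key structural observation is that the scaled bumps $\phi\bigl((\cdot-p_i)/\delta\bigr)$ have pairwise disjoint supports. Indeed, $\phi$ is supported in $\B(0,1)$, so $\phi((\cdot-p_i)/\delta)$ is supported in $\B(p_i,\delta)$; since $\|p_i-p_j\| > 2\delta$ for $i\neq j$, these balls are pairwise disjoint. First, I would verify that $\phi$ is $\mathcal{C}^\infty$ on $\R^n$ (a standard fact for this bump function, since all derivatives of $\exp(-\|y\|^2/(1-\|y\|^2))$ extend continuously to $0$ at $\|y\|=1$). Consequently each summand of $\Phi_{\mathbf{w}}-I_n$ is $\mathcal{C}^\infty$, and by disjointness of supports the finite sum is also $\mathcal{C}^\infty$. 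Properness is immediate: outside the bounded set $\bigcup_i \B(p_i,\delta)$, $\Phi_{\mathbf{w}}$ coincides with the identity, so $\|\Phi_{\mathbf{w}}(x)\|\geq \|x\|-\eta$ everywhere, and the limit as $\|x\|\to\infty$ follows.

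Next, I would reduce the sup-norm bounds to a single-bump computation. At any $x\in\R^n$, at most one index $i$ satisfies $\phi((x-p_i)/\delta)\neq 0$, and the same holds for $d_x\phi((\cdot-p_i)/\delta)$ and $d^2_x\phi((\cdot-p_i)/\delta)$. Hence, for every $x$, there exists at most one index $i=i(x)$ such that $\Phi_{\mathbf{w}}(x)-x = \eta\,\phi\bigl((x-p_i)/\delta\bigr)w_i$, and the derivatives take the analogous one-term form. Using $\|w_i\|=1$ and the chain rule, we get
\begin{align*}
\sup_{x\in\R^n}\|\Phi_{\mathbf{w}}(x)-x\|
&\leq \eta\,\|\phi\|_\infty,
\\
\sup_{x\in\R^n}\normop{I_n-d_x\Phi_{\mathbf{w}}}
&\leq \frac{\eta}{\delta}\,\sup_{y\in\R^n}\|\nabla\phi(y)\|,
\\
\sup_{x\in\R^n}\normop{d^2_x\Phi_{\mathbf{w}}}
&\leq \frac{\eta}{\delta^2}\,\sup_{y\in\R^n}\normop{\nabla^2\phi(y)}.
\end{align*}
Thus it suffices to control $\|\phi\|_\infty$, $\|\nabla\phi\|_\infty$, and $\|\nabla^2\phi\|_\infty$.

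Finally, I would estimate these three scalar quantities by a one-dimensional calculation in $t=\|y\|^2\in[0,1)$. Clearly $\|\phi\|_\infty=\phi(0)=1$. Writing $\phi(y)=e^{-s(t)}$ with $s(t)=t/(1-t)$, a direct computation gives $\nabla\phi(y)=-\frac{2\,\phi(y)}{(1-t)^2}\,y$, so $\|\nabla\phi(y)\|=\frac{2\sqrt{t}}{(1-t)^2}e^{-t/(1-t)}$, a function of $t$ alone whose maximum on $[0,1)$ can be bounded by $5/2$ by solving the scalar critical-point equation (after the change of variable $u=t/(1-t)$ this becomes maximization of $2(u+1)^{3/2}\sqrt{u}\,e^{-u}$). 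Similarly, differentiating once more yields
\[
\nabla^2\phi(y)=\phi(y)\Bigl[\tfrac{4}{(1-t)^4}\,yy^{\top} - \tfrac{2}{(1-t)^2}I_n - \tfrac{8}{(1-t)^3}\,yy^{\top}\Bigr],
\]
whose operator norm is a rational-exponential function of $t$ that can be maximized on $[0,1)$ and bounded by $23$. Plugging these three bounds into the display above yields the required estimates $\eta$, $5\eta/(2\delta)$, and $23\eta/\delta^2$.

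The main obstacle is purely computational: the explicit sharp numerical bounds $\|\nabla\phi\|_\infty\leq 5/2$ and $\|\nabla^2\phi\|_\infty\leq 23$. These are one-variable exercises, but some care is needed to package the Hessian bound because of the interplay between the radial and tangential components of $\nabla^2\phi$; exploiting the rotational invariance to diagonalize $\nabla^2\phi(y)$ in a frame aligned with $y/\|y\|$ should make the operator-norm calculation clean.
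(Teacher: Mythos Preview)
Your proposal is correct and follows essentially the same approach as the paper: reduce to a single bump via the disjoint-support observation, then invoke the chain rule together with the scalar bounds $\|\phi\|_\infty\le 1$, $\|\nabla\phi\|_\infty\le 5/2$, and $\|\nabla^2\phi\|_\infty\le 23$. The paper simply asserts these three bounds as ``straightforward calculations,'' whereas you sketch the one-variable computation explicitly, so your write-up is in fact more detailed than the original.
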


\begin{proof}[\proofof \Cref{lem:multiple_bump_map_is_nice}]
    Straightforward calculations show that the real-valued map 
    $\phi: \R^n \longrightarrow \R$ is $\mathcal{C}^\infty$ smooth over $\R^n$, 
    equals to $0$ outside  $\B(0,1)$, and satisfies $0 \le \phi \le 1$,
    $\phi(0) = 1$,
	\begin{align*}
    	\sup_{y \in \B(0,1)}
    	\norm{d_y \phi}
    	\le 5/2
    	\text{  and }
    	\sup_{y \in \B(0,1)}
    	\normop{d_y^2 \phi}
    	\le 23
    	.
	\end{align*}
	By composition and linear combination of $\mathcal{C}^\infty$ smooth functions, 
	$\Phi_\mathbf{w}$ is therefore $\mathcal{C}^\infty$ smooth. Also, $\Phi_\mathbf{w}$ 
	coincides with the identity map outside the compact set $\cup_{i=1}^N \B(p_i,\delta)$. Furthermore, for 
	$i \neq j \in \set{1, \dots,N}$, $\B(p_i, \delta) \cap \B(p_j,\delta) = \emptyset$, 
	since $\norm{p_i-p_j} > 2\delta$. Therefore, if $x \in \B(p_i,\delta)$, we have 
	$\Phi_{\mathbf{w}}(x) = x + \eta \phi\left(\frac{x-p_i}{\delta}\right)w_i$.
	This directly gives $\sup_{x \in \R^n} \norm{x - \Phi_\mathbf{w}(x)} \le \eta$, and 
	by chain rule,
	\begin{align*}
	     \sup_{x \in \R^n} \normop{I_n - d_x \Phi_\mathbf{w}}
	     &=
	     \max_{1 \le i \le N}
	     \sup_{x \in \B(p_i,\delta)} 
	     \eta 
	     \normop{d_x \left( \phi\left( \frac{\cdot - p_i}{\delta} \right) w_i\right)}
	     \\
	     &=
	     \max_{1 \le i \le N}
	     \sup_{y \in \B(0,1)}
	     \normop{w_i (d_y \phi)^\top}
	     \frac{\eta}{\delta}
	     \\
	     &=
	     \sup_{y \in \B(0,1)}
	     \norm{d_y \phi}
	     \frac{\eta}{\delta}
	     \\
	     &\le
	     \frac{5\eta}{2\delta}
	     ,
	\end{align*}
	and
	\begin{align*}
	     \sup_{x \in \R^n} \normop{d_x^2 \Phi_\mathbf{w}}
	     &=
	     \max_{1 \le i \le N}
	     \sup_{y \in \B(0,1)}
	     \normop{w_i d_y^2 \phi}
	     \frac{\eta}{\delta^2}
	     \\
	     &\le
	     \frac{23\eta}{\delta^2}
	     ,
	\end{align*}
	which concludes the proof.
\end{proof}

\subsection{Building a Large-Volume Submanifold with Small Euclidean Diameter}
\label{subsec:building-large-volume-manifold}

The proofs of \Cref{thm:SQ_lower_bound_ball_informational,thm:SQ_lower_bound_ball_computational} will involve the construction of submanifolds $M \subseteq \R^n$ with prescribed and possibly large volume $\Haus^d(M)$. Informally, this will enable us to build hypotheses and packings with large cardinality by local variations of it (see \Cref{prop:hypotheses_for_le_cam_local_variations,prop:packing_local_variations}) under nearly minimal assumptions on $f_{\min}$ (which can be seen as an inverse volume, for uniform distributions).
For the reasons mentioned in \Cref{subsubsec:implicit-bounds-on-parameters}, one easily checks that the volume of $M \in \manifoldsball{n}{d}{\rch_{\min}}{R}$ can neither be too small nor too large, when $\rch_{\min}$ and $R$ are fixed (\Cref{prop:volume_bounds_under_reach_constraint}).
Conversely, this section is devoted to prove the \emph{existence} of submanifolds $M \in \manifoldsball{n}{d}{\rch_{\min}}{R}$ that nearly achieve the minimum and maximum possible such volumes given by \Cref{prop:volume_bounds_under_reach_constraint}.

\subsubsection{The Statement}
Namely, the goal of \Cref{subsec:building-large-volume-manifold} is to prove the following result.

\begin{proposition}
\label{prop:manifold_of_prescribed_volume}
	Assume that $\rch_{\min} \le R/36$. 
	Writing $C_d' = 9(2^{2d+1} \sigma_{d - 1})$, let $\mathcal{V} > 0$ be such that
	\[
    	1
    	\le
    	\frac{\mathcal{V}}{C_d' \rch_{\min}^d}
    	\le
    	\max_{1 \le k \le n}
    	\left(\frac{R}{48 \rch_{\min} \sqrt{k}}\right)^k
    	.
	\]
    Then there exists $M_0 \in \manifolds{n}{d}{\rch_{\min}}$ such that 
    $M_0 \subseteq \B(0,R)$ and
	\begin{align*}
		\mathcal{V}/24
		\le
		\Haus^d(M_0)
		\le
		\mathcal{V}
		.
	\end{align*}
\end{proposition}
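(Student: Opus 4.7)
The plan is to realize $M_0$ as a finite union of small $d$-dimensional ``widgets,'' each of volume $\asymp \rch_{\min}^d$, glued together along a long path in a $k$-dimensional axis-aligned cubic subgrid of $\B(0,R) \cap \R^k \subseteq \R^n$, where $k$ is chosen to maximize the number of available cells. First, pick $k^\ast \in \{1,\ldots,n\}$ that realizes the upper bound in the hypothesis, and set a cell side length $a \asymp \rch_{\min}\sqrt{k^\ast}$ large enough that any two points in distinct cells separated by more than one grid step are at Euclidean distance $\gtrsim \rch_{\min}$. The total number of cells that fit inside $\B(0,R) \cap \R^{k^\ast}$ is at least of order $(R/(48\rch_{\min}\sqrt{k^\ast}))^{k^\ast}$, and by the upper hypothesis on $\mathcal{V}$ this is enough to host $N := \lceil \mathcal{V}/(c_d \rch_{\min}^d) \rceil$ widgets, where $c_d$ is the volume of a single widget. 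A sequence $c_1,\ldots,c_N$ of cells with $c_i$ face-adjacent to $c_{i+1}$ exists by a standard snake/Hamiltonian construction in the grid graph, and the lower hypothesis $\mathcal{V} \geq C'_d \rch_{\min}^d$ ensures $N \geq 1$.

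Second, I construct a fixed widget template $W \subseteq [-a/2,a/2]^{k^\ast} \times \R^{n-k^\ast}$: a compact $d$-dimensional $\mathcal{C}^2$-submanifold with $\Haus^d(W) \asymp \rch_{\min}^d$, reach $\geq 2\rch_{\min}$, and equipped with $2k^\ast$ ``ports''---small standardized collar pieces that coincide, near each of the $2k^\ast$ cube faces, with a common template collar $P$ (flat cylinder when $d \geq 2$, short straight segment when $d = 1$) positioned well inside $\B(0, \rch_{\min}/8)$ and orthogonal to the corresponding face. One then translates/rotates $W$ into each selected cell $c_i$ so that the port on the face shared with $c_{i+1}$ matches the port of the widget placed in $c_{i+1}$; at the two path endpoints, the two unused ports are capped with smooth $\mathcal{C}^2$ caps of reach $\geq 2\rch_{\min}$ (e.g., hemispherical caps of radius $\rch_{\min}$); every other unused port on intermediate widgets is similarly capped.

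Third, I argue that the glued space $M_0$ is in $\manifolds{n}{d}{\rch_{\min}}$. Since neighboring widgets agree to infinite order on their shared ports (by using identical collar templates), $M_0$ is $\mathcal{C}^2$ (indeed $\mathcal{C}^\infty$) at every junction, and so is a compact connected $\mathcal{C}^2$-submanifold without boundary. Local reach at every point is $\geq 2\rch_{\min}$, either because the point lies in a widget interior or in a flat collar. Non-local reach---that is, that no two distant points on $M_0$ are within $\rch_{\min}$ in $\R^n$---follows from the grid spacing $a \gtrsim \rch_{\min}\sqrt{k^\ast}$ combined with \Cref{def:reach}: any two points on $M_0$ that are not in a single widget or in two path-adjacent widgets lie in cells whose bounding cubes are Euclidean-distance $\geq \rch_{\min}$ apart, and so their contribution to the medial-axis quantity is benign. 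The volume identity $\Haus^d(M_0) = \sum_{i=1}^N \Haus^d(W_i) + O(\rch_{\min}^d)$ for the two endpoint caps gives $\Haus^d(M_0) = N c_d \rch_{\min}^d \cdot (1 + o(1))$, so by a suitable choice of $N$ (and absorbing constants into the factor $24$) we obtain $\mathcal{V}/24 \leq \Haus^d(M_0) \leq \mathcal{V}$. Finally, $M_0 \subseteq \B(0,R)$ since all widgets live in cells of the grid, itself a subset of $\B(0,R) \cap \R^{k^\ast}$ by the choice of $a$.

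The main obstacle is the existence of the widget template $W$ itself with all four required properties simultaneously: $\mathcal{C}^2$ regularity, uniform reach $\geq 2\rch_{\min}$, volume $\asymp \rch_{\min}^d$, and $2k^\ast$ standardized ports. For $d=1$ this is elementary (a smooth planar arc with curvature bounded by $1/(2\rch_{\min})$ connecting prescribed tangent directions at $2k^\ast$ specified points). For $d \geq 2$ one builds $W$ as a $\mathcal{C}^2$-smoothing of a piecewise model obtained by gluing thin straight tubes (with $S^{d-1}$ cross-section of radius $\asymp \rch_{\min}$) emanating from the $2k^\ast$ ports into a central spherical chamber of radius $\asymp \rch_{\min}$; the delicate step is quantitatively controlling the reach through the smoothing at the $2k^\ast$ tube-chamber junctions, which requires curvature control of order $1/\rch_{\min}$ uniformly and noninterference of the tubes (which is why cells have side length of order $\rch_{\min}\sqrt{k^\ast}$ rather than just $\rch_{\min}$).
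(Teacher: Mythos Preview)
Your high-level plan---pick $k^\ast$ maximizing the cell count, thread a long simple path through a $k^\ast$-dimensional subgrid, and place one widget per vertex---is exactly the paper's strategy. The execution diverges in a way that creates a real gap.

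The paper does \emph{not} build a universal template with $2k^\ast$ ports. A simple path has degree at most $2$ at every vertex, so only the incoming and outgoing directions matter. Up to rotation these give just four local pictures: an \emph{end} (one port), a \emph{straight} (two collinear ports), a \emph{tangent bend} (two orthogonal ports lying in the $(d{+}1)$-plane already spanned by the incoming cylinder), and a \emph{normal bend} (two orthogonal ports spanning a $(d{+}2)$-plane). Each of these four widgets fits in a cube of side $12\rch_{\min}$---constant in $k^\ast$ and $n$---has reach $\geq \rch_{\min}$, and has $d$-volume between $(C_d/3)\rch_{\min}^d$ and $C_d\rch_{\min}^d$ with $C_d = 9(2^d\sigma_{d-1})$. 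The grid spacing is therefore $24\rch_{\min}$, independent of $k^\ast$; the factor $\sqrt{k}$ in the hypothesis comes not from tube noninterference inside a widget but from the trivial fact that a $k$-dimensional cube of side $24\rch_{\min}\kappa$ has Euclidean diameter $24\rch_{\min}\kappa\sqrt{k}$ and must sit inside $\B(0,R/2)$.

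Your $2k^\ast$-port template forces cells of side $\asymp \rch_{\min}\sqrt{k^\ast}$, so the two \emph{used} ports are tubes of length $\asymp \rch_{\min}\sqrt{k^\ast}$ reaching the shared faces, and the $2k^\ast-2$ capped stubs contribute another $\asymp k^\ast\rch_{\min}^d$. The per-widget volume is therefore $\asymp k^\ast\rch_{\min}^d$, not $\asymp \rch_{\min}^d$ as you assert. This $k^\ast$-dependence (with $k^\ast$ possibly as large as $n$) contaminates every constant downstream and is incompatible with the fixed, dimension-$d$-only factor $24$ in the statement. The repair is exactly the paper's simplification: discard the universal template, use the four two-port widgets, and keep the cell size at $24\rch_{\min}$.

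One further shortcut worth noting: the paper builds the four widgets as explicit $\mathcal{C}^{1,1}$ pieces (half-sphere glued to a cylinder, a cylinder swept by a quarter-rotation, etc.) and computes their reach directly from their medial axes. The glued object $M_0'$ is only $\mathcal{C}^{1,1}$ with reach $\geq 2\rch_{\min}$; a one-line density argument then replaces it by a nearby $\mathcal{C}^2$ manifold $M_0$ with reach $\geq \rch_{\min}$ and volume within a factor $2$. This completely bypasses the ``delicate step'' of $\mathcal{C}^2$-smoothing many tube--chamber junctions that you identify as your main obstacle.
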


Informally, in codimension one (i.e. $D = d+1$), the manifold $M_0$ of \Cref{prop:manifold_of_prescribed_volume} can be though of as the boundary of the offset of a Hilbert curve in $\B(0,R)$ of prescribed length. 
This intuition, however, is only limited to codimension one, and requires extra technical developments for general $d < D$.
        
\begin{proof}[\proofof \Cref{prop:manifold_of_prescribed_volume}]
    Consider the discrete grid $G_0$ in $\R^n$ centered at $0 \in \R^n$, with vertices 
    $\left( 24 \rch_{\min} \Z^n \right) \cap \B(0, R / 2)$,
    and composed of hypercubes of side-length $24 \rch_{\min}$. By considering a 
    $k_0$-dimensional sub-grid parallel to the axes, we see that the grid $G_0$
    contains a square grid $G$ with side cardinality 
    $\kappa = \ceil{\frac{R/2}{24\rch_{\min} \sqrt{k_0}}}$, 
    where 
    $k_0$ belongs to $\argmax_{1 \le k \le n} \left(\frac{R}{48 \rch_{\min} \sqrt{k}}\right)^k$.
    Let us write $\ell = \floor{\mathcal{V}/(C_d'\rch_{\min}^d)}$.
    By assumption on $\mathcal{V}$, $\rch_{\min}$ and $R$, we have
    \begin{align*}
        1
        \le
        \ell
        \le
        \frac{\mathcal{V}}{C_d' \rch_{\min}^d}
    	\le
    	\max_{1 \le k \le n}
    	\left(\frac{R}{48 \rch_{\min} \sqrt{k}}\right)^k
    	\le
    	\kappa^{k_0}
    	.
    \end{align*}
    Hence, \Cref{lem:discrete_hypercube_long_curve} asserts that there exists a 
    connected open simple path $L_n(\ell)$ in $G \subseteq G_0$ with length 
    $| L_n(\ell)| = \ell$. 
    Furthermore, \Cref{lem:manifold_from_path} applied with reach parameter $2\rch_{\min}$ provides us with a closed $d$-dimensional submanifold $M_0'$ of class $C^{1,1}$  such that
    $M_0' = M(L_n(\ell)) \subseteq G^{12\rch_{\min}} \subseteq \B(0,R/2)^{12 \rch_{\min}} 
        \subseteq \B(0,2R/3)$ since $\rch_{\min} \le R/36$, reach $\rch_{M_0'} \geq 2 \rch_{\min}$. Furthermore, writing $C_d = 9(2^d \sigma_{d-1})$ for the constant of \Cref{lem:manifold_from_path}, we also have
    \begin{align*}
        \Haus^d(M_0')
        &\le 
        (C_d  (2 \rch_{\min})^d) |L_n(\ell)|
        \\
        &\le
        (C_d (2 \rch_{\min})^d) \frac{\mathcal{V}}{C_d'\rch_{\min}^d}
        \\
        &\leq
        \mathcal{V}/2
        ,
    \end{align*}
    and
    \begin{align*}
        \Haus^d(M_0')
        &\geq
        (C_d  (2\rch_{\min})^d/3)  |L_n(\ell)|
        \\
        &\geq
        (C_d  (2\rch_{\min})^d/3) \frac{\mathcal{V}}{2C_d'\rch_{\min}^d}
        \\
        &=
        \mathcal{V}/12
        ,
    \end{align*}
    where we used that $\floor{t} \geq t/2$ for all $t \geq 1$
    To conclude the proof, we use the density of $\mathcal{C}^{2}$ submanifolds in the space of $\mathcal{C}^{1,1}$ submanifolds to obtain a closed $d$-dimensional submanifold $M_0$ of class $\mathcal{C}^2$ such that $\rch_{M_0} \geq \rch_{M_0'}/2 \geq \rch_{\min}$, $\dHaus(M_0,M_0') \leq \rch_{\min}$ (and hence $M_0 \subset \B(0,2R/3+\rch_{\min}) \subset \B(0,R)$), and $1/2 \leq \Haus^d(M_0)/\Haus^d(M_0') \leq 2$ (and hence $\mathcal{V}/24 \leq \Haus^d(M_0) \leq \mathcal{V}$).
\end{proof}

\subsubsection{Widget Gluing: From Paths on the Discrete Grid to Manifolds}
\label{subsec:widget}
        
\begin{lemma}
\label{lem:widget_construction}
    Given $\rch_{\min} > 0$ and $d\geq 1$, there exist four 
    $d$-dimensional  $\mathcal{C}^{1,1}$-submanifolds with boundary:
    \begin{center}        
    $M_{E},M_{S},M_{TB} \subseteq [-6 \rch_{\min} , 6 \rch_{\min}]^{d + 1}$
    and
    $M_{NB} \subseteq [-6 \rch_{\min} , 6 \rch_{\min}]^{d+2}$, 
    \end{center}
    called 
    respectively \emph{end}, \emph{straight}, \emph{tangent bend} and
    \emph{normal bend} widgets (see \Cref{fig:reach_boundary_comparison}),
    that:
    \begin{itemize}[leftmargin=*]
        \item are smooth:
            $\rch_{M_E},\rch_{M_S},\rch_{M_{TB}}, \rch_{M_{NB}} \geq \rch_{\min}$;
        \item
            have the following topologies:
            \begin{itemize}[leftmargin=*]
                \item $M_E$ is isotopic to a $d$-ball $\B_d(0,1)$,
                \item $M_S$, $M_{TB}$ and $M_{NB}$ are isotopic to a $d$-cylinder $\Sphere^{d-1} \times [0,1]$;
            \end{itemize}
        \item 
            are linkable: writing $s = 6 \rch_{\min}$, we have
            \begin{itemize}[leftmargin=*]
                \item
                For the tip widget $M_E$:
                    \begin{itemize}[leftmargin=*]
                        \item 
                        $
						M_E \cap \left([-s/2 ,s/2]^{d + 1}                        \right)^c
                        =
                        M_E \cap \left([s/2 ,s] \times \R^d \right) 
                        = 
                        [s/2 , s] \times \Sphere^{d - 1}(0,s/3)
                        .$
                        
                    \end{itemize}
            \item 
                For the straight widget $M_S$:
                \begin{itemize}[leftmargin=*]
                	\item
                	$M_S \cap \left([-s/2 ,s/2]^{d + 1}                        \right)^c 
                	= 
                	M_S 
                	\cap 
                	\left( 
                		\left([-s , - s/2] \times \R^d\right) 
	                	\cup
    	            	\left([s/2 ,s] \times \R^d \right)
        	        \right),
        	        $
                    \item
                    $M_S \cap \left([-s , - s/2] \times \R^d\right) =[-s , -s/2] \times \Sphere^{d - 1}(0,s/3)$,
                    \item 
                    $M_S \cap \left([s/2 ,s] \times \R^d \right)= [s/2 , s] \times \Sphere^{d - 1}(0,s/3)
                    .
                    $
                \end{itemize}
                \item 
                For the tangent bend widget $M_{TB}$:
                \begin{itemize}[leftmargin=*]
                	\item
                	$M_{TB} \cap \left([-s/2 ,s/2]^{d + 1}                        \right)^c 
                	= 
                	M_{TB} 
                	\cap 
                	\left( 
                		\left( [-s , - s/2] \times \R^d \right)
	                	\cup
    	            	\left( \R^d \times [-s , - s/2] \right)
        	        \right),
        	        $
                    \item
                    $M_{TB} \cap \left( [-s , - s/2] \times \R^d \right) = [-s , -s/2] \times \Sphere^{d - 1}(0,s/3)$,
                    \item
                    $M_{TB} \cap \left( \R^d \times [-s , - s/2] \right) = \Sphere^{d - 1}(0,s/3) \times [-s , - s/2]
                    .
                    $
                \end{itemize}
                \item 
                For the normal bend widget $M_{NB}$:
                \begin{itemize}[leftmargin=*]
                	\item
                	$M_{NB} \cap \left([-s/2 ,s/2]^{d+2}\right)^c 
                	= 
                	M_{NB} 
                	\cap 
                	\left( 
                		\left( [-s , - s/2] \times \R^{d} \times \set{0} \right)
	                	\cup
    	            	\left( \set{0} \times \R^{d} \times [-s , - s/2] \right)
        	        \right),
        	        $
                \item
                    $M_{NB} \cap \left( [-s , - s/2] \times \R^{d} \times \set{0} \right) =
                        [-s , -s/2] \times \Sphere^{d - 1}(0,s/3) \times \set{0}$,
                \item
                    $
                        M_{NB} \cap \left( \set{0} \times \R^{d} \times [-s , - s/2] \right) = 
                        \set{0} \times \Sphere^{d - 1}(0,s/3) \times [-s , - s/2]
                    .
                    $
                \end{itemize}
            \end{itemize}
            Furthermore, 
            \[
                (C_d/3) \rch_{\min}^d 
                \le
                \Haus^d(M_E),\Haus^d(M_S),\Haus^d(M_{TB}),\Haus^d(M_{NB})
                \le
                C_d \rch_{\min}^d
                ,
            \]
            where $C_d = 9 (2^d \sigma_{d - 1})$ depends only on $d$.
        \end{itemize}
                
\end{lemma}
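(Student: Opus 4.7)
The plan is to construct each widget explicitly using $s = 6\rch_{\min}$ and cross-section radius $s/3 = 2\rch_{\min}$, then verify reach, topology, boundary matching, and volume in turn.

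For the two simple pieces, I would set $M_S = [-s, s] \times \Sphere^{d-1}(0, s/3) \subseteq \R^{d+1}$, a straight cylinder with reach exactly $s/3 = 2\rch_{\min}$; and for $M_E$, attach the hemispherical cap $\{(x,y) \in \R \times \R^d : (x-s/2)^2 + \norm{y}^2 = (s/3)^2,\ x \leq s/2\}$ to the half-cylinder $[s/2, s] \times \Sphere^{d-1}(0, s/3)$. At the join circle $\{s/2\} \times \Sphere^{d-1}(0, s/3)$, both pieces have tangent plane normal to the $x$-axis, so the union is $\mathcal{C}^{1,1}$ with reach $s/3 = 2\rch_{\min}$.

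For the two bent widgets, I would use a common scheme: pick a $\mathcal{C}^{1,1}$ central curve $\gamma$ whose endpoints lie on the prescribed tube axes with matching tangents, and inflate $\gamma$ into a $(d-1)$-sphere bundle of radius $s/3$. For the tangent bend, let $\gamma_{TB} \subseteq \R^{d+1}$ be the union of the two axial segments $[-s,-s/2] \times \{0\}^d$ and $\{0\}^{d-1} \times [-s,-s/2]$, joined by a quarter-arc of radius $s/2$ in the $(x_1, x_{d+1})$-plane centered at $(-s/2, 0, \ldots, 0, -s/2)$. Along the arc, parameterized by $\theta \in [0, \pi/2]$, place the cross-section $(d-1)$-sphere of radius $s/3$ in the $d$-plane $\operatorname{span}(e_2, \ldots, e_d, n(\theta))$ with $n(\theta) = (\sin \theta) e_1 + (\cos \theta) e_{d+1}$, which smoothly interpolates the prescribed cross-section planes $(x_2, \ldots, x_{d+1})$ at $\theta = 0$ and $(x_1, \ldots, x_d)$ at $\theta = \pi/2$. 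For the normal bend, I would exploit the extra ambient dimension $y_{d+2}$: simply set $M_{NB} = \gamma_{NB} \times \Sphere^{d-1}(0, s/3) \subseteq \R^2 \times \R^d$, where $\gamma_{NB}$ is the analogous curve in the $(y_1, y_{d+2})$-plane; since both endpoint cross-sections already lie in $(y_2, \ldots, y_{d+1})$, no frame rotation is needed and the widget is an honest Riemannian product.

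The reach bounds then follow from two facts: (i) the product reach formula $\rch_{A \times B} = \min(\rch_A, \rch_B)$ applied to $M_{NB}$ gives $\min(s/2, s/3) = 2\rch_{\min}$; (ii) the reach of the normal $(d-1)$-sphere bundle of radius $r$ around a $\mathcal{C}^{1,1}$ planar curve of maximal curvature $\kappa$ is $\min(r, 1/\kappa - r)$, which for $M_{TB}$ with $r = s/3$ and $1/\kappa = s/2$ gives $\min(s/3, s/6) = \rch_{\min}$. Topology is immediate ($M_E$ is diffeomorphic to a $d$-ball, the others to $(d-1)$-sphere bundles over an interval), the prescribed linking identities hold by inspection at each straight end, and the tube-area formula $\Haus^d \approx \sigma_{d-1}(s/3)^{d-1} L$ (with central-curve lengths $L$ between $s$ and $(2 + \pi/2)s$, plus a hemispherical contribution for $M_E$) puts all four volumes in $[(C_d/3)\rch_{\min}^d,\ C_d \rch_{\min}^d]$ by direct numerical verification.

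The main obstacle is the reach computation for $M_{TB}$, which is not a product. I would establish (ii) by a direct distance-function analysis in tubular coordinates around $\gamma_{TB}$, showing that the normal exponential map is a diffeomorphism on normal balls of radius $\min(r, 1/\kappa - r) = \rch_{\min}$, and separately checking via continuity that no far medial-axis point arises at the $\mathcal{C}^{1,1}$ junctions between straight and bent parts, since the curvature discontinuity there is only a local drop from $2/s$ to $0$.
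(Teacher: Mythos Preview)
Your construction follows the same blueprint as the paper's---a hemisphere-capped cylinder for $M_E$, a straight cylinder for $M_S$, and a tube over a segment--arc--segment central curve for the two bends---and your reach arguments (the product formula $\rch_{A\times B}=\min(\rch_A,\rch_B)$ for $M_{NB}$, the tube formula $\min(r,1/\kappa-r)$ for $M_{TB}$) are clean shortcuts that reproduce the paper's values $s/3$ and $s/6$; the paper instead identifies the medial axes directly and reads off the distance to them.

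There is one concrete error in your $M_E$. With the cap centered at $x_1=s/2$, the cylinder has length only $s/2$, so
\[
\Haus^d(M_E)=\tfrac12\sigma_d(s/3)^d+\sigma_{d-1}(s/3)^{d-1}\cdot\tfrac{s}{2}
=2^{d-1}\bigl(\sigma_d+3\sigma_{d-1}\bigr)\,\rch_{\min}^d,
\]
which falls below the required lower bound $(C_d/3)\rch_{\min}^d=3\cdot 2^d\sigma_{d-1}\,\rch_{\min}^d$ whenever $\sigma_d<3\sigma_{d-1}$, i.e.\ for every $d\ge 2$ (for $d=2$ one gets $20\pi\rch_{\min}^2<24\pi\rch_{\min}^2$). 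So the ``direct numerical verification'' you defer to would actually fail. The fix is immediate: lengthen the cylinder. The paper puts the hemispherical cap at the origin and runs the cylinder over $[0,s]$, so that the cylinder contribution alone already equals $(C_d/3)\rch_{\min}^d$, and the cap pushes it strictly above.
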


\begin{figure}[ht]
    \centering
	\subfloat[
		End widget $M_E$.
	]{
	    \includegraphics[width=0.3\textwidth, page = 1]{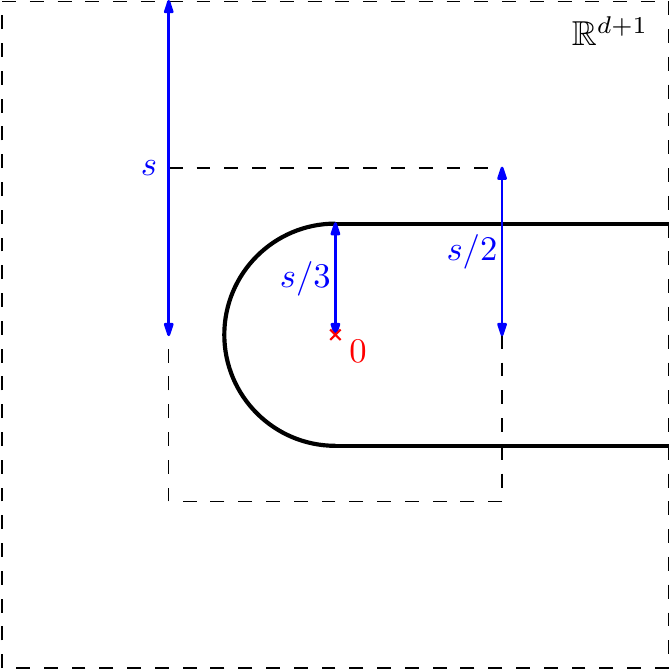}	\label{subfig:sphere_widget_end}
	}
	\subfloat[
		Straight widget $M_S$.
	]{
	    \includegraphics[width=0.3\textwidth, page = 2]{offset_widget_tangent.pdf}		
		\label{subfig:sphere_widget_straight}
	}
	\subfloat[
	    {
	    	Tangent Bend widget $M_{TB}$.	
	    }	
	]{
	    \includegraphics[width=0.3\textwidth, page = 3]{offset_widget_tangent.pdf}	\label{subfig:sphere_widget_tangent_bend}
	}

	\subfloat[
	    {
	        Normal Bend widget $M_{NB}$.	
	    }	
	]{
	    \includegraphics[width=0.5\textwidth]{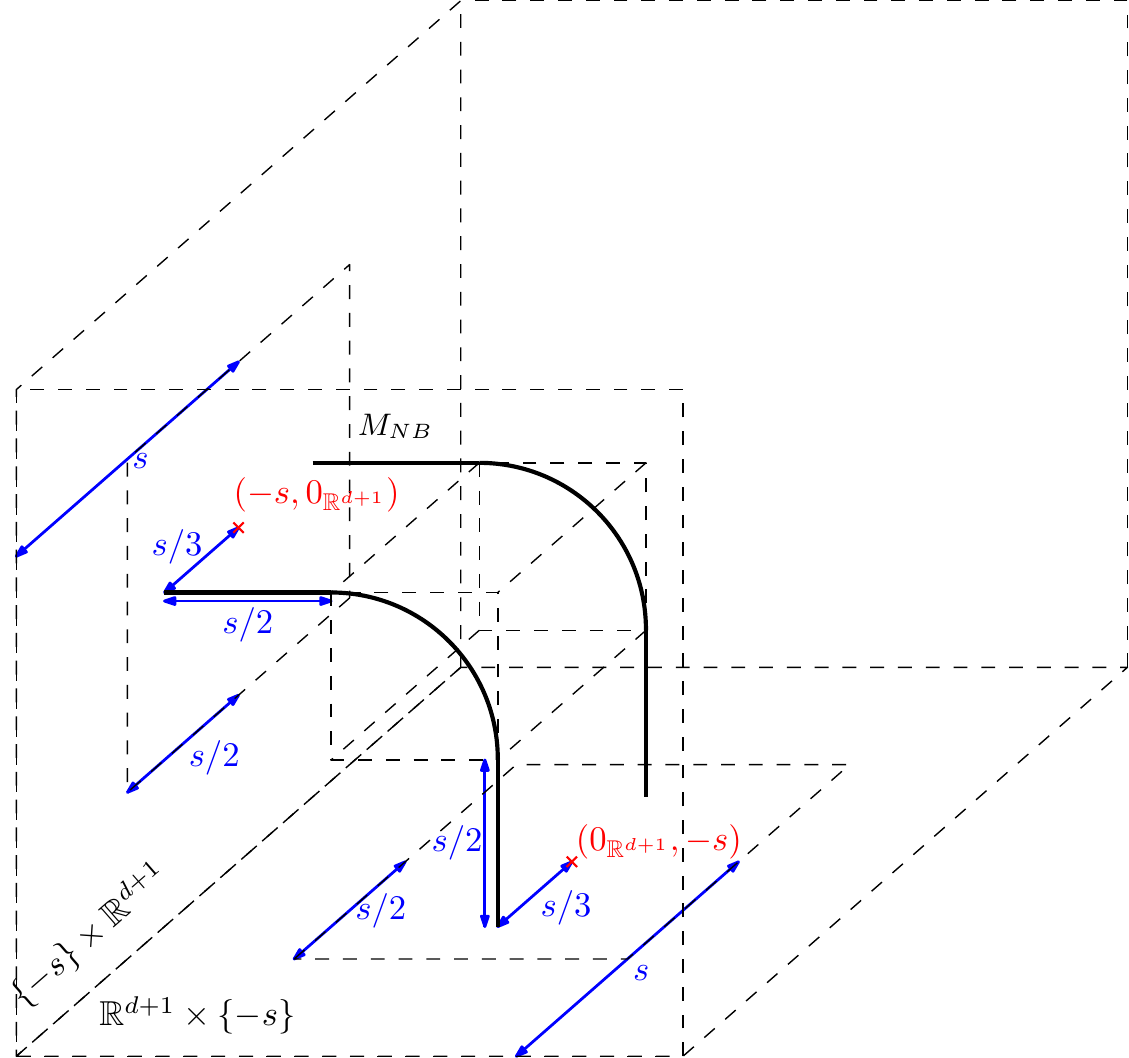}	\label{subfig:sphere_widget_normal_bend}
	}
	\caption{
		The widgets built in \Cref{lem:widget_construction} and used in the proof of \Cref{lem:manifold_from_path}.
	}
	\label{fig:reach_boundary_comparison}
\end{figure}
    
\begin{proof}[\proofof \Cref{lem:widget_construction}]
        First notice that by homogeneity, we can carry out the construction in the unit hypercubes $[-1,1]^{d + 1}$ (respectively $[-1,1]^{d+2}$) and conclude by applying an homothetic transformation.
        Indeed, for all closed set $K \subseteq \R^{n}$ and $\lambda \geq 0$, $\rch_{\lambda K} = \lambda \rch_K$ and $\Haus^d(\lambda K) = \lambda^d \Haus^d(K)$.
        \begin{itemize}[leftmargin=*]

            \item 
            End widget:
            the idea is to glue in a $\mathcal{C}^2$ way a half $d$-sphere with a $d$-cylinder.
            Namely, let us consider
            \begin{align*}
                M_E^{(0)}
                &=
                \left( 
                    \Sphere^{d}(0,1/3) 
                    \cap 
                    \left( [-1,0]\times [-1,1]^d \right) 
                \right)
                \cup
                \left( 
                     [0,1] \times \Sphere^{d - 1}(0,1/3)
                \right)
                .
            \end{align*}
            Elementary calculations yield the intersections 
            $$
            M_E^{(0)} \cap \left([-1/2 ,1/2]^{d + 1}\right)^c = M_E^{(0)} \cap \left([1/2 ,1] \times \R^d \right) = [1/2 , 1] \times \Sphere^{d - 1}(0,1/3)
            .
            $$
            In addition, its medial axis is 
            $\Med(M_E^{(0)}) = [0,1]\times \set{0}^d$, so that 
            $$\rch_{M_E^{(0)}} = \inf_{z \in \Med(M_E^{(0)})} \dd(z,M_E^{(0)}) = 1/3.
            $$
            Finally, $M_E^{(0)}$ is isotopic to the half $d$-sphere 
            $
                \Sphere^{d}(0,1/3) 
                \cap 
                \left( [-1,0]\times [-1,1]^d \right)
            $,
            or equivalently to a $d$-ball.
            
            \item 
            Straight widget: a simple $d$-cylinder satisfies our requirements.
            Similarly as above, the set 
            \begin{align*}
                M_S^{(0)}
                &=
                [-1,1] \times \Sphere^{d - 1}(0,1/3)
            \end{align*}
            clearly is (isotopic to) a $d$-cylinder, has reach $\rch_{M_S^{(0)}} = 1/3$, and all the announced intersection properties with $s=1$.
            
            \item 
            Tangent Bend widget:
            we will glue two orthogonal straight $d$-cylinders via a smoothly rotating $(d - 1)$-sphere.
            More precisely, consider the $d$-cylinders $C_1 = \Sphere^{d - 1}(0,1/3) \times [-1,-1/2]$ and $C_2 = [-1,-1/2] \times \Sphere^{d - 1}(0,1/3)$.
            We will connect smoothly their tips, which are the $(d - 1)$-spheres $S_1 = \Sphere^{d - 1}(0,1/3) \times \set{-1/2} \subseteq C_1$ and $S_2 = \set{-1/2} \times \Sphere^{d - 1}(0,1/3) \subseteq C_2$ of same radius.
            To this aim, take the trajectory of $S_1$ via the affine rotations of center $x_c = (-1/2,0_{\R^{d - 1}},-1/2)$ and linear parts
            \begin{align*}
                R_\theta
                =
                \begin{pmatrix}
                \cos \theta & 0 & \cdots & 0 & -\sin \theta
                \\
                0 & 1 & \cdots & 0 & 0
                \\
                \vdots &&\ddots&& \vdots
                \\
                0 & 0 & \cdots & 1 & 0
                \\
                \sin \theta & 0 & \cdots & 0 & \cos \theta            \end{pmatrix}
                \in \R^{(d + 1)\times(d + 1)}
                ,
            \end{align*}
            when $\theta$ varies in $[0,\pi/2]$. 
            Hence, letting $f_\theta(x) = x_c + R_\theta (x-x_c)$, we have $f_0(S_1) = S_1$, $f_{\pi/2}(S_1) = S_2$. In addition, for all $\theta \in [0,\pi/2]$ and $x \in [-1/2,1/2]^{d} \times \set{-1/2}$, we have $f_\theta(x) \in [-1/2,1/2]^{d + 1}$.
            Hence, letting
            \begin{align*}
                M_{TB}^{(0)}
                =
                C_1 \cup \biggl( \bigcup_{0 \le \theta \le \pi/2} f_\theta (S_1) \biggr) \cup C_2
                ,
            \end{align*}
            we directly get that $M_{TB}^{(0)}$ is isotopic to a $d$-cylinder, and that it satisfies all the announced intersection properties with $s=1$.
            To conclude, by symmetry, the medial axis of this widget writes as
            \begin{align*}
                \Med(M_{TB}^{(0)})
                &=
                \set{0}^{d}\times [-1,-1/2]
                \cup
                \biggl(
                x_c + \bigcup_{t \geq 0}
                (-t,0_{\R^{d-1}},-t)
                \biggr)
                \cup
                [-1,-1/2]\times \set{0}^{d}
                ,
            \end{align*}
            so that straightforward calculations yield $\rch_{M_{TB}^{(0)}} = \min \set{1/3, \dd(x_c,M_{TB}^{(0)}) } = 1/6$.

            \item 
            Normal Bend widget:
            same as for the tangent bend widget, we glue the two orthogonal straight $d$-cylinders $C_1 = \set{0} \times \Sphere^{d-1}(0,1/3) \times [-1,-1/2]$ and $C_2 = [-1,-1/2] \times \Sphere^{d-1}(0,1/3) \times \set{0}$.
            via their respective tips, $S_1 = \set{0} \times \Sphere^{d-1}(0,1/3) \times \set{-1/2} \subseteq C_1$ and $S_2 = \set{-1/2} \times \Sphere^{d-1}(0,1/3) \set{0} \subseteq C_2$.
            To this aim, take trajectory of $S_1$ via the affine rotation of center $x_c = (-1/2,0_{\R^{d}},-1/2)$ and linear parts $R_\theta \in \R^{(d+2)\times(d+2)}$
            for $\theta \in [0,\pi/2]$. 
            As before, letting $f_\theta(x) = x_c + R_\theta (x-x_c)$, we have $f_0(S_1) = S_1$, $f_{\pi/2}(S_1) = S_2$. Also, for all $\theta \in [0,\pi/2]$ and $x \in \set{0} \times [-1/2,1/2]^{d} \times \set{-1/2}$, we have $f_\theta(x) \in [-1/2,1/2]^{d + 1}$.
            Hence, letting
            \begin{align*}
                M_{NB}^{(0)}
                =
                C_1 \cup \biggl( \bigcup_{0 \le \theta \le \pi/2} f_\theta (S_1) \biggr) \cup C_2
                ,
            \end{align*}
            we get the announced results with $s=1$, and in a similar way as above, $\rch_{M_{NB}^{(0)}} =  \min\set{1/3,1/2} = 1/3$.
        \end{itemize}
        Also one easily checks in all the four above cases that
        \[
        \frac{\sigma_{d-1}/3^{d-2}}{3}
        \le
        \Haus^d(M_E^{(0)}),\Haus^d(M_S^{(0)}),\Haus^d(M_{TB}^{(0)}),\Haus^d(M_{NB}^{(0)}) 
        \le 
        \sigma_{d-1}/3^{d-2}
        .
        \]
        Finally, letting
        \[
            C^{-1} 
            = 
            \min
            \set{
                \rch_{M_E^{(0)}}
                ,
                \rch_{M_S^{(0)}}
                ,
                \rch_{M_{TB}^{(0)}}
                ,
                \rch_{M_{NB}^{(0)}}
            }
            =
            1/6
        \]
        and considering the dilations $M_E = (C \rch_{\min}) M_E^{(0)}$, $M_S = (C \rch_{\min}) M_S^{(0)}$, $M_{TB} = (C \rch_{\min}) M_{TB}^{(0)}$ and $M_{NB} = (C \rch_{\min}) M_{NB}^{(0)}$ yields the result by homogeneity, with $C_d = C^d \sigma_{d-1}/3^{d-2} = 9 (2^d \sigma_{d-1})$.
    \end{proof}

\begin{lemma}
    \label{lem:manifold_from_path}
    Let $G$ be a discrete grid in $\R^n$ composed of hypercubes of side-length
    $12 \rch_{\min}$. Then any connected open simple path $L$ in $G$
    (see \Cref{lem:discrete_hypercube_long_curve}) defines a $\mathcal{C}^{1,1}$ $d$-dimensional closed submanifold,
    denoted by $M(L)$, such that:
    \begin{itemize}
        \item 
        $M(L) \subseteq G^{6\rch_{\min}}$;
        \item 
        $M(L) \in \manifolds{n}{d}{\rch_{\min}}$;
        \item
        $C_d/3 \le \dfrac{\Haus^d(M(L))}{|L| \rch_{\min}^d} \le C_d$,
        where $C_d$ is the constant of \Cref{lem:widget_construction};
        \item
        If $L$ and $L'$ are two different such paths in $G$,
        \begin{align*}
            \dHaus(M(L),M(L'))
            > 2 \rch_{\min}
            .
        \end{align*}
     \end{itemize}
\end{lemma}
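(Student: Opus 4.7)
The plan is to build $M(L)$ by placing, at each vertex $v \in L$, a rigidly translated, rotated, and scaled copy of one of the four widgets from \Cref{lem:widget_construction}---specifically $M_E$ at each degree-$1$ endpoint, $M_S$ at each degree-$2$ vertex with collinear incident edges, and $M_{TB}$ or $M_{NB}$ at each bend vertex---and gluing consecutive widgets along their shared cylindrical collars. At each vertex we choose a $d$-dimensional cross-section subspace orthogonal to the incident edge direction(s), propagated consistently along $L$ so that two widgets at adjacent vertices match exactly on their common collar $\Sphere^{d-1}(0, 2\rch_{\min})$ along the shared grid edge. At a bend vertex with perpendicular grid directions $\hat{e}_i, \hat{e}_j$, this matching is always achievable: either take cross-sections $V_1 \ni \hat{e}_j$ and $V_2 \ni \hat{e}_i$ and place $M_{TB}$ inside a common $(d+1)$-dimensional ambient subspace, or take $V_1 = V_2$ orthogonal to both $\hat{e}_i, \hat{e}_j$ and place $M_{NB}$ inside a $(d+2)$-dimensional ambient subspace, consuming at most one extra ambient dimension of $\R^n$. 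Each placed widget lies in a $2\rch_{\min}$-tube around its incident grid edges (plus a half-sphere of radius $2\rch_{\min}$ at endpoints), hence $M(L) \subseteq G^{2\rch_{\min}} \subseteq G^{6\rch_{\min}}$.

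Once the widgets are in place, the linkability condition of \Cref{lem:widget_construction} ensures that on every edge $(v, v') \in L$ both adjacent widgets restrict to the same straight cylinder near their common collar, so $M(L)$ is $\mathcal{C}^\infty$ in a neighborhood of each seam, $\mathcal{C}^{1,1}$ overall, and without boundary (all widget boundary pieces become interior after gluing). Since each widget has reach $\geq \rch_{\min}$ and the seams are locally exact cylinders of radius $2\rch_{\min}$, the local reach of $M(L)$ is $\geq \rch_{\min}$ everywhere. The volume bound $(C_d/3)|L|\rch_{\min}^d \leq \Haus^d(M(L)) \leq C_d|L|\rch_{\min}^d$ follows additively from the per-widget estimates of \Cref{lem:widget_construction}, since distinct widgets overlap only on $(d-1)$-dimensional collars of vanishing $\Haus^d$-measure.

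The main technical obstacle is verifying that the \emph{global} reach is still $\geq \rch_{\min}$, ruling out medial-axis contributions from equidistant pairs lying in non-adjacent widgets. Any two grid edges of $G$ sharing no vertex are at Euclidean distance $\geq 12\rch_{\min}$, so the $2\rch_{\min}$-tubes containing widgets at two non-adjacent vertices of $L$ are at mutual distance $\geq 8\rch_{\min}$, and any medial-axis point equidistant from both lies at distance $\geq 4\rch_{\min} > \rch_{\min}$ from $M(L)$. For the Hausdorff-separation claim, given $L \neq L'$ pick an edge $e = (v, v') \in L \setminus L'$ of direction $\hat{e}$, midpoint $m$, a unit $\hat{n} \perp \hat{e}$ lying in the $M(L)$-cross-section of $e$, and set $p = m + 2\rch_{\min}\hat{n} \in M(L)$. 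Since $e \notin L'$, every axis of every cylindrical tube in $M(L')$ is a grid edge distinct from $e$; a direct coordinate computation shows each such axis lies at distance $\geq 6\rch_{\min}$ from $p$, whence $\dd(p, M(L')) \geq 4\rch_{\min} > 2\rch_{\min}$, completing the separation.
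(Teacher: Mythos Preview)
Your construction and overall strategy coincide with the paper's: one widget per vertex of $L$, chosen as end/straight/tangent-bend/normal-bend according to the local shape of $L$, glued along the cylindrical collars. The volume and smoothness claims go through as you say. Two concrete points need repair, however.

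First, the assertion that every widget lies in the $2\rch_{\min}$-tube about its incident grid edges is false for the bend widgets. In the unscaled coordinates of \Cref{lem:widget_construction}, the inner arc of $M_{TB}$ at angle $\theta=\pi/4$ and $p_1=-1/3$ sits at distance $1/2-1/(6\sqrt 2)\approx 0.382$ from either incident axis, i.e.\ about $2.3\rch_{\min}$ after scaling by $s=6\rch_{\min}$. The lemma only claims $M(L)\subseteq G^{6\rch_{\min}}$, which follows from the cruder fact that each widget sits in the $s$-cube around its vertex; and since the true tube radius is still well below $4\rch_{\min}$, both your global-separation and your Hausdorff estimates survive with smaller constants. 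But the specific numbers you write ($8\rch_{\min}$ separation, $\dd(p,M(L'))\ge 4\rch_{\min}$) are not what the construction actually gives.

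Second, your reach argument is incomplete. You control curvature via the per-widget reaches and then rule out medial-axis points coming from widgets at \emph{non-adjacent} vertices, but you never treat pairs $x,y$ lying in widgets at \emph{adjacent} vertices, nor at path-distance two (whose incident edges share a vertex and hence are not covered by your ``edges sharing no vertex'' clause). The paper avoids this case analysis entirely by using Federer's formula
\[
\rch_{M(L)}=\inf_{x\neq y\in M(L)}\frac{\norm{y-x}^2}{2\,\dd(y-x,T_xM(L))}
\]
and splitting on $\norm{x-y}\gtrless s/2$: for $\norm{x-y}\ge s/2$ the quotient is trivially at least $s/4>\rch_{\min}$; for $\norm{x-y}<s/2$ one checks that $x,y$ must lie in a single widget or in a single straight connecting cylinder, so the widget reach bounds apply. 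This is both shorter and independent of any tube-radius estimate. For the Hausdorff separation the paper instead picks a \emph{vertex} $x_0$ of $L$ not in $L'$ and compares $\dd(x_0,M(L))\le\sqrt{(s/2)^2+(s/3)^2}$ with $\dd(x_0,M(L'))\ge s$; your edge-midpoint variant is a legitimate alternative once the tube radius is corrected and a ``without loss of generality $L\not\subseteq L'$'' is added. Finally, the degenerate case $|L|=1$ (a single degree-$0$ vertex, handled by a bare $d$-sphere in the paper) is not covered by your description.
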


\begin{remark}
The construction of \Cref{lem:manifold_from_path} shows that, given one discrete path $L$, one could actually define several different manifolds $M(L)$ with the same properties. We will not exploit this fact as the construction is enough for our purpose.
\end{remark}

\begin{proof}[\proofof \Cref{lem:manifold_from_path}]
	For short, we let $s = 6 \rch_{\min}$. Let $L$ be a fixed connected open simple path on $G$. If $|L| = 1$, take $M(L)$ to be a $d$-sphere of radius $2\rch_{\min}$ centered at the only vertex of $L$. 
	Assuming now that $|L|\geq 2$, we will build $M(L)$ iteratively by adding appropriate widgets of \Cref{lem:widget_construction} along the consecutive vertices that $L$ goes through.
	We pick one of the two degree $1$ vertices (endpoints) of $L$ arbitrarily, and denote the consecutive vertices of $L$ as $x_1, \dots,x_{[L|-1}$.
	\begin{enumerate}[label=(\roman*),leftmargin=*]
	    \item 
	    The path $L$ has exactly one edge at $x_0$, called $v^+_0$, which is parallel to the axes of $\R^n$ since $G$ is the square grid.
	    In the cube $x_0 + [- 6\rch_{\min},6\rch_{\min}]^n$, we define $M(L)$ to coincide with the End widget $M_E \times \set{0}^{n-(d + 1)}$, rotated in the $(e_1,v^+_0)$ plane so that $-e_1$ is sent on $v^+_0$.
	    In this first cube, $M(L)$ hence presents a $d$-cylinder, obtained by a rotation of $[-s,-s/2]\times \Sphere^{d-1}(0,s/3) \times \set{0}^{n-(d + 1)}$ around $x_0$, and pointing towards $v^+_0$. Let us call this cylinder $C^+_0$.
	    \item 
	    Assume now that we have visited the consecutive vertices $x_0, \dots,x_{k-1}$ of $L$, for some $k\geq 1$, and that in the cube around $x_{k-1}$, $M(L)$ presents a cylinder $C^+_{k-1}$ in the direction $v^+_{k-1}$.
	    If $x_k$ is not the other endpoint of $L$, there are exactly two edges at $x_k$, represented by the axis-parallel vectors $v^-_k = (x_{k-1}-x_k) = -v^+_{k-1}$ and $v^+_k = (x_{k + 1}-x_k)$.
	    There are three possible cases depending on the turn that $L$ takes at $x_k$:
	    \begin{enumerate}
	        \item
	        If $v^-_k$ and $v^+_k$ are aligned, take $M(L) \cap \left(x_k + [-s,s]^n \right)$ to coincide with the Straight widget $M_S \times \set{0}^{n-(d + 1)}$, rotated in the $\{e_1,v^+_k\}$-plane so that $e_1$ is sent on $v^+_k$.
	        \item
	        If $v^+_k$ belongs to the $(d + 1)$-plane spanned by $C^+_{k-1}$ but $v^-_k$ and $v^+_k$ are not aligned, proceed similarly by rotating the Tangent Bend widget $M_{TB} \times \set{0}^{n-(d + 1)}$ so that $(e_1,e_{d + 1})$ is sent on $(-v^-_{k-1},v^+_k)$.
	        \item
	        Otherwise, if $v^+_k$ does not belong to the $(d + 1)$-plane spanned by $C^+_{k-1}$, then $\{v^+_k, C^+_{k-1}\}$ defines a $(d+2)$-plane. Hence, we proceed similarly by rotating the Normal Bend widget $M_{NB} \times \set{0}^{n-(d+2)}$ so that $(e_1,e_{d+2})$ is sent on $(-v^-_{k-1},v^+_k)$.
	        Note that this case can only occur if $n \geq d+2$.
	    \end{enumerate}
	    
	    \item
	    If we reached the other endpoint of $L$ ($k=|L|-1$), add a rotated End widget oriented in the direction of $C^+_{k-1}$.
	    \end{enumerate}
    Now that the construction of $M(L)$ has been carried out, let us move to its claimed properties.
	\begin{itemize}[leftmargin=*]
	\item
	By construction of the widgets and the fact that all of them are centered at points of the grid $G$, $M(L)$ is included in the offset of $G$ of radius $6\rch_{\min}$.
	\item	
	By induction on the length of the path, it is clear that the union of the straight and bend widgets (without the ends) is isotopic to a cylinder $\Sphere^{d-1}(0,1) \times [0,1]$. As a result, adding the two end widgets at the endpoints of the path yields that $M(L)$ is isotopic to a $d$-dimensional sphere $\Sphere^{d}(0,1)$. It is also clear that $M(L)$ connected, by connectedness of $L$. In particular, $M(L)$ is a compact connected $d$-dimensional submanifold of $\R^n$ without boundary. 
	
	What remains to be proved is that $\rch_{M(L)} \geq \rch_{\min}$.
	To see this, notice that by construction, the widgets connect smoothly through sections of facing straight cylinders $C^\pm = \Sphere^{d-1}(0,s/3) \times [0,\pm s/2] \times \set{0}^{n-(d + 1)}$ (rotated), which are included in the boxes $[-s/2,s/2]^n$ centered a the midpoints of the grid. Apart from these connected ingoing and outgoing cylinders, the widgets are included in boxes $[-s/2,s/2]^n$, which are separated by a distance $s$.
	Hence, if two points $x,y \in M(L)$ are such that $\norm{y-x} \le s/2$, then they must belong to either the same widget or the same connecting cylinder $C^- \cup C^+$.
	As a result, from~\cite[Theorem 4.18]{Federer59} and the fact that $\dd(y-x,T_x M(L)) \le \norm{y-x}$ for all $x \in M(L)$, we get
	\begin{align*}
	    \rch_{M(L)}
	    &=
	    \inf_{x \neq y \in M(L)}
	    \frac{\norm{y-x}^2}{2 \dd(y-x,T_x M(L))}
	    \\
	    &=
	    \min
	    \set{
    	    \inf_{\substack{x, y \in M(L) \\ \norm{y-x} \geq s/2}}
	        \frac{\norm{y-x}^2}{2 \dd(y-x,T_x M(L))}
	        ,
	        \inf_{\substack{x \neq y \in M(L) \\ \norm{y-x} \le s/2}}
	        \frac{\norm{y-x}^2}{2 \dd(y-x,T_x M(L))}
	    }
	    \\
	    &\geq
	    \min
	    \set{
	        s/4
	        ,
            \min \set{
	            \rch_{M_E},\rch_{M_S},\rch_{M_{TB}},\rch_{M_{NB}}
            }
	    }
	    \\
	    &\geq
	    \min\set{6\rch_{\min}/4,\rch_{\min}}
	    \\
	    &=
	    \rch_{\min}
	    ,
	\end{align*}
    which ends proving that $M(L) \in \manifolds{n}{d}{\rch_{\min}}$.
	\item
	As $M(L)$ is the union of $|L|$ of the widgets defined in \Cref{lem:widget_construction}, it follows
	\begin{align*}
		\Haus^d(M(L))
		&\le
		|L|
		\max \set{ \Haus^d(M_E),\Haus^d(M_S),\Haus^d(M_{TB}),\Haus^d(M_{NB}) } 
		\\
		&\le 
		|L| C_d \rch_{\min}^d
		,
	\end{align*}
	and similarly, as the intersection of the consecutive widgets (i.e. $(d-1)$-spheres) is $\Haus^d$-negligible, we have
	\begin{align*}
		\Haus^d(M(L))
		&\geq
		|L|
		\min \set{ \Haus^d(M_E),\Haus^d(M_S),\Haus^d(M_{TB}),\Haus^d(M_{NB}) } 
		\\
		&\geq 
		|L| (C_d/3) \rch_{\min}^d
		.
	\end{align*}
	\item
	Let us now fix two different connected open simple paths $L$ and $L'$ in $G$.
	Since $L \neq L'$, $L$ passes through a vertex, say $x_0 \in \R^n$, where $L'$ doesn't.
	Regardless of the widget used at $x_0$ to build $M(L)$, this widget contains, up to rotation centered at $x_0$, the set $x_0 + \set{-s/2} \times \Sphere^{d - 1}(0,s/3) \times \set{0}^{n-(d + 1)}$. As a result, $\dd(x_0, M(L)) \le \sqrt{(s/2)^2+(s/3)^2}$.
	On the other hand, $M(L')$ does not intersect the cube $x_0 + [-s, s]^n$, so $\dd(x_0, M(L')) \geq s$.	
	Finally, we get
	\begin{align*}
	\dHaus(M(L),M(L'))
	&=
	\sup_{x \in \R^n} \left| \dd(x,M(L')) - \dd(x,M(L))  \right|
	\\
	&\geq
	\left| \dd(x_0,M(L')) - \dd(x_0,M(L))  \right|
	\\
	&\geq
	s - \sqrt{(s/2)^2+(s/3)^2}
	\\
	&=
	6 (1-\sqrt{13}/6) \rch_{\min}
	\\
	&>
	2 \rch_{\min}
	,
	\end{align*}
	which concludes the proof.
	\end{itemize}		
\end{proof}

\subsubsection{Existence of Long Paths on the Grid}
\label{subsec:long-path}

In order to complete the construction of \Cref{prop:manifold_of_prescribed_volume}, we need the existence of paths of prescribed length over the $n$-dimensional discrete grid. Although standard, we include this construction for sake of completeness.
\begin{lemma}
    \label{lem:discrete_hypercube_long_curve}
    Let $\kappa\geq 1$ be an integer and consider the square grid graph
    $G_n$ on $\set{1, \dots, \kappa}^n$. Then for all 
    $\ell \in \set{1, \dots, \kappa^n}$, there exists a connected open simple path
    $L_n(\ell)$ of length $\ell$ in $G_n$. That is, $L_n(\ell)$ is a subgraph of 
    $G_n$ such that:
    \begin{itemize}
        \item $L_n(\ell)$ is connected;
        \item $L_n(\ell)$ has vertex cardinality $\ell$;
        \item if $\ell \geq 2$, $L_n(\ell)$ has maximum degree $2$, and exactly two
            vertices with degree $1$.
    \end{itemize}
\end{lemma}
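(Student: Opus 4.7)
The plan is to prove, by induction on $n \geq 1$, the stronger statement that $G_n$ admits a \emph{Hamiltonian} path starting at the corner $(1,\ldots,1)$. Granting this, I would obtain $L_n(\ell)$ simply as the subgraph formed by the first $\ell$ vertices of such a Hamiltonian path together with the $\ell - 1$ edges between consecutive ones; this trivially satisfies the three required properties (connectedness, cardinality $\ell$, and, for $\ell \geq 2$, having maximum degree $2$ with exactly two vertices of degree $1$, namely the two endpoints of the path).

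The base case $n = 1$ is immediate: the linear path $1 - 2 - \cdots - \kappa$ starts at $1$ and visits each vertex exactly once. For the inductive step, I would decompose $\set{1,\ldots,\kappa}^n$ into the $\kappa$ parallel slabs $S_j = \set{1,\ldots,\kappa}^{n-1} \times \set{j}$ for $j = 1,\ldots,\kappa$, each naturally isomorphic to $G_{n-1}$, and stitch together a Hamiltonian path of each slab via the vertical edges linking consecutive slabs. Concretely, denoting by $v$ the terminal vertex of the inductive Hamiltonian path $H_{n-1}$ (which starts at $(1,\ldots,1)$), I would traverse $S_1$ from $(1,\ldots,1,1)$ to $(v,1)$ via $H_{n-1}$, hop to $(v,2)$, traverse $S_2$ in reverse from $(v,2)$ to $(1,\ldots,1,2)$, hop to $(1,\ldots,1,3)$, and so on, alternating the direction of traversal with the parity of $j$. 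This produces the desired boustrophedon (``snake'') Hamiltonian path on $\set{1,\ldots,\kappa}^n$ starting at $(1,\ldots,1,1)$.

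The only verification needed is that each inter-slab hop is a valid edge of $G_n$, which holds because its two endpoints agree on the first $n-1$ coordinates and differ by $1$ on the last. Since the slabs are pairwise vertex-disjoint and no vertex within a slab is revisited, the concatenated sequence is a simple Hamiltonian path on $G_n$, closing the induction. There is no real obstacle here: the construction is entirely standard and the required bookkeeping is minimal.
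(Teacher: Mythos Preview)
Your proposal is correct and essentially identical to the paper's proof: both construct a Hamiltonian path on $G_n$ by induction on $n$ via the same boustrophedon (snake) traversal of the slabs $\{1,\dots,\kappa\}^{n-1}\times\{j\}$, alternating the direction of the inductively given path, and then take $L_n(\ell)$ to be the prefix of the first $\ell$ vertices. The only cosmetic difference is that you phrase the invariant as ``the path starts at the corner $(1,\dots,1)$'' whereas the paper tracks an explicit orientation $L_n^\rightarrow$ and its reverse $L_n^\leftarrow$.
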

    
\begin{proof}[\proofof \Cref{lem:discrete_hypercube_long_curve}]
    For $\kappa=1$, $G_n$ consists of a single point, so that the result is trivial.
    We hence assume that $\kappa\geq 2$. Let us first build the paths
    $L_n = L_n(\kappa^n)$ by induction on $n$. For $n = 1$, simply take $L_1$ to be the
    full graph $G_n$. We orientate $L_1$ by enumerating its adjacent vertices in order: 
    $L^\rightarrow_1 [i] = i$ for all $1 \le i \le \kappa$. Given an orientation 
    $L^\rightarrow$ of some path $L$ in $G_n$, we also let
    $L^\leftarrow [i] = L^\rightarrow [|L| - i]$ denote its backwards orientation. Now, 
    assume that we have built $L_n$ for some $n \geq 1$, together with an orientation 
    $L^\rightarrow_n$. To describe $L_{n + 1}$, we list an orientation $L_{n + 1}^\rightarrow$
    of it: an edge of $G_n$ hence belongs to $L_n$ if an only if it joins two consecutive
    vertices in $L_{n + 1}^\rightarrow$. Namely, for $1 \le i \le \kappa^n$, we let
    \begin{align*}
        L_{n + 1}^\rightarrow[i]
        &=
        \left(L_{n}^\rightarrow[i], 1\right)
        \\
        L_{n + 1}^\rightarrow[\kappa^n + i]
        &=
        \left(L_{n}^\leftarrow[i], 2\right)
        \\
        \vdots
        \\
        L_{n + 1}^\rightarrow[(\kappa - 1)\kappa^n + i]
        &=
        \left(L_{n}^\leftrightarrow[i], \kappa\right),
    \end{align*}
    where for the last line, $\leftrightarrow$ stands for $\rightarrow$ if $\kappa$ is odd,
    and $\leftarrow$ otherwise. $L_{n + 1}$ clearly is connected and visits all the vertices
    $\set{1, \dots, \kappa}^n$.
    Its edges all have degree two, except $\left(L_{n}^\rightarrow[1], 1\right)$ and $\left(L_{n}^\leftrightarrow[\kappa^n], \kappa\right)$ which have degree $1$, which concludes the construction of $L_n = L_n(\kappa^n)$.
        To conclude the proof, take $L_n(\ell)$ ($1 \le \ell \le \kappa^n$) to be the first $\ell$ consecutive vertices of $L_n^\rightarrow(\kappa^n)$.
    \end{proof}

\subsection{Informational Lower Bounds: Hypotheses for Le Cam's Lemma}

This section is devoted to prove the two informational lower bounds 
\Cref{thm:SQ_lower_bound_point_informational,thm:SQ_lower_bound_ball_informational}.
We will use the general informational lower bound from \Cref{thm:lecam_SQ} in the models $\distributionspoint{n}{d}{\rch_{\min}}{f_{\min}}{f_{\max}}{L}{0}$ and $\distributionsball{n}{d}{\rch_{\min}}{f_{\min}}{f_{\max}}{L}{R}$ respectively, and parameter of interest $\theta(D) = \supp(D)$ that lies in the metric space formed by the non-empty compact sets of $\R^n$ equipped with the metric $\dist = \dHaus$.

\subsubsection{Construction of the Hypotheses}
    
First, we show how to build hypotheses, i.e probability distributions for Le Cam's Lemma (\Cref{thm:lecam_SQ}). We present a generic construction in the manifold setting by perturbing a base
submanifold $M_0$. Note that the larger the volume $\Haus^d(M_0)$, the stronger 
the result. See also \Cref{prop:packing_local_variations} for a result similar 
in spirit, and used to derive computational lower bounds instead of informational ones.
    
\begin{proposition}
\label{prop:hypotheses_for_le_cam_local_variations}
	For all $M_0 \in \manifolds{n}{d}{2\rch_{\min}}$, $x_0 \in M_0$ and $\tau \le 1$, 
	there exists a manifold $M_1 \in \manifolds{n}{d}{\rch_{\min}}$ such that 
    $x_0 \in M_1$, $\Haus^d(M_0)/2 \le \Haus^d(M_1) \le 2\Haus^d(M_0)$,
    \[
        \frac{\rch_{\min}}{2^{18}}
        \min\set{
            \frac{1}{2^{22}d^2}
            ,
            \left(
            \frac{\Haus^d(M_0) \tau}{\omega_d \rch_{\min}^d}
            \right)^{2/d}
        }
        \le
        \dHaus(M_0,M_1) 
        \le 
        \rch_{\min}/10
        ,
    \]
    and so that the uniform distributions $D_0,D_1$ over $M_0,M_1$ satisfy
    $
        \TV(D_0, D_1)
        \le
        \tau / 2
        .
    $
\end{proposition}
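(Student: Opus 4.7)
My plan is to construct $M_1$ as a diffeomorphic perturbation of $M_0$, obtained by applying a single localized bump, centered at a point $p\in M_0$ chosen far from $x_0$ and in a direction normal to $M_0$ at $p$. The scale parameters of the bump will be tuned so that (i) the perturbation preserves the reach (via \Cref{prop:diffeomorphism_stability}), (ii) the bump height is large enough to make $\dHaus(M_0,M_1)$ satisfy the claimed lower bound, and (iii) the bump is supported on a $d$-volume small enough to enforce $\TV(D_0,D_1)\le\tau/2$.

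\emph{Choice of the bump location and direction.} Since $M_0\in\manifolds{n}{d}{2\rch_{\min}}$, \Cref{prop:volume_bounds_under_reach_constraint} (and the Hatcher-type argument used in its proof) gives $\diam(M_0)\ge 2\sqrt{2}\rch_{\min}$, so there is $p\in M_0$ with $\norm{p-x_0}\ge\sqrt{2}\rch_{\min}$. Pick a unit vector $w\in T_p M_0^\perp$ (possible since $d<n$). For parameters $0<\eta\le\delta$ to be tuned below, and assuming $\delta\le\sqrt{2}\rch_{\min}$, apply \Cref{lem:multiple_bump_map_is_nice} with $N=1$, center $p$, and unit vector $w$ to obtain a proper smooth map $\Phi:=\Phi_w$, and set $M_1:=\Phi(M_0)$.

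\emph{Verification of the geometric properties.} Imposing $5\eta/(2\delta)\le 1/(10d)$ and $23\eta/\delta^2\le 1/(4\rch_{\min})$, \Cref{lem:multiple_bump_map_is_nice} together with \Cref{prop:diffeomorphism_stability} (applied with reach parameter $2\rch_{\min}$) guarantees that $\Phi$ is a global diffeomorphism, that $M_1\in\manifolds{n}{d}{\rch_{\min}}$, and that $\Haus^d(M_0)/2\le\Haus^d(M_1)\le 2\Haus^d(M_0)$. Because $\Phi$ equals the identity outside $\B(p,\delta)$ and $x_0\notin\B(p,\delta)$, we have $x_0=\Phi(x_0)\in M_1$. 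For the Hausdorff distance, the upper bound $\dHaus(M_0,M_1)\le\sup_x\norm{\Phi(x)-x}\le\eta$ is immediate, and the matching lower bound follows by observing that $\Phi(p)=p+\eta w$ with $w\in T_pM_0^\perp$, so as long as $\eta<2\rch_{\min}$ the nearest point on $M_0$ to $p+\eta w$ is $p$ itself, giving $\dd(\Phi(p),M_0)=\eta$; hence $\dHaus(M_0,M_1)=\eta$. I will enforce $\eta\le\rch_{\min}/10$ so that the upper bound in the statement holds.

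\emph{Total variation bound and tuning.} Since $M_0$ and $M_1$ coincide outside $\B(p,\delta+\eta)$, one can split
\[
\TV(D_0,D_1)
\;\le\;
\Big|1-\tfrac{\Haus^d(M_0)}{\Haus^d(M_1)}\Big|
\;+\;
\frac{\Haus^d(M_0\cap\B(p,\delta))+\Haus^d(M_1\cap\B(p,\delta+\eta))}{\Haus^d(M_0)},
\]
where the first term controls the discrepancy due to renormalization on the common region and the second controls the mass on the (small) differing region. Both terms reduce, via the Lipschitz bound on $\Phi$ and \Cref{lem:intrinsic_ball_mass} applied to the uniform measure on $M_0$ (of reach $\ge 2\rch_{\min}$), to a quantity of order $\omega_d\delta^d/\Haus^d(M_0)$. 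Imposing $\TV(D_0,D_1)\le\tau/2$ therefore amounts to choosing $\delta\sim\rch_{\min}\bigl(\Haus^d(M_0)\tau/(\omega_d\rch_{\min}^d)\bigr)^{1/d}$, and then taking $\eta$ as the largest value compatible with the two reach constraints above, namely $\eta\sim\min\{\delta^2/\rch_{\min},\,\delta/d\}$. Optimizing over these two regimes (quadratic gain when $\delta\lesssim\rch_{\min}/d$, linear otherwise) yields the advertised lower bound
$\dHaus(M_0,M_1)=\eta \gtrsim \rch_{\min}\min\{1/d^2,\,(\Haus^d(M_0)\tau/(\omega_d\rch_{\min}^d))^{2/d}\}$.

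The main obstacle is not conceptual but bookkeeping: matching the explicit universal constants $1/2^{18}$ and $1/(2^{22}d^2)$ in the statement requires careful tracking of the numerical factors in \Cref{lem:multiple_bump_map_is_nice} ($5/2$ and $23$), in \Cref{prop:diffeomorphism_stability} ($1/(10d)$ and $1/(4\rch_{\min})$), and in \Cref{lem:intrinsic_ball_mass}; none of these individually is delicate, but combining them optimally through the choice of $\delta$ is where the precise constants enter.
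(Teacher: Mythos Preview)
Your proposal is correct and follows essentially the same construction as the paper: a single normal bump at a point $p$ far from $x_0$, reach and volume preservation via \Cref{prop:diffeomorphism_stability} and \Cref{lem:multiple_bump_map_is_nice}, Hausdorff distance equal to the bump height $\eta$, and total variation controlled by the local mass $\omega_d\delta^d/\Haus^d(M_0)$. The only cosmetic differences are that the paper invokes an external result (\cite[Lemma~D.2]{Aamari19}) for the TV bound rather than decomposing it by hand, and handles the large-$\tau$ regime by capping $\tau$ at a threshold $\tau_{(0)}$ while always keeping $\eta=\delta^2/(92\rch_{\min})$, instead of switching to the linear constraint $\eta\lesssim\delta/d$ as you suggest; both routes yield the same $\min\{1/d^2,\,(\cdot)^{2/d}\}$ shape.
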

    
\begin{proof}[\proofof \Cref{prop:hypotheses_for_le_cam_local_variations}]
    Let $p_0 \in M_0$ be an arbitrary point such that $\norm{p_0-x_0} \geq \rch_{\min}$. For instance, by taking the geodesic variation
    $p_0 = \gamma_{x_0,v_0}(2\rch_{\min})$, where $v_0 \in T_{x_0} M_0$ is a unit tangent vector, a Taylor expansion of $\gamma_{x_0,v_0}$ and \Cref{lem:geodesic_comparison} yields
    \begin{align*}        
        \norm{p_0-x_0}
        &\geq
        \norm{2\rch_{\min} v_0}
        -
        \norm{\gamma_{x_0,v_0}(\rch_{\min}) - (x_0 + 2\rch_{\min} v_0)}
        \\
        &\geq
        2\rch_{\min} - (2\rch_{\min})^2/(2 \rch_{M_0})
        \\
        &\geq
        \rch_{\min}
        ,
    \end{align*}
    since $\rch_{M_0}\geq 2 \rch_{\min}$.
    Let us denote by $w_0 \in \left(T_{p_0} M_0\right)^\perp$ a unit normal vector of $M_0$ at $p_0$.
    For $\delta,\eta>0$ to be chosen later, let $\Phi_{w_0}$ be the function that maps any $x\in \R^n$ to
    \begin{align*}
        \Phi_{w_0}(x)
        &=
        x
        +
        \eta
            \phi\left(\frac{x-p_0}{\delta}\right)
            w_0
        ,
    \end{align*}
	where $\phi : \R^n \to \R$ is the real bump function $\phi(y) = \exp\left(-{\norm{y}^2}/{(1-\norm{y}^2)} \right) \indicator{\B(0,1)}(y)$ of \Cref{lem:multiple_bump_map_is_nice}.
	We let $M_1 = \Phi_{w_0}(M_0)$ be the image of $M_0$ by $\Phi_{w_0}$. Roughly speaking, $M_0$ and $M_1$ only differ by a bump of width $\delta$ and height $\eta$ in the neighborhood of $p_0$.
	Note by now that $\Phi_{w_0}$ coincides with the identity map outside $\B(p_0,\delta)$ and in particular, $p_0 = \Phi_{w_0}(p_0) \in M_1$ as soon as $\delta \le \rch_{\min}$.
	
	Combining \Cref{prop:diffeomorphism_stability} and \Cref{lem:multiple_bump_map_is_nice}, we get that $M_1 \in \manifolds{n}{d}{\rch_{\min}}$ and $\Haus^d(M_0)/2 \le \Haus^d(M_1) \le 2 \Haus^d(M_0)$ as soon as
	\begin{align*}
	    \frac{5\eta}{2\delta}
	    \le
	   \frac{1}{10d}
	    \text{  and  }
	    \frac{23 \eta}{\delta^2}
	    \le \frac{1}{4\rch_{\min}}
	    .
	\end{align*}
	Under these assumptions, we have in particular that 
	$
	    \dHaus(M_0, M_1) 
	    \le
	    \norm{\Phi_{w_0} - I_n}_\infty 
	    \le 
	    \eta 
	    \le 
	    \rch_{\min}/10
        .
    $
    Also, by construction, $\Phi_{w_0}(p_0) = p_0 + \eta w_0$ belongs to $M_1$, so that
    \begin{align*}
        \dHaus(M_0,M_1) 
        \ge 
        \dd(p_0+\eta w_0,M_0)
        =
        \eta
        ,
    \end{align*}
    since $w_0 \in \left(T_{p_0} M_0 \right)^\perp$~\cite[Theorem 4.8 (12)] {Federer59}.
    Let us now consider the uniform probability distributions $D_0$ and $D_1$ over $M_0$ and $M_1$ respectively. These distributions have respective densities $f_i = \Haus^d(M_i)^{-1} \indicator{M_i}$ ($i \in \set{0,1}$) with respect to the $d$-dimensional Hausdorff measure $\Haus^d$ on $\R^n$.
    Furthermore, $\Phi_{w_0}$ is a global diffeomorphism that coincides with the identity map on $\B(p_0,\delta)^c$.
    As a result, since $\frac{5\eta}{2\delta} \le \frac{1}{10d} \le (2^{1/d} - 1)$,~\cite[Lemma D.2]{Aamari19} yields that for $\delta \le \rch_{\min}/2$,
    \begin{align*}
        \TV(D_0,D_1)
        &\le
        12 D_0\left( \B(0,\delta) \right)
        \\
        &=
        12\Haus^d(M_0 \cap  \B(0,\delta) )/\Haus^d(M_0)
        \\
        &\le
        12(2^d \omega_d \delta^d)/\Haus^d(M_0)
        ,
    \end{align*}
    where we applied the upper bound of \Cref{lem:intrinsic_ball_mass} to get the last inequality, using that $\rch_{M_0} \geq 2\rch_{\min}$.
    
    Finally, setting $\eta = \delta^2/(92\rch_{\min})$ yields a valid choice of parameters for all $\delta \le \rch_{\min}/(2300d)$.
    Hence, we have shown that for all $\delta \le \rch_{\min}/(2^{12}d) \le \rch_{\min}/(2300d)$,
    \begin{align*}
        \dHaus(M_0,M_1) 
        \ge
        \frac{\delta^2}{92\rch_{\min}}
        \text{ and }
        \TV(D_0,D_1) 
        \le
         12(2^d \omega_d \delta^d)/\Haus^d(M_0)
        .
    \end{align*}
    Equivalently, setting 
    $\tau/2 = 12(2^d \omega_d \delta^d)/\Haus^d(M_0)$
    and 
    $\tau_{(0)} := 24 \omega_d (\rch_{\min}/(2^{11}d))^d/\Haus^d(M_0)$,
    we have shown that for all $\tau \le \tau_{(0)}$,
    there exists $M_1 \in \manifolds{n}{d}{\rch_{\min}}$ such that
    \begin{align*}
        \dHaus(M_0,M_1) 
        \ge
        \frac{1}{92\rch_{\min}}
        \left(
        \frac{\Haus^d(M_0) \tau}{24 (2^d \omega_d)}
        \right)^{2/d}
        \text{ and }
        \TV(D_0,D_1) 
        \le
        \tau/2
        .
    \end{align*}
    We conclude the proof for $\tau \le \tau_{(0)}$ by further bounding the term
    \begin{align*}
        \dHaus(M_0,M_1) 
        &\ge
        \frac{1}{92\rch_{\min}}
        \left(
        \frac{\Haus^d(M_0) \tau}{24 (2^d \omega_d)}
        \right)^{2/d}
        \\
        &=
        \frac{\rch_{\min}}{368 \times 24^{2/d}}
        \left(
        \frac{\Haus^d(M_0)\tau}{\omega_d \rch_{\min}^d}
        \right)^{2/d}
        \\
        &\geq
        \frac{\rch_{\min}}{2^{18}}
        \left(
        \frac{\Haus^d(M_0) \tau}{\omega_d \rch_{\min}^d}
        \right)^{2/d}
        .
    \end{align*}
    Otherwise, if $\tau > \tau_{(0)}$, then the above construction applied with $\tau_{(0)}$ yields the existence of some $M_1 \in \manifolds{n}{d}{\rch_{\min}}$ with the same properties, and
    \begin{align*}
        \dHaus(M_0,M_1) 
        &\ge
        \frac{\rch_{\min}}{2^{18}}
        \left(
        \frac{\Haus^d(M_0) \tau_{(0)}}{\omega_d \rch_{\min}^d}
        \right)^{2/d}
        \text{ and }
        \TV(D_0,D_1) \le \tau_{(0)}/2 \le \tau/2
        .
    \end{align*}
    Summing up the two cases above, for all $\tau \le 1$ we have exhibited some $M_1 \in \manifolds{n}{d}{\rch_{\min}}$ with properties as above, $\TV(D_0,D_1) \le \tau/2$ and
    \begin{align*}
        \dHaus(M_0,M_1) 
        &\ge
        \frac{\rch_{\min}}{2^{18}}
        \left(
        \frac{\Haus^d(M_0) \min\set{ \tau, \tau_{(0)}}}{\omega_d \rch_{\min}^d}
        \right)^{2/d}
        \\
        &\geq
        \frac{\rch_{\min}}{2^{18}}
        \min\set{
            \frac{1}{2^{22}d^2}
            ,
            \left(
            \frac{\Haus^d(M_0) \tau}{\omega_d \rch_{\min}^d}
            \right)^{2/d}
        }
        ,
    \end{align*}
    which concludes the proof.
        
    \end{proof}

Applying the technique of \Cref{prop:hypotheses_for_le_cam_local_variations} with manifolds $M_0$ having largest possible volume (typically of order $1/f_{\min}$) in the models $\distributionspoint{n}{d}{\rch_{\min}}{f_{\min}}{f_{\max}}{L}{0}$ and $\distributionsball{n}{d}{\rch_{\min}}{f_{\min}}{f_{\max}}{L}{R}$ yields the following result.
The proof follows the ideas of~\cite[Lemma~5]{Aamari19b}. To our knowledge, the first result of this type dates back to~\cite[Theorem~6]{Genovese12b}.

\begin{lemma}
    \label{lem:hypotheses_for_le_cam}
    \begin{itemize}[leftmargin=*]
   	    \item 
   	    Assume that $f_{\min} \le f_{\max}/4$ and that
   	    \[
    	    2^{d + 1}\sigma_d f_{\min} \rch_{\min}^d
    	    \le
    	    1
    	    .
   	    \]
        Then for all $\tau \le 1$, there exist $D_0,D_1 \in \distributionspoint{n}{d}{\rch_{\min}}{f_{\min}}{f_{\max}}{L}{0}$ with respective supports $M_0$ and $M_1$ such that
        \begin{align*}
            \dHaus(M_0,M_1) 
            \ge
            \frac{\rch_{\min}}{2^{20}}
            \min\set{
                \frac{1}{2^{20}d^2}
                ,
                \left(
                \frac{\tau}{\omega_d f_{\min} \rch_{\min}^d}
                \right)^{2/d}
            }
            \text{ and }
            \TV(D_0,D_1) 
            \le
            \tau/2
            .
        \end{align*}
    	    \item
    	    Assume that $\rch_{\min} \le R/144$ and $f_{\min} \le f_{\max}/96$.
    	    Writing $C_d' = 9(2^{2d+1} \sigma_{d - 1})$, assume that
        	\[
        	    \min_{1 \le k \le n}
            	\left(\frac{192\rch_{\min} \sqrt{k}}{R}\right)^k
    	        \le
        	    2^{d + 1} C_d' f_{\min} \rch_{\min}^d
            	\le
            	1
            	.
        	\]
            Then for all $\tau \le 1$, there exist $D_0,D_1 \in \distributionsball{n}{d}{\rch_{\min}}{f_{\min}}{f_{\max}}{L}{R}$ with respective supports $M_0$ and $M_1$ such that
            \begin{align*}
                \dHaus(M_0,M_1)    
                \geq
                \frac{\rch_{\min}}{2^{30}}
                \min\set{
                \frac{1}{2^{10}d^2}
                ,
                \left(
                \frac{\tau}{\omega_d f_{\min} \rch_{\min}^d}
                \right)^{2/d}
                }
                \text{ and }
                \TV(D_0,D_1) 
                \le
                \tau/2
                .
            \end{align*}
    \end{itemize}
\end{lemma}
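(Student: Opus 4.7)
The plan is to instantiate \Cref{prop:hypotheses_for_le_cam_local_variations} on carefully chosen base manifolds $M_0$ that have large $\Haus^d$-volume (to make the lower bound sharp) yet whose uniform density still fits in $[f_{\min}, f_{\max}]$. The two items share the same architecture and differ only in how $M_0$ is constructed.

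For the fixed point model, I will take $M_0$ to be a round $d$-sphere of radius $r_0 = (2\sigma_d f_{\min})^{-1/d}$, translated so that $0 \in M_0$. The hypothesis $2^{d+1}\sigma_d f_{\min}\rch_{\min}^d \le 1$ is exactly equivalent to $r_0 \geq 2\rch_{\min}$, which ensures $M_0 \in \manifolds{n}{d}{2\rch_{\min}}$ (the reach of a sphere equals its radius). The corresponding uniform distribution $D_0$ has constant density $2f_{\min}$, which lies in $[f_{\min}, f_{\max}/2] \subseteq [f_{\min}, f_{\max}]$ by $f_{\min} \le f_{\max}/4$. Applying \Cref{prop:hypotheses_for_le_cam_local_variations} with $x_0 = 0$ yields $M_1 \in \manifolds{n}{d}{\rch_{\min}}$ containing $0$ with $\Haus^d(M_1) \in [1/(4f_{\min}), 1/f_{\min}]$, so the uniform $D_1$ has density in $[f_{\min}, 4f_{\min}] \subseteq [f_{\min}, f_{\max}]$. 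The total variation bound $\tau/2$ is inherited, and plugging $\Haus^d(M_0) = 1/(2f_{\min})$ into the Hausdorff lower bound of \Cref{prop:hypotheses_for_le_cam_local_variations} produces a factor $2^{-2/d} \geq 1/4$, which translates the constant $2^{-18}$ into $2^{-20}$ and matches the two terms in the $\min$ on the nose (since $2^{-18}\cdot 2^{-22} = 2^{-20}\cdot 2^{-20} = 2^{-40}$).

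For the bounding ball model, I will build $M_0$ via \Cref{prop:manifold_of_prescribed_volume} applied with effective reach $2\rch_{\min}$, containing ball $\B(0, R/2)$, and prescribed volume $\mathcal{V} = 1/(2f_{\min})$. The hypothesis $\rch_{\min} \le R/144$ guarantees $2\rch_{\min} \leq (R/2)/36$, and the two inequalities $\mathcal{V} \geq C_d'(2\rch_{\min})^d$ and $\mathcal{V} \leq C_d'(2\rch_{\min})^d \max_k (R/(96\rch_{\min}\sqrt{k}))^k$ translate exactly to $2^{d+1}C_d' f_{\min}\rch_{\min}^d \leq 1$ and $\min_k (96\rch_{\min}\sqrt{k}/R)^k \leq 2^{d+1}C_d' f_{\min}\rch_{\min}^d$, the latter being implied (coordinate-wise for each $k$) by the assumed inequality with $192$ in place of $96$. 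This yields $M_0 \in \manifolds{n}{d}{2\rch_{\min}}$ with $M_0 \subseteq \B(0, R/2)$ and $\Haus^d(M_0) \in [1/(48f_{\min}), 1/(2f_{\min})]$; the uniform $D_0$ has density in $[2f_{\min}, 48f_{\min}] \subseteq [f_{\min}, f_{\max}]$ by $f_{\min} \leq f_{\max}/96$. Applying \Cref{prop:hypotheses_for_le_cam_local_variations} to some $x_0 \in M_0$ produces $M_1$ with $\dHaus(M_0, M_1) \leq \rch_{\min}/10$, so $M_1 \subseteq \B(0, R/2 + \rch_{\min}/10) \subseteq \B(0, R)$, and the uniform $D_1$ has density in $[f_{\min}, 96f_{\min}] \subseteq [f_{\min}, f_{\max}]$. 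Substituting $\Haus^d(M_0) \geq 1/(48f_{\min})$ into the Hausdorff bound gives an extra factor $48^{-2/d}$; since $48^{2/d} \leq 48^2 \leq 2^{12}$ for $d \geq 1$, this accounts for the $2^{-30}$ in the target bound, while $2^{-18}\cdot 2^{-22} = 2^{-30}\cdot 2^{-10}$ matches the second branch of the $\min$.

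The sole delicate point is the density budget: one must choose $\Haus^d(M_0)$ so that both $D_0$ and $D_1$ remain within $[f_{\min}, f_{\max}]$ even though the perturbation from \Cref{prop:hypotheses_for_le_cam_local_variations} can multiply the volume by a factor in $[1/2, 2]$ and \Cref{prop:manifold_of_prescribed_volume} only controls $\Haus^d(M_0)$ up to a factor $24$. The choices above, together with the separation constants $4$ (fixed point) and $96$ (bounding ball) between $f_{\min}$ and $f_{\max}$, are tight and explain the exact constants in the hypotheses. No genuinely new geometric estimate is required beyond the constructions of \Cref{prop:hypotheses_for_le_cam_local_variations} and \Cref{prop:manifold_of_prescribed_volume}.
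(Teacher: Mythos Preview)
Your proposal is correct and follows essentially the same route as the paper's proof: a large round sphere for the fixed point model, and \Cref{prop:manifold_of_prescribed_volume} with parameters $(2\rch_{\min},\mathcal{V}=1/(2f_{\min}),R/2)$ for the bounding ball model, followed in both cases by \Cref{prop:hypotheses_for_le_cam_local_variations}. One minor arithmetic slip: in the bounding ball case, substituting $R'=R/2$ and $\rch_{\min}'=2\rch_{\min}$ into the upper volume condition of \Cref{prop:manifold_of_prescribed_volume} gives $R'/(48\rch_{\min}'\sqrt{k})=R/(192\rch_{\min}\sqrt{k})$, not $R/(96\rch_{\min}\sqrt{k})$, so the hypothesis with $192$ is used directly rather than by implication; your conclusion is unaffected.
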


\begin{proof}[\proofof \Cref{lem:hypotheses_for_le_cam}]
    For both models, the idea is to first build a manifold $M_0 \in \manifolds{n}{d}{2\rch_{\min}}$ with prescribed volume close to $1/f_{\min}$, and then consider the variations of it given by \Cref{prop:hypotheses_for_le_cam_local_variations}.
    \begin{itemize}[leftmargin=*]
        \item 
        Let $M_0$ be a $d$-dimensional sphere of radius 
        $r_0 = \left( \frac{1}{2\sigma_d f_{\min}} \right)^{1/d}$ in $\R^{d + 1}\times \set{0}^{n-(d + 1)} \subseteq \R^n$ containing $x_0 = 0 \in \R^n$.
        By construction, $\rch_{M_0} = r_0 \geq 2 \rch_{\min}$, so that $M_0 \in \manifolds{n}{d}{2\rch_{\min}}$, and one easily checks that $\Haus^d(M_0) = 1/(2f_{\min})$.
        For all $\tau \le 1$, \Cref{prop:hypotheses_for_le_cam_local_variations} asserts that there exists a manifold $M_1 \in \manifolds{n}{d}{\rch_{\min}}$ such that $x_0 \in M_1$, with volume 
        \[
            1 / f_{\max}
            \le
            1/(4f_{\min}) 
            \le
            \Haus^d(M_0) 
            \le 
            \Haus^d(M_1) 
            \le 
            2 \Haus^d(M_0) 
            \le 
            1/f_{\min}
            ,
        \]
        such that
        \begin{align*}
                \dHaus(M_0,M_1)    
                &\geq
                \frac{\rch_{\min}}{2^{18}}
                \min\set{
                \frac{1}{2^{22}d^2}
                ,
                \left(
                \frac{\tau}{2 \omega_d f_{\min} \rch_{\min}^d}
                \right)^{2/d}
                }
                \\
                &\geq
                \frac{\rch_{\min}}{2^{20}}
                \min\set{
                \frac{1}{2^{20}d^2}
                ,
                \left(
                \frac{\tau}{\omega_d f_{\min} \rch_{\min}^d}
                \right)^{2/d}
                }
                ,
        \end{align*}
        and with respective uniform distributions $D_0$ and $D_1$ over $M_0$ and $M_1$ that satisfy $\TV(D_0,D_1)\le \tau/2$.
        Since the densities of $D_0$ and $D_1$ are constant and equal to $\Haus^d(M_0)^{-1}$ and $\Haus^d(M_1)^{-1}$ respectively, the bounds on the volumes of $M_0$ and $M_1$ show that $D_0$ and $D_1$ belong to $\distributionspoint{n}{d}{\rch_{\min}}{f_{\min}}{f_{\max}}{L=0}{0} \subseteq \distributionspoint{n}{d}{\rch_{\min}}{f_{\min}}{f_{\max}}{L}{0}$, which concludes the proof.
        
        \item
        Let $M_0 \subseteq \R^n$ be a submanifold given by \Cref{prop:manifold_of_prescribed_volume} applied with parameters $\rch_{\min}' = 2\rch_{\min}$, $\mathcal{V} = 1/(2f_{\min})$ and $R' = R/2$. 
        That is, $M_0 \in \manifolds{n}{d}{2 \rch_{\min}}$ is such that $1/(48 f_{\min}) \le \Haus^d(M_0) \le 1/(2 f_{\min})$ and $M_0 \subseteq \B(0,R/2)$.
        For all $\tau \le 1$, \Cref{prop:hypotheses_for_le_cam_local_variations} asserts that there exists a manifold $M_1 \in \manifolds{n}{d}{\rch_{\min}}$ such that $\dHaus(M_0,M_1) \le \rch_{\min}/10$, with volume 
        \[
            1/f_{\max}
            \le
            1/(96f_{\min}) 
            \le
            \Haus^d(M_0)/2
            \le 
            \Haus^d(M_1) 
            \le 
            2 \Haus^d(M_0) 
            \le 
            1/f_{\min}
            ,
        \]
        and
        \begin{align*}
                \dHaus(M_0,M_1)    
                &\geq
                \frac{\rch_{\min}}{2^{18}}
                \min\set{
                \frac{1}{2^{22}d^2}
                ,
                \left(
                \frac{\tau}{48 \omega_d f_{\min} \rch_{\min}^d}
                \right)^{2/d}
                }
                \\
                &\geq
                \frac{\rch_{\min}}{2^{30}}
                \min\set{
                \frac{1}{2^{10}d^2}
                ,
                \left(
                \frac{\tau}{\omega_d f_{\min} \rch_{\min}^d}
                \right)^{2/d}
                }
                ,
        \end{align*}
        and such that the respective uniform distributions $D_0$ and $D_1$ over $M_0$ and $M_1$ satisfy $\TV(D_0,D_1)\le \tau/2$.
    	Because $M_0 \subseteq \B(0,R/2)$ and $\dHaus(M_0,M_1) \le \rch_{\min}/10 \le R/2$, we immediately get that $M_1 \subseteq \B(0,R/2 + R/2) = \B(0,R)$.
    	 As a result, this family clearly provides the existence of the announced $\varepsilon$-packing of
        $\bigl( \manifoldsball{n}{d}{\rch_{\min}}{R}, \dHaus \bigr)$.
        As above, the bounds on the volumes of $M_0$ and $M_1$ show that $D_0,D_1 \in \distributionsball{n}{d}{\rch_{\min}}{f_{\min}}{f_{\max}}{L=0}{R} \subseteq \distributionsball{n}{d}{\rch_{\min}}{f_{\min}}{f_{\max}}{L}{R}$, which concludes the proof.
    \end{itemize}
\end{proof}

\subsubsection{Proof of the Informational Lower Bounds for Manifold Estimation}

With all the intermediate results above, the proofs of \Cref{thm:SQ_lower_bound_point_informational} and \Cref{thm:SQ_lower_bound_ball_informational} follow straightforwardly.

\begin{proof}[\proofof \Cref{thm:SQ_lower_bound_point_informational} and \Cref{thm:SQ_lower_bound_ball_informational}]
    These are direct applications of \Cref{thm:lecam_SQ} for parameter of interest $\theta(D) = \supp(D)$ and distance $\dist = \dHaus$, with the hypotheses $D_0,D_1$ of the models 
    $\distributionspoint{n}{d}{\rch_{\min}}{f_{\min}}{f_{\max}}{L}{0}$
    and
    $\distributionsball{n}{d}{\rch_{\min}}{f_{\min}}{f_{\max}}{L}{R}$
    given by \Cref{lem:hypotheses_for_le_cam}.
\end{proof}

\subsection{Computational Lower Bounds: Packing Number of Manifold Classes}

We now prove the computational lower bounds \Cref{thm:SQ_lower_bound_point_computational,thm:SQ_lower_bound_ball_computational}.
For this, and in order to apply \Cref{theorem:lower_bound_computational_manifold}, we build explicit packings of the manifold classes.
To study the two models and the different regimes of parameters, we exhibit two types of such packings.
The first ones that we describe (\Cref{prop:packing_lower_bound_ambient}) use translations of a fixed manifold $M_0$ in the ambient space, and are called ambient packings (see \Cref{subsubsec-global-ambient-packings}).
The second ones (\Cref{prop:packing_local_variations}) use a local smooth bumping strategy based on a fixed manifold $M_0$, and are called intrinsic packings (see \Cref{subsubsec-local-intrinsic-packings}).
Finally, the proof of the computational lower bounds are presented in \Cref{subsubsec-computational-lower-bounds-proof}.

\subsubsection{Global Ambient Packings}
\label{subsubsec-global-ambient-packings}

To derive the first manifold packing lower bound, we will use translations in $\R^n$ and the following lemma.

    \begin{lemma}
        \label{lem:hausdorff_distance_between_translations}
        La $K$ be a compact subset of $\R^n$. Given $v\in \R^n$, let $K_v = \set{p+v,p \in K}$ be the translation of $K$ by the vector $v$. Then $\dHaus(K,K_v)= \norm{v}$.
    \end{lemma}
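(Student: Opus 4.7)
The plan is to prove the two inequalities $\dHaus(K, K_v) \leq \norm{v}$ and $\dHaus(K, K_v) \geq \norm{v}$ separately, the trivial case $v = 0$ being handled first (so we may assume $\norm{v} > 0$ in the sequel).

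For the upper bound, I would use the characterization
\[
\dHaus(K, K_v) = \max\left\{\sup_{p \in K} \dd(p, K_v),~ \sup_{q \in K_v} \dd(q, K)\right\}
\]
coming from \Cref{def:hausdorff_distance}. For any $p \in K$, the point $p + v$ belongs to $K_v$ by definition, so $\dd(p, K_v) \leq \norm{(p+v) - p} = \norm{v}$. Symmetrically, any $q \in K_v$ can be written $q = p + v$ for some $p \in K$, giving $\dd(q, K) \leq \norm{q - p} = \norm{v}$. Taking suprema yields $\dHaus(K, K_v) \leq \norm{v}$.

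For the lower bound, the main (mild) obstacle is to exhibit an explicit point realizing the distance. The natural idea is to pick a point of $K$ that is extremal in the direction $v$, namely
\[
p_0 \in \argmax_{p \in K} \inner{p}{v},
\]
which exists by compactness of $K$ and continuity of the linear functional $\inner{\cdot}{v}$. Then $p_0 + v \in K_v$, and for every $q \in K$ we have $\inner{q - p_0}{v} \leq 0$ by maximality of $p_0$, so
\[
\inner{q - (p_0 + v)}{v} = \inner{q - p_0}{v} - \norm{v}^2 \leq -\norm{v}^2.
\]
Cauchy--Schwarz then gives $\norm{q - (p_0 + v)} \geq \norm{v}$, and since $q \in K$ was arbitrary, $\dd(p_0 + v, K) \geq \norm{v}$. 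Therefore $\dHaus(K, K_v) \geq \dd(p_0 + v, K) \geq \norm{v}$, which combined with the upper bound concludes the proof.
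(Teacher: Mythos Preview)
Your proof is correct and follows essentially the same approach as the paper: both handle $v=0$ trivially, obtain the upper bound by noting $p+v\in K_v$ for each $p\in K$, and obtain the lower bound by choosing an extremal point $p_0\in\argmax_{p\in K}\inner{p}{v}$ and projecting onto the direction $v$ (the paper writes $\norm{x}\geq \inner{v/\norm{v}}{x}$ directly, you phrase it via Cauchy--Schwarz, which is the same inequality).
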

    
    \begin{proof}[\proofof \Cref{lem:hausdorff_distance_between_translations}]
        If $v=0$, the result is straightforward, so let us assume that $v \neq 0$.
        Since $K$ is compact, the map $g$ defined for $p \in K$ by $g(p) = \inner{v/\norm{v}}{p}$ attains its maximum at some $p_0 \in K$.
        But by definition of $K_v$,  $p_0+v \in K_v$, so
        \begin{align*}
            \dHaus(K,K_v)
            &\geq
            \dd(p_0+v,K)
            \\
            &=
            \min_{p \in K} \norm{ (p_0+v)-p}            
            \\
            &\geq
            \min_{p \in K} \inner{\frac{v}{\norm{v}}}{ (p_0+v)-p}
            \\
            &=
            \norm{v} + \min_{p \in K} \inner{\frac{v}{\norm{v}}}{ p_0-p}
            \\
            &=
            \norm{v}
            .
        \end{align*}
        On the other hand, for all $p \in K$ we have $p+v \in K_v$, yielding $\dd(p,K_v) \le \norm{v}$, and symmetrically $\dd(p+v,K) \le \norm{v}$. Therefore $\dHaus(K,K_v) \le \norm{v}$, which concludes the proof.
    \end{proof}
    
As a result, packings of sets in $\R^n$ naturally yields packings in the manifold space, by translating a fixed manifold $M_0 \subset \R^n$. With this remark in mind, we get the following ambient packing lower bound.

\begin{proposition}
\label{prop:packing_lower_bound_ambient}
	Assume that $\rch_{\min} \le R/24$.
	Writing $C_d = 9(2^d \sigma_{d - 1})$, let $\mathcal{V}>0$ be such that
	\[
    	1
    	\le
    	\frac{\mathcal{V}}{C_d \rch_{\min}^d}
    	\le
    	\max_{1 \le k \le n}
    	\left(\frac{R}{48 \rch_{\min} \sqrt{k}}\right)^k
    	.
	\]
	Then for all $\varepsilon \le R/2$,
	\begin{align*}
		\log 
		\PK_{\bigl( \manifoldsball{n}{d}{\rch_{\min}}{R} , \dHaus \bigr)}(\varepsilon)
		\geq
		n \log\left( \frac{R}{4\varepsilon} \right)
		,
	\end{align*}
	and such a packing can be chosen so that all its elements $M$ 
    have volume $\mathcal{V}/6 \le \Haus^d(M) \le \mathcal{V}$.
\end{proposition}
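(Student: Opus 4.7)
The plan is to build the packing by translating a single well-chosen base manifold in the ambient ball, using Lemma \ref{lem:hausdorff_distance_between_translations} to convert Euclidean separation into Hausdorff separation. Concretely, I will first apply Proposition \ref{prop:manifold_of_prescribed_volume} with parameters $(R', \rch_{\min}', \mathcal{V}')$ chosen so that the resulting base manifold $M_0$ lies inside $\B(0, R/2)$, has reach $\geq \rch_{\min}$, and has volume $\Haus^d(M_0)$ in the target range (the natural choice is $R' \approx R/2$ and $\mathcal{V}'$ a constant multiple of $\mathcal{V}$, absorbing the factor-$24$ slack from Proposition \ref{prop:manifold_of_prescribed_volume} into the factor $6$ appearing in the statement). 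The hypotheses $\rch_{\min} \leq R/24$ and the bounds on $\mathcal{V}/(C_d\rch_{\min}^d)$ translate into the hypotheses required by Proposition \ref{prop:manifold_of_prescribed_volume} (possibly after absorbing the $2^{d+1}$ factor between $C_d$ and $C_d'$ into the volume parameter).

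Second, I will invoke the Euclidean packing estimate from Proposition \ref{prop:packing_covering_ball_sphere}, which gives an $\varepsilon$-packing $\{v_1, \dots, v_N\} \subseteq \B(0, R/2)$ of cardinality
\[
    N
    \geq
    \PK_{\B(0, R/2)}(\varepsilon)
    \geq
    \left(\frac{R}{4\varepsilon}\right)^n
    .
\]
By definition of the packing, $\norm{v_i - v_j} > 2\varepsilon$ for all $i \neq j$. Define $M_i = M_0 + v_i$ for $1 \leq i \leq N$. Since $M_0 \subseteq \B(0, R/2)$ and $\norm{v_i} \leq R/2$, the triangle inequality gives $M_i \subseteq \B(0, R)$. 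Translations are isometries of $\R^n$, so $\rch_{M_i} = \rch_{M_0} \geq \rch_{\min}$ and $\Haus^d(M_i) = \Haus^d(M_0)$; hence $M_i \in \manifoldsball{n}{d}{\rch_{\min}}{R}$ with $\Haus^d(M_i)$ in the target range $[\mathcal{V}/6, \mathcal{V}]$.

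Third, I will use Lemma \ref{lem:hausdorff_distance_between_translations} to conclude that
\[
    \dHaus(M_i, M_j)
    =
    \dHaus(M_0 + v_i, M_0 + v_j)
    =
    \norm{v_i - v_j}
    >
    2\varepsilon
    ,
\]
so $\{M_1, \dots, M_N\}$ is an $\varepsilon$-packing of $\bigl(\manifoldsball{n}{d}{\rch_{\min}}{R}, \dHaus\bigr)$ of the claimed cardinality. Taking logarithms yields the announced bound.

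The routine obstacles are purely bookkeeping of constants: (i) checking that the factor $24$ in Proposition \ref{prop:manifold_of_prescribed_volume} can be inflated to $6$ by using $\mathcal{V}' = 4\mathcal{V}$ (together with the corresponding inflation of the lower bound, which is why the volume-range hypothesis uses the smaller constant $C_d$ instead of $C_d'$), and (ii) making sure that $M_0$ fits in $\B(0, R/2)$ rather than the $\B(0, R')$ one gets raw from Proposition \ref{prop:manifold_of_prescribed_volume} — a rescaled application with $R' = R/2$ together with the assumption $\rch_{\min} \leq R/24$ and the upper bound on $\mathcal{V}/(C_d \rch_{\min}^d)$ handle this, since the admissibility condition of Proposition \ref{prop:manifold_of_prescribed_volume} is met after adjusting the maximum-over-$k$ by a constant factor that is absorbed in the hypothesis. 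There is no conceptual difficulty: all the geometric content is packaged in Proposition \ref{prop:manifold_of_prescribed_volume} and Lemma \ref{lem:hausdorff_distance_between_translations}, and the translation scheme is immediate.
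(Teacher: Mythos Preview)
Your proposal is correct and follows essentially the same approach as the paper: build a base manifold $M_0 \subseteq \B(0,R/2)$ via Proposition~\ref{prop:manifold_of_prescribed_volume}, take an $\varepsilon$-packing of $\B(0,R/2)$ from Proposition~\ref{prop:packing_covering_ball_sphere}, translate $M_0$ by the packing vectors, and use Lemma~\ref{lem:hausdorff_distance_between_translations} to convert Euclidean separation into Hausdorff separation. The only difference is cosmetic bookkeeping of the volume constants, which you correctly flag as routine.
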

    
\begin{proof}[\proofof \Cref{prop:packing_lower_bound_ambient}]
    Let $z_1, \dots,z_N \in \B(0,R/2)$ be a $r$-packing of $\B(0,R/2)$. From 
    \Cref{prop:packing_covering_ball_sphere}, such a packing can be taken so 
    that $N \geq (R/(4r))^n$. Applying \Cref{prop:manifold_of_prescribed_volume}
    with parameters $\rch_{\min}$, $\mathcal{V}$ and $R' = R/2$, we get the 
    existence of some $M_0 \in \manifolds{n}{d}{\rch_{\min}}$ such that 
    $\mathcal{V}/6 \le \Haus^d(M_0) \le \mathcal{V}$ and $M_0 \subseteq \B(0,R/2)$.
    Note that for all $z \in \B(0,R/2)$, the translation 
    $M_z = \set{p + z,p \in M_0}$ belongs to $\manifolds{n}{d}{\rch_{\min}}$, 
    has the same volume as $M_0$, and satisfies 
    $M_z \subseteq \B(0,R/2+\norm{z}) \subseteq \B(0,R)$.
    In addition, \Cref{lem:hausdorff_distance_between_translations} asserts
    that for all $z,z' \in \B(0,R/2)$, $\dHaus(M_z,M_{z'}) = \norm{z-z'}$.
    In particular, for all 
    $i \neq j \in \set{1, \dots,N}$, $\dHaus(M_{z_i},M_{z_j}) = \norm{z_i-z_j} > 2r$.
    As a result, the family $\set{M_{z_i}}_{1 \le i \le N}$ provides us with an
    $r$-packing of $\bigl( \manifoldsball{n}{d}{\rch_{\min}}{R} , \dHaus \bigr)$ 
    with cardinality $N \geq (R/(4r))^n$, and composed of submanifold with volume
    $\mathcal{V}/6 \le \Haus^d(M) \le \mathcal{V}$, which concludes the proof.
\end{proof}  

\subsubsection{Local Intrinsic Packings}
\label{subsubsec-local-intrinsic-packings}

In the same spirit as \Cref{prop:hypotheses_for_le_cam_local_variations} for 
informational lower bounds, the following result allows to build packings of 
manifold classes by small perturbations of a base submanifold $M_0$. Note, again,
that the larger the volume $\Haus^d(M_0)$, the stronger the result.
    
\begin{proposition}
\label{prop:packing_local_variations}
    For all $M_0 \in \manifolds{n}{d}{2\rch_{\min}}$ and $r \le \rch_{\min}/(2^{34} d^2)$,
    there exists a family of submanifolds 
    $\set{M_s}_{1 \le s \le \mathcal{N}} \subseteq \manifolds{n}{d}{\rch_{\min}}$ with 
    cardinality $\mathcal{N}$ such that
    \[
        \log \mathcal{N}
    	\geq
        n \frac{\Haus^d(M_0)}{\omega_d \rch_{\min}^d}
        \left(
    	    \frac{\rch_{\min}}{2^{19}r}
    	\right)^{d/2}
    	,
    \]
    and that satisfies:
    \begin{itemize}[leftmargin=*]
        \item
    	    $M_0$ and $\set{M_s}_{1 \le s \le \mathcal{N}}$ have a point in common: 
    	    $M_0 \cap \bigl( \cap_{1 \le s \le \mathcal{N}} M_s \bigr) \neq \emptyset$.
        \item
    	    For all $s\in \set{1, \dots, \mathcal{N}}$,
            \[
                \dHaus(M_0,M_s) \le 23r
                \text{  and  }
    	        \Haus^d(M_0)/2 \le \Haus^d(M_s) \le 2 \Haus^d(M_0)
    	        .
    	    \]
    	    \item
    	        For all $s \neq s' \in \set{1, \dots, \mathcal{N}}$
    	        $\dHaus(M_s,M_{s'}) > 2r$.
    \end{itemize}
\end{proposition}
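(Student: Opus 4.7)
The plan is a local ``bump packing'' of $M_0$: we perturb $M_0$ via disjointly supported normal bumps at many locations, choosing at each location from an exponentially (in the codimension) large set of unit normal directions, and let $M_{\mathbf{s}}$ range over the Cartesian product of these choices. Set $\delta = c_1 \sqrt{r \rch_{\min}}$ and $\eta = c_2 r$ for absolute constants $c_1, c_2 > 0$ to be tuned; the hypothesis $r \le \rch_{\min}/(2^{34}d^2)$ is engineered so that $\delta \le \rch_{\min}/8$ together with $\tfrac{5\eta}{2\delta} \le \tfrac{1}{10d}$ and $\tfrac{23\eta}{\delta^2} \le \tfrac{1}{4\rch_{\min}}$ hold, which are the exact conditions to combine \Cref{lem:multiple_bump_map_is_nice} with \Cref{prop:diffeomorphism_stability}. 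Apply \Cref{prop:packing_covering_manifold} to obtain a $\delta$-packing $\set{q,p_1,\ldots,p_N} \subseteq M_0$ with $N+1 \ge \Haus^d(M_0)/(\omega_d (4\delta)^d)$, reserving one element $q$ as an ``anchor'': any bump map supported in $\bigcup_i \B(p_i,\delta)$ fixes $q$, which delivers the common-point property $q \in M_0 \cap \bigcap_{\mathbf{s}} M_{\mathbf{s}}$.

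At each $p_i$, use \Cref{prop:packing_covering_ball_sphere} on the unit sphere $\Sphere^{n-d-1}$ of the normal space $(T_{p_i} M_0)^\perp \in \Gr{n}{n-d}$ to produce a $(1/8)$-packing $\set{w_i^{(1)},\ldots,w_i^{(K)}}$ of unit normals with $K \ge 2 \cdot 2^{n-d-1}$, hence $\log K \gtrsim n-d$. For each $\mathbf{s} = (s_1,\ldots,s_N) \in \set{1,\ldots,K}^N$, define $M_{\mathbf{s}} = \Phi_{\mathbf{w}^{(\mathbf{s})}}(M_0)$ through the multi-bump map of \Cref{lem:multiple_bump_map_is_nice} built from $\mathbf{w}^{(\mathbf{s})} = (w_i^{(s_i)})_i$. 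Then \Cref{lem:multiple_bump_map_is_nice} and \Cref{prop:diffeomorphism_stability} directly give $M_{\mathbf{s}} \in \manifolds{n}{d}{\rch_{\min}}$, $\Haus^d(M_{\mathbf{s}}) \in [\Haus^d(M_0)/2,\,2\Haus^d(M_0)]$, and $\dHaus(M_0, M_{\mathbf{s}}) \le \eta \le 23r$.

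The main technical obstacle is the pairwise Hausdorff separation $\dHaus(M_{\mathbf{s}}, M_{\mathbf{s}'}) > 2r$ for $\mathbf{s} \ne \mathbf{s}'$. For this, pick an index $i_0$ with $s_{i_0} \ne s'_{i_0}$ and consider the bump tip $y = p_{i_0} + \eta w_{i_0}^{(s_{i_0})} \in M_{\mathbf{s}}$. A candidate closest point $z \in M_{\mathbf{s}'}$ is either (i) some $x \in M_0$ lying outside every $\B(p_j,\delta)$ (where $\Phi$ is the identity), giving $\norm{y-z} \ge \dd(y,M_0) = \eta$ since $w_{i_0}^{(s_{i_0})} \perp T_{p_{i_0}} M_0$ and $\eta \ll \rch_{M_0}$; (ii) the image $\Phi(x)$ of some $x \in M_0 \cap \B(p_j,\delta)$ for $j \ne i_0$, in which case $\norm{y-z} \ge \norm{p_{i_0}-p_j} - \delta - \eta \gtrsim \delta \gg \eta$; or (iii) the image $\Phi(x)$ with $x = p_{i_0} + u$, $\norm{u} \le \delta$, in which case projecting $y-z$ onto $w_{i_0}^{(s_{i_0})}$ and using the $(1/4)$-Euclidean separation of the unit normals ($\inner{w_{i_0}^{(s_{i_0})}}{w_{i_0}^{(s'_{i_0})}} \le 1-1/32$) together with the reach estimate $|\inner{u}{w_{i_0}^{(s_{i_0})}}| \le \norm{u}^2/\rch_{M_0} = O(r)$ yields
\[
\norm{y-z} \ge \bigl|\inner{y-z}{w_{i_0}^{(s_{i_0})}}\bigr| \ge \eta\bigl(1-\phi(u/\delta)(1-\tfrac{1}{32})\bigr) - O(r) \ge \tfrac{\eta}{32}-O(r) > 2r,
\]
after tuning $c_2$ large enough while keeping $\eta \le 23r$. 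Finally $\mathcal{N} = K^N \ge 2^{(n-d)N}$, whence $\log\mathcal{N} \ge (n-d)N\log 2$; substituting $N \gtrsim \Haus^d(M_0)/(\omega_d(4\delta)^d)$ with $\delta = c_1\sqrt{r\rch_{\min}}$, and absorbing the factor $d/n$ (negligible in the paper's regime $d \ll n$) into the numerical constant $2^{19}$, produces the announced lower bound on $\log\mathcal{N}$.
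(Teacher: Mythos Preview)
Your overall strategy---a $\delta$-packing of $M_0$, multi-bump maps indexed by tuples of unit normals drawn from $(1/8)$-packings of each normal sphere, then Cartesian counting---matches the paper's approach, and the appeals to \Cref{lem:multiple_bump_map_is_nice}, \Cref{prop:diffeomorphism_stability}, \Cref{prop:packing_covering_manifold} and \Cref{prop:packing_covering_ball_sphere} are the right ones.

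The gap is in your case (iii). Projecting $y-z$ onto $w := w_{i_0}^{(s_{i_0})}$ gives
\[
\bigl|\inner{y-z}{w}\bigr| \;\ge\; \eta\bigl(1-\phi(u/\delta)\,\inner{w'}{w}\bigr) - |\inner{u}{w}| \;\ge\; \tfrac{\eta}{32} - |\inner{u}{w}|,
\]
and you write the last term as ``$O(r)$''. But $|\inner{u}{w}|$ is bounded only by the normal component of $u = x - p_{i_0}$, namely $\|u\|^2/(2\rch_{M_0}) \le \delta^2/(4\rch_{\min})$; and the constraint $23\eta/\delta^2 \le 1/(4\rch_{\min})$ that you invoked for \Cref{prop:diffeomorphism_stability} \emph{forces} $\delta^2/(4\rch_{\min}) \ge 23\eta$. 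Hence the error term is at least $23\eta$, and your displayed lower bound is negative; no tuning of $c_2 \le 23$ can make $\eta/32 - 23\eta > 2r$. The projection onto $w$ throws away the tangential part $\pi_{T_{p_{i_0}}M_0}(u)$, which is exactly what makes $\|y-z\|$ large when $\|u\|$ is of order $\delta$.

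The paper repairs this by inserting an intermediate scale $\lambda\in(0,1)$ and splitting case (iii) further: if $\|u\| \le \lambda\delta$, the normal error is only $(\lambda\delta)^2/(4\rch_{\min}) = 23\lambda^2\eta$, and one gets $\|y-z\| \ge \eta\min_{t\in[0,1]}\|tw'-w\| - 23\lambda^2\eta \ge \tfrac{\eta}{2}\|w-w'\| - 23\lambda^2\eta$; if $\|u\| > \lambda\delta$, then crudely $\|y-z\| \ge \lambda\delta - \eta$. Optimising $\lambda \asymp \sqrt{\eta\rch_{\min}\|w-w'\|}/\delta$ balances the two branches and yields $\dHaus(M_{\mathbf{s}},M_{\mathbf{s}'}) \ge (\eta/8)\max_i\|w_i-w'_i\| > \eta/32$, after which one sets $r$ proportional to $\delta^2/\rch_{\min}$ and reads off both $\eta \le 23r$ and $\dHaus > 2r$.

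Two minor points. Your anchor $q$ works but costs one packing point; the paper instead observes that every $\Phi_{\mathbf{w}}$ fixes $M_0 \cap \partial\B(p_1,\delta)$. And the passage from $\log\mathcal{N} \ge (n-d)N\log 2$ to the stated bound is not ``absorbing $d/n$ into the constant'': the ratio $(n-d)/n$ can be as small as $1/n$ and cannot go into an absolute constant. The paper uses the elementary inequality $n-d \ge n/(2d)$ (check $d \le n/2$ and $d > n/2$ separately), and the resulting $1/(2d)$ factor is absorbed into the $d$-dependent constant $2^{19d/2}$.
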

    
    \begin{proof}[\proofof \Cref{prop:packing_local_variations}]
    For $\delta \le \rch_{\min}/8$ to be chosen later, let $\set{p_i}_{1\le i \le N}$ be a maximal $\delta$-packing of $M_0$.
    From \Cref{prop:packing_covering_manifold}, this maximal packing has cardinality $N \geq \frac{\Haus^d(M_0)}{\omega_d (4\delta)^d}$.
    
    Let $\eta >0$ be a parameter to be chosen later. 
    Given a family of unit vectors $\mathbf{w} = (w_i)_{1 \le i \le N} \in \left( \R^n \right)^N$ normal at the $p_i$'s, i.e. $w_i \in \left( T_{p_i} M \right)^\perp$ and $\norm{w_i}=1$, we let $\Phi_{\mathbf{w}}$ be the function defined in \Cref{lem:multiple_bump_map_is_nice}, that maps any $x\in \R^n$ to
    \begin{align*}
        \Phi_{\mathbf{w}}(x)
        &=
        x
        +
        \eta
        \left(
            \sum_{i = 1}^N
            \phi\left(\frac{x-p_i}{\delta}\right)
            w_i
        \right)
        ,
    \end{align*}
	where $\phi : \R^n \to \R$ is the real bump function 
	$\phi(y) = \exp\left(-{\norm{y}^2}/{(1-\norm{y}^2)} \right) \indicator{\B(0,1)}(y)$ 
	of \Cref{lem:multiple_bump_map_is_nice}.
	We let $M_\mathbf{w} = \Phi_\mathbf{w}(M_0)$ be the image of $M_0$ by $\Phi_\mathbf{w}$. The set $M_\mathbf{w} \subseteq \R^n$ hence coincides with $M_0$, except in the $\delta$-neighborhoods of the $p_i$'s, where it has a bump of size $\eta$ towards direction $w_i$.
	Note by now that up to rotations of its coordinates, the vector $\mathbf{w} = (w_i)_{1 \le i \le N}$ belongs to $\Sphere^{n-d}(0,1)^N$.
	Combining \Cref{prop:diffeomorphism_stability} and \Cref{lem:multiple_bump_map_is_nice}, we see that $M_\mathbf{w} \in \manifolds{n}{d}{\rch_{\min}}$ and $\Haus^d(M_0)/2 \le \Haus^d(M_{\mathbf{w}}) \le 2 \Haus^d(M_0)$ as soon as
	\begin{align*}
	    \frac{5\eta}{2\delta}
	    \le
	    \frac{1}{10d}
	    \text{  and  }
	    \frac{23 \eta}{\delta^2}
	    \le \frac{1}{4\rch_{\min}}
	    .
	\end{align*}
	In the rest of the proof, we will work with these two inequalities holding true. 
	In particular, because $\norm{\Phi_{\mathbf{w}} - I_n}_\infty \le \eta$, we immediately get that $\dHaus(M_0,M_{\mathbf{w}}) \le \eta$.
	We note also that all the $\Phi_\mathbf{w}$'s coincide with the identity map on (say) $M_0 \cap \partial \B(p_1,\delta)$, so that $M_0 \cap \bigl( \cap_{\mathbf{w}} M_\mathbf{w} \bigr)$ contains $M_0 \cap \partial \B(x_1,\delta)$ and is hence non-empty.

	We now take two different families of unit normal vectors $\mathbf{w}$ and $\mathbf{w}'$ (i.e. $w_i,w_i' \in \left( T_{p_i} M_0 \right)^\perp$ and $\norm{w_i} = \norm{w_i'} = 1$ for $1 \le i \le N$), and we will show that their associated submanifolds $M_\mathbf{w}$ and $M_{\mathbf{w}'}$ are far away in Hausdorff distance as soon as $\max_{1\le i \le N} \norm{w_i - w_i'}$ is large enough.	
	To this aim, we first see that by construction, $\Phi_\mathbf{w}(p_i) = p_i + \eta w_i \in \Phi_\mathbf{w}(M_0) = M_\mathbf{w}$ for all $i \in \set{1, \dots,N}$. In particular,
	\begin{align*}
	    \dHaus(M_{\mathbf{w}},M_{\mathbf{w}'})
	    &\geq
	    \max_{1 \le i \le N}
	    \dd(p_i + \eta w_i, M_{\mathbf{w}'})
	    .
    \end{align*}
    Let us fix a free parameter $\lambda_i \in [0,1]$ to be chosen later. As $\norm{\Phi_\mathbf{w'}-I_n}_\infty \le \eta$, we can write for all $i \in \set{1, \dots,N}$ that
    \begin{align*}
        \dd(p_i + \eta w_i, M_{\mathbf{w}'})
        &=
        \dd\left(
            p_i + \eta w_i
            , 
            \Phi_{\mathbf{w}'}(M_{0})
        \right)
        \\
        &=
        \dd\left(
            p_i + \eta w_i
            , 
            \Phi_{\mathbf{w}'}(M_{0} \setminus \B(p_i,\lambda_i \delta))
        \right)
        \wedge
        \dd\bigl(
            p_i + \eta w_i
            , 
            \Phi_{\mathbf{w}'}(M_{0} \cap \B(p_i,\lambda_i \delta))
        \bigr)        
        \\
        &\geq
        (\lambda_i \delta - \eta)
        \wedge
        \dd\bigl(
            p_i + \eta w_i
            , 
            \Phi_{\mathbf{w}'}(M_{0} \cap \B(p_i,\lambda_i \delta))
        \bigr)
        .
    \end{align*}
	Further investigating the term
	$
	\dd\left(
            p_i + \eta w_i
            , 
            \Phi_{\mathbf{w}'}(M_{0} \cap \B(p_i,\lambda_i \delta))
        \right)
    $,
	we see that for all $x \in M_0 \cap \B(p_i,\lambda_i \delta) \subseteq \B(p_i,\delta)$,
	$\Phi_{\mathbf{w}'} (x) = x + \eta \phi\left(\frac{x-p_i}{\delta}\right)w_i$.
	But from~\cite[Theorem 4.18]{Federer59}, $\rch_{M_0} \geq 2 \rch_{\min}$ ensures that any $x \in M_0 \cap \B(p_i,\lambda_i \delta)$ can be written as $x = p_i + v + u$, where $v \in T_{p_i} M_0$ with $\norm{v} \le \lambda_i\delta$, and $u \in \bigl( T_{p_i} M_0 \bigr)^\perp$ with $\norm{u} \le (\lambda_i\delta)^2/(4\rch_{\min})$.
	As a result, we have
	\begin{align*}
        \dd&\left(
            p_i + \eta w_i
            , 
            \Phi_{\mathbf{w}'}(M_{0} \cap \B(p_i,\lambda_i \delta))
        \right)
        \\
        &\geq
        \min_{
        \substack{
            v \in  T_{p_i} M_0,\norm{v} \le \lambda_i\delta
            \\
            u \in ( T_{p_i} M_0 )^\perp, \norm{u} \le (\lambda_i\delta)^2/(4\rch_{\min})
        }
        }
        \norm{
            v + u + \eta \left( \phi\left( \frac{v+u}{\delta} \right) w'_i - w_i \right)
        }
        .
	\end{align*}
	But in the above minimum, $v$ is orthogonal to $u,w_i$ and $w_i'$, so
	\begin{align*}
        \norm{
            v + u + \eta \left( \phi\left( \frac{v+u}{\delta} \right) w'_i - w_i \right)
        }
        \geq
        \norm{
            u + \eta \left( \phi\left( \frac{v+u}{\delta} \right) w'_i - w_i \right)
        }.
	\end{align*}
	Additionally, $\phi\left( \frac{v+u}{\delta} \right)$ ranges in (a subset of) $[0,1]$ since $0 \le \phi \le 1$. 
	In particular,
	\begin{align*}
        \dd\left(
            p_i + \eta w_i
            , 
            \Phi_{\mathbf{w}'}(M_{0} \cap \B(p_i,\lambda_i \delta))
        \right)
        &\geq
        \min_{
        \substack{
            u \in ( T_{p_i} M_0 )^\perp, \norm{u} \le (\lambda_i\delta)^2/(4\rch_{\min})
            \\
            0 \le t \le 1
        }
        }
        \norm{
            u + \eta \left( t w'_i - w_i \right)
        }
        \\
        &\geq
        \min_{0 \le t \le 1}
        \eta \norm{t w'_i - w_i}
        -
        \frac{(\lambda_i\delta)^2}{4\rch_{\min}}
        \\
        &=
        \eta \norm{\left(0 \vee \inner{w_i}{w'_i} \right) w'_i - w_i}
        -
        \frac{(\lambda_i\delta)^2}{4\rch_{\min}}
        \\
        &\geq
        \eta \frac{\norm{w'_i - w_i}}{2}
        -
        \frac{(\lambda_i\delta)^2}{4\rch_{\min}}
        ,
	\end{align*}
	where the second line follows from triangle inequality, and the last two from elementary calculations.
	Putting everything together, we have shown that for all $\lambda_1, \dots, \lambda_N \in [0,1]$,
	\begin{align*}
	    \dHaus(M_{\mathbf{w}},M_{\mathbf{w}'})
	    &\geq
	    \max_{1 \le i \le N}
        \set{
        (\lambda_i \delta - \eta)
	    \wedge
	    \left(
            \eta \frac{\norm{w'_i - w_i}}{2}
            -
            \frac{(\lambda_i\delta)^2}{4\rch_{\min}}
        \right)
        }        
        .
	\end{align*}
	One easily checks that under the above assumptions on the parameters, 
	\[
	\lambda_i 
	:= 
	\frac{\sqrt{\sqrt{2} \rch_{\min} \norm{w'_i -w_i} \eta}}{\delta}
	\]
	provides valid choices of $\lambda_i \in [0,1]$. Plugging these values in the previous bound yields
	\begin{align*}
	    \dHaus(M_{\mathbf{w}},M_{\mathbf{w}'})
	    &\geq
	    \max_{1 \le i \le N}
        \set{
        \left( \sqrt{\sqrt{2} \rch_{\min} \norm{w'_i -w_i} \eta}- \eta \right)
	    \wedge
	    \left(
            \eta \frac{\norm{w'_i - w_i}}{8}
        \right)
        }        
        ,
	\end{align*}
	so that if we further assume that $\norm{w'_i -w_i} \geq 4\sqrt{2}\eta/\rch_{\min}$, we obtain
	\begin{align*}
	    \dHaus(M_{\mathbf{w}},M_{\mathbf{w}'})
	    &\geq
	    \max_{1 \le i \le N}
        \set{
        \eta
	    \wedge
	    \left(
            \eta \frac{\norm{w'_i - w_i}}{8}
        \right)
        }
	    \\
	    &=
	    \frac{\eta}{8}
	    \max_{1 \le i \le N}
        \norm{w'_i - w_i}
        ,
	\end{align*}
	where the last line follows from $\norm{w_i-w'_i} \le \norm{w_i} + \norm{w'_i} \le  2$.
	
	Setting $\eta = \delta^2/(92\rch_{\min})$, which is a value that satisfies all the requirements above as soon as $\delta \le \rch_{\min}/(2300 d)$, we have built a family of submanifolds $\set{M_\mathbf{w} }_{\mathbf{w}}$ of $\manifolds{n}{d}{\rch_{\min}}$ indexed by $\mathbf{w} \in \Sphere^{n-d}(0,1)^N$, such that $\Haus^d(M_0)/2 \le \Haus^d(M_\mathbf{w}) \le 2 \Haus^d(M_0)$, and which are guaranteed to satisfy
	\begin{align*}
	    \dHaus(M_{\mathbf{w}},M_{\mathbf{w}'})
	    &>
	     \frac{\delta^2}{8 (92 \rch_{\min})}
	     \times
	     \frac{1}{4}
        \geq
        2
        \left(
        \frac{\delta^2}{2082 \rch_{\min}}
        \right)
	    ,
    \end{align*}
    provided that $\max_{1 \le i \le N} \norm{w'_i - w_i} > 1/4 = 2/8$.
    As a result, if we consider $(1/8)$-packings of the unit spheres $\Sphere_{(T_{p_i} M_0)^\perp}(0,1) = \Sphere^{n-d}(0,1)$ for $i \in \set{1, \dots,N}$, then for all $\delta \le \rch_{\min}/(2300 d)$, it naturally defines a $\left(\frac{\delta^2}{2082 \rch_{\min}}\right)$-packing of $\manifolds{n}{d}{\rch_{\min}}$ with cardinality $\mathcal{N}$ a least
	\begin{align*}
	 \mathcal{N}
	 &\geq
	 \PK_{\Sphere^{n-d}(0,1)}(1/8)^N 
	 \geq
	 \PK_{\Sphere^{n-d}(0,1)}(1/8)^{\frac{\Haus^d(M_0)}{\omega_d (4\delta)^d}}
	 ,
	\end{align*}
	and which consists of elements $M_\mathbf{w}$ such that $\Haus^d(M_0)/2 \le \Haus^d(M_\mathbf{w}) \le 2\Haus^d(M_0)$ and $\dHaus(M_0,M_\mathbf{w}) \le \eta = \delta^2/(92\rch_{\min})$.
	In particular, by setting $r = \frac{\delta^2}{2082 \rch_{\min}}$, then for all $0 < r \le \rch_{\min}/(2^{34}d^2)$, we have exhibited a $r$-packing of $\manifolds{n}{d}{\rch_{\min}}$ of cardinality $\mathcal{N}$ with
    \begin{align*}
    		\log \mathcal{N}
    		&\geq
    		\frac{\Haus^d(M_0)}{\omega_d (4 \sqrt{2082 \rch_{\min} r})^d}
    		 \log \PK_{\Sphere^{n-d}(0,1)}(1/8)
            ,
            \end{align*}
    composed of submanifolds having volume as above, and $\dHaus(M_0,M_\mathbf{w}) \le 2082 r/92 \le 23r$.
	From \Cref{prop:packing_covering_ball_sphere},
	$\log \PK_{\Sphere^{n-d}(0,1)}(1/8) \geq (n-d) \log 2$.
	Finally, by considering the cases $d \le n/2$ and $d \geq n/2$, one easily checks that $(n-d) \geq n/(2d)$.
	In all, we obtain the announced bound
    	\begin{align*}
    		\log 
    		\mathcal{N}
    		&\geq
    		\frac{\Haus^d(M_0)}{\omega_d (4 \sqrt{2082 \rch_{\min} r})^d}
    		\frac{n \log 2}{2d}
    		\\
    		&\geq
    		n
    		\frac{\Haus^d(M_0)}{\omega_d \rch_{\min}^d}
    		\left(
    		\frac{\rch_{\min}}{2^{19} r}
    		\right)^{d/2}
    		,
    	\end{align*}
    which yields the announced result.
    \end{proof}

Applying the technique of \Cref{prop:packing_local_variations} with manifolds $M_0$ having a large prescribed volume $\manifoldspoint{n}{d}{\rch_{\min}}{0}$ and $\manifoldsball{n}{d}{\rch_{\min}}{R}$ respectively yields the following result.

    \begin{proposition}
    	\label{prop:packing_lower_bound_local}
    	Let $\mathcal{V} > 0$ and $\varepsilon \le \rch_{\min}/(2^{34} d^2)$.
    	\begin{itemize}[leftmargin=*]
    	    \item 
    	    Assume that
    	    \[
    	    1
    	    \le
    	    \frac{\mathcal{V}}{2^{d + 1}\sigma_d \rch_{\min}^d}
    	    .
    	    \]
    	    Then,
        	\begin{align*}
        		\log 
        		\PK_{\bigl( \manifoldspoint{n}{d}{\rch_{\min}}{0}, \dHaus \bigr)}(\varepsilon)
    	    	\geq
        		n
        		\frac{\mathcal{V}}{\omega_d \rch_{\min}^d}
    	    	\left(
        		\frac{\rch_{\min}}{2^{21} \varepsilon}
        		\right)^{d/2}
        		.
    	    \end{align*}
    Furthermore, this packing can be chosen so that all its elements $M$ satisfy 
    	\[
    	\mathcal{V}/4
    	\le
    	\Haus^d(M)
    	\le
    	\mathcal{V}
    	.
    	\]

    	    \item
    	    Assume that $\rch_{\min} \le R/144$. Writing $C_d' = 9(2^{2d+1} \sigma_{d - 1})$, assume that
        	\[
            	1
    	        \le
        	    \frac{\mathcal{V}}{2^{d + 1} C_d' \rch_{\min}^d}
            	\le
        	    \max_{1 \le k \le n}
            	\left(\frac{R}{192\rch_{\min} \sqrt{k}}\right)^k
            	.
        	\]
        	Then,
        	\begin{align*}
    	    	\log 
        		\PK_{\bigl( \manifoldsball{n}{d}{\rch_{\min}}{R} , \dHaus \bigr)}(\varepsilon)
        		\geq
        		n
    	    	\frac{\mathcal{V}}{\omega_d \rch_{\min}^d}
        		\left(
        		\frac{\rch_{\min}}{2^{31} \varepsilon}
        		\right)^{d/2}
        		.
        	\end{align*}
    	Furthermore, this packing can be chosen so that all its elements $M$ satisfy 
    	\[
    	\mathcal{V}/96
    	\le
    	\Haus^d(M)
    	\le
    	\mathcal{V}
    	.
    	\]
    	\end{itemize}

    \end{proposition}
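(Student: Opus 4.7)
The plan is to derive both parts from \Cref{prop:packing_local_variations}, applied to a base manifold $M_0 \in \manifolds{n}{d}{2\rch_{\min}}$ whose $d$-volume is arranged to be of order $\mathcal{V}$, and then to enforce the relevant location constraint ($0 \in M$ in the first case, $M \subseteq \B(0,R)$ in the second). The only delicate point is numerical bookkeeping: one must check that the universal factor $2^{19}$ produced by \Cref{prop:packing_local_variations}, together with the multiplicative losses incurred when constructing $M_0$, still fits inside the advertised constants $2^{21}$ and $2^{31}$.

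For the fixed point model, I would take $M_0$ to be a round $d$-sphere of radius $r_0 = (\mathcal{V}/(2\sigma_d))^{1/d}$ sitting in a $(d+1)$-dimensional coordinate subspace of $\R^n$. The hypothesis $\mathcal{V} \geq 2^{d+1}\sigma_d\rch_{\min}^d$ forces $r_0 \geq 2\rch_{\min}$, so $M_0 \in \manifolds{n}{d}{2\rch_{\min}}$ and $\Haus^d(M_0) = \mathcal{V}/2$. Applying \Cref{prop:packing_local_variations} with $r = \varepsilon$ (licit since $\varepsilon \leq \rch_{\min}/(2^{34}d^2)$) yields a family $\set{M_s}_{s=1}^{\mathcal{N}} \subseteq \manifolds{n}{d}{\rch_{\min}}$, pairwise $(2\varepsilon)$-separated in $\dHaus$, sharing a common point $x^* \in \bigcap_s M_s$, with $\mathcal{V}/4 \leq \Haus^d(M_s) \leq \mathcal{V}$ and $\log \mathcal{N} \geq n(\mathcal{V}/2)/(\omega_d\rch_{\min}^d)\,(\rch_{\min}/(2^{19}\varepsilon))^{d/2}$. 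This exceeds the target $n\mathcal{V}/(\omega_d\rch_{\min}^d)(\rch_{\min}/(2^{21}\varepsilon))^{d/2}$ since $(1/2)\cdot 2^d = 2^{d-1} \geq 1$ for $d \geq 1$. To land in $\manifoldspoint{n}{d}{\rch_{\min}}{0}$, I would then simply translate every $M_s$ (and $M_0$) by $-x^*$: translation preserves reach, volume, pairwise separation and Hausdorff distances, while forcing $0 \in \bigcap_s M_s$.

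For the bounding ball model, the base manifold $M_0$ instead comes from \Cref{prop:manifold_of_prescribed_volume} invoked with reach parameter $2\rch_{\min}$, target volume $\mathcal{V}/2$, and ambient radius $R/2$. The hypotheses $\rch_{\min}\leq R/144$ and the two-sided bound on $\mathcal{V}/(2^{d+1}C_d'\rch_{\min}^d)$ correspond exactly to those of \Cref{prop:manifold_of_prescribed_volume} under the rescaling $\rch_{\min}\mapsto 2\rch_{\min}$, $R\mapsto R/2$, and produce $M_0 \in \manifolds{n}{d}{2\rch_{\min}}$ with $M_0\subseteq\B(0,R/2)$ and $\mathcal{V}/48 \leq \Haus^d(M_0) \leq \mathcal{V}/2$. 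Applying \Cref{prop:packing_local_variations} with $r = \varepsilon$ then yields a $(2\varepsilon)$-separated family $\set{M_s}$ with $\dHaus(M_0,M_s)\leq 23\varepsilon$. Because $\varepsilon \leq \rch_{\min}/(2^{34}d^2) \leq R/(144\cdot 2^{34}d^2)$, the displacement $23\varepsilon$ is comfortably smaller than $R/2$, so each $M_s$ remains in $\B(0,R)$ and hence in $\manifoldsball{n}{d}{\rch_{\min}}{R}$. The announced volume bounds $\mathcal{V}/96 \leq \Haus^d(M_s) \leq \mathcal{V}$ are immediate from $\Haus^d(M_0)/2 \leq \Haus^d(M_s) \leq 2\Haus^d(M_0)$, and the cardinality estimate reduces to the elementary inequality $(1/48)\cdot 2^{6d} \geq 1$ for every $d \geq 1$. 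The one step to watch is the constant-tracking in this last line; everything else is a direct application of the results already proved above.
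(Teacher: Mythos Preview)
Your proposal is correct and follows essentially the same approach as the paper: construct $M_0$ as a round $d$-sphere of radius $(\mathcal{V}/(2\sigma_d))^{1/d}$ for the fixed point model (then translate the whole family by $-x^*$), and invoke \Cref{prop:manifold_of_prescribed_volume} with parameters $(2\rch_{\min},\mathcal{V}/2,R/2)$ for the bounding ball model, in both cases feeding $M_0$ into \Cref{prop:packing_local_variations} with $r=\varepsilon$. Your constant checks $(1/2)\cdot 2^d\geq 1$ and $(1/48)\cdot 2^{6d}\geq 1$ are exactly the ones the paper uses.
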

    
    \begin{proof}[\proofof \Cref{prop:packing_lower_bound_local}]
    For both models, the idea is to first build a manifold $M_0 \in \manifolds{n}{d}{2\rch_{\min}}$ with prescribed volume close to $\mathcal{V}$, and then consider the variations of it given by \Cref{prop:packing_local_variations}.
    \begin{itemize}[leftmargin=*]
        \item 
        Let $M_0$ be the centered $d$-dimensional sphere of radius 
        $r_0 = \left( \frac{\mathcal{V}/2}{\sigma_d} \right)^{1/d}$ in $\R^{d + 1}\times \set{0}^{n-(d + 1)} \subseteq \R^n$.
        By construction, $\rch_{M_0} = r_0 \geq 2 \rch_{\min}$, so that $M_0 \in \manifolds{n}{d}{2\rch_{\min}}$. Furthermore, one easily checks that $\Haus^d(M_0) = \mathcal{V}/2$.
        From \Cref{prop:packing_local_variations}, there exists a family of submanifolds $\set{M_s}_{1 \le s \le \mathcal{N}} \subseteq \manifolds{n}{d}{\rch_{\min}}$ with cardinality $\mathcal{N}$ such that
    	\begin{align*}
    	        \log \mathcal{N}
    	        &\geq
        		n\frac{\mathcal{V}/2}{\omega_d \rch_{\min}^d}
        		\left(
    	    	\frac{\rch_{\min}}{2^{19}\varepsilon}
    	    	\right)^{d/2}
    	    	\\
    	    	&\geq
        		n\frac{\mathcal{V}}{\omega_d \rch_{\min}^d}
        		\left(
    	    	\frac{\rch_{\min}}{2^{21}\varepsilon}
    	    	\right)^{d/2}
    	    	,
    	\end{align*}
    	that all share a point $x_0 \in \cap_{1 \le s \le \mathcal{N}} M_s$, and such that 
        $\dHaus(M_s,M_{s'}) > 2\varepsilon$ for all $s \neq s' \in \set{1, \dots, \mathcal{N}}$,
    	with volumes
    	$\mathcal{V}/4 = \Haus^d(M_0)/2 \le \Haus^d(M_s) \le 2 \Haus^d(M_0) = \mathcal{V}$.
        As a result, the family given by the translations $M'_s = M_s - x_0$ clearly provides the existence of the announced $\varepsilon$-packing of
        $\bigl( \manifoldspoint{n}{d}{\rch_{\min}}{0}, \dHaus \bigr)$.

        \item
        Let $M_0 \subseteq \R^n$ be a submanifold given by \Cref{prop:manifold_of_prescribed_volume} applied with parameters $\rch_{\min}' = 2\rch_{\min}$, $\mathcal{V}' = \mathcal{V}/2$ and $R' = R/2$. That is, $M_0 \in \manifolds{n}{d}{2 \rch_{\min}}$ is such that $\mathcal{V}/48 \le \Haus^d(M_0) \le \mathcal{V}/2$ and $M_0 \subseteq \B(0,R/2)$.
        From \Cref{prop:packing_local_variations}, there exists a family of submanifolds $\set{M_s}_{1 \le s \le \mathcal{N}} \subseteq \manifolds{n}{d}{\rch_{\min}}$ with cardinality $\mathcal{N}$ such that
    	\begin{align*}
    	        \log \mathcal{N}
    	        &\geq
        		n\frac{\mathcal{V}/48}{\omega_d \rch_{\min}^d}
        		\left(
    	    	\frac{\rch_{\min}}{2^{19}\varepsilon}
    	    	\right)^{d/2}
    	    	\\
    	    	&\geq
        		n\frac{\mathcal{V}}{\omega_d \rch_{\min}^d}
        		\left(
    	    	\frac{\rch_{\min}}{2^{31}\varepsilon}
    	    	\right)^{d/2}
    	    	,
    	\end{align*}
    	with 
    	$\dHaus(M_0,M_s) \le 23\varepsilon$
        and $\dHaus(M_s,M_{s'}) > 2\varepsilon$ for all $s \neq s' \in \set{1, \dots, \mathcal{N}}$,
    	and volumes
    	$\mathcal{V}/96 \le \Haus^d(M_0)/2 \le \Haus^d(M_s) \le 2 \Haus^d(M_0) \le \mathcal{V}$.
    	Because $M_0 \subseteq \B(0,R/2)$ and $\dHaus(M_0,M_s) \le 23\varepsilon$ for all $s \in \set{1, \dots, \mathcal{N}}$, we immediately get that $M_s \subseteq \B(0,R/2 + 23 \varepsilon) \subseteq \B(0,R)$.
    	 As a result, this family clearly provides the existence of the announced $\varepsilon$-packing of
        $\bigl( \manifoldsball{n}{d}{\rch_{\min}}{R}, \dHaus \bigr)$.
    \end{itemize}
    
\end{proof}
    
\subsubsection{Proof of the Computational Lower Bounds for Manifold Estimation}
\label{subsubsec-computational-lower-bounds-proof}

We are now in position to prove the computational lower bounds presented in this work.
First, we turn to the infeasibiliy result of manifold estimation using statistical queries in the unbounded model $\distributions{n}{d}{\rch_{\min}}{f_{\min}}{f_{\max}}{L}$ (\Cref{prop:SQ_lower_bound_infinity_unbounded}).

\begin{proof}[\proofof \Cref{prop:SQ_lower_bound_infinity_unbounded}]
        Since $\sigma_d f_{\min} \rch_{\min}^d \le 1$, the uniform probability distribution $D_0$ over the centered unit $d$-sphere $M_0 \subseteq \R^{d + 1}\times \set{0}^{n-(d + 1)}$ of radius $\rch_{\min}$ belongs to
        $\distributions{n}{d}{\rch_{\min}}{f_{\min}}{f_{\max}}{L}$.
        Given a unit vector $v \in \R^n$, the invariance of the model by translation yields that the uniform distributions $D_k$ over $M_k = \set{p+(3k\varepsilon)v, p \in M_0}$, for $k \in \Z$, also belong to $\distributions{n}{d}{\rch_{\min}}{f_{\min}}{f_{\max}}{L}$.
        But for all $k \neq k' \in \Z$, $\dHaus(M_k,M_{k'}) = 3|k-k'|\varepsilon > 2\varepsilon$. 
        Hence, writing 
        \[  
            \mathcal{M} = \set{\supp(D), D \in \distributions{n}{d}{\rch_{\min}}{f_{\min}}{f_{\max}}{L}},
        \]
        we see that the family $\set{M_k}_{k \in \Z}$ forms an infinite $\varepsilon$-packing of $(\mathcal{M},\dHaus)$.
        From \Cref{theorem:lower_bound_computational_manifold}, we get that the statistical query complexity of manifold estimation over the model $\distributions{n}{d}{\rch_{\min}}{f_{\min}}{f_{\max}}{L}$ with precision $\varepsilon$ is infinite, which concludes the proof.
\end{proof}

We finally come to the proofs of the computational lower bounds over the fixed point model $\distributionspoint{n}{d}{\rch_{\min}}{f_{\min}}{f_{\max}}{L}{0}$ (\Cref{thm:SQ_lower_bound_point_computational}) and the bounding ball model $\distributionsball{n}{d}{\rch_{\min}}{f_{\min}}{f_{\max}}{L}{R}$ (\Cref{thm:SQ_lower_bound_ball_computational}).

\begin{proof}[\proofof \Cref{thm:SQ_lower_bound_point_computational} and \Cref{thm:SQ_lower_bound_ball_computational}]
    For both results, the idea is to exhibit large enough $\varepsilon$-packings of $\mathcal{M} = \set{\supp(D),D \in \model}$, and apply \Cref{theorem:lower_bound_computational_manifold}. In each case, the assumptions on the parameters $f_{\min},f_{\max}$, $\rch_{\min}$ and $d$ ensure that the uniform distributions over the manifolds given by the packings of \Cref{prop:packing_lower_bound_local} (and \Cref{prop:packing_lower_bound_ambient} for \Cref{thm:SQ_lower_bound_ball_computational}) applied with $\mathcal{V} = 1/f_{\min}$ belong to the model, and hence that $\mathcal{M}$ contain these packings.
    \begin{itemize}[leftmargin=*]
        \item 
        To prove \Cref{thm:SQ_lower_bound_point_computational}, let us write 
        \[
            \mathcal{M}_0
            := 
            \set{\supp(D), D \in \distributionspoint{n}{d}{\rch_{\min}}{f_{\min}}{f_{\max}}{L}{0}} 
            \subseteq 
            \manifoldspoint{n}{d}{\rch_{\min}}{0}
            .
        \]
        From \Cref{theorem:lower_bound_computational_manifold}, any randomized SQ algorithm estimating $M = \supp(D)$ over the model $\distributionspoint{n}{d}{\rch_{\min}}{f_{\min}}{f_{\max}}{L}{0}$ with precision $\varepsilon$ and with probability of success at least $1 - \alpha$ must make at least 
        \[
            q
            \geq 
            \frac{\log \bigl( (1-\alpha) \PK_{(\mathcal{M}_0, \dHaus)}(\varepsilon) \bigr)}{\log (1+\floor{1/\tau})}
        \]
        queries to $\STAT(\tau)$.
        Furthermore, let $\set{M_i}_{1 \le i \le \mathcal{N}}$ be an $\varepsilon$-packing of $\manifoldspoint{n}{d}{\rch_{\min}}{0}$ given by \Cref{prop:packing_lower_bound_local}, that we apply with volume $\mathcal{V} = 1/f_{\min}$.
        Recall that these manifolds are guaranteed to have volumes $1/(4f_{\min}) \le \Haus^d(M_i) \le 1/f_{\min}$.
        From the assumptions on the parameters of the model, we get that the uniform distributions $\set{D_i := \indicator{M_i} \Haus^d /\Haus^d(M_i)}_{1 \le i \le \mathcal{N}}$ over the $M_i$'s all belong to $\distributionspoint{n}{d}{\rch_{\min}}{f_{\min}}{f_{\max}}{L}{0}$.
        In particular, the family $\set{M_i}_{1 \le i \le \mathcal{N}}$ is also an $\varepsilon$-packing of $\mathcal{M}_0$, and therefore
    \begin{align*}
        \log \bigl(\PK_{(\mathcal{M}_0, \dHaus)}(\varepsilon) \bigr)
        &\geq
        \log \mathcal{N}
        \geq
                n
        		\frac{1}{\omega_d f_{\min} \rch_{\min}^d}
    	    	\left(
        		\frac{\rch_{\min}}{2^{21} \varepsilon}
        		\right)^{d/2}
        		,
    \end{align*}
    which yields the announced result.
    
        \item 
        Similarly, to prove \Cref{thm:SQ_lower_bound_ball_computational}, write 
        \[
            \mathcal{M}_R
            := 
            \set{\supp(D), D \in \distributionsball{n}{d}{\rch_{\min}}{f_{\min}}{f_{\max}}{L}{R}} 
            \subseteq 
            \manifoldsball{n}{d}{\rch_{\min}}{R}
            ,
        \]
        and apply \Cref{theorem:lower_bound_computational_manifold} to get
        \[
            q
            \geq 
            \frac{\log \bigl( (1-\alpha) \PK_{(\mathcal{M}_R, \dHaus)}(\varepsilon) \bigr)}{\log (1+\floor{1/\tau})}
            .
        \]
        The assumptions on the parameters ensure that the packings exhibited in \Cref{prop:packing_lower_bound_ambient} and \Cref{prop:packing_lower_bound_local} applied with volume $\mathcal{V} = 1/f_{\min}$ are included in $\mathcal{M}_R$, so that
        \begin{align*}
            \log \bigl(\PK_{(\mathcal{M}_R, \dHaus)}(\varepsilon) \bigr)
            &\geq
                n
        		\max\set{
        		    \log\left(\frac{R}{4\varepsilon} \right)
            		,
            		\frac{1}{\omega_d f_{\min} \rch_{\min}^d}
    	        	\left(
            		\frac{\rch_{\min}}{2^{31} \varepsilon}
            		\right)^{d/2}
        		}
            ,
        \end{align*}
        which concludes the proof.
    \end{itemize}
\end{proof}

    \newpage

\nomenclature[A,01]{$n$}{Ambient dimension}
\nomenclature[A,02]{$d$}{Intrinsic dimension}
\nomenclature[A,03]{$D$}{Unknown distribution}
\nomenclature[A,04]{$\norm{\cdot}$}{Euclidean Norm}
\nomenclature[A,05]{$\Gr{n}{d}$}{Grassmannian of $d$-dimensional subspaces of $\R^n$}
\nomenclature[A,06]{$\Haus^d$}{$d$-dimensional Hausdorff measure}

\nomenclature[B,01]{$\supp(D)$}{Support of $D$}
\nomenclature[B,02]{$\B(x,r)$}{Closed Euclidean $r$-ball around $x \in \R^n$}
\nomenclature[B,03]{$\omega_d$}{Volume of unit $d$-ball}
\nomenclature[B,04]{$\sigma_{d-1}$}{Surface area of unit $(d-1)$-dimensional sphere}
\nomenclature[B,05]{$M$}{Submanifold}
\nomenclature[B,06]{$\dd_M$}{Geodesic distance on $M$}
\nomenclature[B,07]{$\B_M(p,r)$}{Closed geodesic $r$-ball around $p \in M$}
\nomenclature[B,08]{$T_p M$}{Tangent space of $M$ at $p \in M$}
\nomenclature[B,09]{$\angle$}{Principal angle between linear subspaces}
\nomenclature[B,10]{$\Med$}{Medial axis \hfill (\Cref{subsec:geom-stat-model})}
\nomenclature[B,11]{$\pi_M$}{Projection map onto $M$ \hfill (\Cref{subsec:geom-stat-model})}
\nomenclature[B,12]{$\rch_M$}{Reach of $M$ \hfill (\Cref{def:reach})}

\nomenclature[B,13]{$\dd(\cdot,M)$}{Distance-to-$M$ function}
\nomenclature[B,14]{$\dHaus$}{Hausdorff distance \hfill (\Cref{def:hausdorff_distance})}
\nomenclature[B,15]{$M^r$}{$r$-Offset of $M$ \hfill (\Cref{eq:offset})}
\nomenclature[B,16]{$\CV_K(r)$}{$r$-covering number of $K$ \hfill (\Cref{def:packing_covering})}
\nomenclature[B,17]{$\PK_K(r)$}{$r$-packing number of $K$ \hfill (\Cref{def:packing_covering})}

\nomenclature[D,01]{$\tau$}{Tolerance}
\nomenclature[D,02]{$\STAT(\tau)$}{Class of oracles with tolerance $\tau$ \hfill (\Cref{def:deterministic-SQ})}
\nomenclature[D,03]{$q$}{Number of queries}
\nomenclature[D,04]{$\varepsilon$}{Precision}
\nomenclature[D,05]{$\alpha$}{Probability of failure \hfill (\Cref{def:randomized-SQ})}
\nomenclature[D,06]{$r$}{Query to $\STAT(\tau)$}
\nomenclature[D,07]{$\answer$}{Answer to a query}
\nomenclature[D,08]{$\oracle$}{SQ oracle}
\nomenclature[D,09]{$\Algorithm{A}$}{SQ algorithm}
\nomenclature[D,10]{$\RandmizedAlgorithm{A}$}{Randomized SQ algorithm}

\nomenclature[E,01]{$\mu_i(\cdot)$}{$i$th singular value in decreasing order}
\nomenclature[E,02]{$\normF{\cdot}$}{Frobenius norm}
\nomenclature[E,03]{$\inner{\cdot}{\cdot}$}{Inner product}
\nomenclature[E,04]{$\normop{\cdot}$}{Operator norm}
\nomenclature[E,05]{$\normNuc{\cdot}$}{Nuclear norm}

\nomenclature[C,01]{$\manifolds{n}{d}{\rch_{\min}}$}{\hspace{26ex}Geometric model \hfill (\Cref{def:manifold_model})}

\nomenclature[C,02]{$\distributions{n}{d}{\rch_{\min}}{f_{\min}}{f_{\max}}{L}$}{\hspace{11ex}Statistical model \hfill (\Cref{def:manifold_model_distribution})}

\nomenclature[C,03]{$\manifoldspoint{n}{d}{\rch_{\min}}{0}$}{\hspace{19.4ex}Fixed point geometric model \hfill (\Cref{def:models_bounded})}
\nomenclature[C,04]{$\distributionspoint{n}{d}{\rch_{\min}}{f_{\min}}{f_{\max}}{L}{0}$}
{\hspace{4.4ex}Fixed point stat. model
\hfill (\Cref{def:models_bounded})}
\nomenclature[C,05]{$\manifoldsball{n}{d}{\rch_{\min}}{R}$}{\hspace{15ex}Bounding ball geometric model
\hfill (\Cref{def:models_bounded})}
\nomenclature[C,06]{$\distributionsball{n}{d}{\rch_{\min}}{f_{\min}}{f_{\max}}{L}{R}$}
{Bounding ball stat. model
\hfill (\Cref{def:models_bounded})}

\nomenclature[C,07]{$\rch_{\min}$}{Lower bound on the reach}
\nomenclature[C,08]{$f$}{Density with respect to the volume measure}
\nomenclature[C,09]{$f_{\min}$}{Lower bound on $f$}
\nomenclature[C,10]{$f_{\max}$}{Upper bound on $f$}
\nomenclature[C,11]{$L$}{Upper bound on the Lipschitz constant of $f$}
\nomenclature[C,12]{$R$}{Radius of enclosing ball}             

\nomenclature[F,01]{$\TV$}{Total variation distance \hfill (\Cref{def:tv})}
\nomenclature[F,02]{$\model$}{Generic model}
\nomenclature[F,03]{$(\metric, \dist)$}{Generic metric space}
\nomenclature[F,04]{$\theta : \model \to \metric$}{Generic parameter of interest}

    \printnomenclature

    \bibliographystyle{alpha}
    \bibliography{biblio}

\end{document}